\documentclass[11pt,letterpaper,twoside,reqno,nosumlimits,final]{amsart}

\synctex=1

\usepackage[usenames,dvipsnames]{xcolor}
\usepackage{fancyhdr}
\usepackage{amsmath,amsfonts,amsbsy,amsgen,amscd,mathrsfs,amssymb,amsthm}
\usepackage{stackengine}
\usepackage{url}

\usepackage{caption}
\usepackage{subcaption}

\usepackage{mathtools}

\mathtoolsset{showonlyrefs}

\usepackage[font=small,margin=0.25in,labelfont={sc},labelsep={colon}]{caption}

\usepackage{tikz}
\usepackage{microtype}
\usepackage{enumitem}

\definecolor{dark-gray}{gray}{0.3}
\definecolor{dkgray}{rgb}{.4,.4,.4}
\definecolor{dkblue}{rgb}{0,0,.5}
\definecolor{medblue}{rgb}{0,0,.75}
\definecolor{rust}{rgb}{0.5,0.1,0.1}

\usepackage[colorlinks=true]{hyperref}

\hypersetup{urlcolor=rust}
\hypersetup{citecolor=dkblue}
\hypersetup{linkcolor=dkblue}

\usepackage{setspace}

\usepackage{graphicx}
\usepackage{booktabs,longtable,tabu} \setlength{\tabulinesep}{1mm}
\usepackage{multirow} 

\usepackage{float}

\usepackage[full]{textcomp}

\usepackage[scaled=.98,sups,osf]{XCharter}\usepackage[scaled=1.04,varqu,varl]{inconsolata}\usepackage[type1]{cabin}\usepackage[utopia,vvarbb,scaled=1.07]{newtxmath}
\usepackage[cal=boondoxo]{mathalfa}
\linespread{1.04}

\usepackage[T1]{fontenc}

\usepackage[draft]{changes}
\definechangesauthor[color=blue]{ml}
\definechangesauthor[color=red]{jt}

\usepackage{bm} 

\graphicspath{{figures/}}

\newtheorem{bigthm}{Theorem}

\newtheorem{theorem}{Theorem}[section]
\newtheorem{lemma}[theorem]{Lemma}

\newtheorem{proposition}[theorem]{Proposition}
\newtheorem{fact}[theorem]{Fact}

\newtheorem{corollary}[theorem]{Corollary}

\theoremstyle{definition}

\newtheorem{definition}[theorem]{Definition}
\newtheorem{example}[theorem]{Example}
\newtheorem{remark}[theorem]{Remark}

\newcommand{\term}{\emph}

\numberwithin{equation}{section} 
\numberwithin{figure}{section}
\numberwithin{table}{section}

\floatstyle{plaintop}
\newfloat{recipe}{thp}{lor}
\floatname{recipe}{Recipe}
\numberwithin{recipe}{section}

\providecommand{\mathbold}[1]{\bm{#1}}

\renewcommand{\phi}{\varphi}

\newcommand{\eps}{\varepsilon}

\newcommand{\half}{\tfrac{1}{2}}

\newcommand{\econst}{\mathrm{e}}

 \newcommand{\zerovct}{\vct{0}}

\newcommand{\set}[1]{\mathsf{#1}}

\newcommand{\ball}[1]{\mathsf{B}^{#1}}

\providecommand{\mathbbm}{\mathbb} 
\newcommand{\R}{\mathbbm{R}}

\newcommand{\polar}{\circ}

\newcommand{\abs}[1]{\left\vert {#1} \right\vert}

\newcommand{\diff}[1]{\mathrm{d}{#1}}
\newcommand{\idiff}[1]{\, \diff{#1}}

\newcommand{\grad}{\nabla} 

\newcommand{\Hess}{\operatorname{Hess}}

\newcommand{\Prob}[1]{\mathbb{P}\left\{{#1}\right\}}

\newcommand{\Expect}{\operatorname{\mathbb{E}}}

\DeclareMathOperator{\Var}{Var}

\stackMath \setstackgap{S}{1pt}

\newcommand{\intvol}{\mathrm{V}}
\newcommand{\rmv}{\bar{\intvol}}
\newcommand{\rotv}{\mathring{\intvol}}
\newcommand{\rmdelta}{\bar{\delta}}
\newcommand{\rotdelta}{\mathring{\delta}}
\newcommand{\rmI}{\bar{I}}
\newcommand{\rotI}{\mathring{I}}
\newcommand{\wills}{\mathrm{W}}
\newcommand{\rmwills}{\bar{\wills}}
\newcommand{\rotwills}{\mathring{\wills}}
\newcommand{\rmmu}{\bar{\mu}}
\newcommand{\rotmu}{\mathring{\mu}}

\newcommand{\intvolJ}{J_{\theta}}
\newcommand{\rotJ}{\mathring{J}_{\theta}}
\newcommand{\rmJ}{\bar{J}_{\theta}}

\newcommand{\rmsigma}{\bar{\sigma}}
\newcommand{\rotsigma}{\mathring{\sigma}}

\newcommand{\rmDelta}{\bar{\Delta}}
\newcommand{\rotDelta}{\mathring{\Delta}}

\newcommand{\vct}[1]{\mathbold{#1}}
\newcommand{\mtx}[1]{\mathbold{#1}}

\newcommand{\ip}[2]{\left\langle {#1},\ {#2} \right\rangle}

\newcommand{\norm}[1]{\left\Vert {#1} \right\Vert}
\newcommand{\normsq}[1]{\norm{#1}^2}

\DeclareMathOperator{\dist}{dist}
\DeclareMathOperator{\proj}{proj}

\newcommand{\Gr}{\mathrm{Gr}}
\newcommand{\SO}{\mathrm{SO}}
\newcommand{\RM}{\mathrm{SE}}
\newcommand{\Af}{\operatorname{Af}}

\evensidemargin=0in
\oddsidemargin=0in
\textwidth=6.5in
\topmargin=-0.33in 
\headheight=0.25in
\textheight=9in

\title[Phase Transitions in Integral Geometry]{Sharp Phase Transitions \\ in Euclidean Integral Geometry}

\author[M.~Lotz and J.~A.~Tropp]{Martin Lotz and Joel~A.~Tropp}

\date{5 May 2020. Revised 15 December 2021.}

\subjclass[2010]{Primary: 52A22, 52A39. Secondary: 52A23, 52A20, 60D05.}
\keywords{Concentration inequality; convex body; intrinsic volumes; integral geometry; mixed volume; phase transition, quermassintegral}

\begin{document}

\begin{abstract} The intrinsic volumes of a convex body are fundamental invariants that
capture information about the average volume of the projection of the
convex body onto a random subspace of fixed dimension.  The intrinsic volumes
also play a central role in integral geometry formulas that describe how
moving convex bodies interact.
Recent work has demonstrated that the sequence of intrinsic volumes concentrates
sharply around its centroid, which is called the central intrinsic volume.
The purpose of this paper is to derive finer concentration
inequalities for the intrinsic volumes and related sequences.
These concentration results have striking implications for high-dimensional integral geometry.
In particular, they uncover new phase transitions in formulas for random projections,
rotation means, random slicing, and the kinematic formula.
In each case, the location of the
phase transition is determined by
reducing each convex body to a
single summary parameter.
\end{abstract}

\maketitle

\section{Introduction}

In his 1733 treatise~\cite{leclerc1733essai}, \emph{Essai d'arithm{\'e}tique morale},
Georges-Louis Leclerc, Comte de Buffon, initiated the field of geometric probability.
The arithmetic is ``moral'' because it assesses
the degree of hope or fear that an uncertain event should instill,
or---more concretely---whether a game of chance is fair. In a discussion of the game \emph{franc-carreau}, Buffon calculated the probability that a needle, dropped at random on a plank floor,
touches more than one plank (Figure~\ref{fig:buffon-schematic}).
In 1860, Barbier~\cite{barbier1860note} gave a visionary
solution to Buffon's needle problem, based on invariance properties of the random model.
Advanced further by the insight of Crofton~\cite{Cro89:Probability}, this approach evolved into
the field of integral geometry.  See the elegant book~\cite{KR97:Introduction-Geometric}
of Klain \& Rota for an introduction to the subject.

\begin{figure}[t]
\begin{center}
\includegraphics[width=0.75\textwidth]{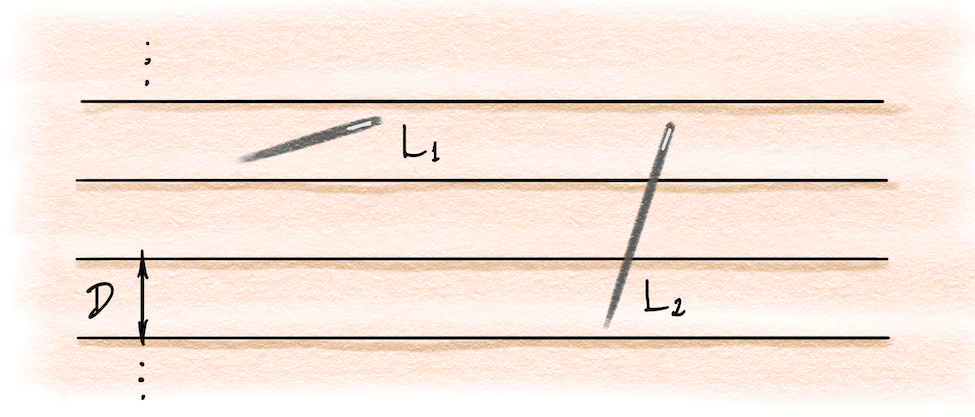} \caption{\textbf{Buffon's Needle.}  What is the probability that a needle,
dropped at random on a plank floor, touches more than one plank?
The needle is less likely to cross a boundary when it is short (needle $L_1$),
relative to the width $D$ of a plank.  A long needle ($L_2$) is more likely to cross
a boundary.} \label{fig:buffon-schematic}
\end{center}
\end{figure}

Let us contrast two different ways of describing the solution
to Buffon's problem.  First, we can write the intersection probability $p(L, D)$ exactly
in terms of the length $L$ of the needle and the width $D$ of the planks:
\begin{equation}\label{eqn:buffon-exact}\tag{A}
p(L,D) =  \frac{2}{\pi} \cdot \begin{cases}
    L/D, & L\leq D;\\
      \arccos\left(D/L\right) +  (L/D) \big(1 - \sqrt{1 - ( D/L )^2  } \big), & L\geq D.
  \end{cases}
\end{equation}
As a consequence, a needle of length $L = \pi D/4$
is equally likely to touch one plank or two.

Even without the precise result, it is clear that very short needles are unlikely
to touch two planks, while very long needles are likely to do so.
We can capture this intuition more vividly with an alternative formula.
For each $\alpha \in [0, 0.36]$,
\begin{equation}
\label{eqn:buffon-approx}\tag{B}
\begin{aligned}
L/D &\leq \phantom{1-{}}(2/\pi) \cdot \alpha
&\text{implies}&&	p(L,D) &\leq \alpha ;\\
L/D &\geq (1-2/\pi) \cdot \alpha^{-1} &\text{implies}&& p(L,D) &\geq 1-\alpha.
\end{aligned}
\end{equation}
This result states that there are two complementary regimes for the length
of the needle.  Depending on which case is active, the probability that the
needle crosses a boundary is either very small or very large.  In between,
there is a wide transition region where neither condition is in force.

A major concern of integral geometry is to identify expressions
of Type~\eqref{eqn:buffon-exact} for more involved problems.  This program has led to
a large corpus of exact results, written in terms of geometric invariants, such as
intrinsic volumes or quermassintegrals.  Unfortunately, the classic formulas are difficult
to interpret or to instantiate, except in the simplest cases (e.g., problems in
dimension two or three).  See~\cite{SW08:Stochastic-Integral,Sch14:Convex-Bodies}
for a comprehensive account of this theory.

Meanwhile, the field of asymptotic convex geometry operates in high dimensions.
The main goal of this activity is to
discover qualitative phenomena,
involving interpretable geometric quantities,
more in the spirit of Type~\eqref{eqn:buffon-approx}.
On the other hand, the analytic and probabilistic methods
that are common in this research often lead to statements that include
unspecified constants.  See~\cite{artstein2015asymptotic}
for a recent textbook on asymptotic convex geometry.

This paper establishes precise---but interpretable---approximations of the famous Kubota, Crofton, rotation mean, and kinematic formulas,
which describe how moving convex sets interact.
Our theorems can be viewed as the Type~\eqref{eqn:buffon-approx}
counterparts of results that have only been expressed
in Type~\eqref{eqn:buffon-exact} formulations.
The main ingredient is a set of novel measure concentration results
for the intrinsic volumes of a convex body.
The proofs combine methodology from integral geometry and from asymptotic
convex geometry. 

The most intriguing new insight from our analysis is that each formula splits into two complementary regimes,
where the probability of a geometric event is either negligible
or overwhelming.
Our approach identifies the exact location of the change-point between
the two regimes in terms of a simple summary parameter
that we can (in principle) calculate.
Moreover, in high dimensions, the width of the transition region becomes so narrow
that we can interpret the results as \term{sharp phase transitions}
for problems in Euclidean integral geometry.

\section{Weighted Intrinsic Volumes}

This section begins with basic concepts from convex geometry.
Then we introduce the intrinsic volumes, which are canonical measures of the content of a convex body.
Afterward, we describe different ways to reweight the intrinsic
volumes to make integral geometry formulas more transparent.
Sections~\ref{sec:main-conc} and~\ref{sec:main-phase}
present the main results of the paper.

\subsection{Notation}

This paper uses standard concepts from convexity; see~\cite{Roc70:Convex-Analysis,Bar02:Course-Convexity,SW08:Stochastic-Integral,Sch14:Convex-Bodies}.
The appendices contain background on invariant measures (Appendix~\ref{sec:invariant})
and integral geometry (Appendices~\ref{sec:elements} and~\ref{sec:formulas}).
Here are the basics.

We work in the Euclidean space $\R^n$, equipped with the usual inner product $\ip{\cdot}{\cdot}$
and norm $\norm{\cdot}$. A \term{convex body} is a compact, convex subset of $\R^n$.
The empty set $\emptyset$ is also a convex body.

The \term{Cartesian product} of two sets $\set{S}, \set{T} \subseteq \R^n$ is the set $\set{S} \times \set{T} := \{ (\vct{x}, \vct{y}) \in \R^{2n} : \vct{x} \in \set{S}, \vct{y} \in \set{T} \}$.   The \term{Minkowski sum} is the set
$\set{S} + \set{T} := \{ \vct{x} + \vct{y} \colon \vct{x} \in \set{S}, \vct{y} \in \set{T} \}$.
For a scalar $\lambda \in \R$, we can form the \term{dilation} $\lambda \set{S} := \{ \lambda \vct{s} : \vct{s} \in \set{S} \}$.
Given a linear subspace $\set{L} \subseteq \R^n$, with orthogonal complement $\set{L}^\perp$,
define the \term{orthogonal projection} $\set{S}|\set{L} := \{ \vct{x} \in \set{L} : \set{S} \cap (\vct{x} + \set{L}^\perp) \neq \emptyset \}$. Unless otherwise stated, the term \term{subspace} always means a linear subspace.

The \term{dimension} of a nonempty convex set $\set{K} \subseteq \R^n$ is the dimension of its \term{affine hull},
the smallest affine space that contains $\set{K}$.  When $\dim(\set{K}) = i$, the $i$-dimensional volume $\intvol_i(\set{K})$ is the Lebesgue
measure of $\set{K}$, relative to its affine hull.
When $\set{K}$ is $0$-dimensional (i.e., a single
point), we set $\intvol_0(\set{K}) = 1$.

The notation $\ball{n} := \{ \vct{x} \in \R^n : \norm{\vct{x}} \leq 1 \}$ is reserved for the Euclidean unit ball.
We write $\kappa_n$ for the volume and $\omega_n$ for the surface area of $\ball{n}$.  They satisfy the formulas
\begin{align} \label{eqn:ball-sphere}
\kappa_n &:= \intvol_n(\ball{n}) = \frac{\pi^{n/2}}{\Gamma(1 + n/2)}
&\text{and}&&
\omega_n &:= n \kappa_n = \frac{2\pi^{n/2}}{\Gamma(n/2)}.
\end{align}
As usual, $\Gamma$ denotes the gamma function. 
We instate the convention from combinatorics that a sequence,
indexed by integers, is automatically extended with zero outside its support.
For example, $\kappa_{i} = 0$ for integers $i < 0$,
and the binomial coefficient $\binom{n}{i} = 0$ for integers $i < 0$ and $i > n$.

The operator $\mathbbm{P}$ computes the probability of an event,
while $\Expect$ returns the expectation of a random variable.
The symbol $\sim$ means ``has the distribution.''
We sometimes use infix notation for the minimum
($\wedge$) and maximum ($\vee$) of two numbers.  

\begin{figure}[t]
\begin{center}
\includegraphics[width=0.9\textwidth]{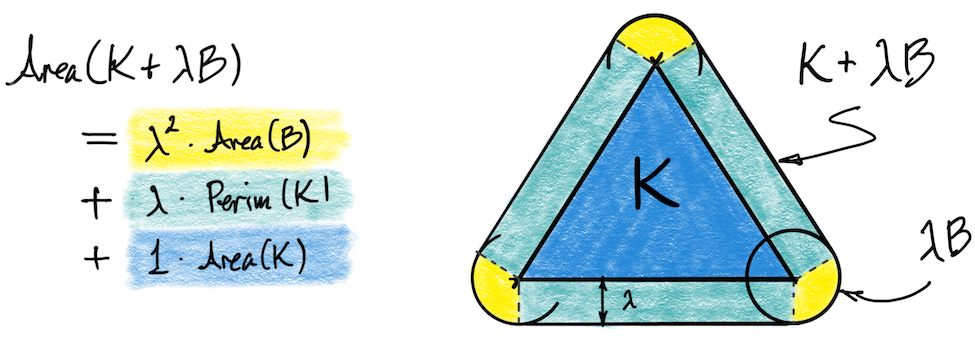}
\caption{\textbf{Steiner's Formula.}  In two dimensions, the sum of a convex body $\set{K}$
and a scaled disc $\lambda \ball{}$ is a quadratic polynomial in the radius $\lambda$.
The coefficients depend on the area and the perimeter of $\set{K}$.  In higher dimensions,
the polynomial expansion involves the intrinsic volumes of the convex body.} \label{fig:steiner}
\end{center}
\end{figure}

\subsection{Intrinsic Volumes}
\label{sec:intvol-def}

Let $\set{K} \subset \R^n$ be a nonempty convex body.
\term{Steiner's formula} states that
the volume of the sum of $\set{K}$ and a scaled Euclidean ball
$\lambda \ball{n}$
can be expressed as a polynomial in the radius $\lambda$ of the ball:
\begin{equation}\label{eqn:steiner-intro}
\intvol_n(\set{K}+\lambda \ball{n}) = \sum_{i=0}^n \lambda^{n-i} \kappa_{n-i} \cdot \intvol_i(\set{K})
\quad\text{for all $\lambda \geq 0$.}
\end{equation}
See Figure~\ref{fig:steiner} for an illustration. The coefficients $\intvol_i(\set{K})$ are called \term{intrinsic volumes} of the
convex body, and the Steiner formula~\eqref{eqn:steiner-intro} serves as their definition.
For the empty set, $\intvol_i(\emptyset) = 0$ for every $i$.

The intrinsic volumes are normalized to be independent
of the ambient dimension in which the convex body
is embedded, so we do not need to specify the dimension in the notation.
In particular, when $\dim(\set{K}) = i$, the $i$th intrinsic volume $\intvol_i(\set{K})$ coincides
with the $i$-dimensional Lebesgue measure of the set,
so the definitions are consistent.

Intrinsic volumes share many properties
of the ordinary volume (Appendix~\ref{sec:intvol-properties}).
In particular, the intrinsic volumes are nonnegative,
they increase with respect to set inclusion, and they
are invariant under rigid motions (i.e., translations, rotations, and reflections).
Furthermore, the $i$th intrinsic volume $\intvol_i$
is positive-homogeneous of degree $i$.
That is, $\intvol_i(\lambda \set{K}) = \lambda^i \,\intvol_i(\set{K})$
for all $\lambda \geq 0$.

Several of the intrinsic volumes have familiar interpretations:
$\intvol_{n-1}(\set{K})$ is half the $(n-1)$-dimensional \term{surface area},
and $\intvol_1(\set{K})$ is proportional to the \term{mean width},
with a factor depending on $n$.  The \term{Euler characteristic}
$\intvol_0(\set{K})$ indicates whether the convex body is nonempty.

Figure~\ref{fig:steiner} suggests that the $i$th intrinsic
volume should reflect the $i$-dimensional content of the body.
Indeed, Kubota's formula (Fact~\ref{fact:projection-intvol})
shows the $\set{V}_i(\set{K})$ is proportional to
the average $i$-dimensional volume of the projections
of $\set{K}$ onto $i$-dimensional subspaces.
Crofton's formula (Fact~\ref{fact:slicing-intvol})
gives a dual representation in terms of the $(n-i)$-dimensional
affine slices of the convex body.

Finally, we introduce the \term{total intrinsic volume},
also known as the \term{Wills functional}~\cite{Wil73:Gitterpunktanzahl,Had75:Willssche}:
\begin{equation} \label{eqn:wills}
\wills(\set{K}) := \sum_{i=0}^n \intvol_i(\set{K}).
\end{equation}
The total intrinsic volume reflects contributions to
the content of the convex body from all dimensions.
It also allows us to compare the size of convex bodies
that have different dimensions.

The total intrinsic volume functional~\eqref{eqn:wills}
is nonnegative, monotone with respect to set inclusion,
and invariant under rigid motions.
Obviously, the total intrinsic volume $\wills(\set{K})$ is comparable
with the maximum intrinsic volume, $\max_i \intvol_i(\set{K})$,
up to a dimensional factor, $\dim( \set{K} )$.  Better estimates
follow from concentration of intrinsic volumes (Theorem~\ref{thm:intvol-conc}).

\subsection{Weighted Intrinsic Volumes}
\label{sec:wvol}

For applications in integral geometry,
Nijenhuis~\cite{nijenhuis1974chern} recognized that
it is valuable to reweight the intrinsic volume sequence;
see also~\cite[pp.~176--177]{SW08:Stochastic-Integral}.
Our work builds on this idea, but we use
different normalizations.
The correct choice of weights depends on which transformation group we are considering,
either rotations or rigid motions.

\begin{definition}[Weighted Intrinsic Volumes] \label{def:wintvol}
Let $\set{K} \subset \R^n$ be a convex body in dimension $n$.  For indices $i = 0,1,2, \dots,n$,
the \term{rotation volumes} $\rotv_i(\set{K})$ and the \term{total rotation volume} $\rotwills(\set{K})$ are the numbers
\begin{align}
\rotv_i(\set{K}) &:= \frac{\omega_{n+1}}{\omega_{i+1}} \intvol_{n-i}(\set{K})
&\text{and}&&
\rotwills(\set{K}) &:= \sum_{i=0}^n \rotv_i(\set{K}). \label{eqn:rotvol-def-intro}
\intertext{The \term{rigid motion volumes} $\rmv_i(\set{K})$ and the \term{total rigid motion volume} $\rmwills(\set{K})$ are} \rmv_i(\set{K}) &:= \frac{\omega_{n+1}}{\omega_{i+1}} \intvol_i(\set{K})
&\text{and}&&
\rmwills(\set{K}) &:= \sum_{i=0}^n \rmv_i(\set{K}).  \label{eqn:rmvol-def-intro}
\end{align}
The notation---a ring for rotations and a bar for rigid motions---is intended to be mnemonic.
\end{definition}

Evidently, the rotation volumes and the rigid motion volumes
depend on the dimension $n$; they are \emph{not} intrinsic.
In the case of rigid motion volumes, this is merely because
of the dispensable factor $\omega_{n+1}$,
which is included to simplify some expressions.
Nevertheless, to lighten notation, we only specify the ambient dimension
when it is required for clarity.

Otherwise, the rotation volumes and rigid motion volumes enjoy the same basic
properties as the intrinsic volumes (Appendix~\ref{sec:intvol-properties}).
In particular, they are nonnegative, monotone, and rigid motion invariant.
The weighted intrinsic volumes also inherit homogeneity properties from the
intrinsic volumes.  Note, however, that the $i$th rotation volume $\rotv_i$
is homogeneous of degree $n - i$ because its indexing reverses that of the
intrinsic volumes.

As we will see, the core formulas of integral geometry simplify
dramatically when they are written using the rotation volumes
and the rigid motion volumes.  We believe that the elegance
of the resulting phase transitions justifies
these unusual normalizations.

\subsection{Relationship with Intrinsic Volumes}

There are many ways to connect intrinsic volumes with rotation
volumes and rigid motion volumes.  In particular, we note two
integral formulas that relate the total volumes.
For a convex body $\set{K} \subset \R^n$,
\begin{equation}\label{eqn:wills-as-moments}
  \rotwills(\set{K}) = \omega_{n+1}\int_0^\infty \wills(s^{-1} \set{K}) \, s^n \econst^{-\pi s^2} \idiff{s}
  \quad\text{and}\quad
  \rmwills(\set{K}) = \omega_{n+1}\int_0^\infty  \wills(s\set{K}) \, \econst^{-\pi s^2} \idiff{s}.
\end{equation}
The identities~\eqref{eqn:wills-as-moments} are obtained from Definition~\ref{def:wintvol}
and the formula~\eqref{eqn:ball-sphere} for the surface area of a ball
by expanding the gamma function in $1/\omega_{i+1}$ using the Euler integral.
In words, the total rotation volume and the total rigid motion volume
reflect how the total intrinsic volume varies with dilation.

\subsection{Characteristic Polynomials}

As with many other sequences, it can be useful to summarize
the volumes using a generating function.
For a convex body $\set{K} \subset \R^n$ and a variable $t \in \R$,
define the characteristic polynomials of the (weighted) intrinsic volumes:
\begin{align}
\chi_{\set{K}}(t) &:= \sum_{i=0}^n t^{n-i} \, \intvol_i(\set{K})
	= t^n \, \wills(t^{-1}\set{K});
	\label{eqn:intvol-charpoly} \\
\mathring{\chi}_{\set{K}}(t) &:= \sum_{i=0}^n t^{n-i} \,\rotv_i(\set{K}) = \rotwills(t \set{K});
	\label{eqn:rotvol-charpoly} \\
\bar{\chi}_{\set{K}}(t) &:= \sum_{i=0}^n t^{n-i}\, \rmv_i(\set{K}) = t^n \,\rmwills(t^{-1} \set{K}).
	\label{eqn:rmvol-charpoly}
\end{align}
On a couple occasions, we will use these polynomials to streamline computations.
The main results of this paper can also be framed in terms of
characteristic polynomials.  For brevity, we will not
pursue this observation.

\subsection{Examples}

It is usually quite difficult to compute all of the intrinsic volumes of a convex body,
but we can provide explicit formulas in some simple cases.  The discussion in this section
supports our numerical work, but it is tangential to the main arc of development.

\begin{example}[Euclidean Ball: Intrinsic Volumes] \label{ex:ball}
The intrinsic volumes of the scaled Euclidean ball $\lambda \ball{n}$ take the form
\begin{equation} \label{eqn:intvol-ball}
\intvol_i(\lambda \ball{n}) = \binom{n}{i} \frac{\kappa_n}{\kappa_{n-i}} \cdot \lambda^i
\quad\text{for all $\lambda \geq 0$.} \end{equation}
The relation~\eqref{eqn:intvol-ball} is an easy consequence of the Steiner formula~\eqref{eqn:steiner-intro}.
The total intrinsic volume~\eqref{eqn:wills} of the ball satisfies
\begin{equation} \label{eqn:ball-wills}
\wills(\lambda\ball{n}) =  \kappa_n \lambda^n + \omega_n \int_0^\infty (\lambda+s)^{n-1} \,\econst^{-\pi s^2} \idiff{s}.
\end{equation}
This expression can be derived by expanding the gamma function in the factor $1/\kappa_{n-i}$
as an Euler integral; it also follows from the integral representation in Corollary~\ref{cor:intvol-metric}.
While this integral cannot be evaluated in closed form, it can be approximated using Laplace's method
or numerical quadrature.

\end{example}

\begin{example}[Euclidean Ball: Rigid Motion Volumes]\label{ex:ball-rm}
By definition~\eqref{eqn:rmvol-def-intro},
the rigid motion volumes of the scaled Euclidean ball $\lambda \ball{n}$
satisfy
\begin{equation*}
\frac{\rmv_i(\lambda\ball{n})}{\omega_{n+1}}= \binom{n}{i}\frac{\kappa_n}{\kappa_{n-i}\omega_{i+1}} \cdot \lambda^i.
\end{equation*}
After some gymnastics with gamma functions,
this expression converts into a form that is reminiscent
of a binomial distribution or a beta distribution.

To obtain a compact expression for the total rigid motion volume,
use the integral representation~\eqref{eqn:wills-as-moments},
the total intrinsic volume computation~\eqref{eqn:ball-wills}, and the volume computations~\eqref{eqn:ball-sphere}
to arrive at
\begin{equation} \label{eqn:rmvol-ball-integral}
 \rmwills(\lambda \ball{n}) = \omega_n \left[\frac{\lambda^n}{n}+\int_0^{\pi/2} (\lambda \sin\theta + \cos\theta )^{n-1} \idiff{\theta} \right].
\end{equation}
If desired, one may invoke Laplace's method to obtain an asymptotic approximation
for the total rigid motion volume.
\end{example}

\begin{example}[Parallelotopes: Intrinsic Volumes] \label{ex:ptope}
Given a vector $\vct{\lambda} := (\lambda_1, \dots, \lambda_n)$ of nonnegative
side lengths, we construct the parallelotope
$\set{R}_{\vct{\lambda}} := [0,\lambda_1] \times \cdots \times [0, \lambda_n] \subset \R^n$.
Its intrinsic volumes are
\begin{equation*} \label{eqn:ptope-intvol}
\intvol_i(\set{R}_{\vct{\lambda}}) = e_i(\vct{\lambda})
\quad\text{for $i = 0, 1, 2, \dots, n$.}
\end{equation*}
We have written $e_i$ for the $i$th elementary symmetric polynomial.
Thus, the total intrinsic volume is
$$
\wills(\set{R}_{\vct{\lambda}}) = \prod_{i=1}^n (1 + \lambda_i).
$$
These formulas follow from an expression (Fact~\ref{fact:product-intvol})
for the intrinsic volumes of a Cartesian product.
\end{example}

\begin{example}[Scaled Cube: Weighted Intrinsic Volumes] \label{ex:cube}
Let $\set{Q}^n := [0,1]^n \subset \R^n$ be the standard cube.
For each $\lambda \geq 0$ and each index $i = 0,1,2, \dots, n$,
\begin{equation} \label{eqn:intvol-cube}
\intvol_i(\lambda \set{Q}^n) = \binom{n}{i} \cdot \lambda^i
\quad\text{and}\quad
\wills(\lambda \set{Q}^n) = (1+\lambda)^n
\quad\text{for all $\lambda \geq 0$.} \end{equation}
This result specializes Example~\ref{ex:ptope}.

It is also straightforward to determine the weighted intrinsic volumes of a scaled cube:
\begin{align*}
\rotv_i(\lambda \set{Q}^n) &= \binom{n+1}{n-i} \frac{\kappa_{n+1}}{\kappa_{i+1}} \cdot \lambda^{n-i}
	= \intvol_{n-i}(\lambda \ball{n+1}); \\
\rmv_i(\lambda \set{Q}^n) &= \binom{n+1}{n-i} \frac{\kappa_{n+1}}{\kappa_{i+1}} \cdot \lambda^i
	= \lambda^{n} \, \intvol_{n-i}(\lambda^{-1} \ball{n+1}).
\end{align*}
These calculations follow from Definition~\ref{def:wintvol},
the homogeneity properties of the intrinsic volumes,
and the calculations~\eqref{eqn:intvol-ball} and~\eqref{eqn:intvol-cube}. In view of~\eqref{eqn:ball-wills}, the total volumes satisfy
\begin{align}
\rotwills(\lambda \set{Q}^n) &= \wills(\lambda \ball{n+1})-\intvol_{n+1}(\lambda \ball{n+1})
	= \omega_{n+1}\int_0^\infty (\lambda+s)^{n} \,\econst^{-\pi s^2} \idiff{s};
	\label{eqn:cube-rotwills} \\
\rmwills(\lambda \set{Q}^n) &= \lambda^{n} \cdot \big[\wills(\lambda^{-1} \ball{n+1})-\intvol_{n+1}(\lambda^{-1} \ball{n+1}) \big]
	= \omega_{n+1}\int_0^\infty (1+\lambda s)^{n} \,\econst^{-\pi s^2} \idiff{s}.
	\label{eqn:cube-rmwills}
\end{align}
These identities also follow from~\eqref{eqn:wills-as-moments}.
\end{example}

\section{Main Results I: Concentration}
\label{sec:main-conc}

With our coauthors Michael McCoy, Ivan Nourdin, and Giovanni Peccati,
we recently established the surprising fact that the sequence of intrinsic volumes
concentrates sharply around its centroid~\cite[Thm.~1.11]{LMNPT20:Concentration-Euclidean}.
The main technical achievement of this paper is a set
of concentration properties for the rotation volumes
and the rigid motion volumes.
In Section~\ref{sec:main-phase}, we present phase
transition formulas that follow from these concentration results.

\subsection{Weighted Intrinsic Volume Random Variables}

It is convenient to construct random variables that reflect
the shape of the sequences of weighted intrinsic volumes (Definition~\ref{def:wintvol}).
This approach gives us access to the language and methods
of probability theory.

\begin{definition}[Weighted Intrinsic Volume Random Variables] \label{def:wintvol-rv}
Let $\set{K} \subset \R^n$ be a nonempty convex body.
The \term{rotation volume random variable} $\rotI_{\set{K}}$ 
and the \term{rigid motion volume random variable} $\rmI_{\set{K}}$
take values in $\{0,1,2,\dots,n\}$ according to the distributions \begin{align} \label{eqn:wvol-rv-def} \Prob{ \rotI_{\set{K}} = n - i } &= \frac{\rotv_i(\set{K})}{\rotwills(\set{K})}
&\text{and}&&
\Prob{ \rmI_{\set{K}} = n - i } &= \frac{\rmv_i(\set{K})}{\rmwills(\set{K})}.
\end{align}
The \term{intrinsic volume random variable} $I_{\set{K}}$ is defined analogously.
Note that each random variable reverses the indexing of the corresponding sequence.
\end{definition}

Figure~\ref{fig:intvoldist} illustrates the distribution of the weighted
intrinsic volume random variables for several convex bodies.  The
picture indicates that each of the random variables is concentrated.
In each case, the point of concentration is the expectation
of the random variable, which merits its own terminology.

\begin{definition}[Central Volumes] \label{def:central-vols}
Let $\set{K} \subset \R^n$ be a nonempty convex body.
The \term{central rotation volume} $\rotdelta(\set{K})$
is the expectation of the rotation volume random variable $\rotI_{\set{K}}$,
while the \term{central rigid motion volume} $\rmdelta(\set{K})$
is the expectation of the rigid motion random variable $\rmI_{\set{K}}$:
\begin{align*} \label{eqn:central-volumes}
\rotdelta(\set{K}) &:= \Expect \rotI_{\set{K}} = \frac{1}{\rotwills(\set{K})} \sum_{i=0}^{n} (n-i) \, \rotv_i(\set{K})
   	= \frac{\diff{}}{\diff{t}}\Big\vert_{t=1} \log \mathring{\chi}_{\set{K}}(t); \\
\rmdelta(\set{K}) &:= \Expect \rmI_{\set{K}} = \frac{1}{\rmwills(\set{K})} \sum_{i=0}^{n} (n-i) \, \rmv_i(\set{K})
	= \frac{\diff{}}{\diff{t}}\Big\vert_{t=1} \log \bar{\chi}_{\set{K}}(t).
\end{align*}
The central intrinsic volume $\delta(\set{K})$ is defined analogously.
The characteristic polynomials $\mathring{\chi}$ and $\bar{\chi}$ are
given in~\eqref{eqn:rotvol-charpoly} and~\eqref{eqn:rmvol-charpoly}.
\end{definition}

Each of the central volumes lies in the interval $[0, n]$.
If we rescale a fixed body $\set{K}$, the central rotation volume $\rotdelta(\lambda \set{K})$
increases monotonically from $0$ to $n$ as the scale parameter $\lambda$ increases from $0$ to $\infty$.
Oppositely, the central rigid motion volume $\rmdelta(\lambda \set{K})$
decreases monotonically from $n$ to $0$. This feature may be confusing, but it reflects a duality between the two notions of volume.

\begin{figure}[t]
\begin{subfigure}[b]{0.47\textwidth}
\includegraphics[width=\textwidth]{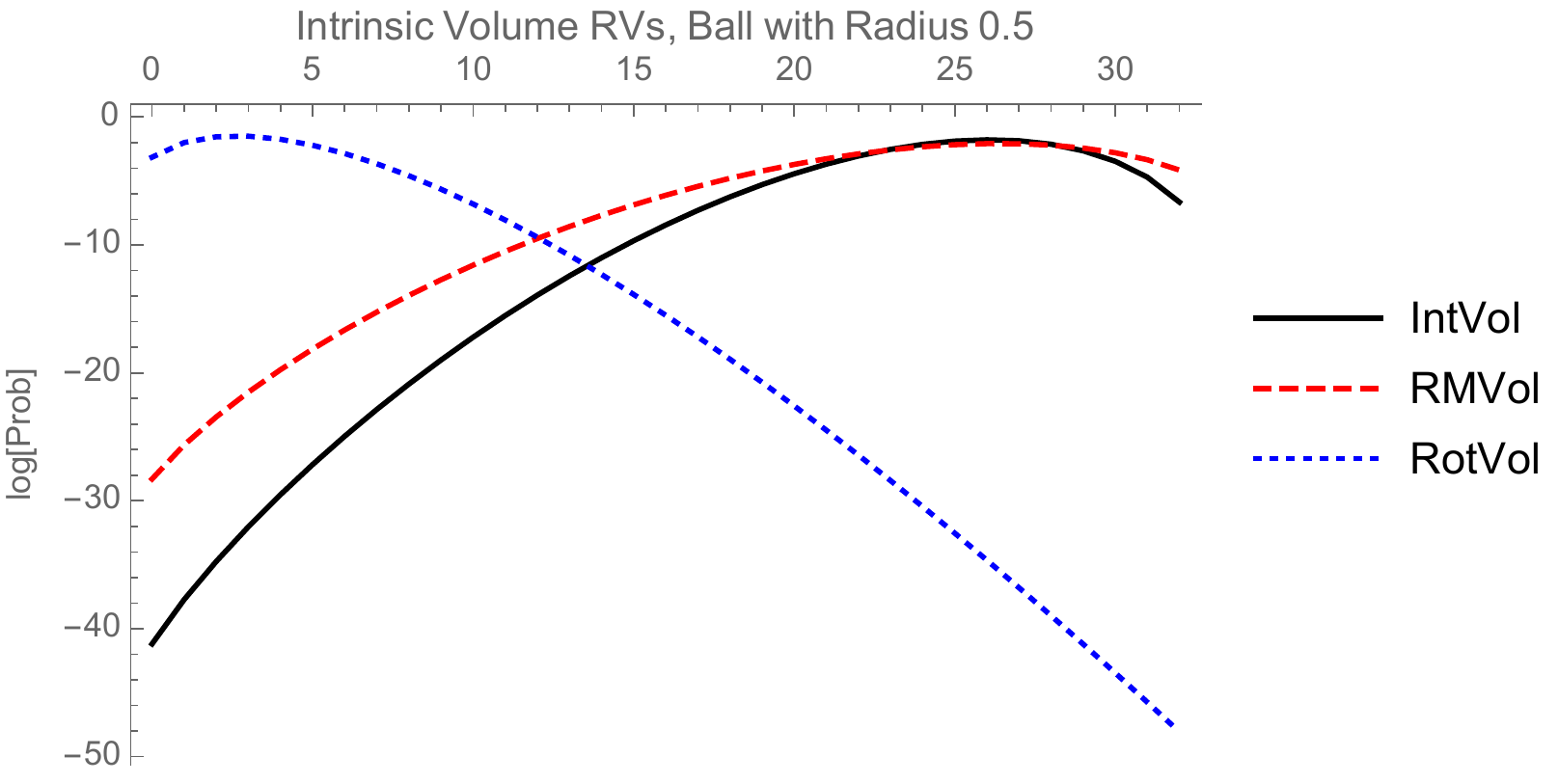}
\end{subfigure} \hspace{1pc}
\begin{subfigure}[b]{0.47\textwidth}
\includegraphics[width=\textwidth]{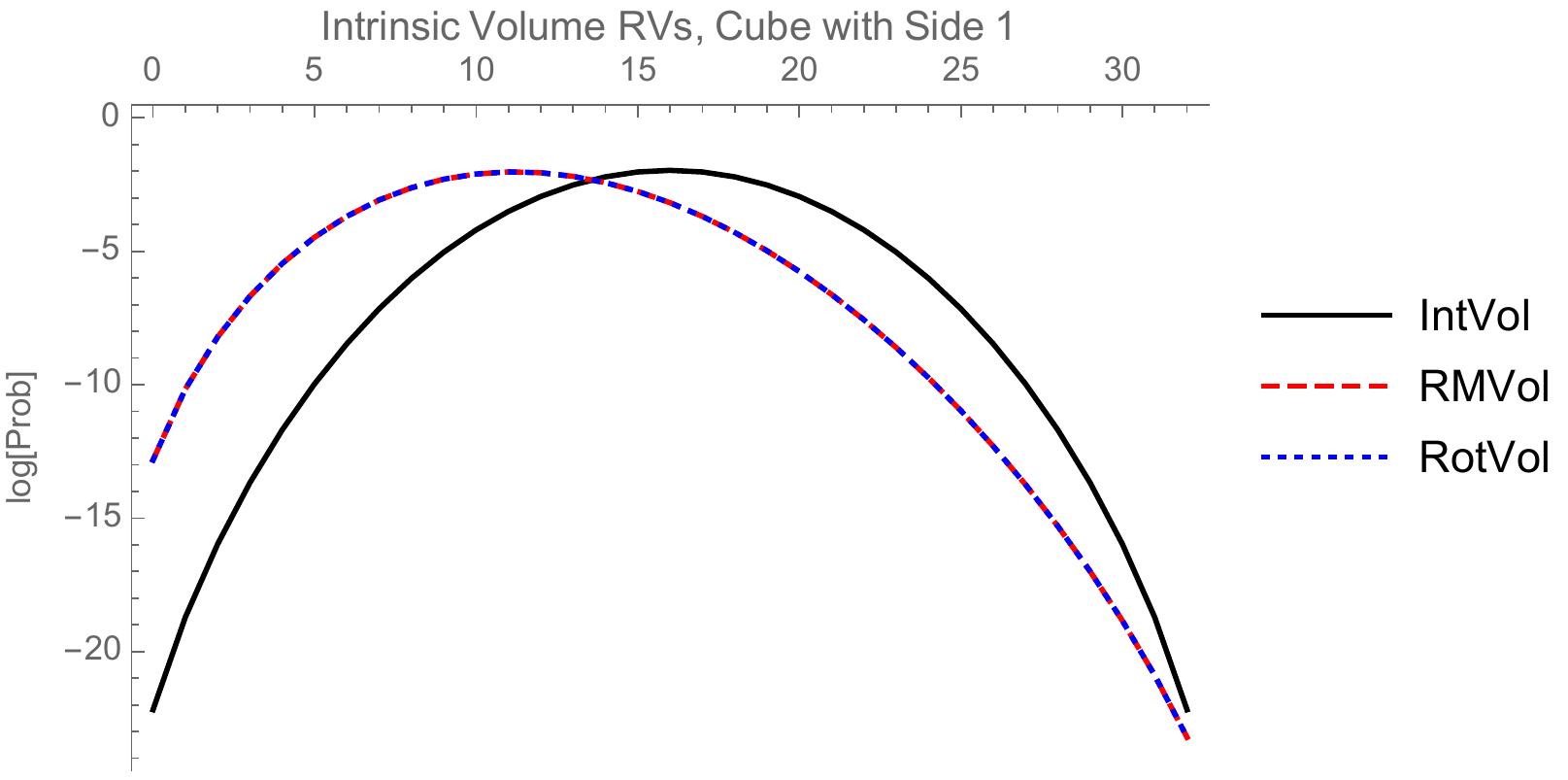}
\end{subfigure} \\
\begin{subfigure}[b]{0.47\textwidth}
\includegraphics[width=\textwidth]{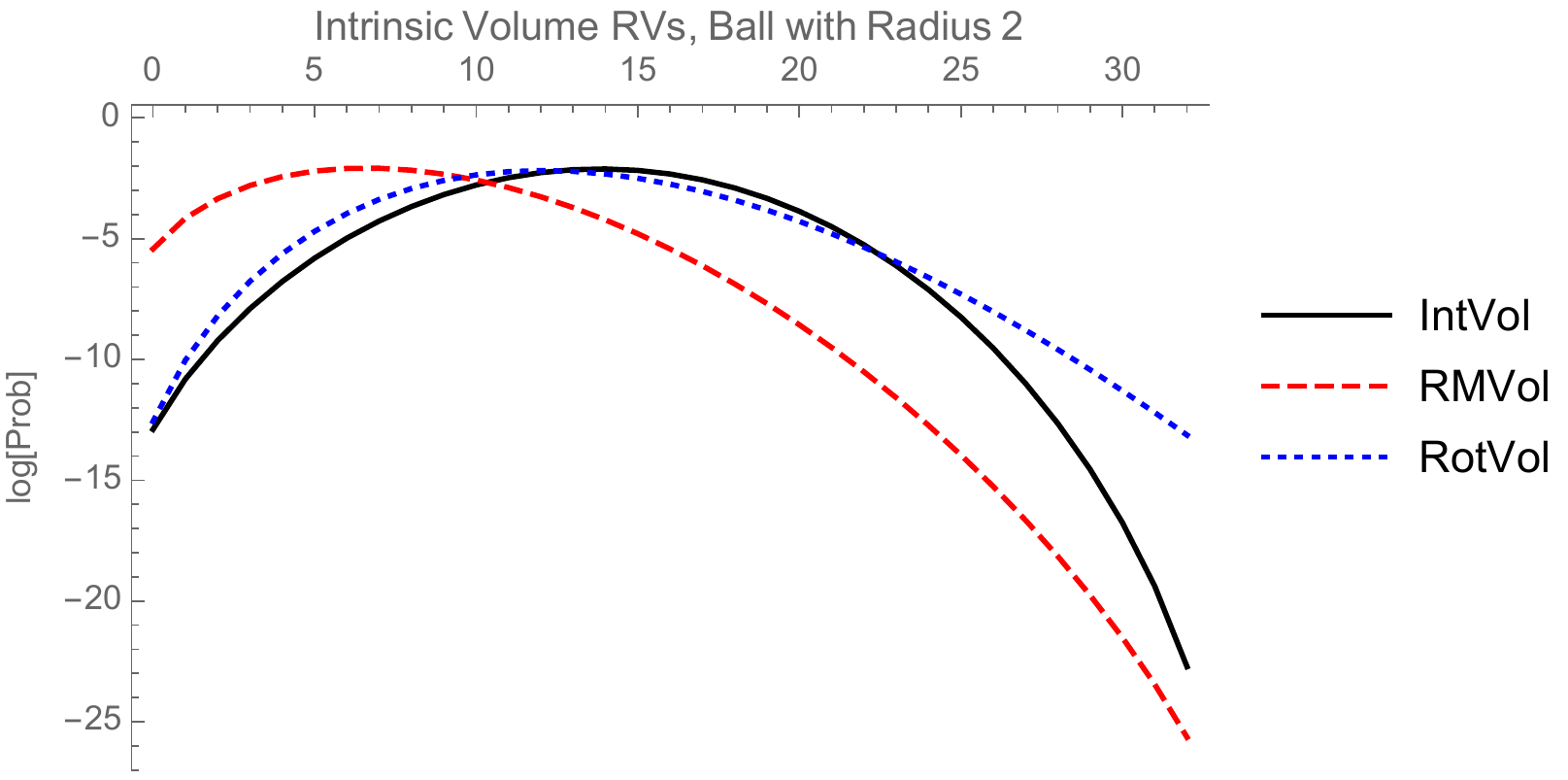}
\end{subfigure}  \hspace{1pc}
\begin{subfigure}[b]{0.47\textwidth}
\includegraphics[width=\textwidth]{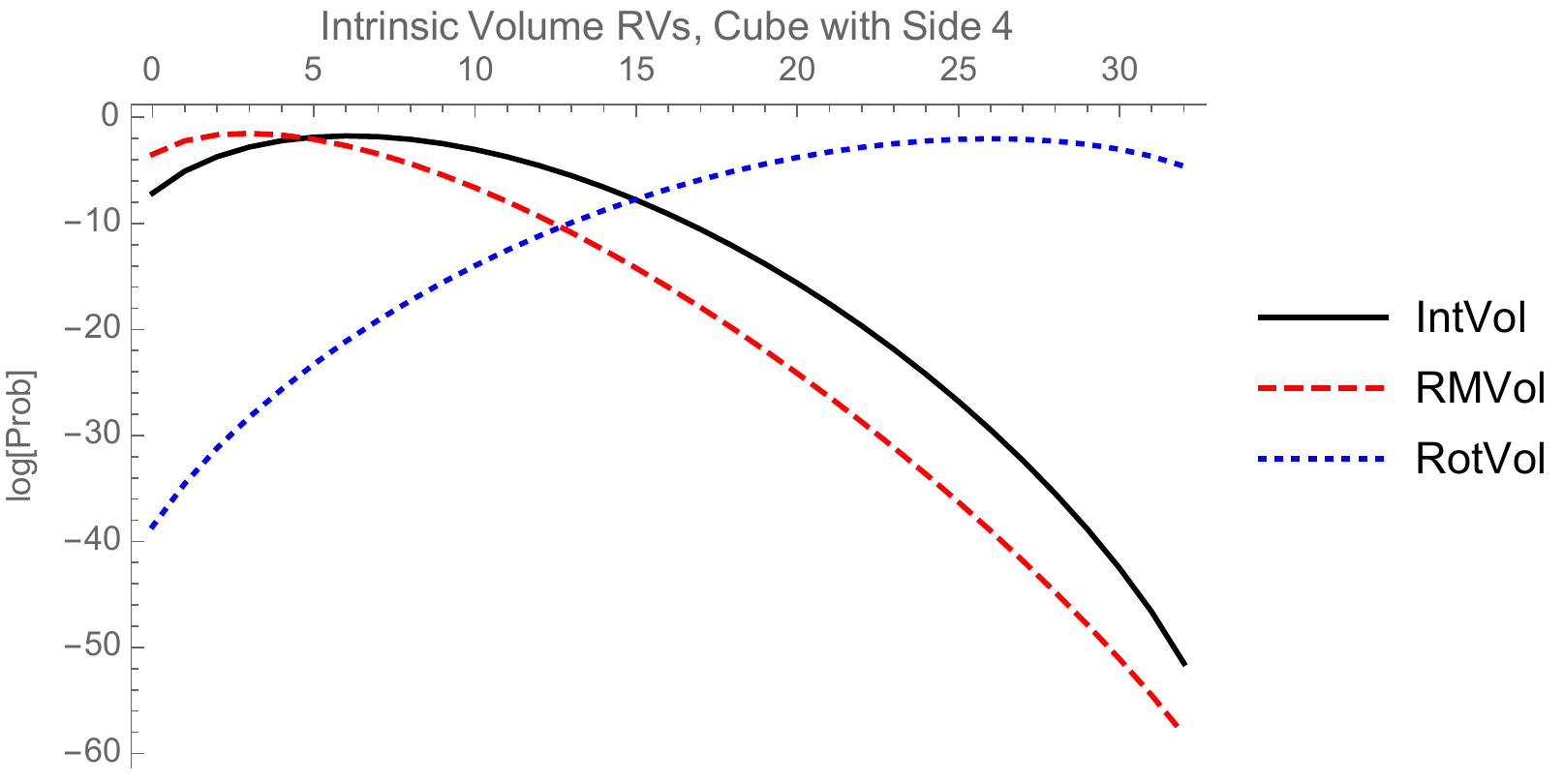}
\end{subfigure}
\caption{\textbf{Weighted Intrinsic Volume Random Variables.}  The distribution
of the weighted intrinsic volume random variables (Definition~\ref{def:wintvol-rv})
for scaled balls and scaled cubes in $\R^{32}$,
calculated using Examples~\ref{ex:ball}, ~\ref{ex:ball-rm}, and~\ref{ex:cube}.
The horizontal axis is the value of the random variable;
the vertical axis is the {natural logarithm} of the probability that it takes that value.
}
\label{fig:intvoldist}
\end{figure}

\begin{example}[Scaled Cube: Central Rotation Volume]\label{ex:scaled-cube-delta}
From the discussion in Example~\ref{ex:cube}, we can derive expressions for the central rotation volumes of the scaled cube:
\begin{equation*}
  \rotdelta(\lambda \set{Q}^n)
  	= \frac{\diff{}}{\diff{t}}\Big\vert_{t=1} \log \mathring{\chi}_{\lambda \set{Q}^n}(t)
= \lambda n \cdot \frac{\int_0^\infty (\lambda+s)^{n-1} \,\econst^{-\pi s^2} \idiff{s}}{\int_0^\infty (\lambda+s)^{n} \,\econst^{-\pi s^2} \idiff{s}}.
\end{equation*}
We have used the expression~\eqref{eqn:cube-rotwills} for the total rotation volume.
This formula can be evaluated asymptotically in an appropriate parameter regime.
For each $\zeta > 0$,
\begin{equation}\label{eqn:central-scaled-cube}
  \frac{\rotdelta\big(\zeta \sqrt{2n/\pi} \, \set{Q}^n \big)}{n} \longrightarrow \frac{2}{1+\sqrt{1+\zeta^{-2}}}
  \quad\text{as $n\to \infty$.}
\end{equation}
The statement~\eqref{eqn:central-scaled-cube} follows from a routine application of Laplace's method.
There is a related expression for the asymptotics of the central rigid motion volume of a scaled cube.
\end{example}

\begin{example}[Scaled Ball: Central Rigid Motion Volume]\label{ex:scaled-ball-delta}
From the discussion in Example~\ref{ex:ball-rm}, we can derive expressions for the
central rigid volume volumes of the scaled ball.  Indeed, \begin{equation*}
  \rmdelta(\lambda \ball{n})
  	= \frac{\diff{}}{\diff{t}}\Big\vert_{t=1} \log \bar{\chi}_{\lambda \ball{n}}(t)
= \frac{\lambda^n + (n-1)\lambda  \int_0^{\pi/2} ( \lambda \sin\theta + \cos\theta )^{n-2} \sin\theta \idiff{\theta}}
	{n^{-1} \lambda^n + \int_0^{\pi/2} ( \lambda \sin\theta + \cos\theta )^{n-1} \idiff{\theta}}.
\end{equation*}
We have used the expression~\eqref{eqn:rmvol-ball-integral}.
For each $\lambda>0$, \begin{equation}\label{eqn:central-scaled-ball}
   \frac{\rmdelta(\lambda \ball{n} )}{n} \longrightarrow \frac{1}{1 + \lambda^2}
  \quad\text{as $n\to \infty$.}
\end{equation}
The statement~\eqref{eqn:central-scaled-ball} follows after a careful application of Laplace's method.
\end{example}

\subsection{Weighted Intrinsic Volumes Concentrate}
\label{sec:wvol-conc-intro}

The concentration behavior visible in Figure~\ref{fig:intvoldist}
is a generic property of every convex body.  The following theorems
quantify the rate at which the distribution of the weighted intrinsic
volume random variable decays away from the central volume.

\begin{bigthm}[Rotation Volumes: Concentration] \label{thm:rotvol-intro}
Consider a nonempty convex body $\set{K} \subset \R^n$ with rotation volume random variable $\rotI_{\set{K}}$
and central rotation volume $\rotdelta(\set{K})$.
For all $t \geq 0$,
$$
\Prob{ \abs{ \phantom{\big|\!} \smash{\rotI_{\set{K}} - \rotdelta(\set{K}) }} \geq t }
	\leq 2 \exp\left( \frac{-t^2/2}{\rotsigma^2(\set{K}) + t/3} \right) \quad\text{where}\quad
	\rotsigma^2(\set{K}) := \rotdelta(\set{K}).
$$ 
\end{bigthm}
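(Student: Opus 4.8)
The plan is to bound the moment generating function of $\rotI_{\set{K}}$ by a Poisson-type estimate and then extract the Bernstein inequality by the standard Chernoff argument. Using Definition~\ref{def:wintvol-rv} and the characteristic polynomial~\eqref{eqn:rotvol-charpoly}, the generating function has a closed form,
$$
\Expect \exp(\theta\,\rotI_{\set{K}}) = \frac{1}{\rotwills(\set{K})}\sum_{i=0}^{n}\econst^{\theta(n-i)}\,\rotv_i(\set{K}) = \frac{\mathring{\chi}_{\set{K}}(\econst^{\theta})}{\mathring{\chi}_{\set{K}}(1)} = \frac{\rotwills(\econst^{\theta}\set{K})}{\rotwills(\set{K})}.
$$
Setting $g(\theta) := \log\rotwills(\econst^{\theta}\set{K})$, this says that the cumulant generating function of $\rotI_{\set{K}}$ equals $g(\theta)-g(0)$; applying the same identity to the dilate $\econst^{\theta}\set{K}$ gives $g'(\theta) = \rotdelta(\econst^{\theta}\set{K}) \ge 0$ and $g''(\theta) = \Var(\rotI_{\econst^{\theta}\set{K}})$. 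So the whole problem reduces to controlling the variance of a rotation volume random variable by its mean.

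The structural heart of the proof is therefore the inequality
$$
\Var(\rotI_{\set{L}}) \le \Expect \rotI_{\set{L}} = \rotdelta(\set{L}) \qquad\text{for every nonempty convex body }\set{L}\subset\R^{n}.
$$
I would prove this by showing that the distribution of $\rotI_{\set{L}}$ is \emph{ultra-log-concave}. From~\eqref{eqn:rotvol-def-intro} and~\eqref{eqn:ball-sphere}, a short computation with the Gamma function (Legendre's duplication formula) gives
$$
\Prob{\rotI_{\set{L}} = j} \ \propto\ \frac{(2\pi)^{j}}{j!}\cdot\binom{n}{j}^{-1}\kappa_{n-j}\,\intvol_{j}(\set{L}),
$$
and the sequence $j\mapsto \binom{n}{j}^{-1}\kappa_{n-j}\,\intvol_{j}(\set{L})$ is log-concave --- this is precisely the Alexandrov--Fenchel inequality for the quermassintegrals of $\set{L}$. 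Multiplying a log-concave sequence by the Poisson weights $(2\pi)^{j}/j!$ preserves the relation $p_{j}^{2} \ge \tfrac{j+1}{j}\,p_{j-1}p_{j+1}$; equivalently, $(j!\,p_{j})_{j}$ is log-concave, which is ultra-log-concavity. Finally, ultra-log-concavity forces $\Var\le\Expect$: the relation $p_{j}^{2}\ge\tfrac{j+1}{j}p_{j-1}p_{j+1}$ says exactly that $\rotI_{\set{L}}+1$ dominates the size-biased variable $\widehat{\rotI_{\set{L}}}$ in the likelihood-ratio order, hence in the usual stochastic order, so $\Expect\widehat{\rotI_{\set{L}}} \le \Expect\rotI_{\set{L}} + 1$; combined with the size-bias identity $\Var(X) = \Expect X\,(\Expect\widehat{X} - \Expect X)$ this yields the claim. (Lower-dimensional bodies, where the support of $\rotI_{\set{L}}$ is a proper subinterval of $\{0,\dots,n\}$, and the point, where $\rotI_{\set{L}} \equiv 0$, are immediate.)

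Granting the variance bound, the rest is routine. The differential inequality $g'' \le g'$ together with $g' \ge 0$ integrates --- Gr\"onwall: $g'(\theta)\econst^{-\theta}$ is nonincreasing --- to $g(\theta) - g(0) \le \rotdelta(\set{K})\,(\econst^{\theta}-1)$ for all $\theta\in\R$, that is,
$$
\log\Expect\exp\!\big(\theta\,(\rotI_{\set{K}}-\rotdelta(\set{K}))\big) \ \le\ \rotdelta(\set{K})\,(\econst^{\theta}-1-\theta),
$$
the centered Poisson cumulant bound with rate $\rotsigma^{2}(\set{K}) = \rotdelta(\set{K})$. Optimizing the Chernoff bound over $\theta>0$ gives Bennett's inequality for the upper tail and over $\theta<0$ the sub-Gaussian bound $\exp(-t^{2}/(2\rotsigma^{2}))$ for the lower tail; the elementary estimate $(1+u)\log(1+u)-u \ge \tfrac{u^{2}/2}{1+u/3}$ for $u\ge 0$ converts Bennett's bound into the stated Bernstein form, and a union bound supplies the factor $2$.

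The main obstacle is the structural step: recognizing that the rotation-volume normalization of Definition~\ref{def:wintvol} is exactly the one for which $\Prob{\rotI_{\set{K}}=\cdot}$ becomes ultra-log-concave directly from Alexandrov--Fenchel --- the unweighted intrinsic volume sequence is \emph{not} ultra-log-concave in general --- and doing the Gamma-function bookkeeping carefully so that the Poisson weights $(2\pi)^{j}/j!$ emerge cleanly. Everything downstream --- variance control $\Rightarrow$ Poisson cumulant bound $\Rightarrow$ Bernstein tail --- is a general mechanism that I would isolate as a stand-alone lemma, since the same scheme drives the companion concentration bound for the rigid motion volumes.
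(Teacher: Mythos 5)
Your proof is correct, and it reaches the key differential inequality $\xi_{\rotI_{\set{K}}}'' \leq \xi_{\rotI_{\set{K}}}'$ by a genuinely different route than the paper. Both arguments share the outer shell: reduce the Bernstein bound to the Poisson cgf estimate $\xi_{Y}(\theta) \le \rotdelta(\set{K})(\econst^{\theta}-\theta-1)$ via Gr\"onwall/Petrovitch and Chernoff, exactly as in the paper's Sections 4--5. The difference is in how the thermal variance is controlled. The paper passes from the discrete random variable to a continuous one: the generalized Steiner formula rewrites the mgf as an integral of $\econst^{-2\pi\econst^{\theta}\dist_{\set{K}}}$ over $\R^n$, and the Nguyen--Wang ``variance of information'' inequality for log-concave measures ($\Var[J(\vct{z})]\le n$) delivers $\xi''\le\xi'$. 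You instead observe that exponential tilting of $\rotI_{\set{K}}$ is the same as dilating the body ($\xi'(\theta)=\rotdelta(\econst^{\theta}\set{K})$, $\xi''(\theta)=\Var(\rotI_{\econst^{\theta}\set{K}})$), so it suffices to prove $\Var\le\Expect$ for every body; you get this from ultra-log-concavity of the sequence $j\mapsto\Prob{\rotI_{\set{K}}=j}\propto\intvol_j(\set{K})/\omega_{n-j+1}$, which your Gamma-function computation correctly reduces (via Legendre duplication) to the Alexandrov--Fenchel log-concavity of the quermassintegrals, and the size-bias/likelihood-ratio argument for ULC sequences is sound. This is essentially the alternative the authors flag in the remark following Proposition 3.7 (crediting Aravinda--Marsiglietti--Melbourne), but your ``ULC at every temperature'' formulation actually recovers the full strength of Theorem 5.1 for the rotation volumes, not a weakened form. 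What the paper's analytic route buys that yours does not is uniformity of method: for the rigid motion volumes and the intrinsic volumes the relevant continuous measures are concave rather than log-concave, the variance proxy is $\rmdelta\wedge((n+1)-\rmdelta)$ rather than the mean, and a plain ULC argument would not see that refinement; your route is more elementary and self-contained modulo Alexandrov--Fenchel, which is a much deeper input than the Nguyen--Wang inequality.
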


\noindent
Theorem~\ref{thm:rotvol-intro} is a consequence of Theorem~\ref{thm:rotvol-conc}.

According to Theorem~\ref{thm:rotvol-intro}, the rotation volume random variable
$\rotI_{\set{K}}$ exhibits Bernstein-type concentration around the central rotation volume $\rotdelta(\set{K})$.
Initially, for small $t$, the probability decays at least as fast as a Gaussian random variable with variance
$\rotdelta(\set{K})$.  In other words, the bulk of the rotation volume distribution is concentrated on about $\rotdelta(\set{K})^{1/2}$ indices near $\rotdelta(\set{K})$.
When $t \approx \rotdelta(\set{K})$, the decay shifts from Gaussian
to exponential with variance one.  In fact, the proof of the result demonstrates
that stronger Poisson-type decay occurs.

A parallel result, with a similar interpretation, holds for the rigid motion volumes.

\begin{bigthm}[Rigid Motion Volumes: Concentration] \label{thm:rmvol-intro}
Consider a nonempty convex body $\set{K} \subset \R^n$ with rigid motion volume random variable $\rmI_{\set{K}}$
and central rigid motion volume $\rmdelta(\set{K})$.
For all $t \geq 0$,
$$
\Prob{ \abs{ \rmI_{\set{K}} - \rmdelta(\set{K}) } \geq t }
	\leq 2 \exp\left( \frac{-t^2/4}{\rmsigma^2(\set{K}) + t/3} \right)
	\quad\text{where}\quad
	\rmsigma^2(\set{K}) := \rmdelta(\set{K}) \wedge \big((n + 1) - \rmdelta(\set{K})\big).
$$ 
\end{bigthm}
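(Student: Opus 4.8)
The plan is to prove the inequality by Bernstein's method, carried out on the exponential moments of $\rmI_{\set{K}}$. By the characteristic polynomial~\eqref{eqn:rmvol-charpoly} we have $\Expect\econst^{\theta\rmI_{\set{K}}}=\bar{\chi}_{\set{K}}(\econst^{\theta})/\bar{\chi}_{\set{K}}(1)$, so it suffices to establish a one-sided Bennett-type bound of the form $\log\Expect\econst^{\theta(\rmI_{\set{K}}-\rmdelta(\set{K}))}\le\theta^{2}\rmsigma^{2}(\set{K})/(1-2\theta/3)$ for $0\le\theta<3/2$, together with its reflection for $\theta\le 0$. Markov's inequality, optimization over $\theta$, and a union bound over the two tails then give exactly $2\exp\!\big(\tfrac{-t^{2}/4}{\rmsigma^{2}(\set{K})+t/3}\big)$; the factor $\tfrac14$ (in place of the $\tfrac12$ of Theorem~\ref{thm:rotvol-intro}) is the price of using one variance proxy to cover both tails.

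The next step is to exhibit $\rmI_{\set{K}}$ as a Gaussian mixture of intrinsic volume random variables. The identity $1/\omega_{i+1}=\int_{0}^{\infty}s^{i}\econst^{-\pi s^{2}}\idiff{s}$ behind~\eqref{eqn:wills-as-moments} gives $\rmv_{i}(\set{K})=\omega_{n+1}\int_{0}^{\infty}\intvol_{i}(s\set{K})\,\econst^{-\pi s^{2}}\idiff{s}$, hence $\rmI_{\set{K}}\stackrel{d}{=}I_{S\set{K}}$, where the random scale $S$ has density proportional to $\wills(s\set{K})\econst^{-\pi s^{2}}$ on $(0,\infty)$. Writing $\rmI_{\set{K}}-\rmdelta(\set{K})=\big(I_{S\set{K}}-\delta(S\set{K})\big)+\big(\delta(S\set{K})-\rmdelta(\set{K})\big)$ reduces the cumulant bound to two contributions: the conditional fluctuation $I_{S\set{K}}-\delta(S\set{K})$, governed by the concentration of intrinsic volumes (Theorem~\ref{thm:intvol-conc}) for each fixed value of $S$; and the fluctuation of the random centre $\delta(S\set{K})$ about its mean $\rmdelta(\set{K})$. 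The separation of these two pieces is what forces the extra factor of $2$ into the variance proxy.

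The heart of the matter is the centre term, and it is also where the $\wedge$ originates. Since $s\mapsto I_{s\set{K}}$ is an exponential family in $\log s$, one has $\tfrac{\diff}{\diff(\log s)}\delta(s\set{K})=-\Var(I_{s\set{K}})$, and the intrinsic volume concentration bounds $\Var(I_{s\set{K}})$ in terms of $\delta(s\set{K})\wedge(n-\delta(s\set{K}))$. Thus $\delta(s\set{K})$ is Lipschitz in $\log s$ with a constant that is self-limiting, and combined with the fast-decaying Gaussian factor $\econst^{-\pi s^{2}}$ in the law of $S$ this confines $\delta(S\set{K})$ to a window of size $O(\rmsigma(\set{K}))$ around $\rmdelta(\set{K})$, yielding an exponential-moment bound for $\delta(S\set{K})-\rmdelta(\set{K})$ with variance proxy at most $\rmdelta(\set{K})\wedge\big((n+1)-\rmdelta(\set{K})\big)$. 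The two alternatives in $\rmsigma^{2}$ match the two regimes: $\rmdelta$ small (large bodies, where $\rmI_{\set{K}}$ is Poisson-like near $0$) and $\rmdelta$ near $n$ (small bodies, where $(n+1)-\rmI_{\set{K}}$ is Poisson-like near $0$); the shift to ``$n+1$'' tracks the $\omega_{n+1}$ in Definition~\ref{def:wintvol}, i.e.\ the hidden $(n+1)$-dimensional scale already visible in $\rmv_{i}(\lambda\set{Q}^{n})=\lambda^{n}\intvol_{n-i}(\lambda^{-1}\ball{n+1})$ of Example~\ref{ex:cube}.

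The first two steps are routine. The main obstacle is making the centre estimate quantitative with the stated constants: one must show that the Gaussian mixing inflates the variance by at most a bounded factor, and this is genuinely harder than in the rotation case because the sequence $\rmv_{i}(\set{K})$ — unlike the intrinsic volumes, and unlike the rotation volumes — need not be real-rooted (a P\'olya frequency sequence), so $\rmI_{\set{K}}$ is not literally a sum of independent indicators. Overcoming this requires exploiting the precise $1/\omega_{i+1}$ weighting, presumably through a differential inequality for $\bar{\chi}_{\set{K}}$ or a bootstrap on its cumulant generating function, rather than any soft comparison with Theorem~\ref{thm:rotvol-intro}.
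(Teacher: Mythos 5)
Your mixture representation is correct and is a genuinely different starting point from the paper: since $1/\omega_{i+1}=\int_0^\infty s^i\econst^{-\pi s^2}\idiff{s}$, one does have $\rmv_i(\set{K})=\omega_{n+1}\int_0^\infty\intvol_i(s\set{K})\,\econst^{-\pi s^2}\idiff{s}$ and hence $\rmI_{\set{K}}\stackrel{d}{=}I_{S\set{K}}$ with $S$ distributed as you describe. The paper does not randomize the dilation at all. Instead it applies the generalized Steiner formula directly to the tilted sequence, writing $\Expect\econst^{\theta\rmI_{\set{K}}}$ as an integral over $\R^n$ against the concave measure with density proportional to $\big[1+\econst^{-2\theta}\dist_{\set{K}}^2(\vct{x})\big]^{-(n+1)/2}$; Nguyen's dimensional variance inequality for such measures then yields the differential inequality $\xi''_{\rmI_{\set{K}}}(\theta)\leq\frac{2}{n+1}\,\xi'_{\rmI_{\set{K}}}(\theta)\big[(n+1)-\xi'_{\rmI_{\set{K}}}(\theta)\big]$, which integrates (via Lemma~\ref{lem:psi'topsi}) to asymmetric Poisson-type cgf bounds. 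The $\wedge$ in $\rmsigma^2(\set{K})$ comes from the asymmetry of those bounds (upper tail governed by $(n+1)-\rmdelta$, lower tail by $\rmdelta$), and the $-t^2/4$ comes from $\bar{v}(\set{K})=2\rmdelta\rmdelta_\circ/(n+1)\leq 2\rmsigma^2(\set{K})$ and $\beta_\circ<2$ — not, as you suggest, from covering two tails with one proxy.

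The genuine gap is that your argument stops exactly where the work begins. You decompose $\rmI_{\set{K}}-\rmdelta(\set{K})$ into a conditional fluctuation and a centre fluctuation, but (i) you never prove an exponential-moment bound for $\delta(S\set{K})-\rmdelta(\set{K})$ — the identity $\frac{\diff}{\diff(\log s)}\delta(s\set{K})=-\Var(I_{s\set{K}})$ and the vague appeal to the Gaussian factor do not produce a cgf bound with explicit constants, and your own closing paragraph concedes that this step is missing and only "presumably" obtainable; and (ii) even granting such a bound, the two pieces are dependent: conditioning on $S$, the cgf bound for $I_{S\set{K}}-\delta(S\set{K})$ from Theorem~\ref{thm:intvol-conc} has a variance proxy $\sigma^2(S\set{K})$ that is itself a function of $S$, so the outer expectation is a joint exponential moment that cannot simply be split into a sum of two cgf bounds. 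Until you supply a quantitative lemma controlling $\Expect_S\big[\econst^{\theta(\delta(S\set{K})-\rmdelta(\set{K}))+g(\theta,S)}\big]$ with the stated constants, the proposal is a plausible programme rather than a proof. (A minor further point: your real-rootedness remark is a red herring — the paper never uses real-rootedness for any of the three sequences; intrinsic volumes are ultra-log-concave and the weighted sequences merely log-concave, and neither fact enters the concentration proof.)
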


\noindent
Theorem~\ref{thm:rmvol-intro} follows from Theorem~\ref{thm:rmvol-conc}.

\subsection{Metric Formulations}

Although it is difficult to calculate weighted intrinsic volumes,
the concentration theory demonstrates that the central volume is
already an adequate summary of the entire distribution.  Fortunately,
the central volume has an alternative expression that is more tractable.
In the following results, $\dist_{\set{K}}(\vct{x})$ reports the
Euclidean distance from the point $\vct{x}$ to the set $\set{K}$.

\begin{proposition}[Rotation Volumes: Metric Formulation] \label{prop:rotvol-metric}
The total rotation volume and the central rotation volume
of a nonempty convex body $\set{K} \subset \R^n$
may be calculated as
\begin{align*}
\rotwills(\set{K}) &= \frac{\omega_{n+1}}{2} \int_{\R^n} \econst^{-2\pi \dist_{\set{K}}(\vct{x})} \idiff{\vct{x}}; \\
n - \rotdelta(\set{K}) &= \frac{\omega_{n+1}}{2 \rotwills(\set{K})} \int_{\R^n} 2\pi\dist_{\set{K}}(\vct{x})\, \econst^{-2\pi\dist_{\set{K}}(\vct{x})} \idiff{\vct{x}}.
\end{align*}
\end{proposition}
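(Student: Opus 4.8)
The plan is to collapse both identities onto a single scalar integral evaluation, exploiting Steiner's formula~\eqref{eqn:steiner-intro} to linearize the distance function and the integral representation~\eqref{eqn:wills-as-moments} of the total rotation volume. The starting point is the classical integral representation of the Wills functional,
\[
  \wills(\set{L}) = \int_{\R^n} \econst^{-\pi \dist_{\set{L}}(\vct{x})^2} \idiff{\vct{x}}
  \qquad\text{for every nonempty convex body } \set{L} \subset \R^n,
\]
which I would recover from the layer--cake formula: the sublevel set $\{\vct{x} : \dist_{\set{L}}(\vct{x}) \le s\}$ is exactly $\set{L} + s\ball{n}$, so by~\eqref{eqn:steiner-intro} it has volume $\sum_{i=0}^n s^{n-i}\kappa_{n-i}\intvol_i(\set{L})$; integrating the increments of this polynomial against $\econst^{-\pi s^2}$ and evaluating each Euler integral makes every dimensional constant collapse to $1$ by way of $\Gamma(1+z) = z\Gamma(z)$ and~\eqref{eqn:ball-sphere}. (This is essentially Corollary~\ref{cor:intvol-metric}.)

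For the first identity I would substitute this representation into~\eqref{eqn:wills-as-moments}. Using $\dist_{s^{-1}\set{K}}(\vct{y}) = s^{-1}\dist_{\set{K}}(s\vct{y})$ and rescaling $\vct{x} = s\vct{y}$ gives $\wills(s^{-1}\set{K}) = s^{-n}\int_{\R^n}\econst^{-\pi\dist_{\set{K}}(\vct{x})^2/s^2}\idiff{\vct{x}}$, so after an application of Tonelli's theorem
\[
  \rotwills(\set{K}) = \omega_{n+1}\int_{\R^n}\Bigl(\int_0^\infty \econst^{-\pi(s^2 + \dist_{\set{K}}(\vct{x})^2/s^2)}\idiff{s}\Bigr)\idiff{\vct{x}}.
\]
The inner integral is exactly the Cauchy--Schl\"omilch (Glasser) evaluation $\int_0^\infty \econst^{-\pi(s^2 + a^2/s^2)}\idiff{s} = \half\,\econst^{-2\pi a}$ for $a \ge 0$, which delivers the first formula at once.

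For the second identity I would differentiate. By~\eqref{eqn:rotvol-charpoly} and Definition~\ref{def:central-vols}, $\rotdelta(\set{K}) = \frac{\diff{}}{\diff{t}}\Big\vert_{t=1}\log\rotwills(t\set{K})$. Applying the formula just proved to $t\set{K}$ and rescaling yields $\rotwills(t\set{K}) = \half\,\omega_{n+1}\, t^n \int_{\R^n}\econst^{-2\pi t\,\dist_{\set{K}}(\vct{x})}\idiff{\vct{x}}$; taking logarithms and differentiating at $t=1$---which is legitimate by dominated convergence, since near $t=1$ the $t$-derivative of the integrand is controlled by a constant multiple of $\dist_{\set{K}}(\vct{x})\,\econst^{-\pi\dist_{\set{K}}(\vct{x})}$, an integrable function---expresses $\rotdelta(\set{K})$ as $n$ minus the ratio of $\int_{\R^n}2\pi\dist_{\set{K}}(\vct{x})\,\econst^{-2\pi\dist_{\set{K}}(\vct{x})}\idiff{\vct{x}}$ to $\int_{\R^n}\econst^{-2\pi\dist_{\set{K}}(\vct{x})}\idiff{\vct{x}}$. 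Substituting the first identity for the denominator rearranges this into the claimed formula for $n - \rotdelta(\set{K})$.

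The only genuinely analytic input is the Cauchy--Schl\"omilch integral $\int_0^\infty\econst^{-\pi(s^2 + a^2/s^2)}\idiff{s} = \half\,\econst^{-2\pi a}$; everything else is gamma-function bookkeeping and routine interchange-of-limit justifications (Tonelli for the double integral, dominated convergence for the $t$-derivative). If one prefers to bypass Glasser's identity, an alternative is to expand $\econst^{-2\pi\dist_{\set{K}}}$ and $2\pi\dist_{\set{K}}\,\econst^{-2\pi\dist_{\set{K}}}$ directly against the Steiner polynomial for the volume of $\set{K} + s\ball{n}$; the constants then reorganize by way of the Legendre duplication formula $m! = 2^m\,\pi^{-1/2}\,\Gamma(1+m/2)\,\Gamma((m+1)/2)$, which is precisely the statement $\kappa_m\, m!/(2\pi)^m = 2/\omega_{m+1}$ needed to pass from the $\intvol_i$'s back to the rotation volumes.
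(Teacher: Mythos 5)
Your proof is correct, but it takes a genuinely different route from the paper's. The paper derives both identities as immediate specializations of Proposition~\ref{prop:rotvol-distint}: it feeds $f(r) = h(2\pi \econst^{\theta} r)\,\econst^{-2\pi\econst^{\theta}r}$ into the generalized Steiner formula (Fact~\ref{fact:distance-integral}), recognizes gamma-distribution expectations after the Legendre-duplication bookkeeping, and then sets $\theta = 0$ with $h(s)=1$ and $h(s)=s$ (using $\Expect G_i = i$). You instead assemble the first identity from the Hadwiger--Wills representation $\wills(\set{L}) = \int_{\R^n}\econst^{-\pi\dist_{\set{L}}^2}$, the moment formula~\eqref{eqn:wills-as-moments}, Tonelli, and the Cauchy--Schl\"omilch evaluation $\int_0^\infty \econst^{-\pi(s^2+a^2/s^2)}\idiff{s} = \tfrac12\econst^{-2\pi a}$; and you get the second identity by logarithmic differentiation of $t\mapsto\rotwills(t\set{K})$ at $t=1$, which is exactly the characterization of $\rotdelta$ in Definition~\ref{def:central-vols}. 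All the individual steps check out: the scaling identities $\dist_{s^{-1}\set{K}}(\vct{y}) = s^{-1}\dist_{\set{K}}(s\vct{y})$ and $\rotwills(t\set{K}) = \mathring{\chi}_{\set{K}}(t)$ are correct, the Glasser integral with $\alpha=\pi$, $\beta=\pi a^2$ gives precisely $\tfrac12\econst^{-2\pi a}$, and your domination of the $t$-derivative by a multiple of $\dist_{\set{K}}\,\econst^{-\pi\dist_{\set{K}}}$ is adequate. What your approach buys is the avoidance of the explicit gamma-function cancellation (at the cost of one classical special integral) and a conceptually transparent reading of the two identities as a Laplace-type transform of the Wills functional plus its logarithmic derivative. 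What the paper's approach buys is generality: Proposition~\ref{prop:rotvol-distint} yields \emph{all} moments $\Expect[h(G_i)]$ at every inverse temperature $\theta$ simultaneously, which the authors need anyway for the thermal variance bounds in Section~\ref{sec:rotvol-conc}, so the metric formulation falls out for free.
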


\begin{proposition}[Rigid Motion Volumes: Metric Formulation] \label{prop:rmvol-metric}
The total rigid motion volume and the central rigid motion volume
of a nonempty convex body $\set{K} \subset \R^n$ may be calculated as
\begin{align*}
\rmwills(\set{K}) &= \int_{\R^n} \big[ 1 + \dist_{\set{K}}^2(\vct{x}) \big]^{-(n+1)/2} \idiff{\vct{x}}; \\
(n+1) - \rmdelta(\set{K}) &= \frac{n+1}{\rmwills(\set{K})} \int_{\R^n} \big[ 1 + \dist_{\set{K}}^2(\vct{x}) \big]^{-(n+3)/2} \idiff{\vct{x}}.
\end{align*}
\end{proposition}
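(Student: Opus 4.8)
The plan is to express the total rigid motion volume as an integral over $\R^n$ via the Wills functional and a Gaussian-type identity, then differentiate to extract the central volume. The starting point is the classical metric representation of the Wills functional: for a nonempty convex body $\set{K} \subset \R^n$ one has $\wills(\set{K}) = \int_{\R^n} \econst^{-\pi \dist_{\set{K}}^2(\vct{x})} \idiff{\vct{x}}$. (This should appear earlier as Corollary~\ref{cor:intvol-metric}, or follows by a Gaussian integration against the Steiner formula~\eqref{eqn:steiner-intro}.) Substituting $s \set{K}$ for $\set{K}$ gives $\wills(s\set{K}) = s^{-n}\int_{\R^n} \econst^{-\pi \dist_{\set{K}}^2(\vct{x}/s)}\,\diff\vct{x} \cdot s^n$... more cleanly, $\wills(s\set{K}) = \int_{\R^n} \econst^{-\pi s^2 \dist_{\set{K}}^2(\vct{x})}\,s^n\idiff{\vct{x}}$ after the change of variables $\vct{x}\mapsto s\vct{x}$, using $\dist_{s\set{K}}(s\vct{x}) = s\,\dist_{\set{K}}(\vct{x})$.

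First I would insert this into the integral representation $\rmwills(\set{K}) = \omega_{n+1}\int_0^\infty \wills(s\set{K})\,\econst^{-\pi s^2}\idiff{s}$ from~\eqref{eqn:wills-as-moments}, swap the order of integration (justified by nonnegativity and Tonelli), and evaluate the inner one-dimensional integral
$$
\omega_{n+1}\int_0^\infty s^n\,\econst^{-\pi s^2(1 + \dist_{\set{K}}^2(\vct{x}))}\idiff{s}
= \omega_{n+1}\cdot\frac{\Gamma\!\big(\tfrac{n+1}{2}\big)}{2\pi^{(n+1)/2}}\cdot\big[1 + \dist_{\set{K}}^2(\vct{x})\big]^{-(n+1)/2},
$$
by the substitution $u = s^2$ and the Euler integral for $\Gamma$. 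The constant $\omega_{n+1}\Gamma((n+1)/2)/(2\pi^{(n+1)/2}) = 1$ by~\eqref{eqn:ball-sphere}, which delivers the first displayed formula exactly.

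For the second formula I would use the characteristic polynomial representation $\bar{\chi}_{\set{K}}(t) = t^n\rmwills(t^{-1}\set{K})$ from~\eqref{eqn:rmvol-charpoly} together with Definition~\ref{def:central-vols}, which gives $n - \rmdelta(\set{K}) = \frac{\diff{}}{\diff{t}}\big|_{t=1}\log\rmwills(t^{-1}\set{K})$ after accounting for the $t^n$ prefactor; equivalently, writing $f(\lambda) := \rmwills(\lambda\set{K})$, one has $(n+1) - \rmdelta(\set{K}) = (n+1) - n + \lambda f'(\lambda)/f(\lambda)\big|_{\lambda=1}$, and the clean route is to note $(n+1) - \rmdelta(\set{K}) = -\frac{\diff}{\diff t}\big|_{t=1}\log\big(t^{-(n+1)}\bar\chi_{\set{K}}(t^{-1})\big)$ — I would pin down the exact bookkeeping by matching degrees. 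Then I differentiate under the integral sign in the formula just proved: with $g(\lambda) := \int_{\R^n}[1 + \lambda^2\dist_{\set{K}}^2(\vct{x})]^{-(n+1)/2}\diff\vct{x}$ (obtained from the first formula applied to $\lambda\set{K}$, using $\dist_{\lambda\set{K}}(\lambda\vct{x}) = \lambda\dist_{\set{K}}(\vct{x})$ and a change of variables), one computes $g'(1) = -(n+1)\int_{\R^n}\dist_{\set{K}}^2(\vct{x})\,[1 + \dist_{\set{K}}^2(\vct{x})]^{-(n+3)/2}\diff\vct{x}$, and the logarithmic-derivative identity rearranges into the claimed expression after writing $\dist_{\set{K}}^2 = (1 + \dist_{\set{K}}^2) - 1$.

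The main obstacle is purely bookkeeping: getting the relationship between $\rmdelta(\set{K})$, the logarithmic derivative of $\bar\chi_{\set{K}}$ at $t=1$, and the $\lambda$-derivative of $\lambda\mapsto\rmwills(\lambda\set{K})$ exactly right — in particular tracking whether the relevant quantity is $\rmdelta$, $n-\rmdelta$, or $(n+1)-\rmdelta$, and where the extra $+1$ comes from (it arises because the exponent $(n+1)/2$, not $n/2$, controls the scaling). Differentiation under the integral and the interchange of integration order are both routine to justify by dominated/monotone convergence since $\dist_{\set{K}}$ is Lipschitz and the integrands decay polynomially of order $> n$. Once the scaling identity for $\rmwills(\lambda\set{K})$ and the correct form of the logarithmic-derivative identity are in hand, the second formula drops out in a line.
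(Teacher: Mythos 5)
Your argument is correct, but it takes a genuinely different route from the paper. The paper's proof is a two-line specialization of Proposition~\ref{prop:rmvol-potent} (the distance-integral identity generating the exponential moments of the rigid motion volumes): set $\theta = 0$ and take $h(s)=1$, then $h(s)=(n+1)s$, using $\Expect[B_{n-i}]=(n-i)/(n+1)$. You instead derive the first identity by inserting Hadwiger's metric representation $\wills(s\set{K})=\int_{\R^n}\econst^{-\pi s^2\dist_{\set{K}}^2(\vct{x})}s^n\idiff{\vct{x}}$ into the moment formula~\eqref{eqn:wills-as-moments} and evaluating the resulting Gaussian $s$-integral, and the second by differentiating the scaling relation $\lambda\mapsto\rmwills(\lambda\set{K})$. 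Both routes ultimately rest on the generalized Steiner formula (Fact~\ref{fact:distance-integral}), but yours avoids the beta-distribution machinery entirely, at the cost of reproving only the zeroth and first moments rather than the full family of identities the paper needs later for the thermal variance. Your bookkeeping for the second identity does close: from Definition~\ref{def:central-vols} one gets $(n+1)-\rmdelta(\set{K}) = 1 + f'(1)/f(1)$ with $f(\lambda):=\rmwills(\lambda\set{K})=\lambda^n g(\lambda)$, so $(n+1)-\rmdelta(\set{K}) = (n+1)+g'(1)/g(1)$, and your value of $g'(1)$ together with $\dist^2=(1+\dist^2)-1$ gives exactly the claim. One caution: the tentative ``clean route'' identity $(n+1)-\rmdelta(\set{K}) = -\frac{\diff{}}{\diff{t}}\big|_{t=1}\log\big(t^{-(n+1)}\bar\chi_{\set{K}}(t^{-1})\big)$ is wrong (that expression evaluates to $(2n+1)-f'(1)/f(1)$, not $1+f'(1)/f(1)$); you flagged it as unverified, and it plays no role in the computation you actually carry out, but it should be deleted rather than pinned down.
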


\noindent
The proof of Proposition~\ref{prop:rotvol-metric} appears
in Section~\ref{sec:rotvol-distint}.
The proof of Proposition~\ref{prop:rmvol-metric}
is in Section~\ref{sec:rmvol-distint}.
An analogous result for intrinsic volumes
appears as Corollary~\ref{cor:intvol-metric}.

The central rotation volume is essentially given by the entropy of the log-concave probability measure
induced by the distance to the body.
The central rigid motion volume has a related structure, but it is expressed in terms of a concave measure.
Owing to the strikingly different form of these measures, the concentration results
require distinct technical ingredients.

\subsection{Proof Strategy}

Both Theorem~\ref{thm:rotvol-intro} and Theorem~\ref{thm:rmvol-intro}
follow from the same species of argument,
which we execute in Sections~\ref{sec:genfun}--\ref{sec:rmvol-conc}.
Let us give a summary.

To study the behavior of the weighted intrinsic volume random variable,
we use a refined version of the entropy method. The argument depends on the insight, from~\cite{LMNPT20:Concentration-Euclidean}, that Steiner's formula~\eqref{eqn:steiner-intro} allows us to pass
from the discrete random variable on $\{0, 1, 2, \dots, n\}$
to a continuous random variable on $\R^n$ with a concave measure.
To understand the fluctuations of the continuous distribution,
we apply modern variance bounds for concave measures,
in the spirit of the classic Borell--Brascamp--Lieb inequality~\cite{Bor75:Convex-Set,BL76:Extensions-Brunn-Minkowski}.
In each case, the details are somewhat different.

\subsection{Intrinsic Volumes Concentrate Better}

Using the same methodology,
we have also established a concentration inequality
for the intrinsic volume random variable that improves
significantly over our results in~\cite{LMNPT20:Concentration-Euclidean}.
This material is not directly relevant to our
phase transition project, so we have postponed the statement
and proof to Appendix~\ref{app:intvol}.

\subsection{Log-Concavity of Weighted Intrinsic Volumes}

As a counterpoint to our main results, we remark that the
weighted intrinsic volume sequences are log-concave.

\begin{proposition}[Weighted Intrinsic Volumes: Log-Concavity] \label{prop:lc}
Let $\set{K} \subset \R^n$ be a convex body.  For each index $i = 1, 2, 3, \dots, n - 1$,
\begin{align}
\rotv_i(\set{K})^2 &\geq \rotv_{i-1}(\set{K}) \cdot \rotv_{i+1}(\set{K}); \\
\rmv_i(\set{K})^2 &\geq \rmv_{i-1}(\set{K}) \cdot \rmv_{i+1}(\set{K}).
\end{align}
\end{proposition}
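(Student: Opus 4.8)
The plan is to reduce the log-concavity of the weighted sequences to the classical log-concavity of the intrinsic volume sequence itself, since the weights are multiplicative in a controlled way. Recall that the Alexandrov--Fenchel inequalities imply the unweighted intrinsic volumes are log-concave: for a convex body $\set{K} \subset \R^n$ and $1 \le i \le n-1$, one has $\intvol_i(\set{K})^2 \ge \intvol_{i-1}(\set{K})\,\intvol_{i+1}(\set{K})$. (This is standard; see e.g.\ \cite{Sch14:Convex-Bodies}, and it should already be available among the properties of intrinsic volumes collected in the appendices referenced as Appendix~\ref{sec:intvol-properties}.) The first step is therefore to invoke this fact.

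For the rigid motion volumes, Definition~\ref{def:wintvol} gives $\rmv_i(\set{K}) = (\omega_{n+1}/\omega_{i+1})\,\intvol_i(\set{K})$, so with $a_i := \omega_{n+1}/\omega_{i+1}$ we must check that $a_i^2 \ge a_{i-1} a_{i+1}$, i.e.\ that the sequence $i \mapsto 1/\omega_{i+1}$ is log-concave, equivalently that $i \mapsto \omega_{i+1}$ is log-convex. Using $\omega_{m} = 2\pi^{m/2}/\Gamma(m/2)$ from \eqref{eqn:ball-sphere}, this is precisely log-convexity of $m \mapsto 1/\Gamma(m/2)$ in the index, which follows from log-convexity of $\Gamma$ (the Bohr--Mollerup property). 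Multiplying the two log-concave sequences $a_i$ and $\intvol_i(\set{K})$ termwise yields a log-concave sequence, which is exactly the claimed inequality for $\rmv_i$. For the rotation volumes, $\rotv_i(\set{K}) = (\omega_{n+1}/\omega_{i+1})\,\intvol_{n-i}(\set{K})$; the factor $\intvol_{n-i}(\set{K})$ is the reversed intrinsic volume sequence, which is still log-concave (log-concavity is preserved under reversal), and $1/\omega_{i+1}$ is again log-concave as just noted, so the termwise product $\rotv_i$ is log-concave. One should be mildly careful at the endpoints $i=1$ and $i=n-1$: the combinatorial convention that out-of-range intrinsic volumes vanish means the relevant neighbours $\rotv_0, \rotv_n$ (resp.\ $\rmv_0, \rmv_n$) are genuine nonnegative numbers, so the product inequality is interpreted with a possibly-zero right-hand side and holds trivially or by the same argument.

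The only real content is the log-concavity of the unweighted intrinsic volumes (Alexandrov--Fenchel) and the log-convexity of $\Gamma$; everything else is the elementary fact that the termwise product of log-concave sequences is log-concave, together with bookkeeping about the reversal of indices. I expect the main obstacle to be purely expository: deciding whether to cite Alexandrov--Fenchel directly or to route through whichever form of it is recorded in Appendix~\ref{sec:intvol-properties}, and making sure the gamma-function log-convexity computation is stated cleanly rather than ``after some gymnastics.'' No concentration machinery from the body of the paper is needed here; this proposition is genuinely a ``counterpoint'' and its proof is short.
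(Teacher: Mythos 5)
There is a genuine gap, and it is a sign error at the crux of your argument. You need the weight sequence $a_i = \omega_{n+1}/\omega_{i+1}$ to be log-concave, and you derive this from ``log-convexity of $1/\Gamma$, which follows from log-convexity of $\Gamma$.'' But if $\Gamma$ is log-convex then $1/\Gamma$ is log-\emph{concave}, so $\omega_m = 2\pi^{m/2}/\Gamma(m/2)$ is a log-concave sequence (check: $\omega_2^2 = 4\pi^2 > 8\pi = \omega_1\omega_3$), and hence $i \mapsto 1/\omega_{i+1}$ is log-\emph{convex}: $a_i^2 \le a_{i-1}a_{i+1}$, the reverse of what you assert. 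The termwise product of a log-concave sequence with a log-convex one need not be log-concave, so the argument collapses for both $\rmv_i$ and $\rotv_i$ (the reversal of indices for $\rotv_i$ is harmless, as you say; the problem is only the weights).

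This is precisely why the paper's proof invokes \emph{ultra}-log-concavity of the intrinsic volumes rather than plain log-concavity. Alexandrov--Fenchel gives the stronger inequality $\intvol_i(\set{K})^2 \ge \tfrac{i+1}{i}\,\intvol_{i-1}(\set{K})\,\intvol_{i+1}(\set{K})$ (Chevet, McMullen), and the surplus factor $\tfrac{i+1}{i}$ is exactly large enough to absorb the deficit coming from the weights. Concretely, what must be shown is $\intvol_i^2/(\intvol_{i-1}\intvol_{i+1}) \ge \omega_{i+1}^2/(\omega_i \omega_{i+2})$, and one computes $\omega_{i+1}^2/(\omega_i\omega_{i+2}) = \tfrac{i}{2}\,\Gamma(i/2)^2/\Gamma((i+1)/2)^2$, which Wendel's inequality $\Gamma((i+1)/2)/\Gamma(i/2) \ge \tfrac{i}{2}\bigl(\tfrac{2}{i+1}\bigr)^{1/2}$ bounds above by $\tfrac{i+1}{i}$, with near-equality for large $i$. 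So plain log-concavity of the $\intvol_i$ has no slack to spare here; the ULC strengthening plus a genuine (if short) gamma-function estimate is unavoidable, and that is the ``gymnastics'' the paper's sketch alludes to.
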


Proposition~\ref{prop:lc} gives an alternative sense
in which the weighted intrinsic volume sequences concentrate.
Among other things, it ensures that both sequences are unimodular.
See Section~\ref{sec:intvol-ulc} for related results
about the intrinsic volume sequence.

\begin{proof}[Proof Sketch]
The Alexandrov--Fenchel inequalities~\cite[Sec.~7.3]{Sch14:Convex-Bodies}
imply that the intrinsic volumes of a convex body form an
ultra-log-concave sequence~\cite{Che76:Processus-Gaussiens,McM91:Inequalities-Intrinsic}.
Using Definition~\ref{def:wintvol} and the volume calculation~\eqref{eqn:ball-sphere},
we quickly conclude that the weighted intrinsic volumes are log-concave.
\end{proof}

After this paper was written, Aravind et al.~\cite{AMM21:Concentration-Inequalities}
proved that all ultra-log-concave sequences exhibit Poisson-type concentration.
This provides an alternative approach to proving slightly weaker forms
of some of the concentration theorems in this paper.

\section{Main Results II: Phase Transitions}
\label{sec:main-phase}

The primary goal of this paper is to reveal that several classic
formulas from integral geometry exhibit sharp phase transitions
in high dimensions.  In each setting, the formula collapses into two
complementary cases when we invoke the concentration properties
of weighted intrinsic volumes.  The width of the transition region between
the two cases becomes negligible as the dimension increases.

\subsection{Rotations}

We begin with two models that involve averaging over random rotations.
The first involves projection onto a random subspace,
and the second involves the sum of a convex body and
a randomly rotated convex body.  In each case,
the concentration of rotation volumes
leads to phase transition phenomena.

\begin{figure}[t]
\begin{center}
\includegraphics[width=0.67\textwidth]{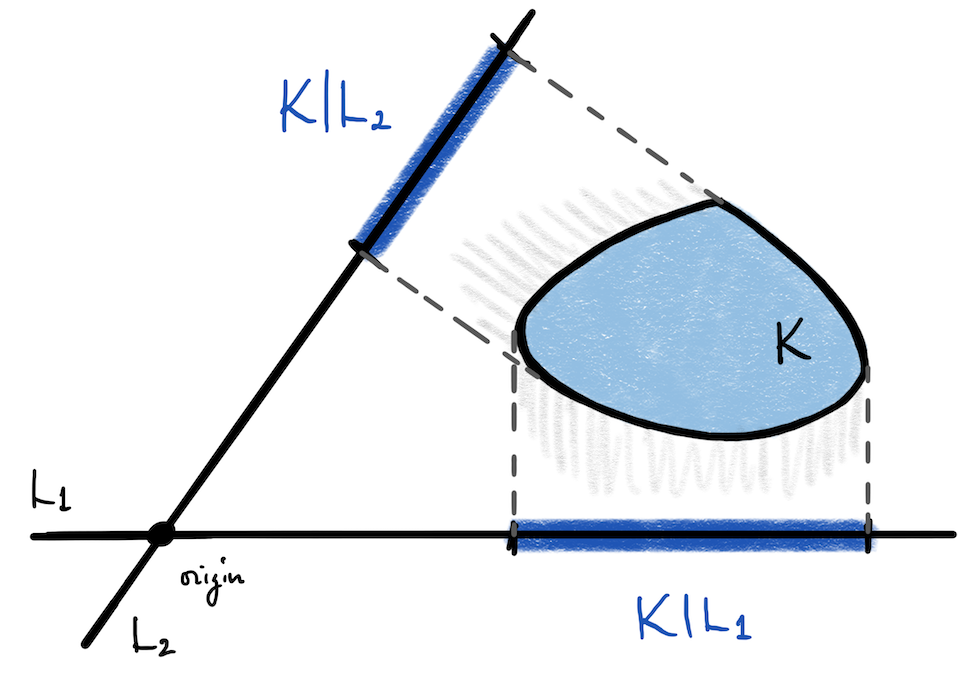}
\caption{\textbf{Projection onto a Random Subspace.}  This diagram shows
the orthogonal projections of a two-dimensional convex body $\set{K}$ onto
the one-dimensional subspaces $\set{L}_1$ and $\set{L}_2$.
The projection formula, Fact~\ref{fact:proj-formula-intro},
states the expected length of the projection, averaged over all one-dimensional subspaces,
is proportional to the perimeter of $\set{K}$.  The formula also treats problems
in higher dimensions.} \label{fig:random-projection}
\end{center}
\end{figure}

\begin{figure}[t]
\begin{center}
\begin{subfigure}[b]{0.45\textwidth}
\includegraphics[width=\textwidth]{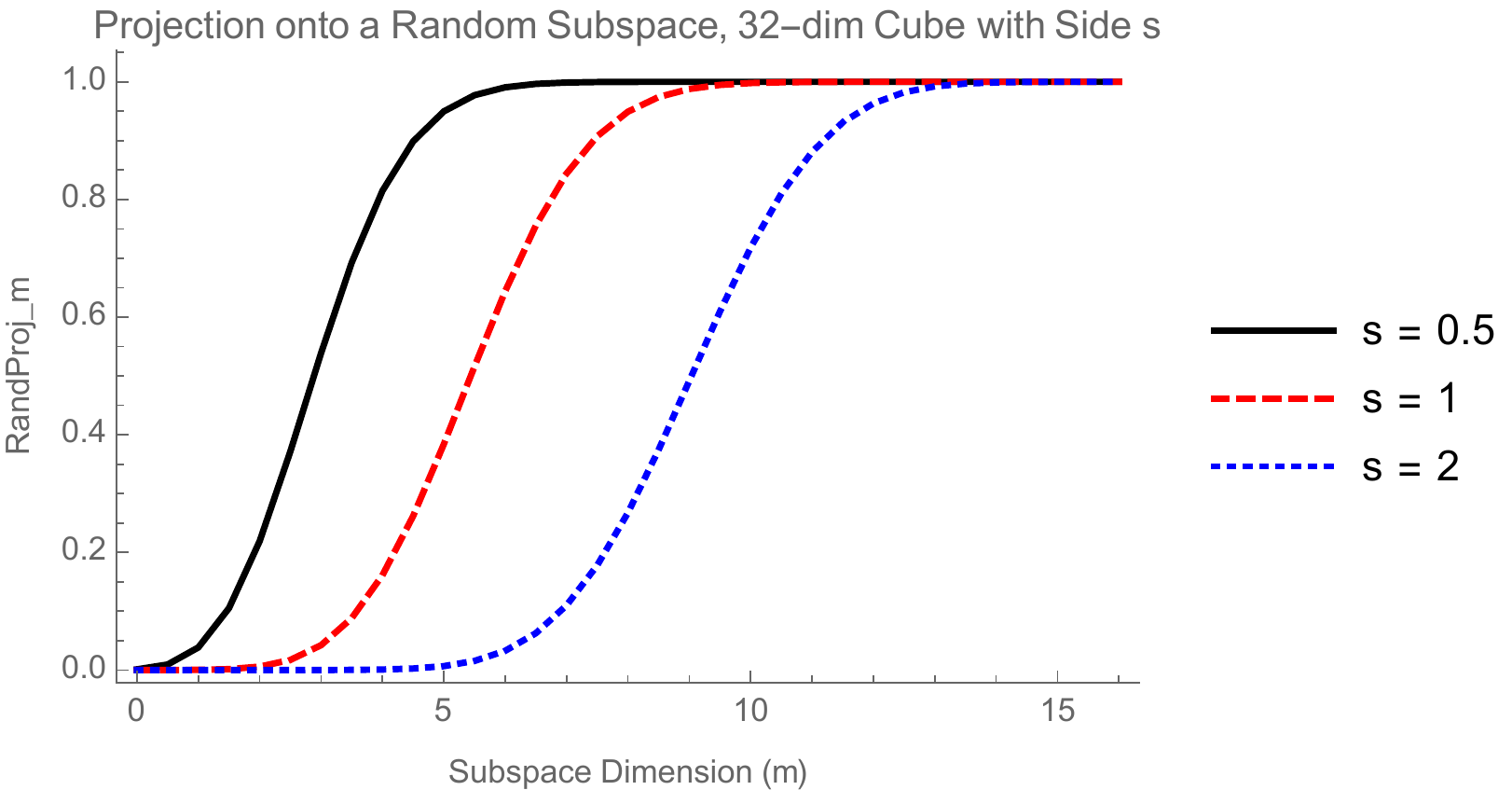}
\end{subfigure}
\hspace{1pc}
\begin{subfigure}[b]{0.45\textwidth}
\includegraphics[width=\textwidth]{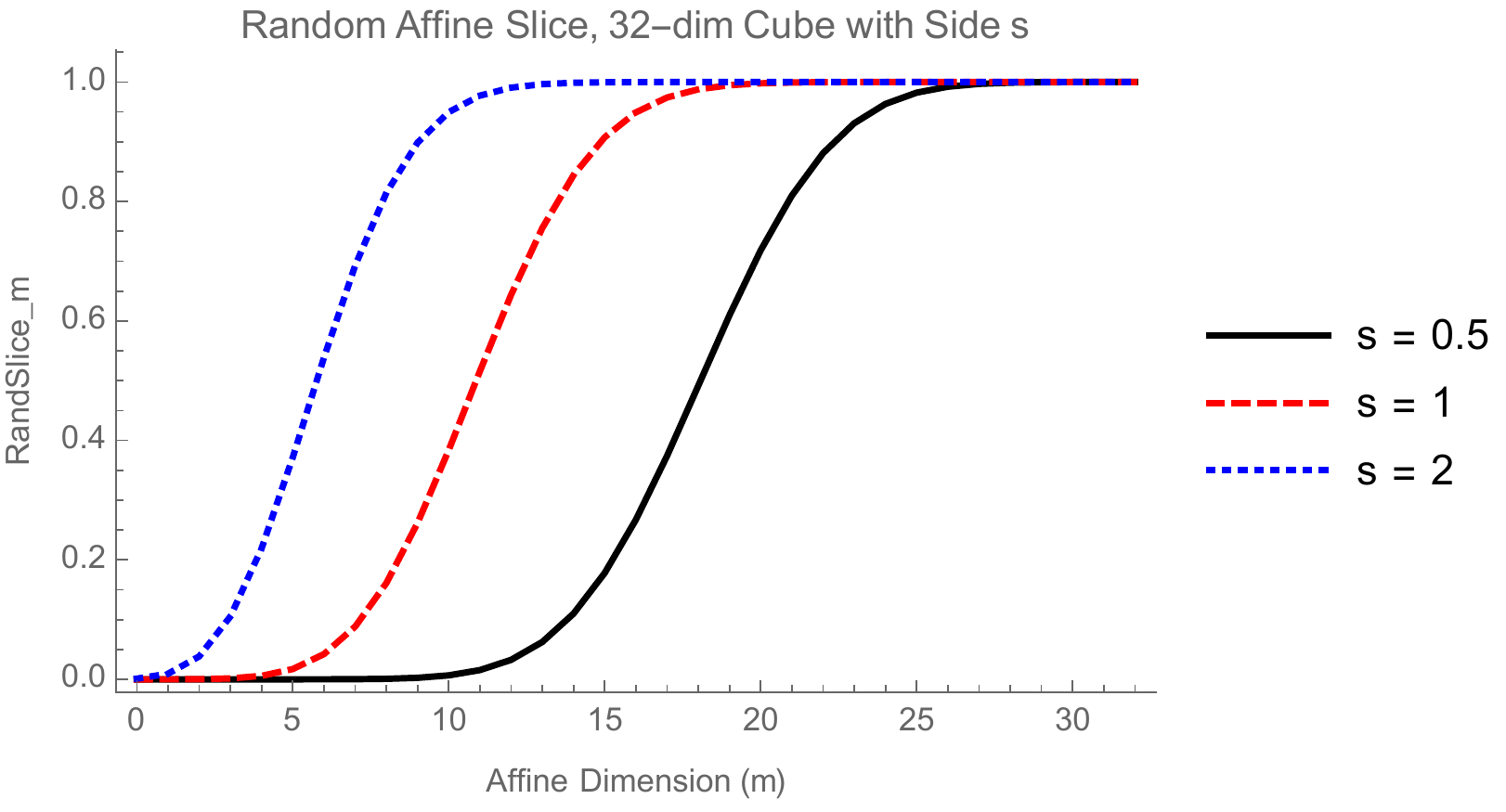}
\end{subfigure} \\
\begin{subfigure}[b]{0.45\textwidth}
\includegraphics[width=\textwidth]{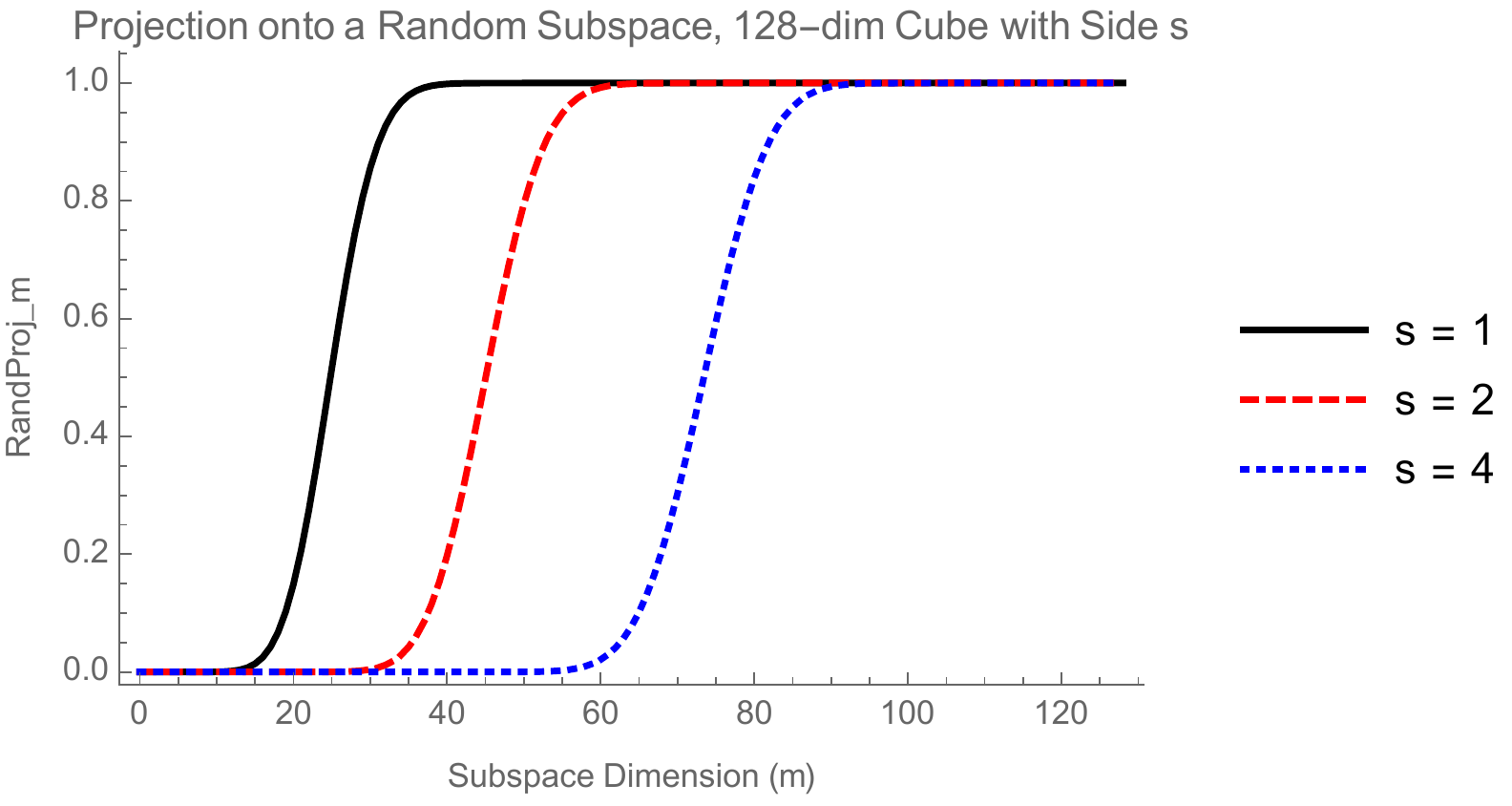}
\end{subfigure}
\hspace{1pc}
\begin{subfigure}[b]{0.45\textwidth}
\includegraphics[width=\textwidth]{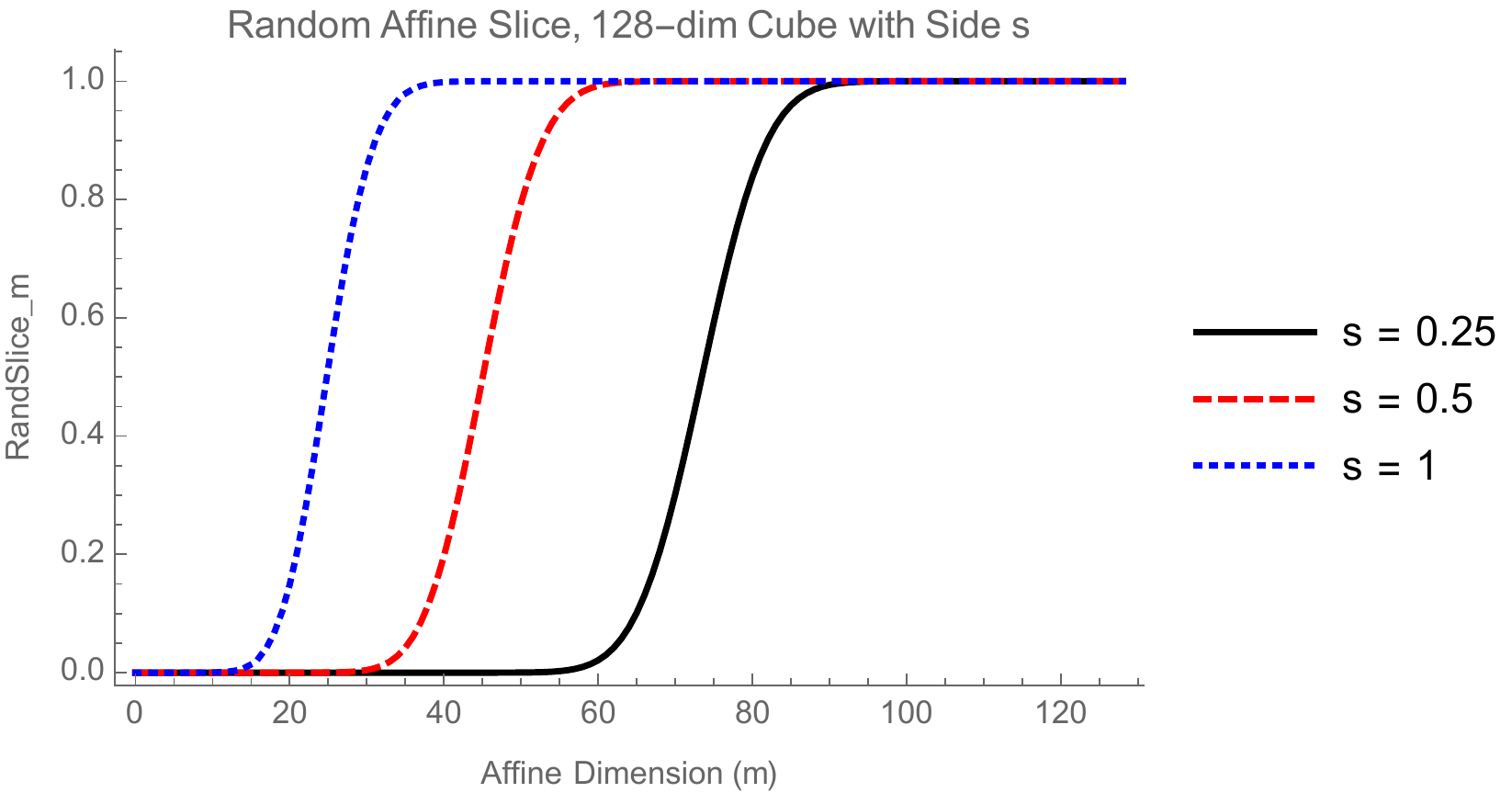}
\end{subfigure} \\
\caption{\textbf{Phase Transitions for Moving Flats.} \textsc{[left]}
The function $\mathrm{RandProj}_m$, defined in~\eqref{eqn:randproj-intro},
compares the total rotation volume of a random $m$-dimensional projection of a scaled cube $s\set{Q}^n$
with the total rotation volume of the scaled cube.
The transition moves \emph{right} with increasing scale $s$.
\textsc{[right]} The function $\mathrm{RandSlice}_m$, defined in~\eqref{eqn:randslice-intro},
compares the total rigid motion volume of a random $m$-dimensional
affine slice of a scaled cube $s \set{Q}^n$
with the total rigid motion volume of the scaled cube.
The transition moves \emph{left} with increasing scale $s$.
\textsc{[top]} $\R^{32}$ and \textsc{[bottom]} $\R^{128}$.
Note that the relative transition width becomes narrower as the ambient dimension increases.}
\label{fig:randproj-randslice}
\end{center}
\end{figure}

\subsubsection{Projection onto a Random Subspace}

The first question concerns the interaction
between a set and a random subspace:

\begin{quotation} \textcolor{dkblue}{
\textbf{Suppose that we project a convex body onto a random
linear subspace of a given dimension.  How big is the
image relative to the original set?}}
\end{quotation}

\noindent
See Figure~\ref{fig:random-projection} for a schematic.
The integral geometry literature contains a detailed
answer to this question.  The result is usually
written in terms of the intrinsic volumes
(Fact~\ref{fact:projection-intvol}), but
we have discovered that the rotation
volumes lead to a simpler statement:

\begin{fact}[Projection Formula] \label{fact:proj-formula-intro}
Let $\set{K} \subset \R^n$ be a nonempty convex body.
For each subspace dimension $m = 0, 1, 2, \dots, n$
and each index $i = 0, 1, 2, \dots, m$,
\begin{equation} \label{eqn:proj-formula-intro}
\int_{\Gr(m,n)} \rotv_i^{m}(\set{K}|\set{L}) \, {\nu}_m(\diff{\set{L}})
	= \rotv_{n-m+i}^{n}(\set{K}).
\end{equation}
The superscript indicates the ambient dimension
of the space in which the rotation volumes are calculated.
The Grassmannian $\Gr(m,n)$ of $m$-dimensional subspaces in $\R^n$
is equipped with its rotation-invariant probability measure $\nu_m$;
see Appendix~\ref{sec:invariant-grass}.
\end{fact}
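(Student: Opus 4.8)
The plan is to reduce~\eqref{eqn:proj-formula-intro} directly to the classical Kubota projection formula for the (unweighted) intrinsic volumes, recorded here as Fact~\ref{fact:projection-intvol}. The only genuine content is a bookkeeping check that the normalizing factors chosen in Definition~\ref{def:wintvol} are exactly the ones that convert Kubota's formula into the clean identity~\eqref{eqn:proj-formula-intro}; in effect, the weights $\omega_{n+1}/\omega_{i+1}$ are reverse-engineered from the flag coefficients appearing in Kubota's theorem.

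First I would unfold the definition of the rotation volumes on each side, keeping careful track of the ambient dimension. Since $\set{K}|\set{L}$ is a convex body lying in the $m$-dimensional subspace $\set{L}$, Definition~\ref{def:wintvol} (applied in dimension $m$) gives $\rotv_i^{m}(\set{K}|\set{L}) = (\omega_{m+1}/\omega_{i+1})\,\intvol_{m-i}(\set{K}|\set{L})$. On the right-hand side, because $n-(n-m+i)=m-i$, we have $\rotv_{n-m+i}^{n}(\set{K}) = (\omega_{n+1}/\omega_{n-m+i+1})\,\intvol_{m-i}(\set{K})$. Hence~\eqref{eqn:proj-formula-intro} is equivalent to
\begin{equation*}
\int_{\Gr(m,n)} \intvol_{m-i}(\set{K}|\set{L}) \, \nu_m(\diff{\set{L}})
	= \frac{\omega_{i+1}}{\omega_{m+1}} \cdot \frac{\omega_{n+1}}{\omega_{n-m+i+1}} \cdot \intvol_{m-i}(\set{K}).
\end{equation*}
This is now a statement purely about intrinsic volumes.

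Next I would invoke Fact~\ref{fact:projection-intvol} (Kubota's formula), which asserts that the average over $\Gr(m,n)$ of the $j$th intrinsic volume of the projection $\set{K}|\set{L}$ is a fixed scalar multiple of $\intvol_j(\set{K})$, the scalar being a quotient of flag coefficients --- that is, a ratio of products of the constants $\kappa_\bullet$, equivalently of the $\omega_\bullet = \bullet\cdot\kappa_\bullet$. Setting $j = m - i$ and comparing with the displayed identity, the proposition collapses to a single scalar equality: the Kubota constant for the index $j=m-i$ must equal $(\omega_{i+1}/\omega_{m+1})(\omega_{n+1}/\omega_{n-m+i+1})$. The main (if modest) obstacle is to verify this; writing $\omega_k = 2\pi^{k/2}/\Gamma(k/2)$ and expanding the binomial coefficients as ratios of gamma functions, it reduces to a routine gamma-function cancellation, which I would not belabor. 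Finally I would dispatch the degenerate cases at the outset: if $\set{K} = \emptyset$ both sides vanish, and for nonempty $\set{K}$ the projection $\set{K}|\set{L}$ is a nonempty convex body for every $\set{L}$, so Fact~\ref{fact:projection-intvol} applies pointwise and the integrand is a bounded Borel function of $\set{L}$, making the integral well defined.
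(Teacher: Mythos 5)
Your proposal is correct and follows exactly the paper's route: the paper derives this fact from Fact~\ref{fact:projection-intvol} (Kubota's formula) together with Lemma~\ref{lem:structure}, which is precisely the ``routine gamma-function cancellation'' you defer to (the paper packages it via the Legendre duplication formula). Your index bookkeeping checks out, since the Kubota constant $c^{m,\,n-m+i}_{n,\,i}$ for the index $j=m-i$ equals $(\omega_{n+1}/\omega_{m+1})(\omega_{i+1}/\omega_{n-m+i+1})$, as required.
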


Among other things, the projection formula allows us
to study the total rotation volume of a random projection
onto an $m$-dimensional subspace.  To do so,
we define the average
\begin{equation} \label{eqn:randproj-intro}
\mathrm{RandProj}_m(\set{K}) := \int_{\Gr(m,n)} \frac{\rotwills^m(\set{K}|\set{L})}{\rotwills^n(\set{K})} \,\nu_m(\diff{\set{L}})
	= \sum_{i = 0}^m \frac{\rotv_{n-i}^n(\set{K})}{\rotwills^n(\set{K})}.
\end{equation}
The second relation follows when we sum~\eqref{eqn:proj-formula-intro}
over indices $i = 0, 1,2,\dots, m$ and divide by the total rotation volume
$\rotwills^n(\set{K})$.

Figure~\ref{fig:randproj-randslice} illustrates the behavior
of the function $m \mapsto \mathrm{RandProj}_m(s\set{Q}^n)$
for some scaled cubes $s \set{Q}^n$.
Evidently, the function jumps from zero to one as the subspace
dimension $m$ increases, and the relative width of the jump
becomes narrower as the ambient dimension $n$ grows.
We have developed a complete mathematical
explanation for this empirical observation.

In view of~\eqref{eqn:randproj-intro}, the function $\mathrm{RandProj}_m$
can be written using the rotation volume random variable~\eqref{eqn:wvol-rv-def}:
$$
\mathrm{RandProj}_m(\set{K}) = \Prob{ \rotI_{\set{K}} \leq m }.
$$
Second, invoke Theorem~\ref{thm:rotvol-conc},
on the concentration of rotation volumes,
to see that $m \mapsto \Prob{ \smash{\rotI_{\set{K}} \leq m} }$
makes a transition from zero to one around the value
$m = \rotdelta(\set{K})$. Here is a precise statement.

\begin{bigthm}[Random Projections: Phase Transition]\label{thm:randproj-intro}
Consider a nonempty convex body $\set{K} \subset \R^n$
with central rotation volume $\rotdelta(\set{K})$.
For each subspace dimension $m \in \{0,1,2, \dots ,n\}$, form the average \begin{equation*}\mathrm{RandProj}_m(\set{K}) := \int_{\Gr(m,n)} \frac{\rotwills^m(\set{K}|\set{L})}{\rotwills^n(\set{K})} \,\nu_m(\diff{\set{L}})
	\in [0, 1].
\end{equation*}
For a proportion $\alpha \in (0,1)$, set the transition width
$$
t_{\star}(\alpha) := \big[ 2 \rotdelta(\set{K}) \log(1/\alpha) \big]^{1/2} + \tfrac{2}{3} \log(1/\alpha).
$$
Then $\mathrm{RandProj}_m(\set{K})$ undergoes a phase transition at $m = \rotdelta(\set{K})$
with width controlled by $t_{\star}(\alpha)$: \begin{equation*}
\begin{aligned}
m &\leq \rotdelta(\set{K}) - t_{\star}(\alpha)
&&\quad\text{implies}\quad &\mathrm{RandProj}_m(\set{K}) &\leq \alpha; \\
m &\geq \rotdelta(\set{K}) + t_{\star}(\alpha)
&&\quad\text{implies}\quad &\mathrm{RandProj}_m(\set{K}) &\geq 1 - \alpha.
\end{aligned}
\end{equation*}
\end{bigthm}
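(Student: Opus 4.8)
The plan is to reduce Theorem~\ref{thm:randproj-intro} to the concentration estimate of Theorem~\ref{thm:rotvol-intro} (equivalently Theorem~\ref{thm:rotvol-conc}) via the probabilistic reformulation already hinted at in the text. First I would record the identity
\[
\mathrm{RandProj}_m(\set{K}) = \Prob{ \rotI_{\set{K}} \leq m },
\]
which follows by summing the projection formula~\eqref{eqn:proj-formula-intro} over $i = 0, 1, \dots, m$, dividing by $\rotwills^n(\set{K})$, and comparing with the definition~\eqref{eqn:wvol-rv-def} of the rotation volume random variable: the right-hand side of~\eqref{eqn:randproj-intro} is exactly $\sum_{j=0}^m \Prob{\rotI_{\set{K}} = j}$. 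The measurability and integrability needed to interchange the sum and the integral over $\Gr(m,n)$ are routine, since each term is nonnegative and bounded.

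Next I would translate the two implications into tail bounds for $\rotI_{\set{K}}$ about its mean $\rotdelta(\set{K})$, using $\rotsigma^2(\set{K}) = \rotdelta(\set{K})$. For the first implication, if $m \leq \rotdelta(\set{K}) - t$ with $t = t_\star(\alpha)$, then
\[
\mathrm{RandProj}_m(\set{K}) = \Prob{\rotI_{\set{K}} \leq m} \leq \Prob{ \rotI_{\set{K}} - \rotdelta(\set{K}) \leq -t } \leq \Prob{ \abs{\rotI_{\set{K}} - \rotdelta(\set{K})} \geq t } \leq 2\exp\!\left( \frac{-t^2/2}{\rotdelta(\set{K}) + t/3} \right).
\]
(Here I would ideally use the one-sided version inside Theorem~\ref{thm:rotvol-conc}, which saves the factor of $2$; with the symmetric bound as stated in Theorem~\ref{thm:rotvol-intro} one absorbs the $2$ harmlessly, or notes that the lower tail carries no leading constant.) The second implication is the mirror image: if $m \geq \rotdelta(\set{K}) + t$, then $\Prob{\rotI_{\set{K}} > m} \leq \Prob{\rotI_{\set{K}} - \rotdelta(\set{K}) > t}$, so $\mathrm{RandProj}_m(\set{K}) \geq 1 - 2\exp(-t^2/2 (\rotdelta(\set{K}) + t/3)^{-1})$.

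It then remains to verify the elementary inequality that the choice $t = t_\star(\alpha) = [2\rotdelta(\set{K})\log(1/\alpha)]^{1/2} + \tfrac{2}{3}\log(1/\alpha)$ forces the Bernstein bound below $\alpha$. Writing $\sigma^2 = \rotdelta(\set{K})$ and $L = \log(1/\alpha)$, I want $\tfrac{1}{2}t^2 \geq L(\sigma^2 + t/3)$, i.e. $t^2 - \tfrac{2}{3}Lt - 2L\sigma^2 \geq 0$; solving the quadratic, this holds precisely when $t \geq \tfrac{1}{3}L + [\tfrac{1}{9}L^2 + 2L\sigma^2]^{1/2}$, and the subadditivity of the square root gives $[\tfrac{1}{9}L^2 + 2L\sigma^2]^{1/2} \leq \tfrac{1}{3}L + [2L\sigma^2]^{1/2}$, so the stated $t_\star(\alpha) = \tfrac{2}{3}L + [2\sigma^2 L]^{1/2}$ is comfortably sufficient. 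None of this is a real obstacle; the only point requiring care is bookkeeping with the constant $2$ in front of the exponential, which is why I would invoke the sharper one-sided form of the concentration result from Section~\ref{sec:rotvol-conc} rather than the symmetrized statement of Theorem~\ref{thm:rotvol-intro}. In short, the entire content of the theorem is packaged in the concentration inequality; the proof is the one-line identity $\mathrm{RandProj}_m = \Prob{\rotI_{\set{K}} \leq m}$ followed by the routine Bernstein-inversion computation.
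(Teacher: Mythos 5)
Your proposal is correct and follows essentially the same route as the paper: the identity $\mathrm{RandProj}_m(\set{K}) = \Prob{\smash{\rotI_{\set{K}}} \leq m}$ obtained by summing the projection formula, followed by inversion of the one-sided Bernstein bound~\eqref{eqn:rotvol-bernstein} (which, as you correctly note, carries no factor of $2$) and the subadditivity of the square root to reach $t_\star(\alpha)$. The quadratic inversion you carry out is exactly the computation in Section~\ref{sec:randproj}.
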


\noindent
We have already given the majority of the proof;
see Section~\ref{sec:randproj} for the remaining details.

Here is an interpretation.
Random projection of a convex body onto a subspace either obliterates or preserves its total rotation volume,
depending on whether the dimension $m$ of the subspace is smaller or larger than $\rotdelta(\set{K})$.
Thus, the central rotation volume $\rotdelta(\set{K})$ measures the ``dimension'' of the set $\set{K}$.

The transition between the two cases takes place over a very narrow range of subspace dimensions.
For example, when $\alpha = 1/\econst^{2}$, the transition width is around $2\rotdelta(\set{K})^{1/2} + 1$,
which is usually much smaller than the location $\rotdelta(\set{K})$ of the transition.
Keeping in mind that the central rotation volume $\rotdelta(\set{K}) \in [0, n]$, we see that this transition takes place
over at most $2\sqrt{n} + 1$ dimensions, which should be compared with the range $n$
of possible subspace dimensions.

\begin{figure}[t]
\centering
\includegraphics[width=0.48\textwidth]{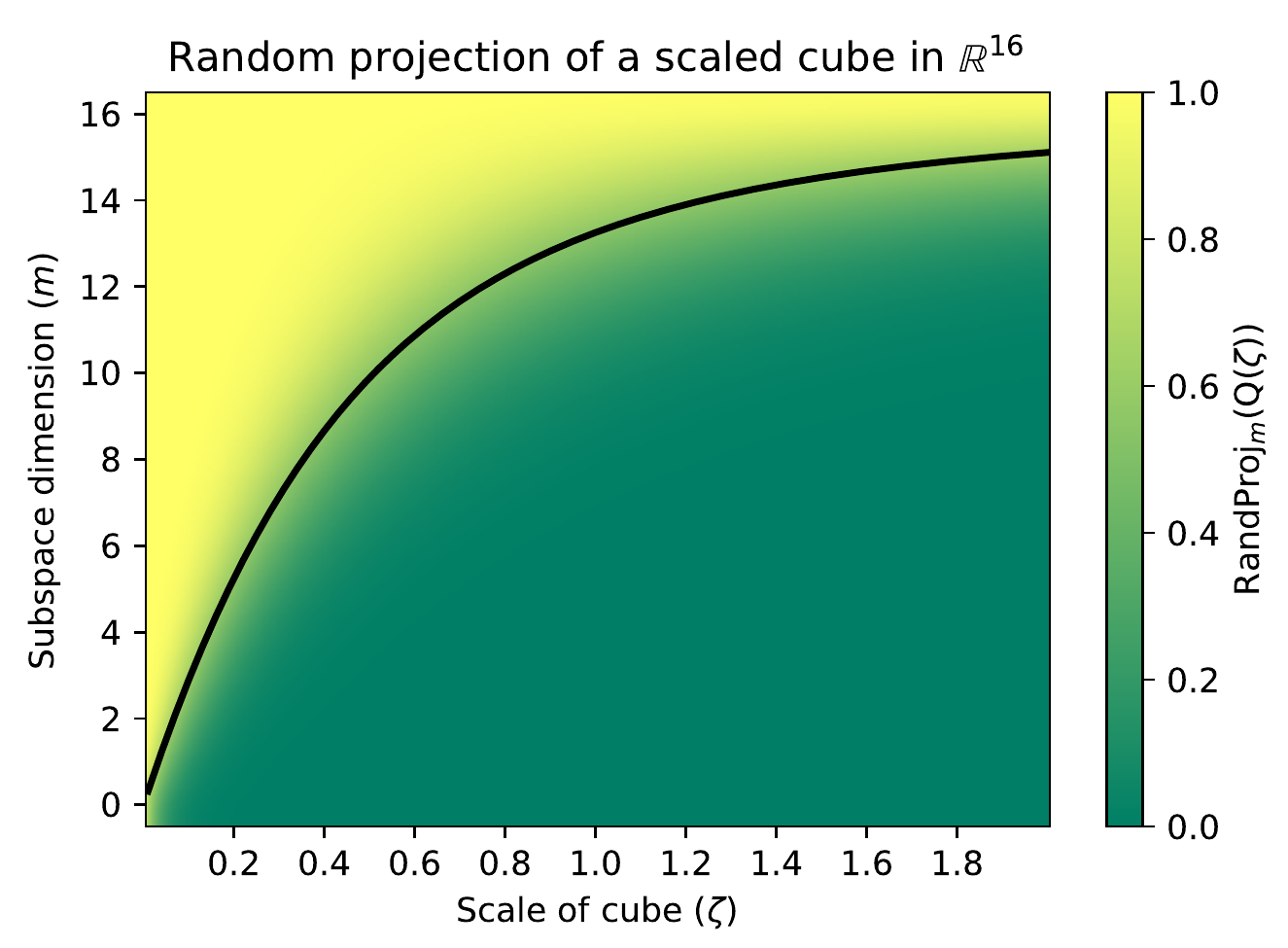} \hspace{1pc}
\includegraphics[width=0.48\textwidth]{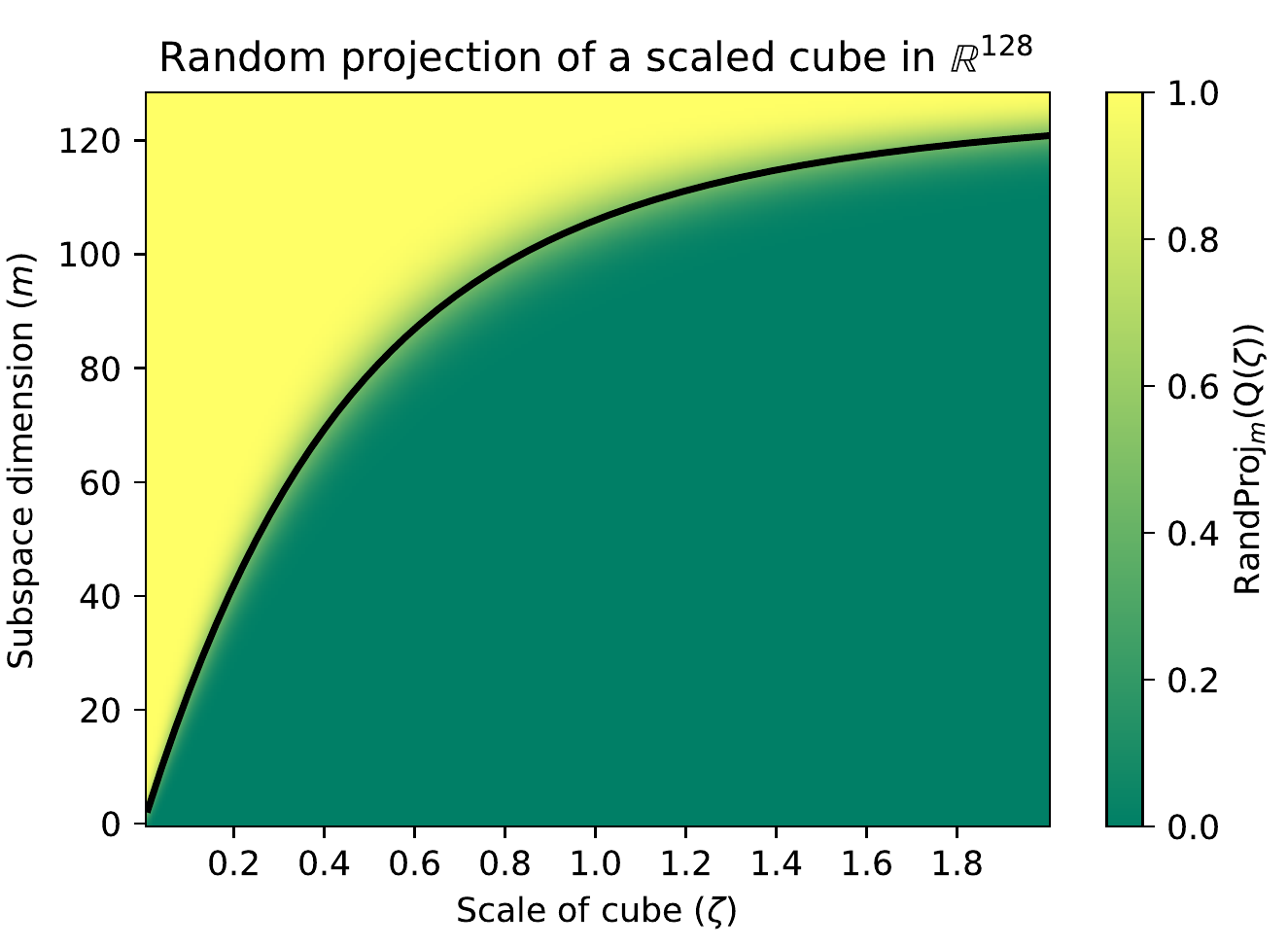}
\caption{\textbf{Phase Transition for Projections of a Cube.}
For a scaled cube $\set{Q}(\zeta) = \zeta \sqrt{2n / \pi} \, \set{Q}^n$, these plots display $\mathrm{RandProj}_m(\mathsf{Q}(\zeta))$
as a function of the scaling parameter $\zeta$ and the dimension $m$
of the projection.  The black curve traces the asymptotic expression~\eqref{eqn:central-scaled-cube}
for the central rotation volume $\rotdelta(\set{Q}(\zeta)) / n$.
The function $\mathrm{RandProj}_m$ is defined in~\eqref{eqn:randproj-intro}.
The plots are similar, except that the  transition region narrows
when the ambient dimension increases from $n = 16$ to $n = 128$.} \label{fig:randproj-phase-map}
\end{figure} 

\begin{example}[Scaled Cube: Random Projection]
Consider the family $\set{Q}(\zeta;n) = \zeta \sqrt{2n/\pi} \, \set{Q}^n$ of scaled cubes
that were introduced in Example~\ref{ex:scaled-cube-delta}.
When $n$ is large, Theorem~\ref{thm:randproj-intro}
and the asymptotic formula~\eqref{eqn:central-scaled-cube} show that
\begin{equation*}
\begin{aligned}
\frac{m}{n} &\lessapprox \frac{2}{1+\sqrt{1+\zeta^{-2}}} &&\quad\text{implies}\quad &\mathrm{RandProj}_m(\set{Q}(\zeta;n)) &\approx 0; \\ \frac{m}{n} &\gtrapprox  \frac{2}{1+\sqrt{1+\zeta^{-2}}} &&\quad\text{implies}\quad &\mathrm{RandProj}_m(\set{Q}(\zeta;n)) &\approx 1. \end{aligned}
\end{equation*}
To confirm this result,
Figure~\ref{fig:randproj-phase-map} displays the numerical value $\mathrm{RandProj}_m(\set{Q}(\zeta; n))$
as a function of $\zeta$ and $m$ for two choices of dimension $n$.
The asymptotic phase transition is superimposed.  We remark that each projection of the cube $\set{Q}^n$
is a zonotope with at most $n$ facets.
\end{example}

\begin{remark}[Related Work]
Milman~\cite{Mil90:Note-Low} established the the \emph{diameter} of a random projection
of a norm ball undergoes a change in behavior as the dimension $m$
of the subspace increases.  When $m$ is much smaller than the mean width of the body,
random projections are isomorphic to Euclidean balls whose radius is the mean width.
When $m$ is much larger than the mean width of the body, the diameter of the random projection
is comparable to $\sqrt{m/n}$ times the diameter of the body.  See~\cite[Sec.~5.7.1]{artstein2015asymptotic}
or~\cite[Sec.~7.7, Sec.~9.2.2]{Ver18:High-Dimensional-Probability} for more details.
\end{remark}

\begin{figure}[t]
\begin{center}
\includegraphics[width=0.7\textwidth]{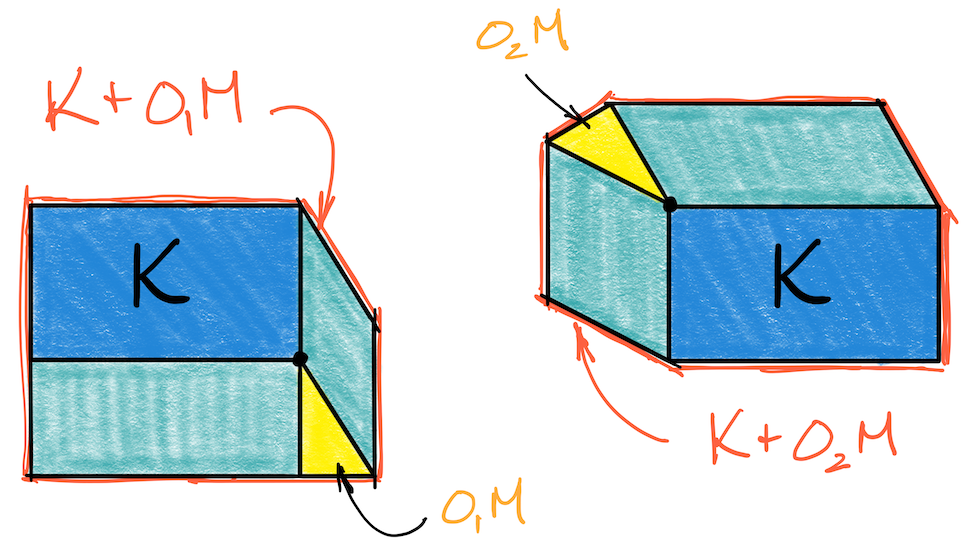}
\caption{\textbf{Rotation Means.}  This picture illustrates the
Minkowski sum of a convex body $\set{K}$ and two rotations
$\mtx{O}_1 \set{M}$ and $\mtx{O}_2 \set{M}$ of a convex body
$\set{M}$.  The rotation mean value formula, Fact~\ref{fact:rotmean},
expresses the expected area of the sum, averaged over all rotations,
in terms of the perimeters and areas of $\set{K}$ and $\set{M}$.}
\label{fig:rotation-mean}
\end{center}
\end{figure}

\subsubsection{Rotation Means}

Next, we consider a problem that involves the interaction
between two different convex bodies:

\begin{quotation} \textcolor{dkblue}{
\textbf{Suppose that we add a randomly rotated convex body
to a fixed convex body.  How big is the sum
relative to the size of the original sets?}}
\end{quotation}

\noindent
See Figure~\ref{fig:rotation-mean} for an illustration.
One version of this question admits an exact answer,
which is contained in a classic result called the
rotation mean value formula (Fact~\ref{fact:rotmean}).
We have discovered that rotation means exhibit
a sharp phase transition.

\begin{bigthm}[Rotation Means: Phase Transition] \label{thm:rotmean-intro}
Consider two nonempty convex bodies $\set{K}, \set{M} \subset \R^n$,
and define the sum $\rotDelta(\set{K}, \set{M}) := \rotdelta(\set{K}) + \rotdelta(\set{M})$ of the central rotation volumes. Form the average \begin{equation} \label{eqn:rotmean-intro}
\mathrm{RotMean}(\set{K}, \set{M}) := \int_{\SO(n)} \frac{\rotwills(\set{K} + \mtx{O} \set{M})}{\rotwills(\set{K}) \, \rotwills(\set{M})} \, \nu(\diff{\mtx{O}})
	\in [0, 1].
\end{equation}
The special orthogonal group $\SO(n)$ is equipped with its invariant probability measure $\nu$;
see Section~\ref{sec:invariant-SO}.  For a proportion $\alpha \in (0,1)$,
set the transition width
$$
t_{\star}(\alpha) := \big[ 2 \rotDelta(\set{K},\set{M}) \log(1/\alpha) \big]^{1/2} + \tfrac{2}{3} \log(1/\alpha).
$$ Then $\mathrm{RotMean}(\set{K}, \set{M})$ undergoes a phase transition at $\rotDelta(\set{K}, \set{M}) = n$
with width controlled by $t_{\star}(\alpha)$:
\begin{equation*}
\begin{aligned}
\rotDelta(\set{K},\set{M}) &\leq n - t_{\star}(\alpha)
&&\quad\text{implies}\quad &\mathrm{RotMean}(\set{K},\set{M}) &\geq 1 - \alpha; \\
\rotDelta(\set{K}, \set{M}) &\geq n + t_{\star}(\alpha)
&&\quad\text{implies}\quad &\mathrm{RotMean}(\set{K},\set{M}) &\leq \alpha.
\end{aligned}
\end{equation*}
\end{bigthm}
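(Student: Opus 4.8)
The plan is to recast $\mathrm{RotMean}(\set{K},\set{M})$ as a tail probability for a sum of two \emph{independent} rotation volume random variables, and then feed this into the concentration machinery behind Theorem~\ref{thm:rotvol-intro}. The first step is the translation. The rotation mean value formula (Fact~\ref{fact:rotmean}), written in the rotation-volume normalization of Definition~\ref{def:wintvol}, is a discrete convolution: for each $k = 0,1,\dots,n$,
\[
\int_{\SO(n)} \rotv_k(\set{K} + \mtx{O}\set{M}) \, \nu(\diff{\mtx{O}}) = \sum_{i+j = n+k} \rotv_i(\set{K}) \, \rotv_j(\set{M}),
\]
the sum running over $0 \leq i,j \leq n$. (A degree check is reassuring: $\rotv_i$ is homogeneous of degree $n-i$, so both sides are jointly homogeneous of degree $n-k$; and specializing $\set{M}$ to a point recovers $\rotv_k(\set{K})$ on both sides, confirming there is no spurious constant.) Summing over $k = 0,1,\dots,n$ and using $\rotwills = \sum_k \rotv_k$ collapses the right-hand side to $\sum_{i+j \geq n} \rotv_i(\set{K}) \rotv_j(\set{M})$. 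Dividing by $\rotwills(\set{K})\rotwills(\set{M})$ and reading off the ratios in Definition~\ref{def:wintvol-rv} gives the clean identity
\[
\mathrm{RotMean}(\set{K},\set{M}) = \Prob{ \rotI_{\set{K}} + \rotI_{\set{M}} \leq n },
\]
where $\rotI_{\set{K}}$ and $\rotI_{\set{M}}$ are independent; here the index reversal in Definition~\ref{def:wintvol-rv} turns the constraint $\{i+j \geq n\}$ into $\{(n - \rotI_{\set{K}}) + (n - \rotI_{\set{M}}) \geq n\}$, i.e.\ $\{\rotI_{\set{K}} + \rotI_{\set{M}} \leq n\}$.

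Next I would track the sum $S := \rotI_{\set{K}} + \rotI_{\set{M}}$. By linearity and Definition~\ref{def:central-vols}, $\Expect S = \rotdelta(\set{K}) + \rotdelta(\set{M}) = \rotDelta(\set{K},\set{M})$. The proof of Theorem~\ref{thm:rotvol-intro} in fact delivers a sub-Poisson moment generating function bound for the rotation volume random variable, $\Expect \exp(\lambda(\rotI_{\set{K}} - \rotdelta(\set{K}))) \leq \exp(\rotdelta(\set{K})(\econst^\lambda - 1 - \lambda))$ for all real $\lambda$, and likewise for $\set{M}$. Independence lets the two bounds tensorize, so $S - \rotDelta(\set{K},\set{M})$ satisfies the sub-Poisson bound with rate $\rotDelta(\set{K},\set{M})$. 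Consequently $S$ enjoys Bennett--Bernstein concentration with variance proxy $\rotDelta(\set{K},\set{M})$: the Cram\'er--Chernoff method yields $\Prob{ S \geq \rotDelta(\set{K},\set{M}) + t } \leq \exp(-\tfrac{t^2/2}{\rotDelta(\set{K},\set{M}) + t/3})$ and the even sharper sub-Gaussian lower tail $\Prob{ S \leq \rotDelta(\set{K},\set{M}) - t } \leq \exp(-\tfrac{t^2}{2\rotDelta(\set{K},\set{M})})$.

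The last step is a routine inversion and case split. A standard computation shows both displayed tail bounds are at most $\alpha$ as soon as $t \geq t_{\star}(\alpha) = [2\rotDelta(\set{K},\set{M})\log(1/\alpha)]^{1/2} + \tfrac{2}{3} \log(1/\alpha)$. If $\rotDelta(\set{K},\set{M}) \leq n - t_{\star}(\alpha)$, then, because $S$ is integer-valued, $1 - \mathrm{RotMean}(\set{K},\set{M}) = \Prob{ S \geq n+1 } \leq \Prob{ S \geq \rotDelta(\set{K},\set{M}) + t_{\star}(\alpha) } \leq \alpha$. If $\rotDelta(\set{K},\set{M}) \geq n + t_{\star}(\alpha)$, then $\mathrm{RotMean}(\set{K},\set{M}) = \Prob{ S \leq n } \leq \Prob{ S \leq \rotDelta(\set{K},\set{M}) - t_{\star}(\alpha) } \leq \alpha$. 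This is precisely the asserted phase transition.

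I expect the first step to be the crux: one has to notice that the classical rotation mean value formula, which is unwieldy in the intrinsic-volume normalization, becomes a plain convolution of rotation-volume sequences, so that $\mathrm{RotMean}$ is nothing but $\Prob{\rotI_{\set{K}} + \rotI_{\set{M}} \leq n}$. After that the argument is soft: the combination of the two per-body concentration bounds is painless precisely because Theorem~\ref{thm:rotvol-intro} is established at the level of the moment generating function, so tensorizing over the two independent rotations is automatic. (If one insisted on working only from the stated two-sided tail bound rather than its MGF refinement, one would have to apportion $t_{\star}(\alpha)$ between $\set{K}$ and $\set{M}$ and would pay a constant-factor price; the integrality bookkeeping $\Prob{S \leq n}$ versus $\Prob{S \geq n+1}$ at the two thresholds is harmless.)
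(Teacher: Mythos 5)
Your proposal is correct and follows essentially the same route as the paper: rewrite the rotation mean value formula as a convolution of rotation volumes, identify $\mathrm{RotMean}(\set{K},\set{M})$ with $\Prob{\rotI_{\set{K}}+\rotI_{\set{M}}\leq n}$ for independent rotation volume random variables, tensorize the Poisson-type cgf bound of Theorem~\ref{thm:rotvol-conc} via additivity of cgfs, and invert the resulting Bernstein tail at $t_{\star}(\alpha)$. Your derivation of the event as $\{\rotI_{\set{K}}+\rotI_{\set{M}}\leq n\}$ (rather than $\geq n$) is the correct reading of the index reversal in Definition~\ref{def:wintvol-rv}, and the remaining details match the paper's argument in Section~\ref{sec:rotmean}.
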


\noindent
The proof of Theorem~\ref{thm:rotmean-intro} appears in Section~\ref{sec:rotmean}.

This result states that the total rotation volume of the sum
$\set{K} + \mtx{O} \set{M}$, averaged over rotations $\mtx{O}$,
does not exceed the product of the total rotation volumes of the two sets.
When the total ``dimension'' $\rotDelta(\set{K}, \set{M})$ of the two sets is smaller
than the ambient dimension $n$, the average total rotation volume
of the sum is nearly as large as possible.
The situation changes when the total ``dimension'' of the two sets
is larger than the ambient dimension, in which case the average total
rotation volume of the sum is far smaller than the upper bound.
The width of the transition region between the
two regimes is not more than $2\sqrt{n}$, which
is negligible since $\rotDelta(\set{K}, \set{M}) \in [0, 2n]$.

We can make an analogy with linear algebra.  For two subspaces in
general position, the dimension of the sum is equal to the total
dimension of the subspaces when the total dimension is smaller
than the ambient dimension.  Otherwise, the dimension of the sum
equals the ambient dimension.

\begin{figure}[t]
\begin{center}
\includegraphics[width=0.48\textwidth]{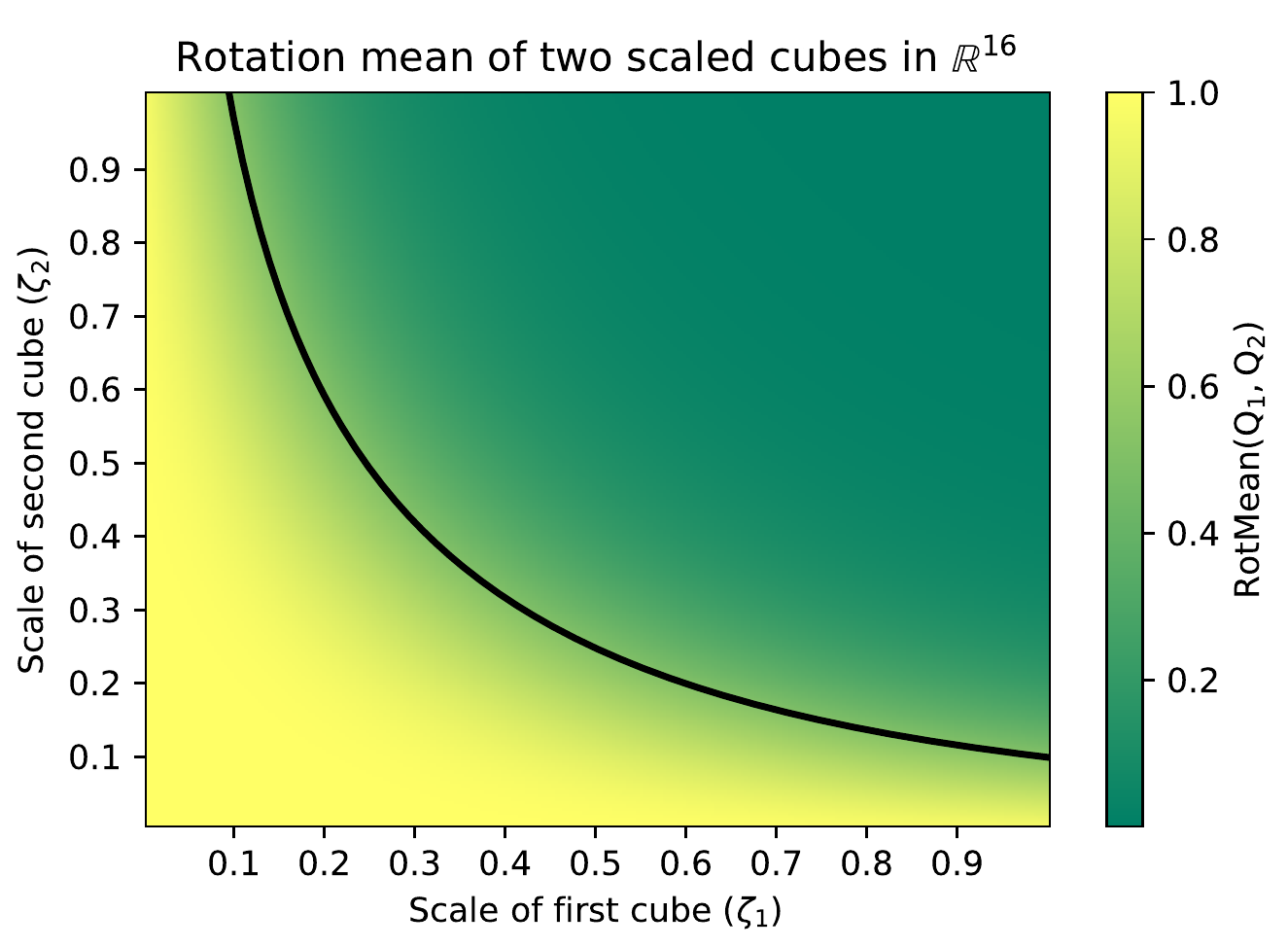} \includegraphics[width=0.48\textwidth]{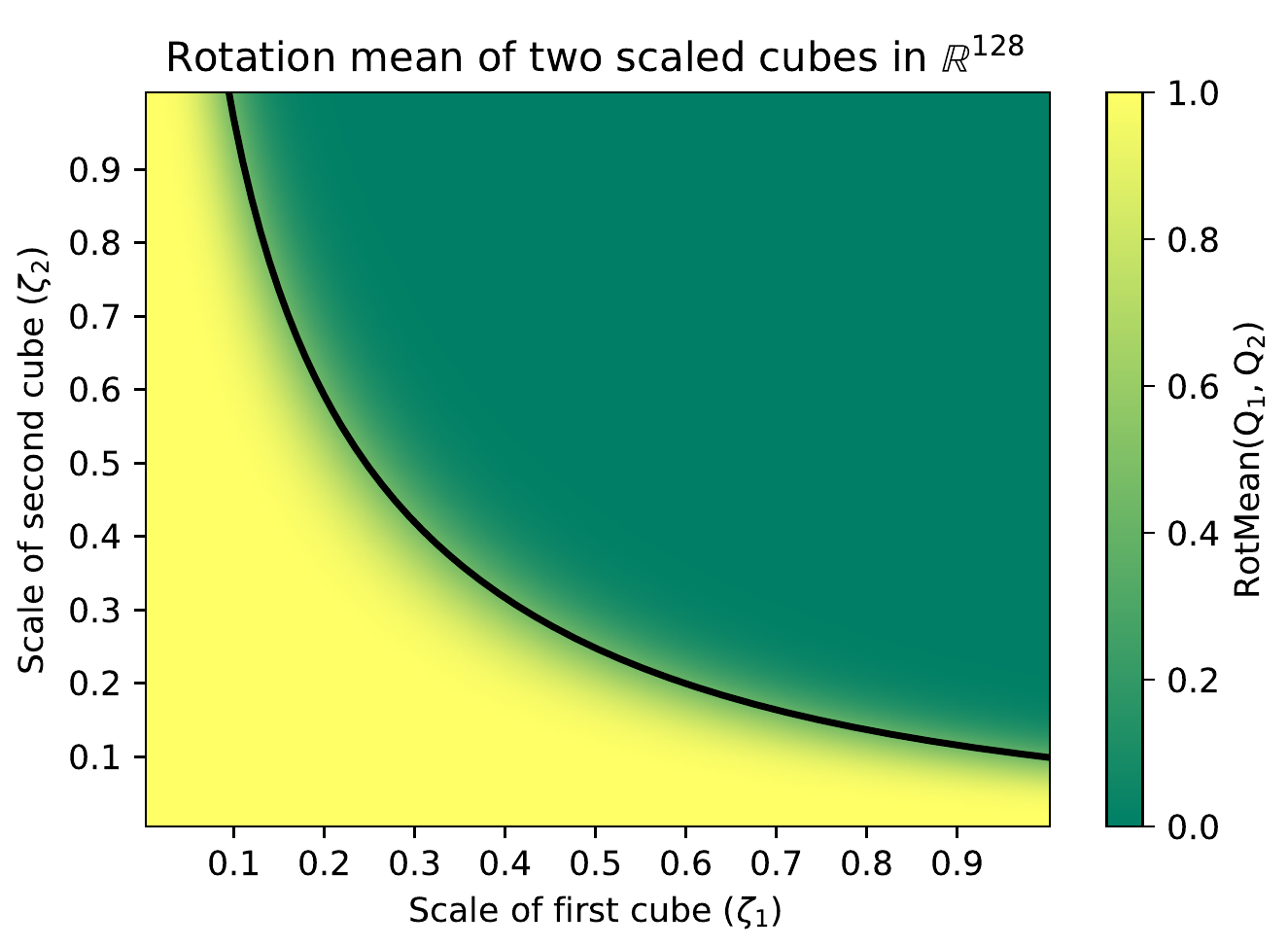}
\caption{\textbf{Phase Transition for Rotation Mean of Two Cubes.}
For scaled cubes $\set{Q}(\zeta) = \zeta \sqrt{2n / \pi} \, \set{Q}^n$ with $n \in \{16,128\}$,
this plot displays the value of $\mathrm{RotMean}(\set{Q}(\zeta_1), \set{Q}(\zeta_2))$
as a function of the scaling parameters $\zeta_1$ and $\zeta_2$.
The black curve traces the asymptotic phase transition~\eqref{eqn:rotmean-asymp}.
The function $\mathrm{RotMean}$ is defined in~\eqref{eqn:rotmean-intro}.} \label{fig:rotvol-phase-map}
\end{center}
\end{figure} 

\begin{example}[Scaled Cubes: Rotation Mean]
Consider the family $\set{Q}(\zeta; n) = \zeta \sqrt{2n/\pi} \, \set{Q}^n$ of scaled cubes.
When $n$ is large, Theorem~\ref{thm:rotmean-intro} and the asymptotic formula~\eqref{eqn:central-scaled-cube}
demonstrate that the function $(\zeta_1, \zeta_2) \mapsto \mathrm{RotMean}(\set{Q}(\zeta_1; n), \set{Q}(\zeta_2; n))$
undergoes a phase transition along the curve
\begin{equation} \label{eqn:rotmean-asymp}
\frac{2}{1 + \sqrt{1 + {\zeta_1}^{-2}}} + \frac{2}{1 + \sqrt{1 + {\zeta_2}^{-2}}} = 1.
\end{equation}
See Figure~\ref{fig:rotvol-phase-map} for a numerical illustration.
\end{example}

\begin{remark}[Related Work]
The global form of Dvoretsky's theorem, due to Bourgain et al.~\cite{BLM88:Minkowski-Sums},
states that a rotation mean $\mtx{O}_1 \set{K} + \dots + \mtx{O}_r \set{K}$
is isomorphic to a Euclidean ball when $r$ is sufficiently large.
Here, the convex body $\set{K}$ is a norm ball, and
the random rotations $\mtx{O}_i$ are independent and uniform on $\SO(n)$.
A bound for the number $r$ of summands can be obtained from
the geometry of the set $\set{K}$.
See~\cite[Sec.~5.6]{artstein2015asymptotic}.
\end{remark}

\subsection{Rigid Motions}

Next, we discuss two problems where we integrate over all
Euclidean proper rigid motions (i.e., translations and rotations).
The first involves random affine slices of a convex body.
The second involves the intersection of randomly positioned convex bodies.
These models are trickier to interpret than the models
involving rotations because the integrals cannot be
regarded as averages.

\begin{figure}[t!]
\begin{center}
\includegraphics[width=0.6\textwidth]{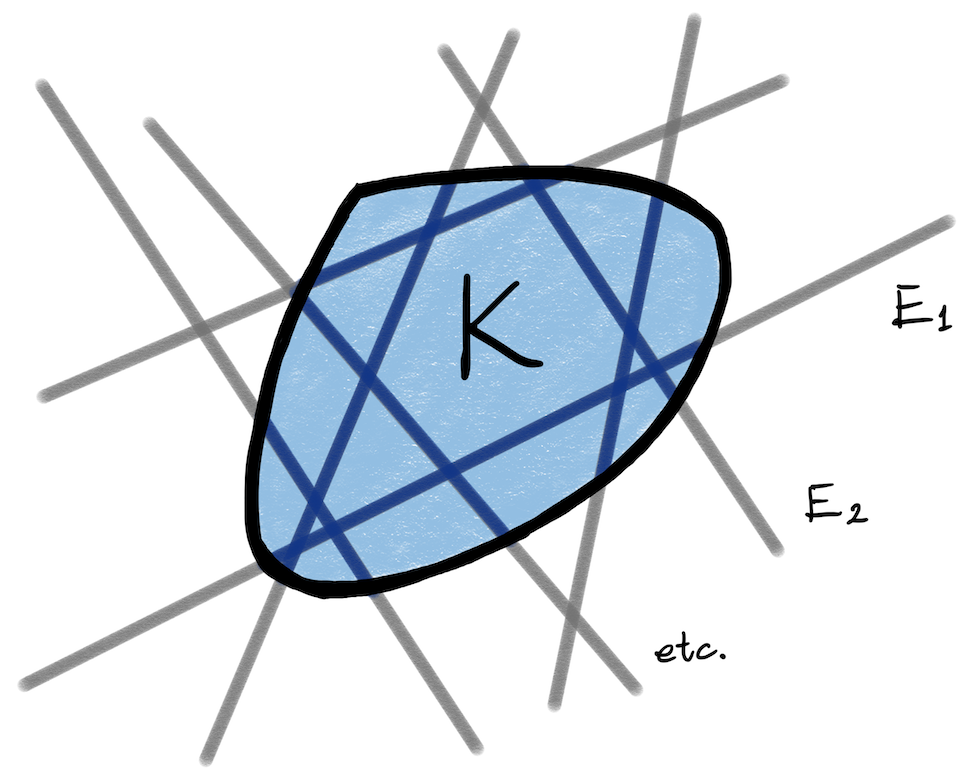}
\caption{\textbf{Intersection with Affine Spaces.}
This picture exhibits slices of a convex body $\set{K}$
by one-dimensional affine spaces $\set{E}_1$, $\set{E}_2$,
etc.  The slicing formula, Fact~\ref{fact:slicing}, asserts
that an appropriate integral of the length of all such affine slices is proportional to the area of the set $\set{K}$.
Furthermore, the measure of the set of affine lines that hit $\set{K}$
is proportional to the perimeter of $\set{K}$.} \label{fig:crofton}
\end{center}
\end{figure}

\subsubsection{Intersection with Random Affine Spaces}

The first problem is dual to the problem of projecting
onto a random subspace:

\begin{quotation} \textcolor{dkblue}{
\textbf{Suppose we slice a convex body with a random
affine space of a given dimension.  How does the total
size of the slices compare with the size of the original body?}}
\end{quotation}

\noindent
Figure~\ref{fig:crofton} contains a schematic.
This problem has an exact solution in terms of intrinsic volumes, a result
known as Crofton's formula (Fact~\ref{fact:slicing}).
We have discovered a sharp phase transition here too.

\begin{bigthm}[Random Slices: Phase Transition] \label{thm:crofton-intro}
Consider a nonempty convex body $\set{K} \subset \R^n$
with central rigid motion volume $\rmdelta(\set{K})$
and variance proxy $\rmsigma^2(\set{K})$, as defined in~Theorem~\ref{thm:rmvol-intro}.
For each affine space dimension $m \in \{0,1,2, \dots, n\}$, form the integral
\begin{equation} \label{eqn:randslice-intro}
\mathrm{RandSlice}_m(\set{K}) := \int_{\Af(m,n)} \frac{\rmwills(\set{K} \cap \set{E})}{\rmwills(\set{K})} \, \mu_m(\diff{\set{E}}) \in [0,1].
\end{equation}
The set $\Af(m,n)$ of $m$-dimensional affine spaces in $\R^n$ is equipped
with a canonical rigid-motion-invariant measure $\mu_m$; see Section~\ref{sec:invariant-Af}.
For a proportion $\alpha \in (0,1)$, set
the transition width
$$
t_{\star}(\alpha) := \big[ 4 \rmsigma^2(\set{K}) \log(1/\alpha) \big]^{1/2} + \tfrac{4}{3} \log(1/\alpha).
$$ Then $m \mapsto \mathrm{RandSlice}_m(\set{K})$ undergoes a phase transition at $m = \rmdelta(\set{K})$
with width controlled by $t_{\star}(\alpha)$:
\begin{equation*}
\begin{aligned}
m &\leq \rmdelta(\set{K}) - t_{\star}(\alpha)
&&\quad\text{implies}\quad &\mathrm{RandSlice}_m(\set{K}) &\leq \alpha; \\
m &\geq \rmdelta(\set{K}) + t_{\star}(\alpha)
&&\quad\text{implies}\quad &\mathrm{RandSlice}_m(\set{K}) &\geq 1 - \alpha.
\end{aligned}
\end{equation*}
\end{bigthm}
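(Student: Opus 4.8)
The plan is to reduce Theorem~\ref{thm:crofton-intro} to the concentration inequality for rigid motion volumes (Theorem~\ref{thm:rmvol-intro}), exactly paralleling the treatment of random projections. First I would establish the combinatorial identity that rewrites the integral $\mathrm{RandSlice}_m(\set{K})$ in terms of the rigid motion volumes of $\set{K}$ itself. By Crofton's formula (Fact~\ref{fact:slicing}), integrating $\intvol_j(\set{K}\cap\set{E})$ over $\set{E}\in\Af(m,n)$ against $\mu_m$ returns an intrinsic volume $\intvol_{j+n-m}(\set{K})$, up to an explicit constant built from $\kappa$'s and $\omega$'s. Translating this through Definition~\ref{def:wintvol} into the rigid motion normalization, I expect the constants to telescope—this is precisely the point of the unusual weights—so that
\begin{equation*}
\int_{\Af(m,n)} \rmwills(\set{K}\cap\set{E})\, \mu_m(\diff{\set{E}})
	= \sum_{i=n-m}^{n} \rmv_i(\set{K}).
\end{equation*}
Dividing by $\rmwills(\set{K})$ and recalling that $\rmI_{\set{K}}$ reverses the indexing, this says
\[
\mathrm{RandSlice}_m(\set{K}) = \sum_{i=n-m}^n \frac{\rmv_i(\set{K})}{\rmwills(\set{K})}
	= \Prob{ \rmI_{\set{K}} \leq m }.
\]

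Once this reformulation is in hand, the phase transition is an immediate corollary of Theorem~\ref{thm:rmvol-intro}. For the lower regime, $m \leq \rmdelta(\set{K}) - t_\star(\alpha)$ gives
\[
\mathrm{RandSlice}_m(\set{K}) = \Prob{ \rmI_{\set{K}} \leq m }
	\leq \Prob{ \rmI_{\set{K}} - \rmdelta(\set{K}) \leq -t_\star(\alpha) }
	\leq \Prob{ \abs{\rmI_{\set{K}} - \rmdelta(\set{K})} \geq t_\star(\alpha) },
\]
and I would check that the choice $t_\star(\alpha) = \big[4\rmsigma^2(\set{K})\log(1/\alpha)\big]^{1/2} + \tfrac43\log(1/\alpha)$ makes the Bernstein bound $2\exp\!\big(\tfrac{-t^2/4}{\rmsigma^2 + t/3}\big)$ at most $\alpha$. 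The standard trick is to split into the two terms: if $t_\star^2/4 \geq \rmsigma^2 t_\star$, i.e. in the sub-exponential tail, the $t/3$ term dominates and one uses $t_\star \geq \tfrac43\log(1/\alpha)$; otherwise the Gaussian term dominates and one uses $t_\star \geq (4\rmsigma^2\log(1/\alpha))^{1/2}$. In each case the exponent is at least $\log(1/\alpha)$, possibly with the factor $2$ absorbed by taking the sum-of-two-terms form rather than the max; I would carry out this elementary bookkeeping carefully so the stated constants come out exactly. The upper regime $m \geq \rmdelta(\set{K}) + t_\star(\alpha)$ is symmetric, using $\Prob{\rmI_{\set{K}} \leq m} \geq 1 - \Prob{\rmI_{\set{K}} - \rmdelta(\set{K}) \geq t_\star(\alpha)}$.

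The main obstacle is the first step: verifying the constant in the Crofton-type identity survives in exactly the form that produces the clean $\Prob{\rmI_{\set{K}}\leq m}$ expression. Crofton's formula in the literature is stated with normalizing constants $c^{m,j}_{n,\,\cdot}$ that are ratios of binomial-type factors times $\kappa_{j+n-m}\kappa_{m-j}/(\kappa_j\kappa_{n-m})$ (or similar), and the rigid motion weight $\omega_{n+1}/\omega_{i+1}$ must conspire with these to leave no residue depending on $m$. I would prove this by a direct computation with the gamma-function expressions in~\eqref{eqn:ball-sphere}, most likely organized through the characteristic polynomial $\bar\chi_{\set{K}}$ in~\eqref{eqn:rmvol-charpoly} so that the summation over $i$ becomes a single polynomial identity, and I would double-check it against a known case such as the scaled cube using Example~\ref{ex:cube}. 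A secondary concern is the degenerate endpoints $m=0$ (where $\set{K}\cap\set{E}$ is generically empty for $\dim\set{K}<n$, so the Euler-characteristic bookkeeping and the convention $\intvol_i(\emptyset)=0$ must be handled) and $m=n$ (where the identity must reduce to $\mathrm{RandSlice}_n(\set{K})=1$); both should follow from the conventions already fixed in the paper, but they warrant an explicit sentence. The remaining details are routine and I would relegate them to Section~\ref{sec:randslice}.
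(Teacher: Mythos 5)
Your proposal follows essentially the same route as the paper: rewrite $\mathrm{RandSlice}_m(\set{K})$ via the slicing formula in rigid-motion-volume normalization to get $\Prob{\smash{\rmI_{\set{K}}} \leq m}$ (this is exactly the paper's Fact~\ref{fact:slicing} summed over $i$, with the constants handled by Lemma~\ref{lem:structure}), and then invert the Bernstein-type tail bound for $\rmI_{\set{K}}$. The structure is right and the endpoint/convention worries you raise are indeed handled by the conventions already in place.

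One concrete point of bookkeeping would trip you up as written: you pass from the one-sided event to $\Prob{\abs{\rmI_{\set{K}} - \rmdelta(\set{K})} \geq t_\star}$ and then invoke the symmetrized bound $2\exp\bigl(\frac{-t^2/4}{\rmsigma^2 + t/3}\bigr)$ from Theorem~\ref{thm:rmvol-intro}. With the stated $t_\star(\alpha)$ the exponential itself is at most $\alpha$, but the prefactor $2$ does \emph{not} get absorbed; you would end up with $2\alpha$, not $\alpha$. The fix is to not symmetrize at all: Theorem~\ref{thm:rmvol-conc} together with Fact~\ref{fact:poisson} gives separate one-sided tail bounds, packaged in the paper as the one-sided Bernstein inequality~\eqref{eqn:rmvol-bernstein}, namely $\Prob{(\rmI_{\set{K}} - \rmdelta(\set{K}))/t \geq 1} \leq \exp\bigl(\frac{-t^2/4}{\rmsigma^2(\set{K}) + \abs{t}/3}\bigr)$ for $t \neq 0$, with no factor of $2$. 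Taking $t = -t_\star$ handles the lower regime and $t = +t_\star$ the upper regime, and solving the quadratic $t^2/4 \geq (\rmsigma^2 + t/3)\log(1/\alpha)$ followed by subadditivity of the square root yields exactly the stated $t_\star(\alpha)$. With that substitution your argument matches the paper's proof.
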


\noindent
See Section~\ref{sec:crofton} for the proof of Theorem~\ref{thm:crofton-intro}.
Figure~\ref{fig:randproj-randslice} contains a numerical demonstration.

A distinctive feature of the affine slicing model is that the total
rigid motion volume $\rmwills(\set{K} \cap \set{E})$ of an affine
slice \emph{never} exceeds $\rmwills(\set{K})$; this point follows
from monotonicity of volumes.
The theorem states that the total rigid motion volume of a random slice is either a negligible proportion
of the maximum or an overwhelming proportion, depending on whether the dimension
$m$ of the affine space is smaller or larger than the central rigid motion volume $\rmdelta(\set{K})$.
The transition occurs over a narrow change in the subspace dimension,
on the order of $[ \rmdelta(\set{K}) \wedge ( (n+1) - \rmdelta(\set{K}) ) ]^{1/2}$.

Therefore, we can interpret the central rigid motion volume $\rmdelta(\set{K})$
as a measure of the ``codimension'' of the body $\set{K}$.  (Recall that the central rigid
motion volume \emph{decreases} as the set $\set{K}$ gets larger!)

\begin{figure}[t]
\centering
\includegraphics[width=0.48\textwidth]{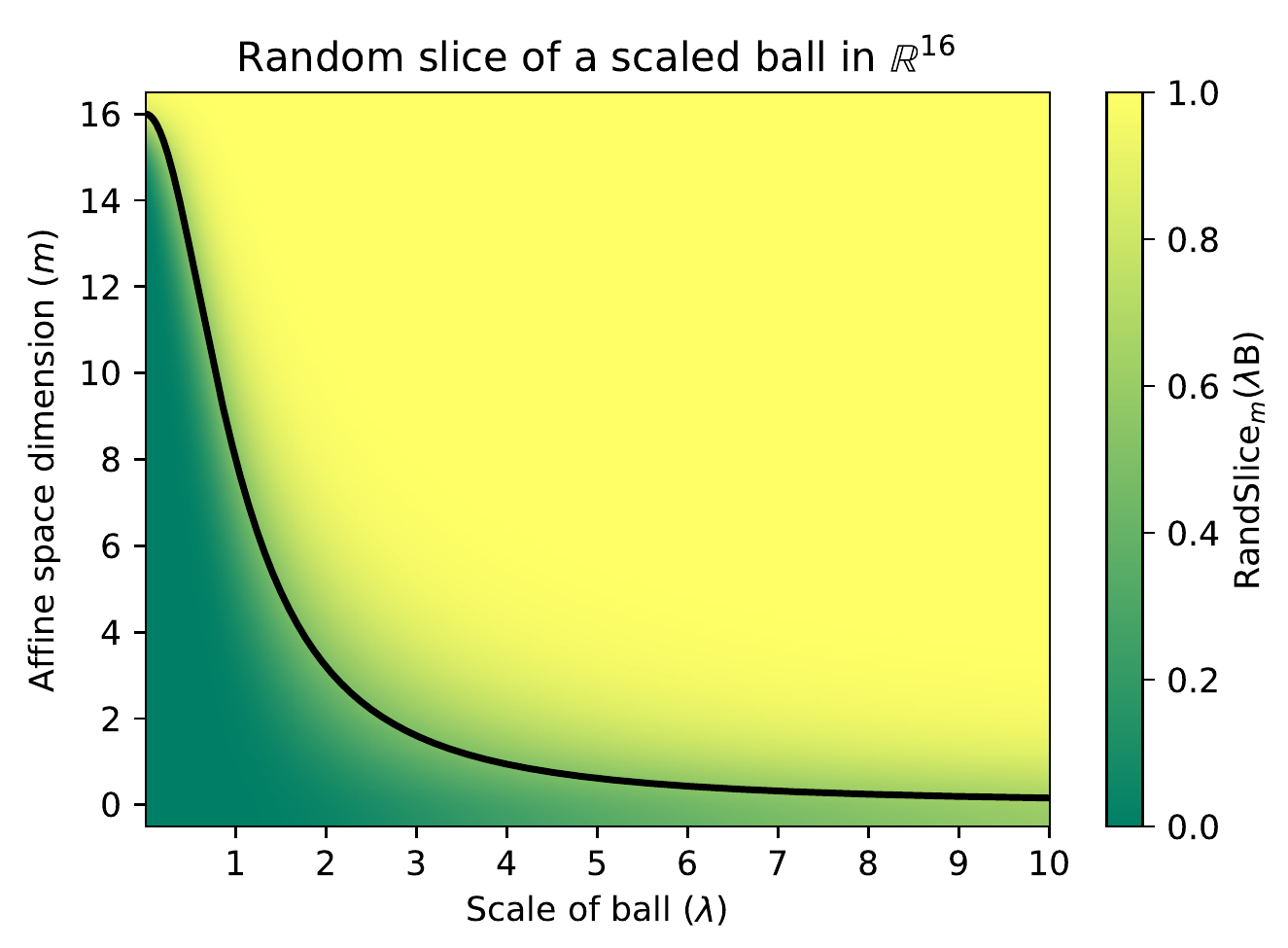} \hspace{1pc}
\includegraphics[width=0.48\textwidth]{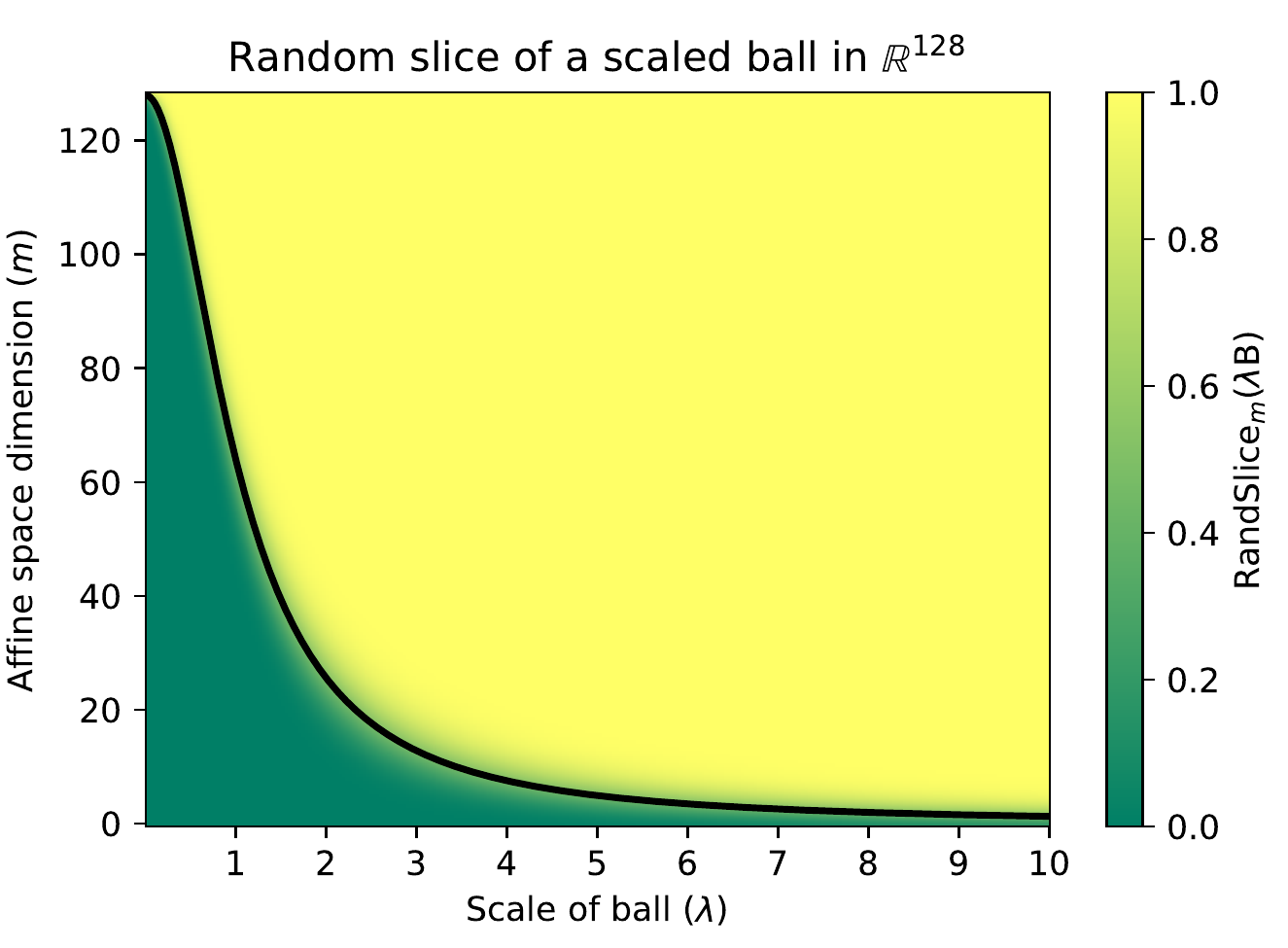}
\caption{\textbf{Phase Transition for Slices of a Ball.}
For a scaled ball $\lambda\ball{n}$, these plots display $\mathrm{RandSlice}_m(\lambda \ball{n})$
as a function of the scaling parameter $\lambda$ and the dimension $m$
of the slice.  The black curve is the asymptotic expression~\eqref{eqn:central-scaled-ball}
for the central rigid motion volume $\rmdelta(\lambda \ball{n}) / n$.
The function $\mathrm{RandSlice}_m$ is defined in~\eqref{eqn:randslice-intro}.}
\label{fig:crofton-phase-map}
\end{figure} 

\begin{example}[Scaled Ball: Random Slice]
Consider the family $\lambda\ball{n}$ of scaled Euclidean balls from Example~\ref{ex:scaled-ball-delta}.
When $n$ is large, Theorem~\ref{thm:crofton-intro}
and the asymptotic formula~\eqref{eqn:central-scaled-ball} show that
\begin{equation*}
\begin{aligned}
\frac{m}{n} &\lessapprox \frac{1}{1 + \lambda^2} &&\quad\text{implies}\quad &\mathrm{RandSlice}_m(\lambda \ball{n}) &\approx 0; \\ \frac{m}{n} &\gtrapprox  \frac{1}{1 + \lambda^2} &&\quad\text{implies}\quad &\mathrm{RandSlice}_m(\lambda \ball{n}) &\approx 1. \end{aligned}
\end{equation*}
Figure~\ref{fig:crofton-phase-map} displays the numerical value $\mathrm{RandSlice}_m(\lambda\ball{n})$
as a function of $\lambda$ and $m$ for two choices of dimension $n$.
The asymptotic phase transition is superimposed. 
\end{example}

\begin{remark}[Related Work]
Dvoretsky's theorem states that every norm ball in $\R^n$
has an affine slice, with dimension on the order of $\log n$,
that is isomorphic to a Euclidean ball.
Milman's proof~\cite{Mil71:New-Proof} shows that a random slice
has the desired property with high probability.
Research in asymptotic convex geometry has elucidated how the structure of the body
affects the dimension of Euclidean slices.
In particular, there is a phase transition when the codimension $m$ of the affine space
passes the squared Gaussian width of the set.
See~\cite[Chaps.~5, 9]{artstein2015asymptotic}
or~\cite[Sec.~9.4]{Ver18:High-Dimensional-Probability} for details.
\end{remark}

\subsubsection{The Kinematic Formula}

The last problem is dual to the question about rotation means:

\begin{quotation} \textcolor{dkblue}{
\textbf{Suppose that  we intersect a fixed convex body with a randomly positioned convex body.
How does the total size of the intersections compare with the size of the original sets?}}
\end{quotation}

\noindent
See Figure~\ref{fig:kinematic} for an illustration.
The celebrated kinematic formula (Fact~\ref{fact:kinematic}) gives the exact solution to this problem. The kinematic formula also exhibits a sharp phase transition.

\begin{figure}[t!]
\begin{center}
\includegraphics[width=0.9\textwidth]{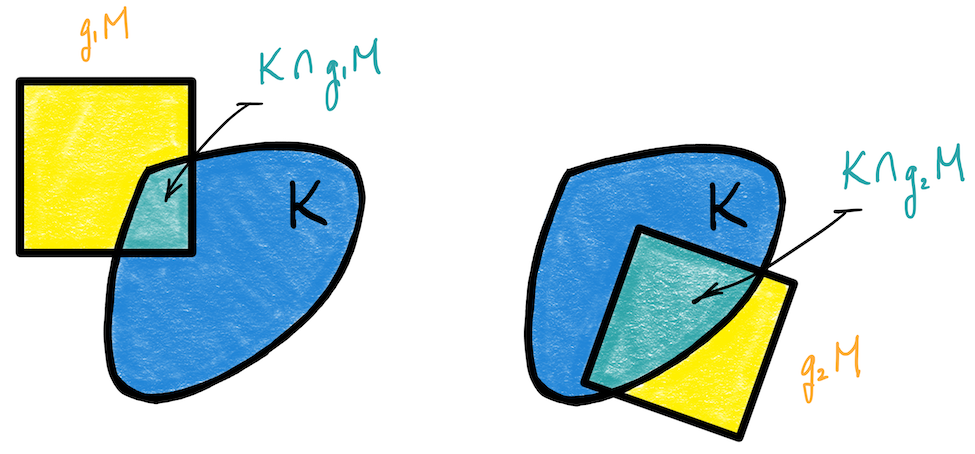}
\caption{\textbf{Kinematic Formula.}  This diagram shows
the intersection of a convex body $\set{K}$ with each of two rigid
transformations $g_1 \set{M}$ and $g_2 \set{M}$ of a convex body $\set{M}$.
The kinematic formula, Fact~\ref{fact:kinematic},
states that an appropriate integral of the area of
all such intersections is proportional to the product of the
area of $\set{K}$ and the area of $\set{M}$.
Moreover, the measure of the set of rigid motions that bring
$\set{M}$ into contact with $\set{K}$ can be expressed
in terms of the area and perimeter of the two sets.} \label{fig:kinematic}
\end{center}
\end{figure}

\begin{bigthm}[Kinematic Formula: Phase Transition] \label{thm:kinematic-intro}
Consider two nonempty convex bodies $\set{K}, \set{M} \subset \R^n$,
and define the sum $\rmDelta(\set{K},\set{M}) := \rmdelta(\set{K}) + \rmdelta(\set{M})$ of the central rigid motion volumes. Form the integral \begin{equation}\label{eqn:kinematic-intro}
\mathrm{Kinematic}(\set{K}, \set{M}) := \int_{\RM(n)} \frac{\rmwills(\set{K} \cap g \set{M})}{\rmwills(\set{K}) \, \rmwills(\set{M})} \, \mu(\diff g)
	\in [0, 1].
\end{equation}
The group $\RM(n)$ of proper rigid motions on $\R^n$ is equipped with a canonical
motion-invariant measure $\mu$; see Section~\ref{sec:invariant-RM}.
For a proportion $\alpha \in (0,1)$, set
the transition width
$$
t_{\star}(\alpha) := \big[ 4 \rmDelta(\set{K},\set{M}) \log(1/\alpha) \big]^{1/2} + \tfrac{4}{3} \log(1/\alpha).
$$ Then $\mathrm{Kinematic}(\set{K}, \set{M})$ undergoes a phase transition at $\rmDelta(\set{K},\set{M}) = n$
with width controlled by $t_{\star}(\alpha)$:
\begin{equation*}
\begin{aligned}
\rmDelta(\set{K},\set{M}) &\leq n - t_{\star}(\alpha)
&&\quad\text{implies}\quad & \mathrm{Kinematic}(\set{K},\set{M}) &\geq 1 - \alpha; \\
\rmDelta(\set{K},\set{M}) &\geq n + t_{\star}(\alpha)
&&\quad\text{implies}\quad & \mathrm{Kinematic}(\set{K},\set{M}) &\leq \alpha.
\end{aligned}
\end{equation*}
\end{bigthm}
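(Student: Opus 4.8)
The plan is to follow the template used for Theorem~\ref{thm:rotmean-intro}: reduce the kinematic integral to a probability for the sum of two independent rigid motion volume random variables, and then invoke the Bernstein concentration of Theorem~\ref{thm:rmvol-intro}. First I would apply the kinematic formula (Fact~\ref{fact:kinematic}) in its rigid motion volume form; with the normalization of Definition~\ref{def:wintvol} this identity becomes a discrete convolution, expressing $\int_{\RM(n)}\rmv_k(\set{K}\cap g\set{M})\,\mu(\diff g)$ as a constant multiple of $\sum_{i+j=n+k}\rmv_i(\set{K})\,\rmv_j(\set{M})$. Summing over $k=0,1,\dots,n$ (all quantities are nonnegative, so Tonelli applies) converts the left-hand side into $\int_{\RM(n)}\rmwills(\set{K}\cap g\set{M})\,\mu(\diff g)$ and replaces the constraint $i+j=n+k$, $0\le k\le n$, by $i+j\ge n$, $0\le i,j\le n$. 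Dividing by $\rmwills(\set{K})\,\rmwills(\set{M})$ and using Definition~\ref{def:wintvol-rv}, in which $\rmv_i(\set{K})/\rmwills(\set{K})=\Prob{\rmI_{\set{K}}=n-i}$, the index reversal turns $i+j\ge n$ into $\rmI_{\set{K}}+\rmI_{\set{M}}\le n$. Hence
\[
\mathrm{Kinematic}(\set{K},\set{M}) = \Prob{ \rmI_{\set{K}} + \rmI_{\set{M}} \leq n },
\]
where $\rmI_{\set{K}}$ and $\rmI_{\set{M}}$ are independent. In particular $\mathrm{Kinematic}(\set{K},\set{M})\in[0,1]$, and $\Expect[\rmI_{\set{K}}+\rmI_{\set{M}}]=\rmDelta(\set{K},\set{M})$, which is why the change-point occurs when $\rmDelta(\set{K},\set{M})=n$.

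Next I would establish concentration for $\rmI_{\set{K}}+\rmI_{\set{M}}$. The proof of Theorem~\ref{thm:rmvol-conc} supplies an exponential-moment bound of Bernstein type for each centered variable $\rmI_{\set{K}}-\rmdelta(\set{K})$ (the estimate from which the tail bound of Theorem~\ref{thm:rmvol-intro} is derived), with variance proxy $\rmsigma^2(\set{K})\le\rmdelta(\set{K})$ and scale parameter $1/3$. Because $\rmI_{\set{K}}$ and $\rmI_{\set{M}}$ are independent, their moment generating functions multiply, so $(\rmI_{\set{K}}+\rmI_{\set{M}})-\rmDelta(\set{K},\set{M})$ obeys a Bernstein bound with variance proxy $\rmsigma^2(\set{K})+\rmsigma^2(\set{M})\le\rmdelta(\set{K})+\rmdelta(\set{M})=\rmDelta(\set{K},\set{M})$ and scale $1/3$. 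A Chernoff argument then yields, for every $t\ge0$,
\[
\Prob{ \abs{ (\rmI_{\set{K}} + \rmI_{\set{M}}) - \rmDelta(\set{K},\set{M}) } \geq t }
	\leq 2\exp\!\left( \frac{-t^2/4}{\rmDelta(\set{K},\set{M}) + t/3} \right),
\]
with each one-sided tail bounded by the same expression without the factor $2$. A direct calculation shows that the choice $t=t_{\star}(\alpha)=[4\rmDelta(\set{K},\set{M})\log(1/\alpha)]^{1/2}+\tfrac{4}{3}\log(1/\alpha)$ makes this one-sided bound at most $\alpha$, since then $t_{\star}(\alpha)^2/4 \ge \big(\rmDelta(\set{K},\set{M})+t_{\star}(\alpha)/3\big)\log(1/\alpha)$.

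Finally I would convert these tail bounds into the asserted dichotomy. Suppose $\rmDelta(\set{K},\set{M})\ge n+t_{\star}(\alpha)$. Then $\{\rmI_{\set{K}}+\rmI_{\set{M}}\le n\}\subseteq\{(\rmI_{\set{K}}+\rmI_{\set{M}})-\rmDelta(\set{K},\set{M})\le -t_{\star}(\alpha)\}$, so the lower-tail estimate gives $\mathrm{Kinematic}(\set{K},\set{M})\le\alpha$. Suppose instead $\rmDelta(\set{K},\set{M})\le n-t_{\star}(\alpha)$. Since $\rmI_{\set{K}}+\rmI_{\set{M}}$ is integer-valued, $\{\rmI_{\set{K}}+\rmI_{\set{M}}>n\}=\{\rmI_{\set{K}}+\rmI_{\set{M}}\ge n+1\}\subseteq\{(\rmI_{\set{K}}+\rmI_{\set{M}})-\rmDelta(\set{K},\set{M})\ge t_{\star}(\alpha)\}$, so the upper-tail estimate gives $\Prob{\rmI_{\set{K}}+\rmI_{\set{M}}>n}\le\alpha$, whence $\mathrm{Kinematic}(\set{K},\set{M})=1-\Prob{\rmI_{\set{K}}+\rmI_{\set{M}}>n}\ge 1-\alpha$.

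I expect the main obstacle to be the first step: extracting from the classical kinematic formula the clean convolution identity in rigid motion volumes, including the exact constant, and then tracking the summation ranges and the index reversal of Definition~\ref{def:wintvol-rv} so that the right-hand side is precisely the probability $\Prob{\rmI_{\set{K}}+\rmI_{\set{M}}\le n}$. Once that identity is in hand, the remaining steps are routine: concentration of a sum of two independent Bernstein random variables and an elementary inequality for $t_{\star}(\alpha)$. The whole argument parallels the proof of Theorem~\ref{thm:rotmean-intro}, with the kinematic formula and rigid motion volumes replacing the rotation mean value formula and rotation volumes.
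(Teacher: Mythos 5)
Your proposal is correct and follows essentially the same route as the paper: rewrite Fact~\ref{fact:kinematic} as a convolution of rigid motion volumes, sum over indices to obtain $\mathrm{Kinematic}(\set{K},\set{M}) = \Prob{\rmI_{\set{K}} + \rmI_{\set{M}} \leq n}$ for independent rigid motion volume random variables, apply the Bernstein bound~\eqref{eqn:rmvol-sum} for the sum (loosened via $\bar{v}(\set{K},\set{M}) \leq \rmDelta(\set{K},\set{M})$), and invert the tail with $t_{\star}(\alpha)$. Note that your inequality $\rmI_{\set{K}}+\rmI_{\set{M}} \leq n$ is the direction actually consistent with the index reversal in Definition~\ref{def:wintvol-rv} and with the stated dichotomy, so your bookkeeping is if anything more careful than the displayed formula~\eqref{eqn:kinematic-wills-pf}.
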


\noindent
Section~\ref{sec:kinematic} contains the proof of Theorem~\ref{thm:kinematic-intro}.

Theorem~\ref{thm:kinematic-intro} asserts that the total rigid motion volume
of an intersection $\set{K} \cap g \set{M}$, integrated over proper motions $g$,
does not exceed the product of the total rigid motion volumes of the two bodies.
When the total ``codimension'' $\rmDelta(\set{K}, \set{M})$ of the two sets
is smaller than the ambient dimension $n$, the integral of the total rotation
volume is nearly as large as possible.  In the complementary case, it is
negligible.  The transition takes place as the total ``codimension''
changes by about $\sqrt{n}$, or less.

\begin{example}[Scaled Balls: Kinematics]
Consider the family $\lambda \ball{n}$ of scaled Euclidean balls.
For large $n$, Theorem~\ref{thm:kinematic-intro} and the asymptotic formula~\eqref{eqn:central-scaled-ball}
demonstrate that the function $(\lambda_1,\lambda_2) \mapsto \mathrm{Kinematic}(\lambda_1\ball{n}, \lambda_2\ball{n})$
undergoes a phase transition at the curve
\begin{equation} \label{eqn:kinematic-asymp}
\frac{1}{1 + \lambda_1^2}+\frac{1}{1 + \lambda_2^2} = 1
\quad\text{or equivalently}\quad \lambda_1\lambda_2=1.
\end{equation}
See Figure~\ref{fig:kinematic-phase-map} for a numerical illustration.
\end{example}

\subsection{Proof Strategy}

All four of the phase transition results follow from considerations
that are similar to the argument behind Theorem~\ref{thm:randproj-intro},
the result on random projections.  First, we use the appropriate weighted
intrinsic volumes to rewrite a classical integral geometry formula as an ordinary
convolution.  This step allows us to express the value of the geometric integral as a probability
involving weighted intrinsic volume random variables.  Last, we invoke
concentration of the weighted intrinsic volumes to see that the probability
undergoes a phase transition.  The proofs appear in Sections~\ref{sec:moving-flats}
and~\ref{sec:moving-bodies}.

\begin{figure}[t]
\begin{center}
\includegraphics[width=0.48\textwidth]{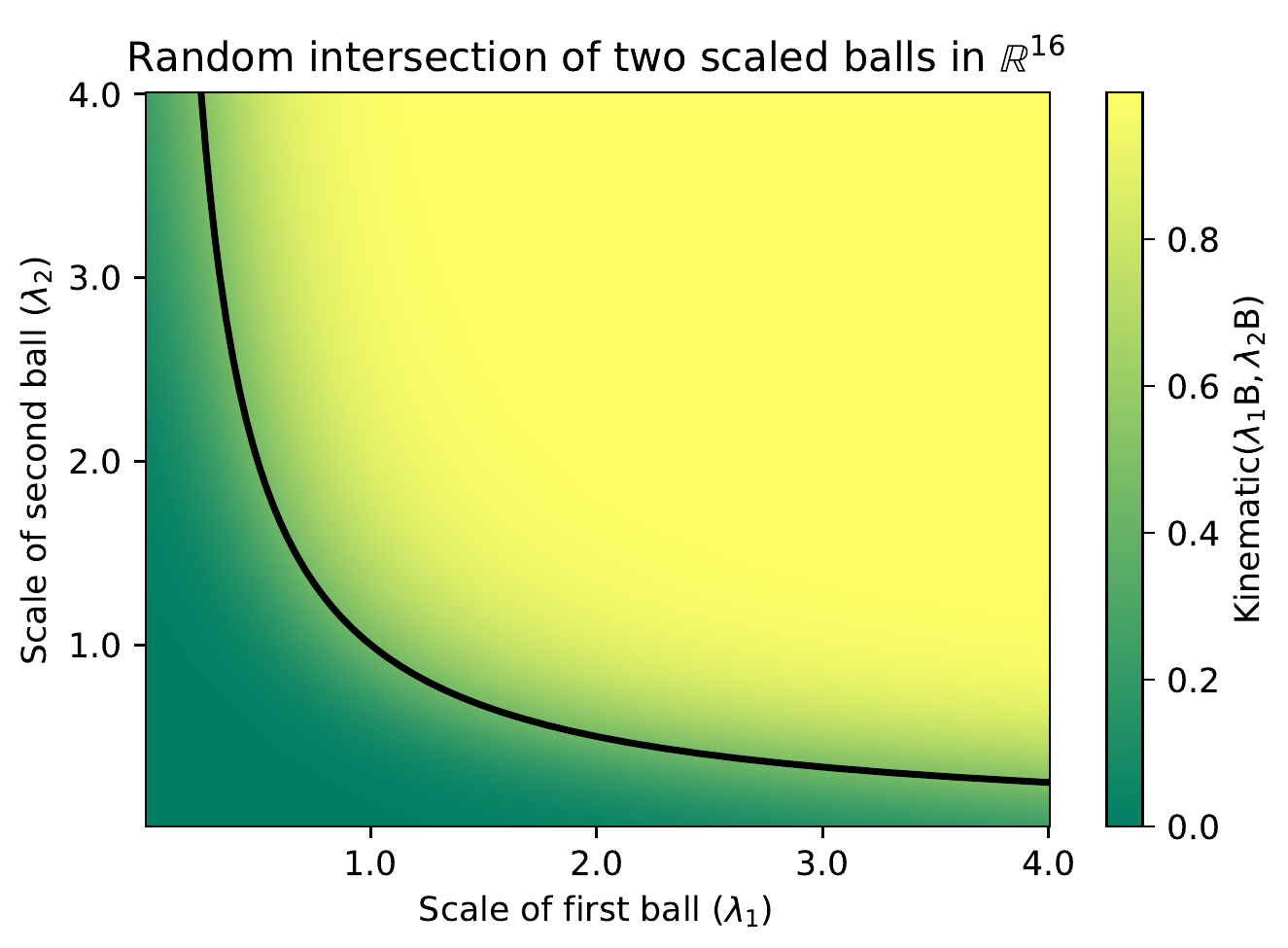} \includegraphics[width=0.48\textwidth]{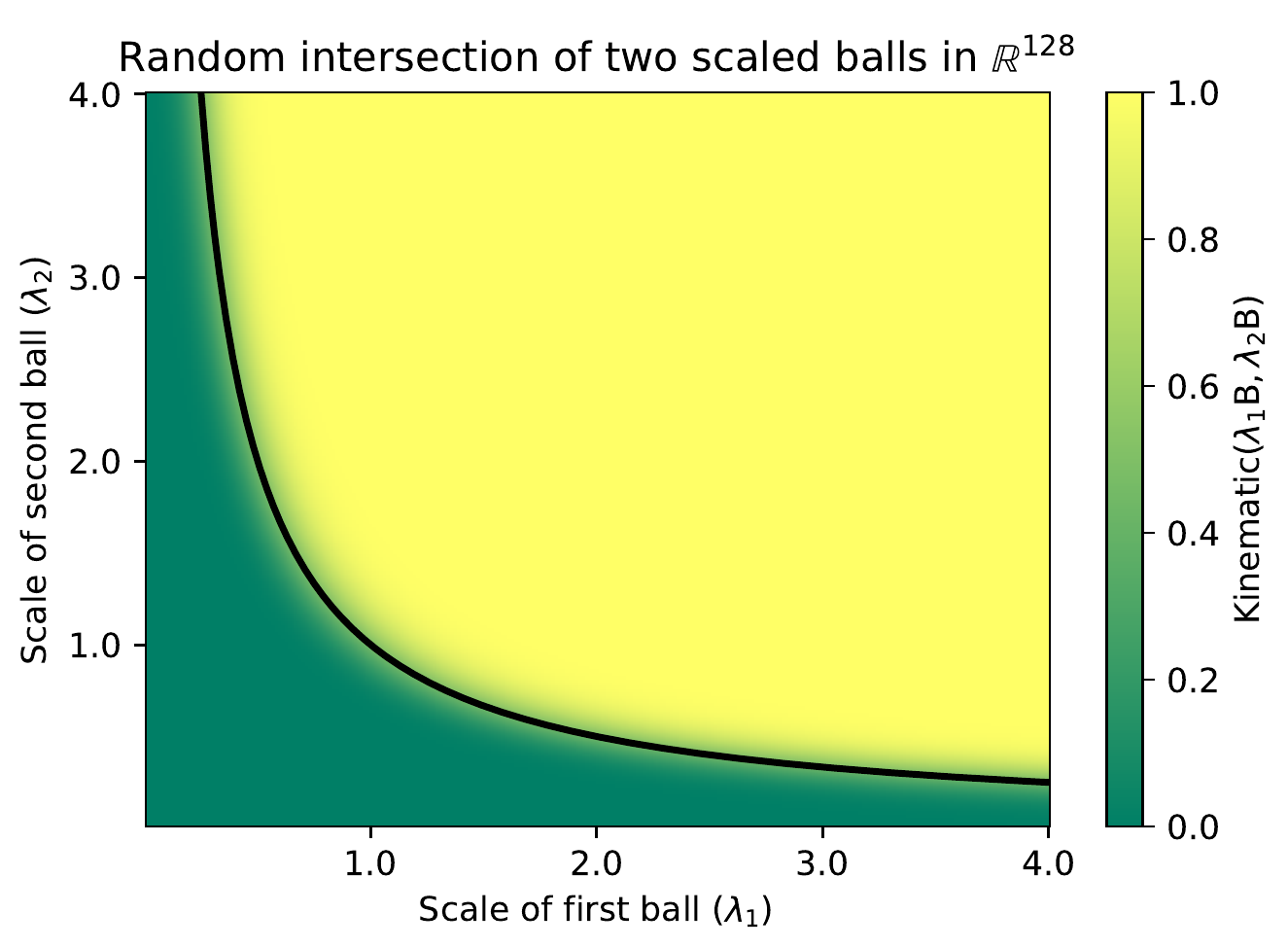}
\caption{\textbf{Phase Transition for Intersection of Two Moving Balls.}
For two scaled balls, this plot displays $\mathrm{Kinematic}(\lambda_1 \ball{n}, \lambda_2\ball{n})$
as a function of the scaling parameters $\lambda_1$ and $\lambda_2$.
The black curve is the asymptotic phase transition~\eqref{eqn:kinematic-asymp}.
The function $\mathrm{Kinematic}$ is defined in~\eqref{eqn:kinematic-intro}.} \label{fig:kinematic-phase-map}
\end{center}
\end{figure}

\section{Concentration via Generating Functions} \label{sec:genfun}

We use the entropy method to study the fluctuations of the weighted intrinsic volume random
variables about their mean.  This approach depends on relationships between
the cumulant generating function and its derivatives.
The modern role of these ideas in probability can be traced to the work of
Ledoux~\cite{Led96:Talagrands-Deviation}, but the approach has deeper roots
in statistical mechanics and information theory.  Our presentation is
based on Maurer's article~\cite{Mau12:Thermodynamics-Concentration}.

The development in this section does not depend on properties of
the intrinsic volumes, so it is more transparent to state the
basic results for an arbitrary bounded random variable.  We include detailed
cross-references, so many readers will prefer to continue to the
novel material in the next section.

\subsection{Moment Generating Functions}\label{sub:mgf-cgf}
We begin with the central objects of study: the moment generating function of a random variable
and the logarithm of the moment generating function.

\begin{definition}[Moment Generating Functions]
Let $Y$ be a bounded, real-valued random variable.
The \emph{moment generating function} (mgf) of $Y$
is defined as
\begin{equation} \label{eqn:mgf}
m_{Y}(\theta) := \Expect \econst^{\theta Y}
\quad\text{for $\theta \in \R$.}
\end{equation}
The \emph{cumulant generating function} (cgf) of $Y$
is the logarithm of the mgf:
\begin{equation} \label{eqn:cgf}
\xi_{Y}(\theta) := \log m_Y(\theta) = \log \Expect \econst^{\theta Y}
\quad\text{for $\theta \in \R$.}
\end{equation}
\end{definition}

We can link the cgf $\xi_Y$ to bounds on the tails of the random variable $Y$
using a classic result, often called the Cram\'er--Chernoff method.

\begin{fact}[Tails and Cgfs] \label{fact:cgf-tail}
Let $Y$ be a bounded random variable with cgf $\xi_Y$.  For all $t \geq 0$,
\begin{align*}
\log \Prob{ Y \geq +t }
	&\leq \inf_{\theta > 0}\ \left[ \xi_Y(\theta) - \theta t \right]; \\
\log \Prob{ Y \leq -t }
	&\leq \inf_{\theta < 0}\ \left[ \xi_Y(\theta) + \theta t \right].
\end{align*}\end{fact}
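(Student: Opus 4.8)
The plan is to establish this via the standard Cram\'er--Chernoff (exponential Markov) argument, which requires nothing beyond boundedness of $Y$ and the monotonicity of the exponential. First I would handle the upper tail. Fix $t \geq 0$ and any $\theta > 0$. Since $x \mapsto \econst^{\theta x}$ is increasing, the event $\{Y \geq t\}$ coincides with the event $\{\econst^{\theta Y} \geq \econst^{\theta t}\}$. Applying Markov's inequality to the nonnegative random variable $\econst^{\theta Y}$—which has finite expectation because $Y$ is bounded—gives
\begin{equation*}
\Prob{ Y \geq t } = \Prob{ \econst^{\theta Y} \geq \econst^{\theta t} } \leq \econst^{-\theta t} \, \Expect \econst^{\theta Y} = \econst^{\theta t} \cdot \econst^{\xi_Y(\theta)} \cdot \econst^{-2\theta t},
\end{equation*}
wait—more cleanly, $\Prob{Y \geq t} \leq \econst^{-\theta t}\, m_Y(\theta) = \exp(\xi_Y(\theta) - \theta t)$. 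Taking logarithms yields $\log \Prob{Y \geq t} \leq \xi_Y(\theta) - \theta t$ for every $\theta > 0$, and since the left side does not depend on $\theta$, I may pass to the infimum over $\theta > 0$ on the right.

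For the lower tail I would apply the same reasoning to $-Y$, or equivalently work directly: for $\theta < 0$, the map $x \mapsto \econst^{\theta x}$ is decreasing, so $\{Y \leq -t\}$ equals $\{\econst^{\theta Y} \geq \econst^{-\theta t}\}$, and Markov's inequality gives $\Prob{Y \leq -t} \leq \econst^{\theta t}\, m_Y(\theta) = \exp(\xi_Y(\theta) + \theta t)$. Taking logarithms and the infimum over $\theta < 0$ gives the second inequality. A small technical point worth noting: when $t = 0$ the bound $\Prob{Y \geq 0} \leq 1$ is automatic and the infimum is still $\leq 0$ by letting $\theta \to 0^+$ (since $\xi_Y(0) = 0$), so the statement holds trivially there; no special casing is really needed.

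There is essentially no obstacle here—the result is a textbook lemma—so the only thing to be careful about is bookkeeping: ensuring $m_Y(\theta)$ is finite (guaranteed by boundedness of $Y$, so $\Expect \econst^{\theta Y} \leq \econst^{|\theta|\,\|Y\|_\infty} < \infty$ for all real $\theta$), and correctly tracking the sign of $\theta$ in each of the two cases so that the inequality direction in Markov's step is right. I would present it in two short symmetric paragraphs, one per tail, citing Markov's inequality and the definitions \eqref{eqn:mgf} and \eqref{eqn:cgf}.
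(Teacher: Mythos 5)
Your argument is correct and is exactly the paper's proof: the authors dispose of this fact by noting that one applies Markov's inequality to $\exp(\theta Y)$ and cites a standard reference. The only blemish is the garbled intermediate display in your first paragraph (the ``wait'' line), which you should delete in favor of the clean version that follows it.
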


\noindent
The proof is easy. Just apply Markov's inequality to the random variable $\exp(\theta Y)$.
For example, see~\cite[Sec.~2.2]{boucheron2013concentration}. 

Let us record a simple but important translation identity for the cgf:
\begin{equation}\label{eqn:cgf-shift}
 \xi_{Y+c}(\theta) = \xi_Y(\theta)+c\theta
 \quad\text{for each $c \in \R$.}
\end{equation}
This result allows us to focus on zero-mean random variables.

Cgfs are particularly useful for studying a sum of independent random variables.
Indeed, if $Y$ and $Z$ are independent, then
\begin{equation} \label{eqn:cgf-indep}
\xi_{Y+Z}(\theta) = \xi_Y(\theta) + \xi_Z(\theta)
\quad\text{for all $\theta \in \R$.}
\end{equation}
This identity is easy consequence of the definition~\eqref{eqn:cgf}
and independence.

\subsection{The Gibbs Measure} 

To extract concentration inequalities from Fact~\ref{fact:cgf-tail},
it suffices to obtain estimates for the cgf $\xi_Y$.
A powerful method for controlling the cgf $\xi_Y$ is to bound its derivatives.  The derivatives of $\xi_Y$ can be expressed compactly in terms of
the moments of a probability distribution related to $Y$.

Let $\varrho_Y$ be the probability measure on the real line that
gives the distribution of the bounded real random variable $Y$.
For a parameter $\theta \in \R$,
we can define another probability distribution via
$$
\diff{\varrho_Y^{\theta}}(y)
	:= \frac{\econst^{\theta y}\idiff{\varrho_Y(y)}}{m_Y(\theta)}
	\quad\text{for $y \in \R$.}
$$
In the thermodynamics literature, the distribution $\varrho_Y^{\theta}$ is called
the \emph{Gibbs measure} at inverse temperature $\theta$, or the \term{canonical ensemble}.
Applied probabilists often call this object an \emph{exponential tilting} of the distribution.
Note that we allow the inverse temperature $\theta$ to take nonpositive values.

Since the random variable $Y$ is bounded, we can compute derivatives of
the generating functions $m_Y$ and $\xi_Y$ by passing the derivative
through the expectation.
The first derivative of the cgf satisfies
\begin{equation} \label{eqn:thermal-expect}
\xi_Y'(\theta) 
	= \frac{m_Y'(\theta)}{m_Y(\theta)}
	= \frac{\Expect[ Y \econst^{\theta Y} ]}{\Expect \econst^{\theta Y} }.
\end{equation}
The quantity $\xi_Y'(\theta)$ coincides with the mean
of the Gibbs measure $\varrho_Y^{\theta}$.  It is also called the
\term{thermal mean} of $Y$ at inverse temperature $\theta$.
Turning to the second derivative, we find that
\begin{equation} \label{eqn:thermal-var}
\xi_Y''(\theta)
	= \frac{m_Y''(\theta)}{m_Y(\theta)} - \left( \frac{m_Y'(\theta)}{m_Y(\theta)} \right)^2 \\
	= \frac{\Expect[ Y^2 \econst^{\theta Y} ]}{\Expect \econst^{\theta Y}}
	- \left( \frac{\Expect[ Y \econst^{\theta Y} ]}{\Expect \econst^{\theta Y} } \right)^2.
\end{equation}
The quantity $\xi_Y''(\theta)$ coincides with the variance
of the Gibbs measure $\varrho_Y^{\theta}$.  It is also known
as the \term{thermal variance} of $Y$ at inverse temperature $\theta$.

The derivatives of the cgf $\xi_Y$ at zero have special significance.  Indeed,
\begin{equation} \label{eqn:cgf-zero}
\xi_Y(0) = 0; \qquad
\xi_Y'(0) = \Expect Y; \qquad
\xi_Y''(0) = \Var[Y].
\end{equation}
These relations follow instantly by specializing the
formulas~\eqref{eqn:cgf},~\eqref{eqn:thermal-expect}, and~\eqref{eqn:thermal-var}.
The interpretation of $\xi_Y'$ as a mean has further consequences.
In particular,
\begin{equation} \label{eqn:thermal-mean-extreme}
\inf Y \leq \xi_Y'(\theta) \leq \sup Y
\quad\text{for all $\theta \in \R$.}
\end{equation}
The interpretation of $\xi_Y''$ as a variance also has ramifications.
Since variances are nonnegative,
$\xi_Y''(\theta) \geq 0$, which implies that $\xi_Y$ is a convex function. Moreover, the thermal variance is invariant under shifts of the random variable:
\begin{equation} \label{eqn:thermal-var-shift}
\xi_{Y + c}''(\theta) = \xi_{Y}''(\theta)
\quad\text{for all $c \in \R$ and all $\theta \in \R$.}
\end{equation}
This point follows from~\eqref{eqn:cgf-shift}. 

\subsection{Differential Inequalities}

As mentioned, a natural mechanism for controlling the cgf $\xi_Y$
is to bound the thermal mean $\xi_Y'$ over an interval,
which limits the growth of the cgf.
The thermal mean, in turn, can be calculated from the thermal variance via integration:
\begin{equation}\label{eqn:thermal-mean}
\xi_Y'(\theta)-\Expect Y \overset{\eqref{eqn:cgf-zero}}{=} \xi_Y'(\theta)-\xi_Y'(0) = \int_0^\theta \xi_Y''(s) \idiff{s}
	= - \int_{\theta}^0 \xi_Y''(s) \idiff{s}
	\quad\text{for $\theta \in \R$.}
\end{equation}
In many situations, we can bound the thermal variance $\xi_Y''$ in terms
of the thermal mean $\xi_Y'$. For instance, suppose that 
\begin{equation*}
  \xi_Y''(\theta) \leq a\cdot \xi_Y'(\theta)
  \quad\text{for some $a > 0$.}
\end{equation*}
Then \eqref{eqn:thermal-mean}
leads to an inequality relating the thermal mean $\xi_Y'$ and the cgf $\xi_Y$.

To solve this kind of differential inequality, we rely on a 1901 theorem
of Petrovitch~\cite{Pet01:Sur-Maniere}.  Compare this statement with
the more familiar result that Gr{\"o}nwall derived in 1919.

\begin{fact}[Petrovitch] \label{fact:petrovitch}
Let $g : \R \to \R$ be the solution to the ordinary differential equation
$$
\begin{cases}
g'(t) = u( t; g(t) ) & \text{for $t \in \R$}; \\
g(0) = 0,
\end{cases}
$$
where $u : \R^2 \to \R$ is any function.
For a function $f : \R \to \R$ with the boundary condition $f(0) = 0$,
the differential inequality
\begin{align*}
f'(t) \leq u( t; f(t) )
	\quad\text{on $t > 0$}
	\quad\text{implies}\quad
	f(t) &\leq g(t)
	\quad\text{on $t \geq 0$}; \\
f'(t) \geq u( t; f(t) )
	\quad\text{on $t < 0$}
	\quad\text{implies}\quad
	f(t) &\leq g(t)
	\quad\text{on $t \leq 0$}.
\end{align*}\end{fact}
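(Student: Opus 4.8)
The plan is to prove the Petrovitch comparison theorem by a continuity/bootstrap argument, exploiting that $g$ is an exact solution while $f$ is a subsolution. I will treat the two assertions (behavior on $t > 0$ and on $t < 0$) separately, since the inequality directions differ.

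For the first assertion, define $h(t) := g(t) - f(t)$ for $t \geq 0$. We know $h(0) = 0$, and I want to show $h(t) \geq 0$ for all $t \geq 0$. The key computation is that at any point $t_0 > 0$ where $f(t_0) = g(t_0)$, the derivatives satisfy $f'(t_0) \leq u(t_0; f(t_0)) = u(t_0; g(t_0)) = g'(t_0)$, so $h'(t_0) \geq 0$. This says $h$ cannot strictly decrease through zero from above. To turn this local fact into a global statement I would use the standard trick: fix $\eps > 0$ and consider the perturbed comparison, or argue by contradiction. Suppose $h(t_1) < 0$ for some $t_1 > 0$. Let $t_0 := \sup\{ t \in [0, t_1] : h(t) \geq 0\}$; by continuity $h(t_0) = 0$ and $h < 0$ on $(t_0, t_1]$. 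But then for $t$ slightly larger than $t_0$ we have $f(t) > g(t)$ with $f(t_0) = g(t_0)$, which forces $f'(t_0) \geq g'(t_0)$ in a suitable one-sided sense, contradicting $f'(t_0) \leq g'(t_0)$ unless there is equality; to rule out the degenerate equality case one perturbs $g$ to $g_\eps$ solving $g_\eps' = u(t; g_\eps) + \eps$ with $g_\eps(0) = \eps$, obtains strict inequality $h_\eps > 0$ on any interval where $g_\eps$ exists, and lets $\eps \downarrow 0$. The second assertion, on $t < 0$, is handled symmetrically: with $t$ negative the hypothesis $f'(t) \geq u(t; f(t))$ together with $f(0) = g(0) = 0$ again yields, after reversing time via $\tilde f(s) := f(-s)$, $\tilde g(s) := g(-s)$, a subsolution comparison on $s > 0$, and the same argument gives $f(t) \leq g(t)$ for $t \leq 0$.

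The main obstacle is the lack of any regularity or Lipschitz hypothesis on $u$: Fact~\ref{fact:petrovitch} as stated assumes only that $g$ is \emph{a} solution of the ODE and $f$ is differentiable, with no uniqueness or continuity assumed for $u$. This means I cannot invoke Grönwall-type estimates or standard ODE uniqueness; instead everything must be extracted from the two structural facts that (i) $g$ is an exact solution and (ii) $f$ is a one-sided subsolution with matching initial value. The $\eps$-perturbation of $g$ is what makes the contradiction argument go through without regularity on $u$, since it converts ``$h$ touches zero'' into ``$h_\eps$ stays strictly positive,'' and the only input needed is that $g_\eps$ exists on the relevant interval and converges to $g$ as $\eps \to 0$ — which we may assume, since the statement posits $g$ as a given global solution and the application in this paper (to the concrete ODEs arising from the thermal-variance bounds) has explicit, well-behaved $u$. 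A secondary point to handle carefully is the sign bookkeeping in the $t < 0$ case: after the time reversal the inequality $f'(t) \geq u(t; f(t))$ becomes $\tilde f'(s) \leq -u(-s; \tilde f(s)) =: \tilde u(s; \tilde f(s))$, and one must check that $\tilde g$ solves the correspondingly reversed ODE, so that the first case applies verbatim.
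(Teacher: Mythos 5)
The paper does not prove this statement: it is quoted as a known result of Petrovitch (1901), with a citation in place of a proof. So your proposal is being measured against the literature, not against an argument in the text. Your overall architecture --- first-crossing point, strict comparison for an $\eps$-perturbed supersolution $g_\eps$ with $g_\eps(0) = \eps$ and $g_\eps' = u + \eps$, then $\eps \downarrow 0$ --- is the standard and correct way to prove comparison theorems without Gr\"onwall, and your sign bookkeeping for the time-reversed case $t<0$ is right.

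There is, however, one genuine gap, and it sits exactly where you wave it away: the claim that $g_\eps \to g$ as $\eps \downarrow 0$. For an arbitrary $u$ this is false, and in fact the Fact as literally stated (``$u$ is any function'', ``$g$ is \emph{the} solution'') is false without an implicit uniqueness hypothesis. Take $u(t,y) = 2\sqrt{\max(y,0)}$, $g \equiv 0$ (a valid solution with $g(0)=0$), and $f(t) = t^2$ for $t \ge 0$: then $f'(t) = 2t = u(t,f(t))$, so the differential inequality holds, yet $f(t) > g(t)$ for $t>0$. Your perturbed solutions $g_\eps$ converge to the \emph{maximal} solution ($t^2$ here), not to the given $g$, so the limiting step delivers only $f \le \lim_\eps g_\eps$. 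The fix is to make the uniqueness hypothesis explicit: if $u$ is locally Lipschitz in its second argument (or, as in the paper's sole application, Lemma~\ref{lem:psi'topsi}, affine in $y$), then solutions are unique, continuous dependence gives $g_\eps \to g$, and your argument closes --- indeed in that case an even shorter route is available, since $h = f - g$ satisfies $h' \le L\abs{h}$ with $h(t_0)=0$ at any first crossing, and integrating $\econst^{-Lt}h$ rules out $h>0$ directly. You should either add the Lipschitz hypothesis or state the conclusion relative to the maximal solution; as written, ``we may assume'' convergence of $g_\eps$ is precisely the point at which a counterexample enters.
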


In other words, we can solve a differential inequality by means of the
ordinary differential equation that arises when we replace the inequality
with an equality.  This step is analogous with the Herbst argument
in the entropy method~\cite[Chap.~6]{boucheron2013concentration}.

\subsection{Poisson Cgf Bound}

We illustrate this strategy by showing how a simple differential
inequality leads to a Poisson-type cgf bound.

\begin{lemma}[Poisson Cgf Bound] \label{lem:psi'topsi}
Suppose that $Y$ is a zero-mean random variable.  When $v \geq 0$,
\begin{align}
&\xi_Y'(\theta) \leq a \xi_Y(\theta) + v \theta \quad\text{on $\theta > 0$}
\quad\text{implies}\quad
	\xi_Y(\theta) \leq v \cdot \frac{\econst^{+a\theta} - a\theta - 1}{a^2} \quad\text{on $\theta \geq 0$};  \label{eqn:mgfY+} \\
&\xi_Y'(\theta) \geq a \xi_Y(\theta) + v \theta \quad \text{on $\theta < 0$}
\quad\text{implies}\quad
	\xi_Y(\theta) \leq v \cdot \frac{\econst^{-a\theta} + a\theta - 1}{a^2} \quad\text{on $\theta \leq 0$}. \label{eqn:mgfY-}
\end{align}
\end{lemma}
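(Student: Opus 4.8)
The plan is to read each implication as a first-order differential inequality for the cumulant generating function $\xi_Y$ and to close it with the Petrovitch comparison principle (Fact~\ref{fact:petrovitch}). Before starting, I would record two elementary facts about the cgf of a bounded zero-mean $Y$: that $\xi_Y(0) = \log \Expect \econst^{0} = 0$, which supplies the boundary condition Fact~\ref{fact:petrovitch} requires; and that $\xi_Y(\theta) \geq 0$ for all $\theta$, by Jensen's inequality $\Expect \econst^{\theta Y} \geq \econst^{\theta \Expect Y} = 1$. I would also note that $\xi_Y$ is smooth (indeed real-analytic) because $Y$ is bounded, which legitimizes differentiating under the expectation and invoking the comparison principle.

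For \eqref{eqn:mgfY+}, I would set $f := \xi_Y$ and $u(\theta;y) := a y + v\theta$, so that the hypothesis reads exactly $f'(\theta) \leq u(\theta;f(\theta))$ on $\theta > 0$, with $f(0) = 0$. The companion ODE $g'(\theta) = a g(\theta) + v\theta$, $g(0) = 0$, is constant-coefficient linear; solving it with an integrating factor (or guessing $g(\theta) = C\econst^{a\theta} + \alpha\theta + \beta$ and matching coefficients) gives $g(\theta) = v\,(\econst^{a\theta} - a\theta - 1)/a^2$, which a one-line substitution confirms solves the equation and vanishes at $0$. Fact~\ref{fact:petrovitch} then yields $\xi_Y(\theta) = f(\theta) \leq g(\theta)$ on $\theta \geq 0$, which is the claim.

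For \eqref{eqn:mgfY-} the same template applies, but I would first reshape the comparison ODE. Since $\xi_Y \geq 0$ and $a > 0$, on $\theta < 0$ the hypothesis can be weakened to
\[
\xi_Y'(\theta)\ \geq\ a\,\xi_Y(\theta) + v\theta\ \geq\ -a\,\xi_Y(\theta) + v\theta .
\]
Now take $u(\theta;y) := -a y + v\theta$; then $f'(\theta) \geq u(\theta;f(\theta))$ on $\theta < 0$ with $f(0) = 0$, so the second half of Fact~\ref{fact:petrovitch} applies. The companion ODE $g'(\theta) = -a g(\theta) + v\theta$, $g(0) = 0$, has solution $g(\theta) = v\,(\econst^{-a\theta} + a\theta - 1)/a^2$, again verified by inspection, and the conclusion is $\xi_Y(\theta) \leq g(\theta)$ on $\theta \leq 0$, exactly as stated. (Alternatively, applying the first half of Fact~\ref{fact:petrovitch} directly with $u(\theta;y) = a y + v\theta$ produces the slightly sharper bound $v(\econst^{a\theta} - a\theta - 1)/a^2$ on $\theta \leq 0$, from which the stated form follows via $\sinh x \geq x$ for $x \geq 0$; I prefer the sign-flip since it lands on the stated expression with no further estimate.)

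There is no serious obstacle: both ODEs are elementary, and Fact~\ref{fact:petrovitch} does the real work. The one step that is not purely mechanical is the observation in the lower branch that trading $a\,\xi_Y$ for $-a\,\xi_Y$ (legitimate because $\xi_Y \geq 0$) is precisely what turns the comparison equation into $g' = -a g + v\theta$ and thereby makes the decaying exponential $\econst^{-a\theta}$ appear in the bound.
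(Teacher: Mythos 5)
Your treatment of \eqref{eqn:mgfY+} is exactly the paper's argument: form the companion ODE $g' = ag + v\theta$, $g(0)=0$, solve by integrating factors, and invoke Fact~\ref{fact:petrovitch}. For \eqref{eqn:mgfY-} the paper only says ``the parallel argument,'' and your sign flip---weakening $\xi_Y' \geq a\xi_Y + v\theta$ to $\xi_Y' \geq -a\xi_Y + v\theta$ using $\xi_Y \geq 0$, so that the companion ODE $g' = -ag + v\theta$ lands on the stated expression---is a clean way to fill that in. (Small slip: the direct route in your parenthetical uses the \emph{second} branch of Fact~\ref{fact:petrovitch}, the one for $t<0$, not the first.)

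The substantive point is the clause ``since $\xi_Y \geq 0$ and $a>0$'': the hypothesis $a>0$ is essential to both of your routes, but it does not appear in the lemma. For $a<0$ the weakening reverses ($a\xi_Y \leq -a\xi_Y$ when $\xi_Y \geq 0$), and the $\sinh x \geq x$ comparison also reverses, so the direct Petrovitch bound $v(\econst^{a\theta}-a\theta-1)/a^2$ is now \emph{larger} than the stated $v(\econst^{-a\theta}+a\theta-1)/a^2$ on $\theta \leq 0$. This is not a repairable gap in the proof but a restriction inherent in the statement: for $a<0$ the stated bound grows only linearly as $\theta\to-\infty$, with slope $v/\abs{a}$, while $\xi_Y(\theta)$ grows linearly with slope $\abs{\inf Y}$, and the hypothesis imposes no relation between the two (its right-hand side $a\xi_Y(\theta)+v\theta$ tends to $-\infty$, so the differential inequality is asymptotically vacuous). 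Hence \eqref{eqn:mgfY-} fails in general for $a<0$. This matters because the paper invokes the second case with $a=-\beta<0$ in the proof of Theorem~\ref{thm:rmvol-conc}; there the argument only yields the weaker bound $\bar{v}(\econst^{-\beta\theta}+\beta\theta-1)/\beta^2$. So your proof is correct precisely in the regime where the lemma is true, but you should promote $a>0$ to an explicit hypothesis rather than introduce it silently mid-argument.
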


\begin{proof}
Consider the first differential inequality for the cgf $\xi_Y$.
Since $\xi_Y(0) = \Expect Y  = 0$, the associated ordinary differential equation is
$$
g(0) = 0
\quad\text{and}\quad
g'(\theta) = a g(\theta) + v \theta 
\quad\text{on $\theta > 0$.}
$$
By the method of integrating factors, we quickly arrive at the solution:
$$
g(\theta) = v \cdot \frac{\econst^{a \theta} - a \theta - 1}{a^2}
\quad\text{on $\theta \geq 0$.}
$$
Fact~\ref{fact:petrovitch} now implies the bound~\eqref{eqn:mgfY+}.
The bound~\eqref{eqn:mgfY-} follows from the parallel argument.
\end{proof}

\subsection{Poisson Concentration}

The cgf bounds that appear in Lemma~\ref{lem:psi'topsi} have
the same form as the cgf bounds that lead to the Bennett
inequality for a sum of independent, bounded real random
variables~\cite[Sec.~2.7]{boucheron2013concentration}.
We arrive at the following tail inequalities.

\begin{fact}[Poisson Concentration] \label{fact:poisson}
Let $Y$ be a (zero-mean) random variable.  For $v \geq 0$ and $t \geq 0$,
\begin{align}
\xi_Y(\theta) \leq v \cdot \frac{\econst^{+a\theta} - a\theta - 1}{a^2} \quad\text{on $\theta \geq 0$}
\quad\text{implies}\quad
\log \Prob{ Y \geq +t }
	&\leq \frac{-v}{a^2} \psi\left(\frac{+at}{v}\right); \label{eqn:ProbY+} \\
\xi_Y(\theta) \leq v \cdot \frac{\econst^{-a\theta} + a\theta - 1}{a^2} \quad\text{on $\theta \leq 0$}
\quad\text{implies}\quad
\log \Prob{ Y \leq -t }
	&\leq \frac{-v}{a^2} \psi\left(\frac{-at}{v}\right). \label{eqn:ProbY-}
\end{align}
The function $\psi(u) := (1 + u) \log(1 + u) - u$ for all $u \in \R$,
with the convention $\psi(u) = + \infty$ for $u < -1$.
\end{fact}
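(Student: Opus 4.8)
The plan is to combine the given cgf bound with the Cram\'er--Chernoff method from Fact~\ref{fact:cgf-tail} and then recognize the resulting optimization over $\theta$ as the Legendre dual of the Poisson cgf. I will treat the upper-tail statement~\eqref{eqn:ProbY+}; the lower-tail statement~\eqref{eqn:ProbY-} follows by the parallel argument applied to $-Y$, or by reflecting $\theta \mapsto -\theta$.

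\begin{proof}
We prove~\eqref{eqn:ProbY+}; the bound~\eqref{eqn:ProbY-} follows from the same reasoning applied to $-Y$.
Assume the hypothesis $\xi_Y(\theta) \leq v \cdot (\econst^{a\theta} - a\theta - 1)/a^2$ for $\theta \geq 0$.
Fact~\ref{fact:cgf-tail} gives, for each $t \geq 0$,
$$
\log \Prob{ Y \geq t } \leq \inf_{\theta > 0} \left[ \xi_Y(\theta) - \theta t \right]
	\leq \inf_{\theta > 0} \left[ \frac{v}{a^2}\bigl( \econst^{a\theta} - a\theta - 1 \bigr) - \theta t \right].
$$
Substitute $s = a\theta > 0$ and set $u := at/v \geq 0$.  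The right-hand side becomes
$$
\frac{v}{a^2} \inf_{s > 0} \left[ \econst^{s} - s - 1 - s u \right]
	= \frac{v}{a^2} \inf_{s > 0} \left[ \econst^{s} - (1 + u) s - 1 \right].
$$
The function $s \mapsto \econst^{s} - (1+u)s - 1$ is strictly convex with derivative $\econst^{s} - (1+u)$,
which vanishes at $s^\star = \log(1 + u)$; since $u \geq 0$ we have $s^\star \geq 0$, and the infimum over $s > 0$
is attained in the limit or at $s^\star$.  Evaluating at $s^\star$ gives
$$
\econst^{s^\star} - (1+u) s^\star - 1 = (1 + u) - (1 + u)\log(1 + u) - 1 = -\psi(u),
$$
where $\psi(u) = (1 + u)\log(1 + u) - u$.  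Therefore
$$
\log \Prob{ Y \geq t } \leq \frac{v}{a^2} \cdot \bigl( -\psi(u) \bigr) = \frac{-v}{a^2}\, \psi\!\left( \frac{at}{v} \right),
$$
which is~\eqref{eqn:ProbY+}.  The convention $\psi(u) = +\infty$ for $u < -1$ plays no role here since $u \geq 0$.
For~\eqref{eqn:ProbY-}, apply the result to $-Y$, whose cgf is $\theta \mapsto \xi_Y(-\theta)$, and note that the hypothesis on $\xi_Y$ for $\theta \leq 0$ translates exactly into the upper-tail hypothesis for $-Y$.
\end{proof}

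There is essentially no obstacle: the only step requiring care is verifying that the unconstrained minimizer $s^\star = \log(1+u)$ is nonnegative (so that it lies in the region $s > 0$ where the cgf bound is valid), which holds precisely because the deviation $t$ and hence $u = at/v$ are nonnegative. The identification of the optimized exponent with $-\psi(u)$ is the standard computation of the Poisson rate function, and matching it against the Bennett-type cgf is exactly the analogy flagged in the text preceding the statement. One should also handle the degenerate case $v = 0$ separately: then the hypothesis forces $\xi_Y(\theta) \leq 0$ for $\theta \geq 0$, hence $Y \leq 0$ almost surely, so $\Prob{Y \geq t} = 0$ for $t > 0$, consistent with the convention that the right-hand side is $-\infty$; and if additionally $t=0$ both sides are $\log 1 = 0$ and $-\frac{v}{a^2}\psi(0) = 0$.
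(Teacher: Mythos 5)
Your proof is correct and follows exactly the paper's argument: apply the Cram\'er--Chernoff bound of Fact~\ref{fact:cgf-tail}, substitute the hypothesized cgf bound, and optimize over $\theta$, the infimum being attained at $\theta = a^{-1}\log(1+at/v)$, which yields $-\tfrac{v}{a^2}\psi(at/v)$. The extra checks you supply (nonnegativity of the minimizer, the degenerate case $v=0$) are fine but not needed beyond what the paper records.
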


\begin{proof}
Suppose that~\eqref{eqn:mgfY+} is in force.  Then
Fact~\ref{fact:cgf-tail} implies the tail bound~\eqref{eqn:ProbY+}:
$$
\log \Prob{ Y \geq t }
	\leq \inf_{\theta > 0}\ \left[ \xi_Y(\theta ) - \theta t \right]
	\leq \inf_{\theta > 0}\ \left[ v \cdot \frac{\econst^{a \theta} - a \theta - 1}{a^2} - \theta t \right]
	= \frac{v}{a^2} \psi\left( \frac{at}{v} \right).
$$
The infimum is attained at $\theta = a^{-1} \log(1 + at/v)$.
The tail bound~\eqref{eqn:ProbY-} follows from the parallel argument.
\end{proof}

The tail function $\psi$ satisfies the inequality $\psi(u) \geq (u^2/2)/(1+u/3)$ for all $u \in \R$.
Therefore, each of the bounds~\eqref{eqn:ProbY+} and~\eqref{eqn:ProbY-} implies
weaker, but more interpretable, results.  For all $t \geq 0$,
\begin{equation} \label{eqn:bernstein} \log \Prob{ Y \geq + t} \leq \frac{-t^2/2}{v + at / 3}
\quad\text{and}\quad
\log \Prob{ Y \leq - t} \leq \frac{-t^2/2}{(v - at / 3)_+}.
\end{equation}
The tail bounds in~\eqref{eqn:bernstein} are called Bernstein inequalities;
it is also common to combine them into a single formula.

\section{From Thermal Variance Bounds to Concentration}

Pursuing the ideas in Section~\ref{sec:genfun},
we can now derive concentration inequalities for
the weighted intrinsic volume random variables
as a consequence of bounds on the thermal variance.
For the moment, we will take the thermal variance
bounds for granted.  The subsequent sections
develop the machinery required to control
the thermal variance of this type of random
variable.

\subsection{Rotation Volumes}

To begin, we present detailed concentration results for
rotation volume random variables, introduced
in Section~\ref{sec:wvol}.

\begin{theorem}[Rotation Volumes: Variance and Cgf] \label{thm:rotvol-conc}
Let $\set{K} \subset \R^n$ be a nonempty convex body with
rotation volume random variable $\rotI_{\set{K}}$
and central rotation volume $\rotdelta(\set{K}) = \Expect \rotI_{\set{K}}$.
The variance of $\rotI_{\set{K}}$ satisfies
$$
\Var[ \rotI_{\set{K}} ] \leq \rotdelta(\set{K}).
$$
The cgf of $\rotI_{\set{K}}$ satisfies
$$
\xi_{\rotI_{\set{K}}}(\theta) \leq \rotdelta(\set{K}) \cdot (\econst^{\theta} - \theta - 1)
\quad\text{for all $\theta \in \R$.}
$$
\end{theorem}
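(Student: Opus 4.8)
The plan is to obtain both assertions from a single ingredient that the later sections supply: the \emph{thermal variance bound}, asserting that every nonempty convex body $\set{L}\subset\R^n$ satisfies $\Var[\rotI_{\set{L}}]\le\Expect\rotI_{\set{L}}=\rotdelta(\set{L})$. Taking $\set{L}=\set{K}$ gives the first display of the theorem at once, so the real work is to convert this family of variance bounds into the promised cgf estimate. I will bound the cgf of the centered variable $Y:=\rotI_{\set{K}}-\rotdelta(\set{K})$, which by \eqref{eqn:cgf-shift} is $\xi_Y(\theta)=\xi_{\rotI_{\set{K}}}(\theta)-\rotdelta(\set{K})\,\theta$ and is the quantity that feeds the Bernstein bound in Theorem~\ref{thm:rotvol-intro}.

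The key observation is that the exponential tilt of $\rotI_{\set{K}}$ is itself a rotation volume random variable of a rescaled body. Since $\rotv_i$ is homogeneous of degree $n-i$, the identity $\mathring{\chi}_{\set{K}}(t)=\sum_i t^{n-i}\rotv_i(\set{K})=\rotwills(t\set{K})$ from \eqref{eqn:rotvol-charpoly} shows that the mgf is $m_{\rotI_{\set{K}}}(\theta)=\mathring{\chi}_{\set{K}}(\econst^\theta)/\rotwills(\set{K})=\rotwills(\econst^\theta\set{K})/\rotwills(\set{K})$, and that the Gibbs measure $\varrho_{\rotI_{\set{K}}}^{\theta}$ is exactly the law of $\rotI_{\econst^\theta\set{K}}$. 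Hence, by \eqref{eqn:thermal-expect} and \eqref{eqn:thermal-var}, the thermal mean and thermal variance of $\rotI_{\set{K}}$ are $\xi_{\rotI_{\set{K}}}'(\theta)=\rotdelta(\econst^\theta\set{K})=\Expect\rotI_{\econst^\theta\set{K}}$ and $\xi_{\rotI_{\set{K}}}''(\theta)=\Var[\rotI_{\econst^\theta\set{K}}]$. Applying the thermal variance bound to the rescaled body $\econst^\theta\set{K}$, for each $\theta\in\R$, produces the pointwise differential inequality
$$
\xi_{\rotI_{\set{K}}}''(\theta)=\Var[\rotI_{\econst^\theta\set{K}}]\le\Expect\rotI_{\econst^\theta\set{K}}=\xi_{\rotI_{\set{K}}}'(\theta).
$$

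It remains to integrate. Writing $M(\theta):=\xi_{\rotI_{\set{K}}}'(\theta)\ge 0$ with $M(0)=\rotdelta(\set{K})$, the inequality above says $\tfrac{\diff{}}{\diff{\theta}}\big(\econst^{-\theta}M(\theta)\big)=\econst^{-\theta}\big(M'(\theta)-M(\theta)\big)\le 0$, so $\econst^{-\theta}M(\theta)$ is nonincreasing; therefore $M(\theta)\le\rotdelta(\set{K})\,\econst^{\theta}$ for $\theta\ge 0$ and $M(\theta)\ge\rotdelta(\set{K})\,\econst^{\theta}$ for $\theta\le 0$. Since $\xi_Y(\theta)=\int_0^\theta\big(M(s)-\rotdelta(\set{K})\big)\idiff{s}$, integrating the sign-appropriate bound on $M$ yields $\xi_Y(\theta)\le\rotdelta(\set{K})\,(\econst^{\theta}-\theta-1)$ for every $\theta\in\R$, which is the second assertion. (For $\theta\ge 0$ one may instead integrate $\xi_Y''\le\xi_Y'+\rotdelta(\set{K})$ once to reach the hypothesis of the Poisson cgf bound, Lemma~\ref{lem:psi'topsi} with $a=1$ and $v=\rotdelta(\set{K})$; the $\theta<0$ range, however, genuinely needs the direct argument, since the $\theta\le 0$ branch of that lemma delivers only the larger reflected form $\rotdelta(\set{K})(\econst^{-\theta}+\theta-1)$.) The one real obstacle is thus external to this theorem: it is the thermal variance bound $\Var[\rotI_{\set{L}}]\le\rotdelta(\set{L})$ itself, which is the substantive result, proved in the subsequent sections by lifting $\rotI_{\set{K}}$ through Steiner's formula to a log-concave measure on $\R^n$ and invoking a variance inequality for log-concave measures of Brascamp--Lieb type.
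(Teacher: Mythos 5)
Your proof is correct, and it follows the paper's skeleton — reduce everything to the thermal-variance inequality $\xi_{\rotI_{\set{K}}}''(\theta) \le \xi_{\rotI_{\set{K}}}'(\theta)$ and integrate — but two of your steps genuinely differ from the paper's. First, the paper proves the thermal variance bound (Proposition~\ref{prop:rotvol-tvar}) directly for every inverse temperature $\theta$, by building the tilted log-concave measure with potential $2\pi\econst^\theta\dist_{\set{K}}$ and invoking the Nguyen--Wang inequality; you instead observe that, because $\rotv_i$ is homogeneous of degree $n-i$, the Gibbs tilt of $\rotI_{\set{K}}$ is exactly the law of $\rotI_{\econst^\theta\set{K}}$, so the single untilted variance bound $\Var[\rotI_{\set{L}}]\le\rotdelta(\set{L})$, applied to dilates of $\set{K}$, yields the whole family of thermal bounds. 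This is an equivalent but cleaner packaging (the paper's measure $\rotmu_{\theta}$ for $\set{K}$ is just $\rotmu_{0}$ for $\econst^\theta\set{K}$ after rescaling the integration variable), and it correctly isolates the one substantive analytic input. Second, your direct integration of $M'\le M$ via the integrating factor $\econst^{-\theta}$ delivers the stated bound $\rotdelta(\set{K})\,(\econst^{\theta}-\theta-1)$ on both half-lines, whereas the paper's route through Lemma~\ref{lem:psi'topsi} with $a=1$ literally yields only the reflected form $\rotdelta(\set{K})\,(\econst^{-\theta}+\theta-1)$ for $\theta\le 0$, which is weaker (though still sufficient for the downstream tail estimates via Fact~\ref{fact:poisson}). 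Your remark on this point is accurate, and your argument proves the theorem exactly as stated. The only caveat is that the real content of the theorem resides in the variance bound you take as input; since you identify it precisely and describe its proof route (Steiner lifting to a log-concave measure plus the variance-of-information inequality) correctly, nothing essential is missing.
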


\noindent
The proof of Theorem~\ref{thm:rotvol-conc} begins in Section~\ref{sec:rotvol-from-tv}
and is completed in Section~\ref{sec:rotvol-conc}

Combining Theorem~\ref{thm:rotvol-conc} with Fact~\ref{fact:poisson},
we immediately obtain probability bounds for the rotation volume
random variable $\rotI_{\set{K}}$ of a convex body $\set{K}$.
\begin{align*}
\log \Prob{ \rotI_{\set{K}} \geq \rotdelta(\set{K}) + t }
	&\leq  -\rotdelta(\set{K})\cdot \psi(+t/\rotdelta(\set{K})) && \quad\text{for $0 \leq t$;} \\
\log \Prob{ \rotI_{\set{K}} \leq \rotdelta(\set{K}) - t }
	&\leq -\rotdelta(\set{K})\cdot \psi(-t/\rotdelta(\set{K})) && \quad\text{for $0 \leq t \leq \rotdelta(\set{K})$.}
\end{align*}
The tail function $\psi$ is defined in Fact~\ref{fact:poisson}.
The Poisson-type bound implies a weaker Bernstein inequality of the form~\eqref{eqn:bernstein}:
\begin{equation} \label{eqn:rotvol-bernstein}
\log \Prob{ \frac{\rotI_{\set{K}} - \rotdelta(\set{K})}{t} \geq 1 }
	\leq \frac{-t^2/2}{\rotdelta(\set{K}) + \abs{t}/3}
	\quad\text{for all $t \neq 0$.}
\end{equation}
The unusual formulation~\eqref{eqn:rotvol-bernstein} captures both the lower and upper tail in a single expression.
Theorem~\ref{thm:rotvol-intro}, in the introduction,
rephrases the latter inequality in a more standard way.
It leads to the phase transition for random projections, Theorem~\ref{thm:randproj-intro}.

As a further consequence, we can deduce probability inequalities for sums.
Let $\rotI_{\set{K}}$ and $\rotI_{\set{M}}$ be \emph{independent} rotation
volume random variables, associated with two convex bodies $\set{K}, \set{M} \subset \R^n$.
Define the sum $\rotDelta(\set{K},\set{M}) := \rotdelta(\set{K}) + \rotdelta(\set{M})$
of central rotation volumes.  Then
\begin{equation} \label{eqn:rotvol-sum}
\log \Prob{ \frac{(\rotI_{\set{K}} + \rotI_{\set{M}}) - \rotDelta(\set{K},\set{M}) }{t} \geq 1 }
	\leq \frac{-t^2/2}{\rotDelta(\set{K},\set{M}) + \abs{t}/3}
	\quad\text{for $t \neq 0$.}
\end{equation}
This result follows from the fact~\eqref{eqn:cgf-indep}
that cgfs are additive, from Theorem~\ref{thm:rotvol-conc},
and from Fact~\ref{fact:poisson}.
It plays a key role in the proof of Theorem~\ref{thm:rotmean-intro},
the phase transition for rotation means.

\subsubsection{Reduction to Thermal Variance Bound}
\label{sec:rotvol-from-tv}

Theorem~\ref{thm:rotvol-conc} follows from a bound on the
thermal variance of the rotation volume random variable.

\begin{proposition}[Rotation Volumes: Thermal Variance] \label{prop:rotvol-tvar}
Let $\set{K} \subset \R^n$ be a nonempty convex body.  The thermal variance
of the rotation volume random variable $\rotI_{\set{K}}$ satisfies
$$
\xi_{\rotI_{\set{K}}}''(\theta) \leq \xi_{\rotI_{\set{K}}}'(\theta)
\quad\text{for all $\theta \in \R$.}
$$
\end{proposition}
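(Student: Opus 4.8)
The plan is to realize the rotation volume random variable $\rotI_{\set{K}}$ (or rather its complement $n - \rotI_{\set{K}}$) through the metric formulation in Proposition~\ref{prop:rotvol-metric}, which expresses the central rotation volume as a moment of the log-concave measure on $\R^n$ whose density is proportional to $\econst^{-2\pi\dist_{\set{K}}(\vct{x})}$. The exponential tilting $\varrho^\theta$ of $\rotI_{\set{K}}$ should correspond, via the Steiner-type expansion, to replacing the scale in the distance integral: tilting by $\theta$ amounts to looking at $\rotwills(\econst^{\theta/?}\set{K})$ or equivalently to a measure of the form $\econst^{-2\pi c(\theta)\dist_{\set{K}}(\vct{x})}$ with $c(\theta)$ an explicit exponential in $\theta$. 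Under this identification, $\xi'_{\rotI_{\set{K}}}(\theta)$ becomes (up to the shift by $n$) a thermal mean of the distance functional, and $\xi''_{\rotI_{\set{K}}}(\theta)$ becomes a thermal variance of the distance functional, both taken with respect to the tilted log-concave measure. So the first step is to make this dictionary precise: identify the tilted distribution of $\rotI_{\set{K}}$ with the distribution of (a shift/reflection of) a scaled distance variable on $\R^n$.

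Once that dictionary is in place, the inequality $\xi''_{\rotI_{\set{K}}}(\theta) \leq \xi'_{\rotI_{\set{K}}}(\theta)$ translates into a statement that the variance of the relevant distance functional under the tilted log-concave measure is bounded by its mean (after accounting for the chain-rule factors coming from $c(\theta)$). The key analytic input should be a Poincaré-type / Brascamp--Lieb-type variance inequality for log-concave measures: for a measure with density $\econst^{-V}$ where $V$ is convex, $\Var[f] \leq \Expect \ip{(\Hess V)^{-1} \grad f}{\grad f}$. Here $V(\vct{x}) = 2\pi c\,\dist_{\set{K}}(\vct{x})$ (plus the Gaussian-type normalization hidden in the definition through $\omega_{i+1}$ — one must be careful about exactly which log-concave measure appears), and $f$ is essentially the distance functional itself; since $\grad \dist_{\set{K}}$ is a unit vector wherever the distance is positive, and the Hessian of $\dist_{\set{K}}$ controls curvature, one expects the Brascamp--Lieb estimate to collapse to something proportional to the first moment of the distance, which is exactly the thermal mean. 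The proof strategy outline in the excerpt explicitly points to "modern variance bounds for concave measures, in the spirit of the classic Borell--Brascamp--Lieb inequality," so this is the intended route.

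The main obstacle I anticipate is handling the non-smoothness of $\dist_{\set{K}}$ and the fact that $\dist_{\set{K}}$ is convex but not strictly convex — its Hessian degenerates in the direction normal to $\set{K}$ and is singular on $\set{K}$ itself — so a naive application of Brascamp--Lieb gives $\Var \leq \Expect \ip{(\Hess V)^{-1}\grad f}{\grad f}$ with an ill-defined right-hand side. The fix is presumably to exploit the specific structure: the correct log-concave measure is not simply $\econst^{-2\pi\dist}$ on $\R^n$ but rather a measure for which one can integrate in "radial" coordinates relative to $\set{K}$ (distance $r$ and a spherical-cap variable), reducing the $n$-dimensional variance bound to a one-dimensional inequality for the exponential (or Gamma) distribution in the variable $r$, where $\Var \le \Expect$ is exactly the Gamma/Poisson signature. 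In other words, the hard part is choosing the right coordinates and the right reference measure so that the Brascamp--Lieb estimate degenerates cleanly to a scalar "variance $\leq$ mean" bound rather than dealing with the full Hessian; once the problem is one-dimensional the inequality should be a short computation. I would therefore expect the actual proof in the paper to set up a one-dimensional marginal (the pushforward of the tilted measure under $\dist_{\set{K}}$), show it is log-concave with the right tail, and invoke a scalar variance bound — deferring the geometric integration-in-polar-coordinates lemma to the "machinery" sections the text promises.
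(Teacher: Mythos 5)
Your first half is exactly the paper's route: the exponential tilting of $\rotI_{\set{K}}$ by $\theta$ is realized by the log-concave measure $\rotmu_{\theta}$ on $\R^n$ with density proportional to $\econst^{-\rotJ}$, where $\rotJ(\vct{x}) = 2\pi\econst^{\theta}\dist_{\set{K}}(\vct{x})$, and under this dictionary one gets the identities $\xi_{\rotI_{\set{K}}}'(\theta) = \Expect[\,n-\rotJ(\vct{z})\,]$ and $\xi_{\rotI_{\set{K}}}''(\theta) = \Var[\rotJ(\vct{z})] - \big(n - \xi_{\rotI_{\set{K}}}'(\theta)\big)$ for $\vct{z}\sim\rotmu_{\theta}$. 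So the proposition is equivalent to the single estimate $\Var[\rotJ(\vct{z})]\le n$. You also correctly identify that Brascamp--Lieb with $f=\rotJ$ is unusable, since $\Hess\dist_{\set{K}}$ annihilates the normal direction, which is precisely the direction of $\grad\dist_{\set{K}}$.

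The gap is in your proposed repair. Reducing to the one-dimensional pushforward of $\rotmu_{\theta}$ under $\dist_{\set{K}}$ is circular: that marginal is the mixture $\sum_i w_i\,\textsc{gamma}(i,1)$ with weights $w_i\propto \econst^{(n-i)\theta}\rotv_i(\set{K})$, and the law of total variance gives $\Var[\rotJ(\vct{z})] = \sum_i w_i\, i + \Var_w[i] = \big(n-\xi_{\rotI_{\set{K}}}'(\theta)\big) + \xi_{\rotI_{\set{K}}}''(\theta)$, so bounding the marginal's variance by its mean \emph{is} the statement to be proved; moreover the marginal is not log-concave (it carries an atom at $0$ of mass proportional to $\intvol_n(\set{K})$), so no scalar log-concavity argument applies off the shelf. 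The missing ingredient is a genuinely $n$-dimensional input: the Nguyen--Wang variance-of-information inequality, which asserts that for \emph{any} convex potential $J$ on $\R^n$, the probability measure with density proportional to $\econst^{-J}$ satisfies $\Var[J(\vct{z})]\le n$. Applied to $J = \rotJ$ this yields $\Var[\rotJ(\vct{z})]\le n$ at once, hence $\xi_{\rotI_{\set{K}}}''\le \xi_{\rotI_{\set{K}}}'$. Notably, that inequality is saturated exactly by $1$-homogeneous potentials, i.e., the degenerate-Hessian regime you were trying to engineer around is the extremal case of the correct inequality rather than an obstruction.
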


\noindent
The proof of Proposition~\ref{prop:rotvol-tvar} is the object
of Section~\ref{sec:rotvol-conc}.  Right now, we can use it to derive
concentration for the rotation volumes.

\begin{proof}[Proof of Theorem~\ref{thm:rotvol-conc} from Proposition~\ref{prop:rotvol-tvar}]
To obtain the variance bound, note that
$$
\Var[ \rotI_{\set{K}} ] \stackrel{\eqref{eqn:cgf-zero}}{=} \xi_{\rotI_{\set{K}}}''(0) \leq \xi_{\rotI_{\set{K}}}'(0)
	\stackrel{\eqref{eqn:cgf-zero}}{=} \Expect \rotI_{\set{K}}
	= \rotdelta(\set{K}).
$$
The inequality is Proposition~\ref{prop:rotvol-tvar}.

To derive the cgf bound, we pass to the zero-mean random variable
\begin{equation*}
Y := \rotI_{\set{K}} - \Expect \rotI_{\set{K}} = \rotI_{\set{K}} - \rotdelta(\set{K}).
\end{equation*}
The cgfs of $\rotI_{\set{K}}$ and $Y$ are related as
\begin{equation*} \label{eqn:cgf-I-Y}
\xi_{\rotI_{\set{K}}}(\theta) = \xi_{Y + \rotdelta(\set{K})}(\theta) \overset{\eqref{eqn:cgf-shift}}{=} \xi_Y(\theta) + \rotdelta(\set{K}) \,\theta
\quad\text{for all $\theta \in \R$.}
\end{equation*}
Assume that $\theta>0$. Using Proposition~\ref{prop:rotvol-tvar} again, we find that
\begin{equation*}
  \xi'_Y(\theta)
  \stackrel{\eqref{eqn:thermal-mean}}{=} \int_0^\theta \xi''_Y(s) \idiff{s}
  \stackrel{\eqref{eqn:thermal-var-shift}}{=} \int_0^\theta \xi''_{\rotI_{\set{K}}}(s) \idiff{s}
  \leq \int_0^{\theta} \xi'_{\rotI_{\set{K}}}(s) \idiff{s}
\stackrel{\eqref{eqn:cgf-zero}}{=} \xi_Y(\theta) + \rotdelta(\set{K}) \, \theta.
\end{equation*}
For $\theta < 0$, the inequality is reversed.  An application of Lemma~\ref{lem:psi'topsi}
with $a = 1$ and $v = \rotdelta(\set{K})$ furnishes the result.
\end{proof}

\subsection{Rigid Motion Volumes}

We continue with a refined concentration result for the rigid motion volume
random variable, introduced in Section~\ref{sec:wvol-conc-intro}.

\begin{theorem}[Rigid Motion Volumes: Variance and Cgf] \label{thm:rmvol-conc}
Let $\set{K} \subset \R^n$ be a nonempty convex body with
rigid motion random variable $\rmI_{\set{K}}$ and
central rigid motion volume $\rmdelta(\set{K}) := \Expect \rmI_{\set{K}}$.
Define the complement
$
\rmdelta_{\circ}(\set{K}) := (n+1) - \rmdelta(\set{K}).
$
The variance of $\rmI_{\set{K}}$ satisfies
$$
\Var[ \rmI_{\set{K}} ] \leq \frac{2 \rmdelta(\set{K}) \rmdelta_{\circ}(\set{K})}{n+1} =: \bar{v}(\set{K}).
$$
The cgf of $\rmI_{\set{K}}$ satisfies
\begin{align*}
\xi_{\rmI_{\set{K}}}(\theta) &\leq \bar{v}(\set{K}) \cdot \frac{\econst^{\beta_{\circ} \theta} - \beta_{\circ} \theta - 1}{\beta_{\circ}^2}
	\quad\text{for $\theta \geq 0$}
	&\text{where}&&
	\beta_{\circ} := \frac{2\rmdelta_{\circ}(\set{K})}{n+1} < 2; \\
\xi_{\rmI_{\set{K}}}(\theta) &\leq \bar{v}(\set{K}) \cdot \frac{\econst^{\beta \theta} - \beta \theta - 1}{\beta^2}
	\quad\text{for $\theta \leq 0$}
	&\text{where}&&
	\beta := \frac{2\rmdelta(\set{K})}{n+1} < 2. 
\end{align*}
\end{theorem}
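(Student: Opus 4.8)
The plan is to replay the argument for the rotation volumes (Section~\ref{sec:rotvol-from-tv}), substituting a more elaborate thermal variance bound for Proposition~\ref{prop:rotvol-tvar}. Concretely, I will first establish---in a later section, starting from the metric representation in Proposition~\ref{prop:rmvol-metric} and a Brascamp--Lieb-type variance inequality adapted to the concave measure on $\R^n$ with Lebesgue density proportional to $\bigl[1+\dist_{\set{K}}^2(\vct{x})\bigr]^{-(n+1)/2}$---a pointwise bound of the shape
\begin{equation*}
\xi_{\rmI_{\set{K}}}''(\theta) \;\leq\; \frac{2}{n+1}\,\xi_{\rmI_{\set{K}}}'(\theta)\,\bigl((n+1)-\xi_{\rmI_{\set{K}}}'(\theta)\bigr)
\qquad\text{for all }\theta\in\R.
\end{equation*}
Granting this, the variance bound is just the case $\theta=0$: by~\eqref{eqn:cgf-zero}, $\Var[\rmI_{\set{K}}]=\xi_{\rmI_{\set{K}}}''(0)\leq\frac{2}{n+1}\rmdelta(\set{K})\bigl((n+1)-\rmdelta(\set{K})\bigr)=\bar{v}(\set{K})$.

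For the cgf bounds I would pass to the centered variable $Y:=\rmI_{\set{K}}-\rmdelta(\set{K})$, so that $\xi_{\rmI_{\set{K}}}(\theta)=\xi_Y(\theta)+\rmdelta(\set{K})\,\theta$ by~\eqref{eqn:cgf-shift} and $\xi_Y''=\xi_{\rmI_{\set{K}}}''$ by~\eqref{eqn:thermal-var-shift}. Consider first $\theta\geq 0$. Because $\xi_{\rmI_{\set{K}}}''\geq 0$, the thermal mean $\xi_{\rmI_{\set{K}}}'$ is nondecreasing, so $\xi_{\rmI_{\set{K}}}'(\theta)\geq\xi_{\rmI_{\set{K}}}'(0)=\rmdelta(\set{K})$ and hence $(n+1)-\xi_{\rmI_{\set{K}}}'(\theta)\leq\rmdelta_{\circ}(\set{K})$ on $\theta\geq 0$. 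The thermal variance bound then collapses to the \emph{linear} inequality $\xi_{\rmI_{\set{K}}}''(\theta)\leq\beta_{\circ}\,\xi_{\rmI_{\set{K}}}'(\theta)$ with $\beta_{\circ}=2\rmdelta_{\circ}(\set{K})/(n+1)$. Integrating from $0$ to $\theta$ and using $\xi_{\rmI_{\set{K}}}(0)=0$ gives $\xi_Y'(\theta)=\xi_{\rmI_{\set{K}}}'(\theta)-\rmdelta(\set{K})\leq\beta_{\circ}\,\xi_{\rmI_{\set{K}}}(\theta)=\beta_{\circ}\xi_Y(\theta)+\beta_{\circ}\rmdelta(\set{K})\,\theta$; since $\beta_{\circ}\rmdelta(\set{K})=\bar{v}(\set{K})$, this is precisely the hypothesis of~\eqref{eqn:mgfY+} with $a=\beta_{\circ}$ and $v=\bar{v}(\set{K})$, and Lemma~\ref{lem:psi'topsi} delivers the claimed cgf bound for $\theta\geq 0$.

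For $\theta\leq 0$ the tidiest route is a duality. Replace $\rmI_{\set{K}}$ by its reflection $Z:=(n+1)-\rmI_{\set{K}}$, which has mean $\Expect Z=\rmdelta_{\circ}(\set{K})$ and cgf $\xi_Z(\theta)=(n+1)\theta+\xi_{\rmI_{\set{K}}}(-\theta)$, so that $\xi_Z'(\theta)=(n+1)-\xi_{\rmI_{\set{K}}}'(-\theta)$ and $\xi_Z''(\theta)=\xi_{\rmI_{\set{K}}}''(-\theta)$. Substituting the last two identities into the thermal variance bound shows that $Z$ satisfies the \emph{same} estimate $\xi_Z''(\theta)\leq\frac{2}{n+1}\xi_Z'(\theta)\bigl((n+1)-\xi_Z'(\theta)\bigr)$. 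The lower tail of $\rmI_{\set{K}}$ is the upper tail of $Z$, so the preceding paragraph applies with $\rmdelta_{\circ}(\set{K})$ in the role of $\rmdelta(\set{K})$: it yields a Poisson-type cgf bound for $Z-\rmdelta_{\circ}(\set{K})=-Y$ with parameter $\beta=2\rmdelta(\set{K})/(n+1)$ and scale $\beta\rmdelta_{\circ}(\set{K})=\bar{v}(\set{K})$, which is the claimed bound for $\theta\leq 0$ once the reflection is undone. Combining the two cgf bounds with Fact~\ref{fact:poisson} and the elementary estimate $\psi(u)\geq(u^2/2)/(1+u/3)$, together with $\bar{v}(\set{K})\leq 2\rmsigma^2(\set{K})$ and $\beta,\beta_{\circ}<2$, reproduces the Bernstein inequality recorded in Theorem~\ref{thm:rmvol-intro}.

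None of this bookkeeping is the real difficulty; it is routine given the rotation-volume template. The hard part is the thermal variance bound itself. The density $\econst^{-2\pi\dist_{\set{K}}(\vct{x})}$ behind the rotation volumes is log-concave, so a single application of a Brascamp--Lieb-type inequality controls its fluctuations; by contrast, $\bigl[1+\dist_{\set{K}}^2(\vct{x})\bigr]^{-(n+1)/2}$ is not log-concave---it is merely a concave measure in the sense of Borell---so the classical Brascamp--Lieb inequality is unavailable. Producing a Poincar\'e-type constant of the right size, and in particular pinning down the factor $2/(n+1)$ and the product structure $\xi'\cdot\bigl((n+1)-\xi'\bigr)$ that is ultimately responsible for the weaker $-t^2/4$ (rather than $-t^2/2$) in Theorem~\ref{thm:rmvol-intro}, is the substantive step, and is where I expect the real work.
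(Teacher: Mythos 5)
Your overall architecture coincides with the paper's: the theorem is deduced from the thermal variance bound $\xi_{\rmI_{\set{K}}}''(\theta)\le\frac{2}{n+1}\,\xi_{\rmI_{\set{K}}}'(\theta)\bigl((n+1)-\xi_{\rmI_{\set{K}}}'(\theta)\bigr)$ of Proposition~\ref{prop:rmvol-tvar} (which the paper proves separately, via a beta-distribution distance integral and Nguyen's dimensional variance inequality for concave measures --- your anticipated route is the right one); the variance bound is the case $\theta=0$; and your treatment of $\theta\ge 0$ --- freeze $(n+1)-\xi_{\rmI_{\set{K}}}'\le\rmdelta_{\circ}(\set{K})$ by monotonicity of the thermal mean, integrate, and invoke the first branch of Lemma~\ref{lem:psi'topsi} with $a=\beta_{\circ}$ and $v=\bar v(\set{K})$ --- is exactly the paper's argument.

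The gap is on the half-line $\theta\le 0$. The reflection $Z=(n+1)-\rmI_{\set{K}}$ does satisfy the same thermal variance bound, and the upper-tail argument applied to $Z$ is valid; it gives $\xi_{-Y}(\theta)\le\bar v(\set{K})\,(\econst^{\beta\theta}-\beta\theta-1)/\beta^2$ for $\theta\ge 0$, where $Y:=\rmI_{\set{K}}-\rmdelta(\set{K})$. But undoing the reflection means $\xi_Y(\theta)=\xi_{-Y}(-\theta)$, so the sign of $\theta$ flips on the right-hand side as well, and what you actually obtain is
\[
\xi_{Y}(\theta)\;\le\;\bar v(\set{K})\cdot\frac{\econst^{-\beta\theta}+\beta\theta-1}{\beta^2}\qquad\text{for }\theta\le 0,
\]
which is \emph{strictly weaker} than the stated bound $\bar v(\set{K})(\econst^{\beta\theta}-\beta\theta-1)/\beta^2$ at every $\theta<0$: writing $\theta=-s$, the two read $\econst^{\beta s}-\beta s-1$ versus $\econst^{-\beta s}+\beta s-1$, and $\sinh(\beta s)>\beta s$. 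The Poisson-type profile is not reflection-symmetric --- the exponentially growing branch belongs to the upper tail only --- so reflecting trades the sub-Gaussian lower-tail shape for the exponential one. To recover the stated bound you must run the differential inequality directly on $[\theta,0]$ for $\theta<0$: there $\xi_{\rmI_{\set{K}}}'(s)\le\rmdelta(\set{K})$, so you freeze \emph{that} factor instead, arriving at $\xi_Y'(\theta)\ge-\beta\,\xi_Y(\theta)+\bar v(\set{K})\,\theta$, and the \emph{second} branch of Lemma~\ref{lem:psi'topsi} with $a=-\beta$ then yields $\bar v(\set{K})(\econst^{\beta\theta}-\beta\theta-1)/\beta^2$. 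Your weaker bound still implies the Bernstein inequality of Theorem~\ref{thm:rmvol-intro} after the relaxations $\bar v(\set{K})\le 2\rmsigma^2(\set{K})$ and $\beta<2$, so the downstream phase transitions survive; but the cgf bound for $\theta\le 0$ as stated in the theorem is not established by the reflection.
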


\noindent
The proof of Theorem~\ref{thm:rmvol-conc} appears in Section~\ref{sec:rmvol-from-tv}
and Section~\ref{sec:rmvol-conc}.

Together, Theorem~\ref{thm:rmvol-conc} and Fact~\ref{fact:poisson}
deliver concentration inequalities for the rigid motion volumes.
For all $t \geq 0$,
\begin{align*}
\log \Prob{ \rmI_{\set{K}} \geq \rmdelta(\set{K}) + t }
	&\leq \frac{-(n+1)}{2\rmdelta_{\circ}(\set{K})} \cdot \rmdelta(\set{K}) \cdot \psi( t/\rmdelta(\set{K}) ); \\
\log \Prob{ \rmI_{\set{K}} \leq \rmdelta(\set{K}) - t }
	&\leq \frac{-(n+1)}{2\rmdelta(\set{K})} \cdot \rmdelta_{\circ}(\set{K}) \cdot \psi( t /\rmdelta_{\circ}(\set{K}) ). 
\end{align*}
For $t \neq 0$, the rigid motion random variable satisfies a Bernstein inequality~\eqref{eqn:bernstein}
of the form \begin{align}
\log \Prob{ \frac{\rmI_{\set{K}} - \rmdelta(\set{K})}{t} \geq 1 }
\leq \frac{-t^2/4}{(\rmdelta(\set{K}) \wedge \rmdelta_{\circ}(\set{K})) + \abs{t}/3}.  \label{eqn:rmvol-bernstein}
\end{align}
Theorem~\ref{thm:rmvol-intro}, in the introduction, contains a slightly weaker variant of
the last display. The inequality~\eqref{eqn:rmvol-bernstein} contributes to the phase transition for random slices 
that appears in Theorem~\ref{thm:crofton-intro}.

We can also obtain bounds for sums of rigid motion volume random variables.
Let $\rmI_{\set{K}}$ and $\rmI_{\set{M}}$ be \emph{independent} rigid motion
volume random variables associated with convex bodies $\set{K}, \set{M} \subset \R^n$.
Define
\begin{equation*}
\rmDelta(\set{K}, \set{M}) := \rmdelta(\set{K}) + \rmdelta(\set{M})
\quad\text{and}\quad
\bar{v}(\set{K}, \set{M}) := (\rmdelta(\set{K}) \wedge \rmdelta_{\circ}(\set{K})) + (\rmdelta(\set{M}) \wedge \rmdelta_{\circ}(\set{M})).
\end{equation*}
Using the additivity~\eqref{eqn:cgf-indep} of cgfs and making some simple bounds
in Theorem~\ref{thm:rmvol-conc}, we find that 
\begin{equation} \label{eqn:rmvol-sum}
\log \Prob{ \frac{(\rmI_{\set{K}} + \rmI_{\set{M}}) - \rmDelta(\set{K}, \set{M}) }{t} \geq 1 }
	\leq \frac{-t^2/4}{\bar{v}(\set{K}, \set{M}) + \abs{t} / 3}.
\end{equation}
This result yields the phase transition for the kinematic formula,
reported in Theorem~\ref{thm:kinematic-intro}.

\subsubsection{Reduction to Thermal Variance Bound}
\label{sec:rmvol-from-tv}

Theorem~\ref{thm:rmvol-conc} follows from a thermal variance bound.

\begin{proposition}[Rigid Motion Volumes: Thermal Variance] \label{prop:rmvol-tvar}
Let $\set{K} \subset \R^n$ be a nonempty convex body.  The thermal
variance of the rigid motion random variable $\rmI_{\set{K}}$ satisfies
$$
\xi_{\rmI_{\set{K}}}''(\theta) \leq \frac{2}{n+1} \cdot \xi_{\rmI_{\set{K}}}'(\theta) \cdot \big[ (n+1) - \xi_{\rmI_{\set{K}}}'(\theta) \big]
\quad\text{for all $\theta \in \R$.}
$$
\end{proposition}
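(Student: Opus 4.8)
The plan is to collapse the pointwise differential inequality into a single variance bound valid for \emph{every} convex body, reformulate that bound as a weighted Poincar\'e inequality for the concave measure of Proposition~\ref{prop:rmvol-metric}, and then prove the latter. First I would reduce to $\theta=0$. Since $\rmv_i$ is homogeneous of degree $i$, the exponential tilting of $\rmI_{\set{K}}$ at inverse temperature $\theta$ is again a rigid motion volume random variable: the Gibbs measure $\varrho^{\theta}_{\rmI_{\set{K}}}$ is the law of $\rmI_{\econst^{-\theta}\set{K}}$, because $\econst^{\theta(n-i)}\rmv_i(\set{K})=\econst^{\theta n}\rmv_i(\econst^{-\theta}\set{K})$. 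By the thermal identities~\eqref{eqn:thermal-expect} and~\eqref{eqn:thermal-var}, $\xi'_{\rmI_{\set{K}}}(\theta)=\rmdelta(\econst^{-\theta}\set{K})$ and $\xi''_{\rmI_{\set{K}}}(\theta)=\Var[\rmI_{\econst^{-\theta}\set{K}}]$, so Proposition~\ref{prop:rmvol-tvar} is equivalent to the claim that every nonempty convex body $\set{L}\subset\R^n$ obeys $\Var[\rmI_{\set{L}}]\le \tfrac{2}{n+1}\rmdelta(\set{L})\big((n+1)-\rmdelta(\set{L})\big)$, i.e.\ the variance assertion of Theorem~\ref{thm:rmvol-conc}. (The cases $n\le1$ are trivial, so assume $n\ge2$.)

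Next I would pass to the concave measure. Fix $\set{L}=\set{K}$, write $\dist:=\dist_{\set{K}}$, and let $\mu$ be the probability measure on $\R^n$ with density proportional to $W^{-(n+1)/2}$, where $W:=1+\dist^2\ge1$ is convex; this is the measure of Proposition~\ref{prop:rmvol-metric}, and $W^{-(n+1)/2}$ is a bona fide convex density. Put $h:=1-W^{-1}=\dist^2/(1+\dist^2)\in[0,1)$. By~\eqref{eqn:rmvol-charpoly} one has $\log\rmwills(\econst^s\set{K})=\mathrm{const}+\xi_{n-\rmI_{\set{K}}}(s)$, so $\xi'_{n-\rmI_{\set{K}}}(0)=n-\rmdelta(\set{K})$ and $\xi''_{n-\rmI_{\set{K}}}(0)=\Var[\rmI_{\set{K}}]$. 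Differentiating the representation $\rmwills(\econst^s\set{K})=\int_{\R^n}(1+\dist_{\econst^s\set{K}}^2)^{-(n+1)/2}\idiff{\vct{x}}$ twice at $s=0$—using $\dist_{\econst^s\set{K}}=\econst^s\dist(\econst^{-s}\,\cdot\,)$, the substitution $\vct{x}\mapsto\econst^s\vct{x}$, and dominated convergence to justify the interchange—yields $\rmdelta(\set{K})=(n+1)\Expect_\mu[h]$ (already contained in Proposition~\ref{prop:rmvol-metric}) and
\[
\Var[\rmI_{\set{K}}] \;=\; (n+1)(n+3)\,\Var_\mu[h]\;-\;2(n+1)\,\Expect_\mu[h]\big(1-\Expect_\mu[h]\big).
\]
Since $\rmdelta(\set{K})=(n+1)\Expect_\mu[h]$ and $(n+1)-\rmdelta(\set{K})=(n+1)(1-\Expect_\mu[h])$, the target $\Var[\rmI_{\set{K}}]\le\tfrac{2}{n+1}\rmdelta(\set{K})\big((n+1)-\rmdelta(\set{K})\big)$ is equivalent to the clean analytic statement
\[
\Var_\mu[h] \;\le\; \frac{4}{n+3}\,\Expect_\mu[h]\big(1-\Expect_\mu[h]\big),
\qquad\text{i.e.}\qquad
\Var_\mu\!\big[(1+\dist^2)^{-1}\big]\;\le\;\frac{4}{n-1}\,\Expect_\mu\!\left[\frac{\dist^2}{(1+\dist^2)^2}\right].
\]

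It remains to prove this inequality for the convex measure $\mu\propto W^{-(n+1)/2}\idiff{\vct{x}}$, and this is the main obstacle. Since $\|\nabla W\|=2\dist$ almost everywhere, the right-hand side equals $(n-1)^{-1}\Expect_\mu\!\big[\,\|\nabla W^{-1}\|^2\,W^{2}\,\big]$, so what is wanted is a weighted Poincar\'e inequality for $\mu$ with weight $W^2$ and constant $(n-1)^{-1}$. This is exactly the regime of the modern variance bounds for Borell convex measures (Bobkov--Ledoux, Nguyen; in the spirit of Borell--Brascamp--Lieb), which for $\idiff{\mu}\propto V^{-\beta}\idiff{\vct{x}}$ with $V$ convex give bounds of the form $\Var_\mu(f)\le C\,\beta^{-1}\Expect_\mu[\langle(\nabla^2 V)^{-1}\nabla f,\nabla f\rangle\,V]$, of the correct shape with $V=W$, $f=W^{-1}$, $\beta=\tfrac{n+1}{2}$. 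The trouble—and the crux—is that $\beta=\tfrac{n+1}{2}$ does not exceed the ambient dimension $n$ when $n\ge2$, so $\mu$ lives at the boundary of the usual range of convex measures and one must extract the sharp constant directly. I would do this either (i) by localization (Fradelizi--Gu\'edon needles): after the standard reduction to an inequality of the form $\int(\cdot)\idiff{\mu}\le0$ under one linear constraint, pass to the one-dimensional case, where the measure has density $\propto\ell^{\,n-1}W^{-(n+1)/2}$ on an interval with $\ell$ affine and $W$ convex and $\ge1$, and verify the resulting scalar inequality by hand; or (ii) by the lift $\vct{x}\mapsto(\vct{x},1)/\|(\vct{x},1)\|$ onto a hemisphere of $\sphere{n}\subset\R^{n+1}$, which carries $\mu$ to a distance-weighting of the uniform measure against a spherically convex set, and then invoking the sharp Poincar\'e inequality on $\sphere{n}$ (spectral gap $n$) together with that spherical convexity. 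Once the pointwise bound of Proposition~\ref{prop:rmvol-tvar} is in hand, the variance and cgf estimates of Theorem~\ref{thm:rmvol-conc} follow by feeding it through~\eqref{eqn:thermal-mean}, Fact~\ref{fact:petrovitch}, and Lemma~\ref{lem:psi'topsi}, exactly as in the treatment of the rotation volumes.
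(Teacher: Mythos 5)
Your setup is sound and, modulo notation, reproduces the paper's reductions: the dilation identity showing that the Gibbs tilting of $\rmI_{\set{K}}$ is the law of $\rmI_{\econst^{-\theta}\set{K}}$ is a clean way to collapse the pointwise differential inequality to a single variance bound for all bodies; your second-derivative identity
$\Var[\rmI_{\set{K}}] = (n+1)(n+3)\Var_\mu[h] - 2(n+1)\Expect_\mu[h](1-\Expect_\mu[h])$
is exactly the paper's thermal variance identity (Lemma~\ref{lem:rmvol-tmv}) at $\theta=0$; and your algebra correctly reduces the proposition to
$\Var_\mu[W^{-1}] \leq \tfrac{4}{n-1}\,\Expect_\mu\big[\dist^2/(1+\dist^2)^2\big]$.
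But the proof stops precisely at the step that carries all the content. You correctly observe that applying the Bobkov--Ledoux/Nguyen-type bounds with the potential $V=W=1+\dist^2$ does not work (the exponent $(n+1)/2$ is wrong, and the resulting weight is $W$, not $W^2$), and you then offer two unexecuted strategies: a localization to one-dimensional needles whose scalar inequality is neither written down nor verified, and a gnomonic lift to the hemisphere (note that $(1+\dist_{\set{K}}^2)^{-(n+1)/2}\idiff{\vct{x}}$ pulls back to the uniform spherical measure only when $\set{K}=\{\vct{0}\}$, so the spherical route would itself require a nontrivial argument). As it stands, the central inequality is asserted, not proved.

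The missing idea is a change of potential: write the density as $\rmJ^{-(n+1)}$ with $\rmJ := (1+\dist^2)^{1/2} = W^{1/2}$, which is convex. This puts the measure exactly in the scope of Nguyen's dimensional variance inequality (Fact~\ref{fact:nguyen}) at the critical exponent $n+1$, namely $\Var_\mu[f] \leq \tfrac{1}{n}\int \ip{(\Hess \rmJ)^{-1}\grad f}{\grad f}\,\rmJ\idiff{\mu}$. Taking $f=\rmJ^{-2}=W^{-1}$, the eikonal-type identity $\norm{\grad\rmJ}^2 = 1-\rmJ^{-2}$ and the collinearity of $\grad f$ with $\grad\rmJ$ let one evaluate the quadratic form in closed form (after an $\eps\normsq{\vct{x}}/2$ regularization to make $\rmJ$ strongly convex), giving
$\Var_\mu[W^{-1}] \leq \tfrac{4}{n}\,\Expect_\mu\big[W^{-1}(1-W^{-1})\big]$,
which is strictly stronger than the $\tfrac{4}{n-1}$ bound you need. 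If you want to salvage your outline, this substitution $V=W^{1/2}$ is the one-line fix that replaces both of your proposed strategies; without it, or a completed version of (i) or (ii), the argument has a genuine gap.
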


\noindent
We establish Proposition~\ref{prop:rmvol-tvar} below in Section~\ref{sec:rmvol-conc}.
Here, we show how the bound leads to concentration of the rigid motion volumes.

\begin{proof}[Proof of Theorem~\ref{thm:rmvol-conc} from Proposition~\ref{prop:rmvol-tvar}]
To derive the variance bound, apply Proposition~\ref{prop:rmvol-tvar} to obtain
$$
\Var[ \rmI_{\set{K}} ] \stackrel{\eqref{eqn:cgf-zero}}{=} \xi_{\rmI_{\set{K}}}''(0)
	\leq \frac{2}{n+1} \cdot \xi_{\rmI_{\set{K}}}'(0) \cdot \big[ (n+1) - \xi_{\rmI_{\set{K}}}'(0) \big]
	\stackrel{\eqref{eqn:cgf-zero}}{=} \frac{2}{n+1} \cdot \rmdelta(\set{K}) \cdot \rmdelta_{\circ}(\set{K}).
$$
We have used the definition $\rmdelta(\set{K}) = \Expect \rmI_{\set{K}}$ twice.

For the concentration result, we pass to the zero-mean random variable
$Y:=\rmI_{\set{K}}-\rmdelta(\set{K})$.  First, assume that $\theta>0$.
Integrating the bound from Proposition~\ref{prop:rmvol-tvar}, we find that
\begin{align*}
\xi_Y'(\theta) &\stackrel{\eqref{eqn:thermal-mean}}{\leq} \frac{2}{n+1} \int_0^\theta
	\big[ (n + 1) - \xi_{\rmI_{\set{K}}}'(s) \big] \cdot \xi_{\rmI_{\set{K}}}'(s) \idiff{s}\\
	& \stackrel{\phantom{\eqref{eqn:thermal-mean}}}{\leq} \frac{2}{n+1} \big[ (n + 1) - \xi_{\rmI_{\set{K}}}'(0) \big]
		\int_{0}^\theta \xi_{\rmI_{\set{K}}}'(s) \idiff{s}
	\stackrel{\eqref{eqn:cgf-zero}}{=} \frac{2 \rmdelta_\polar(\set{K})}{n+1}
		\cdot \xi_{\rmI_{\set{K}}}(\theta).
\end{align*}
The second inequality depends on the property that
the derivative $\xi_{\rmI_{\set{K}}}'$ is increasing because
the cgf $\xi_{\rmI_{\set{K}}}$ is convex.  We have also used
fact~\eqref{eqn:thermal-mean-extreme} to infer that the thermal
mean is nonnegative: $\xi_{\rmI_{\set{K}}}'(s) \geq \inf \rmI_{\set{K}} \geq 0$.
Now, invoke~\eqref{eqn:cgf-shift} to pass to the cgf $\xi_Y$ of the zero-mean variable $Y$:
\begin{equation} \label{eqn:diff-ineq-pos}
\xi_Y'(\theta)
	\leq \frac{2 \rmdelta_{\circ}(\set{K})}{n+1}
	\big[ \xi_Y(\theta) + \rmdelta(\set{K})\, \theta \big]
	= \beta_\circ \xi_Y(\theta) + \bar{v}(\set{K}) \, \theta
	\quad\text{for $\theta > 0$.}
\end{equation}
We can invoke the first case of Lemma~\ref{lem:psi'topsi} with $a = \beta_{\circ}$
and $v = \bar{v}(\set{K})$ to obtain the cgf bound for $\theta \geq 0$.

If $\theta< 0$, a straightforward variant of the same argument implies that
\begin{equation} \label{eqn:diff-ineq-neg}
\xi_Y'(\theta)
	\geq \frac{2\rmdelta(\set{K})}{n+1} \left[ \rmdelta_\polar(\set{K})\,\theta  - \xi_Y(\theta) \right]
	= - \beta \xi_Y(\theta) + \bar{v}(\set{K}) \, \theta 
	\quad\text{for $\theta < 0$.}
\end{equation}
The cgf bound for $\theta \leq 0$ follows from the second case of
Lemma~\ref{lem:psi'topsi} with $a=-\beta$ and with $v = \bar{v}(\set{K})$.
\end{proof}

\subsection{Intrinsic Volumes}

Using the same methodology, we can prove a concentration
inequality for the intrinsic volume random variable that improves
significantly over the results in our prior work~\cite{LMNPT20:Concentration-Euclidean}.
These results are not immediately relevant, so we have
postponed them to Appendix~\ref{app:intvol}.

\section{From Distance Integrals to Generating Functions}
\label{sec:distance-integral}

As we have seen, the concentration results for weighted intrinsic volumes
depend on estimates for the thermal variance of the associated random variable.
To obtain these bounds, the first step is to find an alternative expression for
the mgf of the random variable.  The key observation is that the Steiner formula~\eqref{eqn:steiner-intro}
allows us to pass from the discrete weighted intrinsic volume random variable, taking values in
$\{0, 1, \dots, n\}$, to a continuous random variable, taking values in $\R^n$.
This section contains the background for this argument.
The next two sections execute the approach for the
two sequences of weighted intrinsic volumes.

\subsection{The Distance to a Convex Body}

To develop this approach, we first introduce some functions related to
the Euclidean distance between a point and a convex body.

\begin{definition}[Distance to a Convex Body] \label{def:dist}
The \term{distance} to the nonempty convex body $\set{K} \subset \R^n$
is the function
$$
\dist_{\set{K}}(\vct{x}) := \min \{ \norm{ \vct{y} - \vct{x}} : \vct{y} \in \set{K} \}
\quad\text{for $\vct{x} \in \R^n$.}
$$
The \term{projection} onto the convex body $\set{K}$ is the (unique) point where the distance is realized:
$$
\proj_{\set{K}}(\vct{x}) := \arg \min \{ \norm{ \vct{y} - \vct{x}} : \vct{y} \in \set{K} \}
\quad\text{for $\vct{x} \in \R^n$.}
$$
The \term{normal vector} to the convex body, induced by a point $\vct{x} \in \R^n$, is
$$
\vct{n}_{\set{K}}(\vct{x}) := \frac{\vct{x} - \proj_{\set{K}}(\vct{x})}{\|\vct{x}-\proj_{\set{K}}(\vct{x})\|}  = \frac{\vct{x} - \proj_{\set{K}}(\vct{x})}{\dist_{\set{K}}(\vct{x})}
	\quad\text{for $\vct{x} \in \R^n\backslash \set{K}$.}
$$
If $\vct{x}\in \set{K}$, we set $\vct{n}_{\set{K}}=\zerovct$. 
\end{definition}

\subsection{The Steiner Formula as a Distance Integral}\label{sub:steiner-dist}

We can reinterpret Steiner's formula~\eqref{eqn:steiner-intro} as a statement that
integrals of the distance to a convex body can be evaluated in terms of the intrinsic volumes
of the convex body.  This perspective dates at least as far back as
Hadwiger's influential paper~\cite{Had75:Willssche}.

\begin{fact}[Generalized Steiner Formula] \label{fact:distance-integral}
For every function $f : \R_+ \to \R$ where the integrals are finite,
\begin{equation} \label{eqn:gen-steiner}
\int_{\R^n} f(\dist_{\set{K}}(\vct{x})) \idiff{\vct{x}}
	= f(0) \cdot \intvol_n(\set{K}) + \sum_{i=0}^{n-1} \left( \int_0^\infty f(r) \cdot r^{n-i-1} \idiff{r} \right) \omega_{n-i} \cdot \intvol_i(\set{K}).
\end{equation}
\end{fact}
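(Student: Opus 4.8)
The strategy is to reduce~\eqref{eqn:gen-steiner} to the single case $f = \pInd_{[0,t]}$, where it becomes nothing more than Steiner's formula~\eqref{eqn:steiner-intro} for the volume of an outer parallel body, and then to upgrade to general $f$ by the standard linearity-plus-monotone-convergence argument. The geometric input is minimal; the only thing requiring care is the measure-theoretic bookkeeping.

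\textbf{The geometric core.} First I would record the one fact that drives everything: for each $t \geq 0$, since $\set{K}$ is closed,
\[
\{\vct{x} \in \R^n : \dist_{\set{K}}(\vct{x}) \leq t\} = \set{K} + t\,\ball{n},
\]
so Steiner's formula~\eqref{eqn:steiner-intro} evaluates the left-hand side of~\eqref{eqn:gen-steiner} at $f = \pInd_{[0,t]}$:
\[
\int_{\R^n} \pInd_{[0,t]}(\dist_{\set{K}}(\vct{x})) \idiff{\vct{x}} = \intvol_n(\set{K} + t\,\ball{n}) = \sum_{i=0}^{n} t^{n-i}\,\kappa_{n-i}\,\intvol_i(\set{K}).
\]
On the other hand, the right-hand side of~\eqref{eqn:gen-steiner} at $f = \pInd_{[0,t]}$ is, using $\int_0^t r^{n-i-1}\idiff{r} = t^{n-i}/(n-i)$ for $i \leq n-1$ and the identity $\omega_{n-i} = (n-i)\kappa_{n-i}$ from~\eqref{eqn:ball-sphere},
\[
\intvol_n(\set{K}) + \sum_{i=0}^{n-1} \frac{t^{n-i}}{n-i}\,\omega_{n-i}\,\intvol_i(\set{K}) = \intvol_n(\set{K}) + \sum_{i=0}^{n-1} t^{n-i}\,\kappa_{n-i}\,\intvol_i(\set{K}).
\]
The two expressions agree, since the omitted $i=n$ term on the right of the Steiner sum is exactly $\intvol_n(\set{K})$. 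So~\eqref{eqn:gen-steiner} holds for every interval indicator.

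\textbf{Functional upgrade and the main obstacle.} Both sides of~\eqref{eqn:gen-steiner} are linear in $f$. Reading the right-hand side as $f \mapsto f(0)\,\intvol_n(\set{K}) + \sum_{i=0}^{n-1}\bigl(\int_0^\infty f(r)\,r^{n-i-1}\idiff{r}\bigr)\,\omega_{n-i}\,\intvol_i(\set{K})$, one sees that, away from the value at $0$, it is integration of $f$ against the nonnegative Borel measure on $(0,\infty)$ with density $\sum_{i=0}^{n-1} r^{n-i-1}\,\omega_{n-i}\,\intvol_i(\set{K})$; equivalently, both sides equal $\int_{[0,\infty)} f \idiff{\mu_{\set{K}}}$, where $\mu_{\set{K}}$ is the pushforward of Lebesgue measure under $\dist_{\set{K}}$, which has an atom of mass $\intvol_n(\set{K})$ at $0$ and that density on $(0,\infty)$. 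Having verified the identity for $f = \pInd_{[0,t]}$, I would extend it by monotone convergence (on the left, because the integrand increases pointwise; on the right, against the explicit measure just described) first to nonnegative simple functions, then to arbitrary nonnegative measurable $f$, and finally, splitting $f = f_+ - f_-$, to every $f$ making both sides finite --- which is precisely the stated hypothesis. The only real subtlety is confirming that the right-hand side genuinely is integration against one fixed nonnegative measure, so that the monotone passage is legitimate termwise and the finiteness hypothesis is exactly what justifies the $f_\pm$ splitting; the geometric identification of sublevel sets with parallel bodies is immediate. (An alternative, heavier route uses the coarea formula together with $\norm{\grad \dist_{\set{K}}} = 1$ a.e.\ on $\R^n \setminus \set{K}$ and the identity $\mathcal{H}^{n-1}(\{\dist_{\set{K}} = r\}) = \tfrac{\diff}{\diff r}\,\intvol_n(\set{K} + r\,\ball{n})$, but the indicator reduction avoids all regularity theory and is cleaner.)
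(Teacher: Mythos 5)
Your argument is correct, and it is essentially the proof the paper relies on: the paper gives no proof of Fact~\ref{fact:distance-integral} in the text, deferring instead to~\cite[Prop.~2.4]{LMNPT20:Concentration-Euclidean}, whose argument is exactly yours --- the sublevel set $\{\vct{x} : \dist_{\set{K}}(\vct{x}) \leq t\}$ is the parallel body $\set{K} + t\,\ball{n}$, so Steiner's formula~\eqref{eqn:steiner-intro} identifies the pushforward of Lebesgue measure under $\dist_{\set{K}}$ as an atom of mass $\intvol_n(\set{K})$ at $0$ plus the density $\sum_{i=0}^{n-1}\omega_{n-i}\,r^{n-i-1}\,\intvol_i(\set{K})$ on $(0,\infty)$, after which the identity is just the change-of-variables formula for pushforward integrals. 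One bookkeeping remark: the step from interval indicators to general simple functions is a measure-uniqueness argument (the sets $[0,t]$ form a generating $\pi$-system on which the two $\sigma$-finite measures agree, so they coincide by Dynkin's theorem), not literally a monotone-convergence step; with that phrased correctly, the remainder of your extension to nonnegative measurable $f$ and then to $f = f_+ - f_-$ under the stated finiteness hypothesis is routine.
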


\noindent
See~\cite[Prop.~2.4]{LMNPT20:Concentration-Euclidean} for a short proof of this result.

Following~\cite{LMNPT20:Concentration-Euclidean}, we observe that the right-hand
side of the formula~\eqref{eqn:gen-steiner} is a linear function of the sequence of
intrinsic volumes of $\set{K}$.  By a careful choice of the function $f$,
we can express any moment of the intrinsic volume sequence.  \emph{A fortiori},
we can also express any moment of the rotation volumes or the rigid motion volumes.
The value of this approach is that we can treat the distance integrals that arise
using methods from geometric functional analysis.  In particular,
we will exploit variance inequalities for log-concave and concave measures to
obtain bounds for distance integrals and, thereby, for weighted intrinsic volumes.

\begin{remark}[Related Work]
A rudimentary form of this argument first appeared in our
paper~\cite{ALMT14:Living-Edge} on phase transitions in conic geometry.
Later, the papers~\cite{MT14:Steiner-Formulas} and~\cite{GNP17:Gaussian-Phase}
demonstrated the full power of this approach for treating the conic intrinsic
volumes.  Our recent work~\cite{LMNPT20:Concentration-Euclidean} contains an
initial attempt to execute a similar method for Euclidean intrinsic volumes.
In this paper, we have found a seamless implementation of the idea.
\end{remark}

\subsection{Properties of the Distance Function}

The distance function enjoys a number of elegant properties,
which we record for later reference.

\begin{fact}[Properties of the Distance Function] \label{fact:dist}
The maps appearing in Definition~\ref{def:dist} satisfy
the following properties.
\begin{enumerate}
\item\label{dist:1} The function $\dist_{\set{K}}$ and its square $\dist_{\set{K}}^2$ are convex;
\item\label{dist:2}	The function $\dist_{\set{K}}^2$ is everywhere differentiable, and $\grad \dist_{\set{K}}^2(\vct{x}) = 2 \dist_{\set{K}}(\vct{x}) \, \vct{n}_{\set{K}}(\vct{x})$;
\item\label{dist:3} The Hessian of the squared distance satisfies $(\Hess \dist^2_{\set{K}}(\vct{x})) \, \vct{n}_{\set{K}}(\vct{x}) = 2\vct{n}_{\set{K}}(\vct{x})$.
\end{enumerate}
\end{fact}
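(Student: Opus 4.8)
The plan is to treat the three claims in turn, leaning throughout on the variational characterization of the metric projection onto a nonempty closed convex set: the point $\vct{p} = \proj_{\set{K}}(\vct{x})$ is the unique element of $\set{K}$ satisfying $\ip{\vct{x} - \vct{p}}{\vct{y} - \vct{p}} \leq 0$ for every $\vct{y} \in \set{K}$. For the convexity claim, I would observe that the functions $(\vct{x}, \vct{y}) \mapsto \norm{\vct{x} - \vct{y}}$ and $(\vct{x}, \vct{y}) \mapsto \norm{\vct{x} - \vct{y}}^2$ are jointly convex on $\R^n \times \R^n$, and that $\dist_{\set{K}}$ and $\dist_{\set{K}}^2$ arise by minimizing them over $\vct{y} \in \set{K}$; partial minimization of a jointly convex function over a convex set yields a convex function. (Alternatively, convexity of $\dist_{\set{K}}$ follows in two lines from $\lambda \proj_{\set{K}}(\vct{x}_0) + (1 - \lambda) \proj_{\set{K}}(\vct{x}_1) \in \set{K}$ and the triangle inequality, and then $\dist_{\set{K}}^2$ is convex because $t \mapsto t^2$ is convex and nondecreasing on $[0, \infty)$ while $\dist_{\set{K}} \geq 0$.)

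\textbf{The gradient.} For the second claim, I would first record that $\proj_{\set{K}}$ is $1$-Lipschitz: applying the variational inequality at $\vct{x}$ with $\vct{y} = \proj_{\set{K}}(\vct{x}')$ and at $\vct{x}'$ with $\vct{y} = \proj_{\set{K}}(\vct{x})$, then adding the two relations, gives $\norm{\proj_{\set{K}}(\vct{x}) - \proj_{\set{K}}(\vct{x}')}^2 \leq \ip{\vct{x} - \vct{x}'}{\proj_{\set{K}}(\vct{x}) - \proj_{\set{K}}(\vct{x}')}$, whence the Lipschitz bound by Cauchy--Schwarz. Next, writing $\vct{p} = \proj_{\set{K}}(\vct{x})$ and $\vct{p}' = \proj_{\set{K}}(\vct{x}')$, the bound $\dist_{\set{K}}^2(\vct{x}') \leq \norm{\vct{x}' - \vct{p}}^2$ expands to $\dist_{\set{K}}^2(\vct{x}') - \dist_{\set{K}}^2(\vct{x}) \leq 2\ip{\vct{x} - \vct{p}}{\vct{x}' - \vct{x}} + \norm{\vct{x}' - \vct{x}}^2$, while $\dist_{\set{K}}^2(\vct{x}) \leq \norm{\vct{x} - \vct{p}'}^2$ expands to $\dist_{\set{K}}^2(\vct{x}') - \dist_{\set{K}}^2(\vct{x}) \geq 2\ip{\vct{x}' - \vct{p}'}{\vct{x}' - \vct{x}} - \norm{\vct{x}' - \vct{x}}^2$, and here $\ip{\vct{x}' - \vct{p}'}{\vct{x}' - \vct{x}}$ differs from $\ip{\vct{x} - \vct{p}}{\vct{x}' - \vct{x}}$ by at most $\norm{(\vct{x}' - \vct{x}) - (\vct{p}' - \vct{p})}\,\norm{\vct{x}' - \vct{x}} \leq 2\norm{\vct{x}' - \vct{x}}^2$, using the Lipschitz estimate. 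Combining the two bounds shows that $\dist_{\set{K}}^2$ is differentiable at $\vct{x}$ with $\grad \dist_{\set{K}}^2(\vct{x}) = 2(\vct{x} - \proj_{\set{K}}(\vct{x}))$. To conclude, I would identify $2(\vct{x} - \proj_{\set{K}}(\vct{x}))$ with $2\dist_{\set{K}}(\vct{x})\, \vct{n}_{\set{K}}(\vct{x})$ by treating separately $\vct{x} \in \set{K}$ (both quantities equal $\zerovct$) and $\vct{x} \notin \set{K}$ (immediate from the definition of $\vct{n}_{\set{K}}$).

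\textbf{The Hessian.} For the third claim, the identity holds trivially when $\vct{x} \in \set{K}$, since $\vct{n}_{\set{K}}(\vct{x}) = \zerovct$ there. For $\vct{x} \notin \set{K}$, set $\vct{p} = \proj_{\set{K}}(\vct{x})$, $d = \dist_{\set{K}}(\vct{x}) > 0$, and $\vct{n} = \vct{n}_{\set{K}}(\vct{x})$. The key point is that $\proj_{\set{K}}$ is constant along the outward normal ray: because $\ip{\vct{n}}{\vct{y} - \vct{p}} = d^{-1} \ip{\vct{x} - \vct{p}}{\vct{y} - \vct{p}} \leq 0$ for every $\vct{y} \in \set{K}$, the variational characterization forces $\proj_{\set{K}}(\vct{p} + s\vct{n}) = \vct{p}$ for all $s \geq 0$, and hence $\proj_{\set{K}}(\vct{x} + t\vct{n}) = \vct{p}$ for all $t \geq -d$. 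Feeding this into the gradient formula, $\grad \dist_{\set{K}}^2(\vct{x} + t\vct{n}) = 2(\vct{x} + t\vct{n} - \vct{p}) = 2(d + t)\,\vct{n}$ for $t$ near zero, so differentiating this affine curve at $t = 0$ gives $\Hess \dist_{\set{K}}^2(\vct{x})\, \vct{n} = 2\vct{n}$ at every $\vct{x}$ where the Hessian exists. Since $\dist_{\set{K}}^2$ is convex, this holds at almost every $\vct{x}$ by Alexandrov's theorem, which is all the later integral computations require.

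\textbf{Main obstacle.} The only step needing genuine care is the squeeze in the gradient computation: one must align the two one-sided quadratic estimates with the Lipschitz bound for $\proj_{\set{K}}$ precisely so that the first-order contributions cancel and only an $O(\norm{\vct{x}' - \vct{x}}^2)$ remainder survives. The convexity and Hessian claims are then essentially bookkeeping with the variational inequality.
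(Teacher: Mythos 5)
Your proposal is correct, and it diverges from the paper's (very terse) proof sketch in the second and third claims. For convexity you argue exactly as the paper does: partial minimization of a jointly convex function, then composition with $t \mapsto t^2$. For the gradient, the paper simply cites Rockafellar--Wets, Thm.~2.26(b); you instead give a self-contained derivation via firm nonexpansiveness of $\proj_{\set{K}}$ and a two-sided quadratic squeeze, and your bookkeeping there checks out ($\dist_{\set{K}}^2(\vct{x}') - \dist_{\set{K}}^2(\vct{x}) = 2\ip{\vct{x}-\proj_{\set{K}}(\vct{x})}{\vct{x}'-\vct{x}} + O(\norm{\vct{x}'-\vct{x}}^2)$ from both sides). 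For the Hessian, the paper differentiates the eikonal-type identity $4\dist_{\set{K}}^2(\vct{x}) = \norm{\grad \dist_{\set{K}}^2(\vct{x})}^2$ and equates terms, whereas you exploit the constancy of $\proj_{\set{K}}$ along the outward normal ray to see that $t \mapsto \grad\dist_{\set{K}}^2(\vct{x}+t\vct{n})$ is affine with slope $2\vct{n}$; these are two faces of the same geometric fact, but your version makes the mechanism more transparent. You also explicitly flag that $\Hess\dist_{\set{K}}^2$ exists only almost everywhere (the function is $C^{1,1}$, so Rademacher applied to the gradient, or Alexandrov, gives a.e.\ second differentiability), a caveat the paper leaves implicit; a.e.\ validity is indeed all that the downstream integral computations in the variance bounds require.
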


\begin{proof}[Proof Sketch]
The convexity of $\dist_{\set{K}}(\vct{x})$ follows
because it is the minimum of a jointly convex function of $(\vct{x}, \vct{y})$
with respect to the variable $\vct{y}$, and the convexity of $\dist_{\set{K}}^2$
is a standard consequence of the convexity and nonnegativity of $\dist_{\set{K}}$.
For Claim~\eqref{dist:2}, the differentiability and the gradient of the squared distance function, see~\cite[Thm.~2.26(b)]{RW98:Variational-Analysis}. Claim~\eqref{dist:3} follows by computing the gradient on both sides of the identity $4\dist^2_{\set{K}}(\vct{x}) = \|\grad \dist_{\set{K}}^2(\vct{x})\|^2$ and equating terms.
\end{proof}

\section{Rotation Volumes: Thermal Variance Bound}
\label{sec:rotvol-conc}

In this section, we establish Proposition~\ref{prop:rotvol-tvar}, the thermal
variance bound for the rotation volumes.
The basic idea is to invoke the
generalized Steiner formula (Fact~\ref{fact:distance-integral})
to rewrite the mgf of the rotation volume
random variable in terms of a distance integral.  After some further
manipulations, we can bound the distance integral using a functional
inequality for log-concave probability measures.

We begin with the rotation volumes
because the ideas shine through most brightly.
The other cases follow the same pattern of
argument, but the mass of detail becomes denser.

\subsection{Setup}

Fix a nonempty convex body $\set{K} \subset \R^n$. This is the only convex body that appears in this section, so we will ruthlessly suppress it from the notation.
For each $i = 0, 1, 2, \dots, n$, let $\intvol_i$
denote the $i$th intrinsic volume of $\set{K}$.
The rotation volumes $\rotv_i$ and the total rotation volume $\rotwills$ are given by
\begin{equation} \label{eqn:rotvol-def-pf}
\rotv_i := \frac{\omega_{n+1}}{\omega_{i+1}} \intvol_{n-i}
\quad\text{and}\quad
\rotwills := \sum_{i=0}^n \rotv_i.
\end{equation}
The rotation volume random variable $\rotI$ follows the distribution
\begin{equation} \label{eqn:rotvol-rv-def}
\Prob{ \rotI = n - i } = \rotv_{i} / \rotwills
\quad\text{for $i = 0, 1, 2, \dots, n$.}
\end{equation}
The central rotation volume $\rotdelta := \Expect \rotI$.
We also abbreviate $\dist := \dist_{\set{K}}$.  

\subsection{The Distance Integral}
\label{sec:rotvol-distint}

First, we express the exponential moments of the sequence
of rotation volumes as a distance integral.

\begin{proposition}[Rotation Volumes: Distance Integral] \label{prop:rotvol-distint}
For each $\theta \in \R$, define the convex potential
\begin{equation} \label{eqn:rotvol-potent}
\rotJ(\vct{x}) := 2 \pi \econst^{\theta} \dist(\vct{x})
\quad\text{for $\vct{x} \in \R^n$.}
\end{equation}
For any function $h : \R_+ \to \R$ where the expectations on the right-hand side are finite,
\begin{equation} \label{eqn:rotvol-distint}
\frac{\omega_{n+1} \econst^{n\theta}}{2} \int_{\R^n} h(\rotJ(\vct{x})) \cdot \econst^{ - \rotJ(\vct{x}) } \idiff{\vct{x}}
	= \sum_{i=0}^n \Expect[ h(G_{i}) ] \, \econst^{(n-i) \theta} \cdot \rotv_i. \end{equation}
The random variable $G_i \sim \textsc{gamma}(i, 1)$ follows the gamma distribution with shape parameter $i$
and scale parameter $1$.  By convention, $G_0 = 0$.
\end{proposition}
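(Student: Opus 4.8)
The plan is to choose the test function in the generalized Steiner formula (Fact~\ref{fact:distance-integral}) so that its left-hand side is exactly the distance integral on the left of~\eqref{eqn:rotvol-distint}, and then to check that the constants produced by the Steiner expansion reassemble into the rotation volumes. Fix $\theta \in \R$ and set $f(r) := h(2\pi\econst^\theta r)\,\econst^{-2\pi\econst^\theta r}$ for $r \geq 0$. By the definition~\eqref{eqn:rotvol-potent} of the potential, $f(\dist(\vct{x})) = h(\rotJ(\vct{x}))\,\econst^{-\rotJ(\vct{x})}$, so the left-hand side of~\eqref{eqn:rotvol-distint} equals $\tfrac{1}{2}\omega_{n+1}\econst^{n\theta}\int_{\R^n} f(\dist(\vct{x}))\idiff{\vct{x}}$. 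The integrability hypothesis on $h$, together with $\intvol_n < \infty$ (since $\set{K}$ is bounded), guarantees that the radial integrals $\int_0^\infty |f(r)|\, r^{n-i-1}\idiff{r}$ and $|f(0)|$ are finite, so Fact~\ref{fact:distance-integral} applies.

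Next I would evaluate the radial moments. The substitution $u = 2\pi\econst^\theta r$ gives, for each $j = 1, 2, \dots, n$,
\begin{equation*}
\int_0^\infty f(r)\, r^{j-1}\idiff{r}
  = (2\pi\econst^\theta)^{-j}\int_0^\infty h(u)\,\econst^{-u}\,u^{j-1}\idiff{u}
  = (2\pi\econst^\theta)^{-j}\,\Gamma(j)\,\Expect[h(G_j)],
\end{equation*}
since $G_j \sim \textsc{gamma}(j,1)$ has density $u^{j-1}\econst^{-u}/\Gamma(j)$ on $(0,\infty)$; moreover $f(0) = h(0) = \Expect[h(G_0)]$ by the convention $G_0 = 0$. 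Plugging these into Fact~\ref{fact:distance-integral} (whose $r^{n-i-1}$ term matches $j = n-i$, with weight $\omega_{n-i} = \omega_j$) and reindexing by $j = n-i$ yields
\begin{equation*}
\int_{\R^n} f(\dist(\vct{x}))\idiff{\vct{x}}
  = h(0)\,\intvol_n + \sum_{j=1}^{n} (2\pi\econst^\theta)^{-j}\,\Gamma(j)\,\omega_j\,\Expect[h(G_j)]\,\intvol_{n-j}.
\end{equation*}

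Finally I would match this against the right-hand side of~\eqref{eqn:rotvol-distint}. Substituting the definition $\rotv_i = (\omega_{n+1}/\omega_{i+1})\,\intvol_{n-i}$ and dividing by $\tfrac{1}{2}\omega_{n+1}\econst^{n\theta}$, the claimed identity becomes
\begin{equation*}
\int_{\R^n} f(\dist(\vct{x}))\idiff{\vct{x}}
  = 2\sum_{i=0}^{n} \frac{\econst^{-i\theta}}{\omega_{i+1}}\,\Expect[h(G_i)]\,\intvol_{n-i}.
\end{equation*}
Comparing the two displays coefficient by coefficient in $\intvol_0, \dots, \intvol_n$, it suffices to verify the scalar facts $\omega_1 = 2$ (the $\intvol_n$-coefficient, immediate from~\eqref{eqn:ball-sphere}) and
\begin{equation*}
(2\pi)^{-j}\,\Gamma(j)\,\omega_j = \frac{2}{\omega_{j+1}}
\qquad (j = 1, \dots, n).
\end{equation*}
Inserting $\omega_j = 2\pi^{j/2}/\Gamma(j/2)$ from~\eqref{eqn:ball-sphere}, the latter rearranges to $2\,\Gamma(j) = 2^{j}\pi^{-1/2}\,\Gamma(j/2)\,\Gamma\big((j+1)/2\big)$, which is precisely Legendre's duplication formula $\Gamma(z)\Gamma(z+\tfrac{1}{2}) = 2^{1-2z}\sqrt{\pi}\,\Gamma(2z)$ at $z = j/2$.

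I do not anticipate a deep obstacle: the argument is essentially bookkeeping around Fact~\ref{fact:distance-integral}. The one point that genuinely needs care is the constant-matching in the last step, where one must recognize that the normalization $\omega_{n+1}/\omega_{i+1}$ built into the rotation volumes (Definition~\ref{def:wintvol}) is exactly calibrated so that the gamma moments $\Gamma(j)\,\Expect[h(G_j)]$ and the sphere areas $\omega_{n-i}$ produced by the Steiner formula collapse to the clean form~\eqref{eqn:rotvol-distint}; this reconciliation passes through the duplication formula. A secondary point to handle cleanly is the degenerate index $i = 0$ (equivalently $j = 0$), where $\Gamma$ and $\omega$ are singular, so the corresponding term must be read off separately from the isolated $\intvol_n$ contribution in Fact~\ref{fact:distance-integral}, using $G_0 = 0$.
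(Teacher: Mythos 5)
Your proposal is correct and follows essentially the same route as the paper: instantiate Fact~\ref{fact:distance-integral} with $f(r) = h(2\pi\econst^{\theta}r)\,\econst^{-2\pi\econst^{\theta}r}$, recognize the radial integrals as gamma moments, and reconcile the constants with $\omega_1 = 2$ and the Legendre duplication formula. The only difference is cosmetic bookkeeping (you evaluate the integrals before substituting $\rotv_i$, the paper does it in the opposite order), and your handling of the degenerate $i=0$ term matches the paper's.
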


\begin{proof}
To verify that the potential $\rotJ$ defined in~\eqref{eqn:rotvol-potent}
is convex, simply invoke Fact~\ref{fact:dist}\eqref{dist:1},
which states that the distance function is convex.

To obtain the moment identity, instantiate Fact~\ref{fact:distance-integral} with the function
$f(r) = h(2\pi \econst^{\theta} r) \, \econst^{-2\pi \econst^\theta r}$. We ascertain that
\begin{align*}
\int_{\R^n} h(\rotJ(\vct{x})) \cdot \econst^{-\rotJ(\vct{x})} \idiff{\vct{x}}
&= h(0) \cdot \intvol_n + \sum_{i=0}^{n-1} \left(\int_0^\infty h(2\pi \econst^{\theta} r) \cdot \econst^{-2\pi\econst^{\theta}r} r^{n-i-1}  \idiff{r} \right)
	\omega_{n-i} \cdot \intvol_i \\
	&= \frac{2}{\omega_{n+1}} \left[ h(0) \cdot \rotv_0 + \sum_{i=1}^n \left( \int_0^\infty h(2\pi \econst^{\theta} r) \cdot \econst^{-2\pi\econst^{\theta}r} r^{i-1} \idiff{r} \right)
	\frac{\omega_{i} \omega_{i+1}}{2} \cdot \rotv_i \right].
\end{align*}We have used the definition~\eqref{eqn:rotvol-def-pf} of $\rotv_i$ and the fact~\eqref{eqn:ball-sphere} that $\omega_1 = 2$.
To handle the integrals, make the change of variables $2\pi\econst^{\theta} r \mapsto s$.
We recognize an expectation with respect to the gamma distribution:
$$
\int_0^\infty h(2\pi \econst^{\theta} r) \cdot r^{i-1} \econst^{-2\pi\econst^{\theta}r} \idiff{r}
	= \frac{1}{(2\pi \econst^{\theta})^{i}} \int_{0}^\infty h(s) \cdot s^{i-1} \econst^{-s} \idiff{s}
	= \frac{\Gamma(i)}{(2\pi \econst^{\theta})^{i}} \Expect[ h(G_i) ].
$$
Altogether,
$$
\frac{\omega_{n+1}}{2} \int_{\R^n} h(\rotJ(\vct{x})) \cdot \econst^{-\rotJ(\vct{x})} \idiff{\vct{x}}
	= \sum_{i=0}^n \frac{\Gamma(i) \, \omega_i \omega_{i+1}}{2 (2\pi)^i} \cdot \Expect[ h(G_i) ] \, \econst^{-i \theta}\cdot \rotv_i.
$$
Owing to the formula~\eqref{eqn:ball-sphere} for the surface area and the Legendre
duplication formula, each of the fractions on the right-hand side equals one.
Multiply each side by $\econst^{n\theta}$ to complete the argument.
\end{proof}

As an immediate corollary, we obtain the metric representations of the total rotation volume
and the central rotation volume, reported in Proposition~\ref{prop:rotvol-metric}.

\begin{proof}[Proof of Proposition~\ref{prop:rotvol-metric}]
Apply Proposition~\ref{prop:rotvol-distint} with $h(s) = 1$ and then with $h(s) = s$, using the fact that $\Expect G_i=i$.
Select the parameter $\theta = 0$.
\end{proof}

\subsection{A Family of Log-Concave Measures}

Proposition~\ref{prop:rotvol-distint} allows us to obtain an alternative
expression for the mgf $m_{\rotI}$ of the rotation volume random variable.  Indeed,
we can choose $h(s) = 1$ to obtain
\begin{equation} \label{eqn:rotvol-mgf}
m_{\rotI}(\theta) \stackrel{\eqref{eqn:mgf}}{=} \Expect \econst^{\theta \rotI}
	\stackrel{\eqref{eqn:rotvol-rv-def}}{=} \frac{1}{\rotwills}\sum_{i=0}^{n} \econst^{(n-i)\theta} \cdot \rotv_i 
	= \frac{\omega_{n+1} \econst^{n\theta}}{2 \rotwills}
	\int_{\R^n} \econst^{-\rotJ(\vct{x})} \idiff{\vct{x}}.
\end{equation}
For each $\theta \in \R$, construct the probability measure $\rotmu_{\theta}$
with the log-concave density
\begin{equation*} \label{eqn:rotvol-measure}
\frac{\diff\rotmu_{\theta}(\vct{x})}{\diff{\vct{x}}} = \frac{\omega_{n+1}}{2 \rotwills} \cdot \frac{ \econst^{n\theta}}{ m_{\rotI}(\theta)} \cdot \econst^{-\rotJ(\vct{x})} \quad\text{for $\vct{x} \in \R^n$.}
\end{equation*}
Owing to~\eqref{eqn:rotvol-mgf}, the measure $\rotmu_{\theta}$ has total mass one.
The density is log-concave because $\rotJ$ is convex.

\begin{corollary}[Rotation Volumes: Probabilistic Formulation] \label{cor:rotvol-lc}
Instate the notation from Proposition~\ref{prop:rotvol-distint}.
Draw a random vector $\vct{z} \sim \rotmu_{\theta}$.  Then
$$
\Expect[ h(\rotJ(\vct{z})) ]
	=
\frac{1}{m_{\rotI}(\theta)} \sum_{i=0}^n \Expect[ h(G_{i}) ]\, \econst^{(n-i) \theta} \cdot \Prob{ \rotI = n - i }.
$$
\end{corollary}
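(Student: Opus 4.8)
The plan is to obtain this corollary as a direct bookkeeping consequence of Proposition~\ref{prop:rotvol-distint} together with the definition of the log-concave measure $\rotmu_{\theta}$. First I would write the expectation as an integral against the density of $\rotmu_{\theta}$: for $\vct{z} \sim \rotmu_{\theta}$,
$$
\Expect[ h(\rotJ(\vct{z})) ]
	= \int_{\R^n} h(\rotJ(\vct{x})) \cdot \frac{\omega_{n+1}}{2 \rotwills} \cdot \frac{\econst^{n\theta}}{m_{\rotI}(\theta)} \cdot \econst^{-\rotJ(\vct{x})} \idiff{\vct{x}}.
$$
Next I would pull the constant factor $1/(\rotwills\, m_{\rotI}(\theta))$ outside, so that what remains is exactly $\tfrac{\omega_{n+1}\econst^{n\theta}}{2}\int_{\R^n} h(\rotJ(\vct{x}))\,\econst^{-\rotJ(\vct{x})}\idiff{\vct{x}}$, the left-hand side of~\eqref{eqn:rotvol-distint}. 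Then I would invoke Proposition~\ref{prop:rotvol-distint} to rewrite that integral as $\sum_{i=0}^n \Expect[h(G_i)]\,\econst^{(n-i)\theta}\cdot\rotv_i$. Finally, distributing the factor $1/\rotwills$ across the sum and recognizing $\rotv_i/\rotwills = \Prob{\rotI = n-i}$ from~\eqref{eqn:rotvol-rv-def} yields the stated identity.

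There is essentially no obstacle here: the only point that requires a moment's attention is that the normalizing constant in the density of $\rotmu_{\theta}$ is precisely the reciprocal of the left side of~\eqref{eqn:rotvol-distint} evaluated at $h \equiv 1$, which is exactly the content of~\eqref{eqn:rotvol-mgf} and guarantees that $\rotmu_{\theta}$ is a probability measure. Finiteness of the expectations $\Expect[h(G_i)]$ on the right-hand side, and hence the legitimacy of the implicit exchange of sum and integral, is inherited from the hypothesis of Proposition~\ref{prop:rotvol-distint} and need not be re-examined here.
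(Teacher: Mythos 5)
Your proposal is correct and is precisely the ``straightforward reinterpretation'' of Proposition~\ref{prop:rotvol-distint} that the paper's one-line proof invokes: unpack the density of $\rotmu_{\theta}$, apply the distance-integral identity, and divide through by $\rotwills$ to recognize the probabilities $\rotv_i/\rotwills = \Prob{\rotI = n-i}$. No gaps.
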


\begin{proof}
This result is a straightforward reinterpretation of Proposition~\ref{prop:rotvol-distint}.
\end{proof}

\subsection{A Thermal Variance Identity}

With Corollary~\ref{cor:rotvol-lc} at hand, we quickly obtain
an alternative formula for $\xi_{\rotI}''(\theta)$, the thermal variance~\eqref{eqn:thermal-var} of the rotation volume random variable $\rotI$.
These results are phrased in terms of
a variance with respect to the measure $\rotmu_{\theta}$.

\begin{lemma}[Rotation Volumes: Thermal Variance Identity] \label{lem:rotvol-tmv}
Draw a random vector $\vct{z} \sim \rotmu_{\theta}$. The thermal mean and variance of the rotation volume random variable satisfy
\begin{align} 
\xi_{\rotI}'(\theta) &= \Expect[ n - \rotJ(\vct{z}) ]; \notag \\
\xi_{\rotI}''(\theta) &= \Var[ \rotJ(\vct{z}) ] - (n - \xi_{\rotI}'(\theta)). \label{eqn:rotvol-tmv}
\end{align}
\end{lemma}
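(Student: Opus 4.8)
The plan is to extract both formulas in~\eqref{eqn:rotvol-tmv} directly from Corollary~\ref{cor:rotvol-lc} by specializing the test function $h$ to the monomials $h(s)=s$ and $h(s)=s^2$. The only extra inputs are the first two moments of a gamma variable, namely $\Expect[G_i]=i$ and $\Expect[G_i^2]=\Var[G_i]+(\Expect[G_i])^2=i+i^2$, together with the thermal-moment identities~\eqref{eqn:thermal-expect} and~\eqref{eqn:thermal-var}.

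First I would record the bookkeeping fact that, since $\rotI=n-i$ is equivalent to $i=n-\rotI$, the sum on the right-hand side of the corollary is a Gibbs expectation in disguise: for any function $g$,
\begin{equation*}
\frac{1}{m_{\rotI}(\theta)}\sum_{i=0}^{n} g(i)\,\econst^{(n-i)\theta}\,\Prob{\rotI=n-i}
 = \frac{\Expect\big[\,g(n-\rotI)\,\econst^{\theta\rotI}\,\big]}{\Expect\big[\econst^{\theta\rotI}\big]},
\end{equation*}
which is precisely the expectation of $g(n-\rotI)$ under the Gibbs measure $\varrho_{\rotI}^{\theta}$ at inverse temperature $\theta$. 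By~\eqref{eqn:thermal-expect} and~\eqref{eqn:thermal-var}, the Gibbs mean of $\rotI$ is $\xi_{\rotI}'(\theta)$ and the Gibbs variance of $\rotI$ is $\xi_{\rotI}''(\theta)$; hence the Gibbs mean of $n-\rotI$ is $n-\xi_{\rotI}'(\theta)$, and---since variance is invariant under the reflection $x\mapsto n-x$---the Gibbs variance of $n-\rotI$ is again $\xi_{\rotI}''(\theta)$.

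Applying Corollary~\ref{cor:rotvol-lc} with $h(s)=s$ and $\Expect[G_i]=i$, the right-hand side becomes the Gibbs mean of $n-\rotI$, so $\Expect[\rotJ(\vct{z})]=n-\xi_{\rotI}'(\theta)$, which is the first identity. Applying it with $h(s)=s^2$ and $\Expect[G_i^2]=i+i^2$, the right-hand side becomes the Gibbs expectation of $(n-\rotI)+(n-\rotI)^2$, so
\begin{equation*}
\Var[\rotJ(\vct{z})]
 = \Expect[\rotJ(\vct{z})^2]-\big(\Expect[\rotJ(\vct{z})]\big)^2
 = \big(\text{Gibbs variance of }n-\rotI\big)+\big(\text{Gibbs mean of }n-\rotI\big)
 = \xi_{\rotI}''(\theta)+\big(n-\xi_{\rotI}'(\theta)\big).
\end{equation*}
Rearranging gives $\xi_{\rotI}''(\theta)=\Var[\rotJ(\vct{z})]-(n-\xi_{\rotI}'(\theta))$, the second identity.

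I expect no serious obstacle here; the two points that need care are the evaluation $\Expect[G_i^2]=i(i+1)$ and the observation that the corollary naturally produces Gibbs moments of $n-\rotI$ rather than of $\rotI$, so the reindexing must be tracked. An alternative that avoids the corollary is to differentiate the distance-integral representation~\eqref{eqn:rotvol-mgf} of $m_{\rotI}$ under the integral sign: because $\partial_\theta\rotJ(\vct{x})=\rotJ(\vct{x})$ by~\eqref{eqn:rotvol-potent}, differentiating $\xi_{\rotI}(\theta)=\log m_{\rotI}(\theta)$ once yields $\xi_{\rotI}'(\theta)=n-\Expect[\rotJ(\vct{z})]$ and twice yields $\xi_{\rotI}''(\theta)=\Var[\rotJ(\vct{z})]-\Expect[\rotJ(\vct{z})]$, the same conclusion.
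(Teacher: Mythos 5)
Your proof is correct and takes essentially the same route as the paper's: both apply Corollary~\ref{cor:rotvol-lc} with polynomial test functions and the first two gamma moments, the only difference being that the paper folds the reindexing into the choices $h(s)=n-s$ and $h(s)=(n-s)^2-s$ (so that $\Expect[h(G_i)]$ equals $n-i$ and $(n-i)^2$ directly), whereas you take $h(s)=s$ and $h(s)=s^2$ and rearrange afterwards. Your alternative sketch---differentiating the representation~\eqref{eqn:rotvol-mgf} under the integral sign using $\partial_\theta \rotJ = \rotJ$---is also valid and yields the same identities.
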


\begin{proof}
The gamma random variables $G_i$ satisfy the identities
$$
\Expect[ n - G_{i} ] = n - i
\quad\text{and}\quad
\Expect[ (n - G_{i})^2 - G_{i} ] = (n-i)^2.
$$
First, we apply Corollary~\ref{cor:rotvol-lc} with $h(s) = n - s$
to arrive at
\begin{equation} \label{eqn:rotvol-tm-pf}
\xi_{\rotI}'(\theta) \overset{\eqref{eqn:thermal-expect}}{=} \frac{m_{\rotI}'(\theta)}{m_{\rotI}(\theta)}
	\stackrel{\eqref{eqn:thermal-expect}}{=} \frac{1}{m_{\rotI}(\theta)} \sum_{i=0}^n (n - i) \, \econst^{(n-i)\theta} \cdot \Prob{ \rotI = n - i }
	= \Expect[ n - \rotJ(\vct{z}) ].
\end{equation}
Next, we apply Corollary~\ref{cor:rotvol-lc} with $h(s) = (n-s)^2 - s$.
This step yields
\begin{align*}
\xi_{\rotI}''(\theta) \stackrel{\eqref{eqn:thermal-var}}{=} \frac{m_{\rotI}''(\theta)}{m_{\rotI}(\theta)} - (\xi_{\rotI}'(\theta))^2 
	&\stackrel{\eqref{eqn:thermal-var}}{=} \left[ \frac{1}{m_{\rotI}(\theta)} \sum_{i=0}^n (n-i)^2 \, \econst^{(n-i) \theta} \cdot \Prob{ \rotI = n - i } \right] - (\xi_{\rotI}'(\theta))^2 \\
	&\stackrel{\eqref{eqn:rotvol-tm-pf}}{=} \Expect[ (n - \rotJ(\vct{z}))^2 - \rotJ(\vct{z}) ] - (\Expect[n - \rotJ(\vct{z})])^2 \\
	&\stackrel{\phantom{\eqref{eqn:rotvol-tm-pf}}}{=} \Var[ n - \rotJ(\vct{z}) ] + \Expect[ n - \rotJ(\vct{z}) ] - n \\
	&\stackrel{\eqref{eqn:rotvol-tm-pf}}{=} \Var[ \rotJ(\vct{z}) ] + (\xi_{\rotI}'(\theta) - n).
\end{align*}
We used the invariance properties of the variance. This is the advertised result.
\end{proof}

\subsection{Variance of Information}

We have now converted the problem of bounding the thermal variance
of the rotation volume random variable into a problem about the
variance of a function of a log-concave random variable.
To control this variance, we require a recent result on
the information content of a log-concave random variable.

\begin{fact}[Nguyen, Wang] \label{fact:nguyen-wang}
Let $J : \R^n \to \R$ be a convex potential, and 
consider the log-concave probability measure $\mu$ on $\R^n$
whose density is proportional to $\econst^{-J}$.  Then
\begin{equation}\label{eqn:nguyen-wang}
 \Var_{\vct{z} \sim \mu}[J(\vct{z})] \leq n.
\end{equation}
\end{fact}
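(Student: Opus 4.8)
The plan is to reduce the statement, first to smooth potentials, then to radially symmetric ones, where it becomes a one-dimensional variance bound in which $n$ enters as the shape parameter of a gamma law. Write $\mu = \econst^{-J}/Z$ with $Z := \int_{\R^n}\econst^{-J}$, and let $\vct{z}\sim\mu$. \emph{Step 1 (regularization).} I would first reduce to the case where $J$ is $C^{\infty}$, strictly convex (so $\Hess J\succ 0$), and superlinear, by replacing $J$ with $J_{\varepsilon} := (J\ast\gamma_{\varepsilon}) + \varepsilon\norm{\cdot}^{2}$ for a Gaussian mollifier $\gamma_{\varepsilon}$ together with a truncation of $J$ on large balls, and checking that $\mu_{\varepsilon}\to\mu$ weakly with $\Var_{\mu_\varepsilon}[J_{\varepsilon}]\to\Var_{\mu}[J]$. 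After a harmless shift, assume $\min J = 0$.

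\emph{Step 2 (reduction to the radial case).} Let $V(t) := \intvol_{n}(\{\vct{x} : J(\vct{x})\le t\})$, so $V(0) = 0$ and $V$ increases. Convexity of $J$ gives $\{J\le \lambda s + (1-\lambda)t\}\supseteq \lambda\{J\le s\} + (1-\lambda)\{J\le t\}$, so the Brunn--Minkowski inequality makes $V^{1/n}$ \emph{concave} on $[0,\infty)$. Pushing Lebesgue measure forward by $J$ shows that $J(\vct{z})$ has density $Z^{-1}\econst^{-t}V'(t)$ on $[0,\infty)$. Put $\phi := (V^{1/n})^{-1}$, which is convex, increasing, with $\phi(0) = 0$, and set $R := V(J(\vct{z}))^{1/n}$; a change of variables gives $J(\vct{z}) = \phi(R)$ with $R$ having density proportional to $r^{n-1}\econst^{-\phi(r)}$ on $[0,\infty)$. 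That density is the law of the norm of a draw from the radially symmetric log-concave measure $\tilde{\mu}\propto\econst^{-\phi(\norm{\cdot})}$ on $\R^{n}$, and $\phi(R) = \tilde{J}(\tilde{\vct{z}})$ for $\tilde{J} := \phi(\norm{\cdot})$ and $\tilde{\vct{z}}\sim\tilde{\mu}$. Hence $\Var_{\mu}[J] = \Var[\phi(R)] = \Var_{\tilde{\mu}}[\tilde{J}]$, so it suffices to prove the bound when $\mu$ is radially symmetric; equivalently, the one-dimensional claim: if $R$ has density proportional to $r^{n-1}\econst^{-\phi(r)}$ on $[0,\infty)$ with $\phi$ convex, increasing, $\phi(0)=0$, then $\Var[\phi(R)]\le n$.

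\emph{Step 3 (the one-dimensional inequality --- the crux).} Equality must occur for $\phi(r) = r$, where $R$ is a $\mathrm{Gamma}(n,1)$ variable and $\Var[R] = n$; so the claim asserts that the conical potential $\tilde J = \norm{\cdot}$ is extremal, and any proof must use convexity of $\phi$ essentially --- monotonicity alone is far too weak, and the plain Brascamp--Lieb bound $\Var[\phi(R)]\le\Expect[\phi'(R)^{2}/\phi''(R)]$ degenerates exactly at the extremizer. Writing $N_{k} := \int_{0}^{\infty}t^{k}V(t)\econst^{-t}\idiff{t}$, one computes $\Var_{\mu}[J] = N_{2}/N_{0} - (N_{1}/N_{0})^{2} - 1$, so the goal is $N_{0}N_{2} - N_{1}^{2}\le (n+1)N_{0}^{2}$: a variance bound for the log-concave probability measure on $\R_{+}$ with density proportional to $V(t)\econst^{-t}$, where the indispensable structural input is not merely log-concavity of $V$ but the stronger fact that $V^{1/n}$ is concave (equivalently $tV'(t)\le nV(t)$), from Brunn--Minkowski. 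Establishing this --- that the gamma law is the worst case --- is the step I expect to be hardest.

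A conceptually cleaner route that avoids Step 2 is to invoke a \emph{dimensional} strengthening of the Brascamp--Lieb inequality in the spirit of Nguyen: for $\mu = \econst^{-V}$ with $\Hess V\succ 0$ one has $\Var_{\mu}[f]\le\int\ip{\nabla f}{(\Hess V)^{-1}\nabla f}\idiff{\mu}$ with an extra nonpositive remainder carrying the factor $1/n$, into which $f = V$ substitutes to yield $\Var_{\mu}[V]\le n$ directly. Equivalently, using the pointwise identity $\ip{\nabla J}{(\Hess J)^{-1}\nabla J} = n + \econst^{J}\trace((\Hess J)^{-1}\Hess(\econst^{-J}))$ together with ordinary Brascamp--Lieb for $f = J$, the whole statement collapses to $\int_{\R^{n}}\trace((\Hess J)^{-1}\Hess(\econst^{-J}))\idiff{\vct{x}}\le 0$; I would stress that a naive double integration by parts here is circular (it returns the same inequality), so this final estimate --- whether packaged as Nguyen's curvature--dimension argument or as Wang's ``heat capacity'' monotonicity in the temperature --- is precisely where a genuinely new idea is required, and where the sharpness of the constant $n$ resides.
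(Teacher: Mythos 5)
The paper does not actually prove this Fact: it is quoted from Nguyen's and Wang's theses, with pointers to the short proofs in Fradelizi--Madiman--Wang and in Bolley--Gentil--Guillin, the latter being the dimensional Brascamp--Lieb route you gesture at in your final paragraph (the relevant inequality is reproduced in the paper as Fact~\ref{fact:dim-brascamp-lieb}). So the question is whether your sketch is self-contained, and it is not. Steps 1 and 2 are sound and reproduce the standard reduction from the concentration-of-information literature: the Brunn--Minkowski concavity of $V^{1/n}$, the pushforward density $Z^{-1}\econst^{-t}V'(t)$ for $J(\vct{z})$, and the radialization $J(\vct{z})\overset{d}{=}\phi(R)$ with $R$ of density proportional to $r^{n-1}\econst^{-\phi(r)}$ are all correct, and the reduction preserves the hypotheses because $\phi(\norm{\cdot})$ is again convex.

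The gap is Step 3, which you flag as the hardest step and then do not carry out. The inequality $N_0N_2 - N_1^2 \leq (n+1)N_0^2$ --- equivalently, that the probability measure on $\R_+$ with density proportional to $V(t)\econst^{-t}$ has variance at most $n+1$ whenever $V^{1/n}$ is concave with $V(0)=0$ --- \emph{is} the theorem after your reduction. It is sharp at $V(t)=ct^n$, and it cannot follow from log-concavity of $V\econst^{-t}$ alone (log-concavity gives no dimensional bound whatsoever), so a genuine argument exploiting $tV'(t)\leq nV(t)$ is required and none is supplied; the comparison-with-the-gamma-law argument that closes this step in Fradelizi--Madiman--Wang is real work, not a formality. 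Your fallback --- substituting $f=J$ into Nguyen's dimensional variance inequality, or equivalently establishing $\int_{\R^n}\trace\big((\Hess J)^{-1}\Hess(\econst^{-J})\big)\idiff{\vct{x}}\leq 0$ --- simply restates the result whose proof is at issue, as you yourself concede when you say that this is "precisely where a genuinely new idea is required." As written, the proposal is a correct change of variables that stops one lemma short of the goal, or else assumes the goal.
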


Fact~\ref{fact:nguyen-wang} was obtained independently by Nguyen~\cite{Ngu13:Inegalites-Fonctionelles}
and by Wang~\cite{wang2014heat} in their doctoral theses.  For short proofs of this result,
we refer to~\cite[Thm.~2.3]{FMW16:Optimal-Concentration} and~\cite[Rem.~4.2]{BGG18:Dimensional-Improvements}.
The variance bound~\eqref{eqn:nguyen-wang} holds with equality when the potential $J$ is 1-homogeneous,
but we believe that improvements remain possible.

\subsection{Proof of Proposition~\ref{prop:rotvol-tvar}}

We must bound the thermal variance $\xi_{\rotI}''(\theta)$ of the rotation volume random variable.
To do so, we combine Lemma~\ref{lem:rotvol-tmv} and Fact~\ref{fact:nguyen-wang}:
$$
\xi_{\rotI}''(\theta)
	\overset{\eqref{eqn:rotvol-tmv}}{=} \Var[ \rotJ(\vct{z}) ] - (n - \xi_{\rotI}'(\theta))
	\overset{\eqref{eqn:nguyen-wang}}{\leq} n - (n - \xi_{\rotI}'(\theta))
	= \xi_{\rotI}'(\theta).
$$
This completes the proof of Proposition~\ref{prop:rotvol-tvar}
and, thus, the proof of Theorem~\ref{thm:rotvol-conc}.

\section{Rigid Motion Volumes: Thermal Variance Bound}
\label{sec:rmvol-conc}

This section contains the proof of Proposition~\ref{prop:rmvol-tvar},
following the approach from Section~\ref{sec:rotvol-conc}.  This case
is significantly more complicated than the case of rotation volumes.
Nevertheless, the overall strategy remains the same.

\subsection{Setup}

Fix a nonempty convex body $\set{K} \subset \R^n$, which will not appear
in the notation.  For each $i = 0, 1, 2, \dots, n$, let $\intvol_i$ be the
$i$th intrinsic volume of $\set{K}$.  Introduce the rigid motion volumes $\rmv_i$
and the total rigid motion volume $\rmwills$:
\begin{equation} \label{eqn:rmvol-def-pf}
\rmv_i := \frac{\omega_{n+1}}{\omega_{i+1}} \intvol_i
\quad\text{and}\quad
\rmwills := \sum_{i=0}^n \rmv_i.
\end{equation}
The rigid motion volume random variable $\rmI$ has the distribution
\begin{equation} \label{eqn:rmvol-rv-pf}
\Prob{ \rmI = n-i} = \rmv_i / \rmwills
\quad\text{for $i = 0, 1, 2, \dots, n$.}
\end{equation}
Write $\rmdelta := \Expect \rmI $ for the expectation.
We also abbreviate $\dist := \dist_{\set{K}}$.

\subsection{The Distance Integral}
\label{sec:rmvol-distint}

We begin with a distance integral that generates the exponential
moments of the rigid motion volumes.

\begin{proposition}[Rigid Motion Volumes: Distance Integral] \label{prop:rmvol-potent}
For each $\theta \in \R$, define the convex potential
\begin{equation} \label{eqn:rmvol-potent}
\rmJ(\vct{x}) := \big[ 1 + \econst^{-2\theta} \dist^2(\vct{x}) \big]^{1/2}
\quad\text{for $\vct{x} \in \R^n$.}
\end{equation}
For any function $h : [0,1] \to \R$ where the expectations on the right-hand side are finite,
$$
\int_{\R^n} h\big( 1 - \rmJ(\vct{x})^{-2} \big) \cdot \rmJ(\vct{x})^{-(n+1)} \idiff{\vct{x}}
	= \sum_{i=0}^n \Expect[ h(B_{n-i}) ] \, \econst^{(n-i) \theta} \cdot \rmv_i.
$$
The random variable $B_{n-i} \sim \textsc{beta}((n-i)/2, (i+1)/2)$ follows a beta distribution.
By convention $B_0 = 0$.
\end{proposition}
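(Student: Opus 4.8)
The plan is to follow the proof of Proposition~\ref{prop:rotvol-distint} almost verbatim: apply the generalized Steiner formula (Fact~\ref{fact:distance-integral}) to a suitably chosen radial function, and then change variables in the resulting one-dimensional integrals so that they become Beta integrals. First I would record that the potential $\rmJ$ is convex. Writing $g(r) := [1 + \econst^{-2\theta} r^2]^{1/2}$ for $r \geq 0$, so that $\rmJ(\vct{x}) = g(\dist(\vct{x}))$, I note that $t \mapsto \sqrt{1+t^2}$ is convex and nondecreasing on $[0,\infty)$, while $\dist = \dist_{\set{K}}$ is convex by Fact~\ref{fact:dist}\eqref{dist:1} and nonnegative; hence the composition $\rmJ(\vct{x}) = \sqrt{1 + (\econst^{-\theta}\dist(\vct{x}))^2}$ is convex.

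Next I would apply Fact~\ref{fact:distance-integral} to the function
\[
f(r) := h\bigl(1 - g(r)^{-2}\bigr)\cdot g(r)^{-(n+1)}.
\]
Since $g(0) = 1$, the ``top'' term of the Steiner formula contributes $f(0)\cdot\intvol_n = h(0)\cdot\intvol_n$, which matches the $i = n$ summand of the claimed identity: indeed $\rmv_n = \intvol_n$ and, by the convention $B_0 = 0$, $\Expect[h(B_0)]\,\econst^{0}\cdot\rmv_n = h(0)\cdot\intvol_n$. It then remains to evaluate $\int_0^\infty f(r)\,r^{n-i-1}\idiff{r}$ for $i = 0, 1, \dots, n-1$. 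For this I substitute $u = 1 - g(r)^{-2}$, a smooth increasing bijection of $(0,\infty)$ onto $(0,1)$ under which $g(r)^{-2} = 1 - u$ and $\econst^{-2\theta}r^2 = u/(1-u)$, so that $r = \econst^{\theta} u^{1/2}(1-u)^{-1/2}$. A direct computation gives
\[
r^{n-i-1}\idiff{r} = \tfrac{1}{2}\,\econst^{(n-i)\theta}\, u^{(n-i)/2 - 1}\,(1-u)^{-(n-i)/2 - 1}\idiff{u},
\]
and since $g(r)^{-(n+1)} = (1-u)^{(n+1)/2}$, the exponent of $(1-u)$ in the integrand becomes $\tfrac{n+1}{2} - \tfrac{n-i}{2} - 1 = \tfrac{i+1}{2} - 1$. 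Recognizing a Beta integral,
\[
\int_0^\infty f(r)\,r^{n-i-1}\idiff{r}
= \tfrac{1}{2}\,\econst^{(n-i)\theta}\int_0^1 h(u)\,u^{(n-i)/2 - 1}(1-u)^{(i+1)/2 - 1}\idiff{u}
= \tfrac{1}{2}\,\econst^{(n-i)\theta}\,\Beta\!\left(\tfrac{n-i}{2},\tfrac{i+1}{2}\right)\Expect[h(B_{n-i})],
\]
where $B_{n-i}\sim\textsc{beta}((n-i)/2,(i+1)/2)$. Hence the $i$th summand produced by Fact~\ref{fact:distance-integral} equals $\Expect[h(B_{n-i})]\,\econst^{(n-i)\theta}\,\bigl[\tfrac{1}{2}\Beta(\tfrac{n-i}{2},\tfrac{i+1}{2})\,\omega_{n-i}\bigr]\intvol_i$, and it suffices to verify the scalar identity $\tfrac{1}{2}\Beta(\tfrac{n-i}{2},\tfrac{i+1}{2})\,\omega_{n-i} = \omega_{n+1}/\omega_{i+1}$, after which the bracketed factor times $\intvol_i$ is exactly $\rmv_i$. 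This is immediate from $\omega_m = 2\pi^{m/2}/\Gamma(m/2)$ in~\eqref{eqn:ball-sphere} together with $\Beta(a,b)=\Gamma(a)\Gamma(b)/\Gamma(a+b)$: both sides reduce to $\pi^{(n-i)/2}\Gamma((i+1)/2)/\Gamma((n+1)/2)$. Summing over $i = 0, 1, \dots, n$ gives the proposition. As in the rotation case, one then recovers Proposition~\ref{prop:rmvol-metric} at once by taking $h \equiv 1$ and $h(s) = s$ with $\theta = 0$, using $\Expect[B_{n-i}] = (n-i)/(n+1)$ and the definition of $\rmdelta$.

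I do not anticipate a serious obstacle here: the argument is a routine adaptation of Proposition~\ref{prop:rotvol-distint}. The only steps requiring a little care are the convexity of $\rmJ$ (which is precisely what makes the substitution $u = 1 - \rmJ^{-2}$ the natural one) and the bookkeeping of the Beta parameters; notably, in contrast to the rotation-volume computation, no appeal to the Legendre duplication formula is needed, since the surface-area identity above holds exactly.
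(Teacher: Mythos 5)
Your proposal is correct and follows essentially the same route as the paper: apply the generalized Steiner formula to $f(r) = h(1-g(r)^{-2})\,g(r)^{-(n+1)}$, substitute $u = 1 - g(r)^{-2}$ to produce Beta integrals, and match the resulting constant $\tfrac{1}{2}\Beta(\tfrac{n-i}{2},\tfrac{i+1}{2})\,\omega_{n-i}$ with $\omega_{n+1}/\omega_{i+1}$ via the gamma-function formula for $\omega_m$. The only cosmetic difference is that the paper converts $\intvol_i$ to $\rmv_i$ before normalizing the Beta density rather than after, and your observations about the convexity of $\rmJ$ and the absence of the duplication formula are both accurate.
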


\begin{proof}
The potential $\rmJ$ defined in~\eqref{eqn:rmvol-potent} is convex
because of Fact~\ref{fact:dist}\eqref{dist:1} and elementary convexity arguments.
To obtain the identity, we invoke Fact~\ref{fact:distance-integral} with the function
$$
f(r) = \frac{h\big(1 - (1+\econst^{-2\theta} r^2 )^{-1} \big)}{(1+\econst^{-2\theta} r^2)^{(n+1)/2}}
	= \frac{h\big( \econst^{-2\theta} r^2 / (1 + \econst^{-2\theta} r^2) \big)}{(1+\econst^{-2\theta} r^2)^{(n+1)/2}}.
$$
This yields a formula for the moments of the rigid motion volume sequence:
\begin{align*}
\int_{\R^n} &h\big(1 - \rmJ(\vct{x})^{-2} \big) \cdot \rmJ(\vct{x})^{-(n+1)} \idiff{\vct{x}} \\
	&= h(0) \cdot \intvol_n + \sum_{i=0}^{n-1} \left( \int_0^\infty h\left(\frac{\econst^{-2\theta} r^2}{1 + \econst^{-2\theta} r^2}\right) \cdot \frac{r^{n-i-1}}{(1 + \econst^{-2\theta} r^2)^{(n+1)/2}} \idiff{r} \right) \omega_{n-i} \cdot \intvol_i \\
	&= h(0) \cdot \rmv_n + \sum_{i=0}^{n-1} \left( \frac{\econst^{(n-i)\theta}}{2} \int_0^1 h(s) \cdot s^{(n-i)/2 - 1} (1 - s)^{(i+1)/2 - 1} \idiff{s} \right) \frac{ \omega_{n-i}\omega_{i+1}}{ \omega_{n+1} }\cdot \rmv_i
\end{align*}We have used the definition~\eqref{eqn:rmvol-def-pf} of $\rmv_i$
and made the change of variables $\econst^{-2\theta} r^2 / (1+ \econst^{-2\theta} r^2) \mapsto s$.
Using the formula~\eqref{eqn:ball-sphere} for the surface area of a sphere, we see that the
weight in each term coincides with a beta function:
$$
\frac{\omega_{n-i} \omega_{i+1}}{2 \omega_{n+1}}
	= \frac{\Gamma((n+1)/2)}{\Gamma((n-i)/2) \, \Gamma((i+1)/2)}
	= \frac{1}{\mathrm{B}((n-i)/2, (i+1)/2)}.
$$
Now, we recognize that the integral and the surface area constants combine to produce
the expectation $\Expect[ h(B_{n-i}) ]$ with respect to the beta random variable $B_{n-i}$.
Last, combine the displays. \end{proof}

As a consequence, we arrive at metric representations for the total rigid motion volume
and the central rigid motion volume, stated in Proposition~\ref{prop:rmvol-metric}.

\begin{proof}[Proof of Proposition~\ref{prop:rmvol-metric}]
Apply Proposition~\ref{prop:rmvol-potent} with $h(s) = 1$ and then with $h(s) = (n+1) s$,
using the fact that $\Expect[B_{n-i}]=(n-i)/(n+1)$. 
Select the parameter $\theta = 0$.
\end{proof}

\subsection{A Family of Concave Measures}

Using Proposition~\ref{prop:rmvol-potent}, we can express the mgf $m_{\rmI}$
of the rigid motion volume random variable as a distance integral.
Choosing $h(s) = 1$, we find that
\begin{equation} \label{eqn:rmvol-mgf}
m_{\rmI}(\theta) \stackrel{\eqref{eqn:mgf}}{=} \Expect \econst^{\theta \rmI}
	\stackrel{\eqref{eqn:rmvol-rv-pf}}{=} \frac{1}{\rmwills} \sum_{i=0}^n \econst^{(n-i)\theta} \cdot \rmv_i
	= \int_{\R^n} \rmJ(\vct{x})^{-(n+1)} \idiff{\vct{x}}.
\end{equation}
For each $\theta \in \R$, construct the concave probability measure $\rmmu_{\theta}$
with the density
$$
\frac{\diff\rmmu_{\theta}(\vct{x})}{\diff{\vct{x}}}
	= \frac{1}{\rmwills} \cdot \frac{1}{m_{\rmI}(\theta)} \cdot \rmJ(\vct{x})^{-(n+1)}
	\quad\text{for $\vct{x} \in \R^n$.}
$$
The identity~\eqref{eqn:rmvol-mgf} ensures that $\rmmu_{\theta}$ is normalized.

\begin{corollary}[Rigid Motion Volumes: Probabilistic Formulation] \label{cor:rmvol-lc}
Instate the notation of Proposition~\ref{prop:rmvol-potent}.  Draw a random vector
$\vct{z} \sim \rmmu_{\theta}$.  Then
$$
\Expect\big[ h\big(1 - \rmJ(\vct{z})^{-2} \big) \big]
	= \frac{1}{m_{\rmI}(\theta)} \sum_{i=0}^n \Expect[ h(B_{n-i}) ] \, \econst^{(n-i)\theta} \cdot \Prob{ \rmI = n - i }.
$$
\end{corollary}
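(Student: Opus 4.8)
The plan is to unwind the definition of the measure $\rmmu_{\theta}$ and apply Proposition~\ref{prop:rmvol-potent} directly; this is the rigid-motion twin of Corollary~\ref{cor:rotvol-lc}, and no new ingredient is required. First I would write the left-hand expectation as an integral against the density of $\rmmu_{\theta}$. Since the density is $\rmJ(\vct{x})^{-(n+1)}\big/\big(\rmwills\, m_{\rmI}(\theta)\big)$, the hypothesis on $h$ guarantees convergence, and we get
$$
\Expect_{\vct{z}\sim\rmmu_{\theta}}\!\big[\, h\big(1 - \rmJ(\vct{z})^{-2}\big)\,\big]
= \frac{1}{\rmwills\, m_{\rmI}(\theta)} \int_{\R^n} h\big(1 - \rmJ(\vct{x})^{-2}\big)\cdot \rmJ(\vct{x})^{-(n+1)} \idiff{\vct{x}}.
$$
Next I would substitute the distance-integral identity of Proposition~\ref{prop:rmvol-potent} for the remaining integral, which replaces it by $\sum_{i=0}^n \Expect[h(B_{n-i})]\,\econst^{(n-i)\theta}\,\rmv_i$. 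Finally, I would eliminate the explicit total rigid motion volume by invoking the distributional identity $\rmv_i = \rmwills\cdot\Prob{\rmI = n-i}$ from~\eqref{eqn:rmvol-rv-pf}; the factor $\rmwills$ cancels, leaving precisely the asserted formula.

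There is essentially no obstacle here. The only thing that needs checking is that the normalization constants line up, and they do by construction: the measure $\rmmu_{\theta}$ was defined using exactly the mgf identity~\eqref{eqn:rmvol-mgf}, which is itself the $h\equiv 1$ instance of Proposition~\ref{prop:rmvol-potent}. The single point deserving a word of care is the interchange of expectation with the finite sum and the integral, but this is immediate from the finiteness assumption already built into the statement of Proposition~\ref{prop:rmvol-potent}. Thus the corollary should follow in a few lines, mirroring the one-line proof of Corollary~\ref{cor:rotvol-lc}.
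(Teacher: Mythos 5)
Your proposal is correct and matches the paper's argument, which simply declares the corollary ``a restatement of Proposition~\ref{prop:rmvol-potent}''; you have just spelled out the unwinding of the density of $\rmmu_{\theta}$, the substitution of the distance integral, and the cancellation of $\rmwills$ via $\rmv_i = \rmwills \cdot \Prob{\rmI = n-i}$. Nothing further is needed.
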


\begin{proof}
This is just a restatement of Proposition~\ref{prop:rmvol-potent}.
\end{proof}

\subsection{A Thermal Variance Identity}

Using Corollary~\ref{cor:rmvol-lc}, we can evaluate the thermal
mean $\xi_{\rmI}'$ and the thermal variance $\xi_{\rmI}''$
of the rigid motion volume random variable $\rmI$ in terms
of integrals against the concave measure $\rmJ$.

\begin{lemma}[Rigid Motion Volumes: Thermal Variance Identity] \label{lem:rmvol-tmv}
Draw a random vector $\vct{z} \sim \rmmu_{\theta}$.  Then
\begin{align}
\xi_{\rmI}'(\theta) &= (n+1) \cdot \Expect[ 1 - \rmJ(\vct{z})^{-2} ]; \label{eqn:rmvol-tm} \\ \xi_{\rmI}''(\theta) &= (n+1)(n+3) \cdot \Var[ \rmJ(\vct{z})^{-2} ] 
	- \frac{2 \xi_{\rmI}'(\theta) \cdot [(n+1) - \xi_{\rmI}'(\theta)]}{n+1}. \label{eqn:rmvol-tv}
\end{align}
\end{lemma}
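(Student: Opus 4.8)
The plan is to mirror the proof of Lemma~\ref{lem:rotvol-tmv}, substituting moments of a beta law for the gamma moments used there. The two inputs are: the thermal identities~\eqref{eqn:thermal-expect} and~\eqref{eqn:thermal-var}, which exhibit $\xi_{\rmI}'(\theta)$ and $\xi_{\rmI}''(\theta) + (\xi_{\rmI}'(\theta))^2$ as the tilted weighted sums $\tfrac{1}{m_{\rmI}(\theta)}\sum_{i=0}^n (n-i)\,\econst^{(n-i)\theta}\Prob{\rmI = n-i}$ and $\tfrac{1}{m_{\rmI}(\theta)}\sum_{i=0}^n (n-i)^2\,\econst^{(n-i)\theta}\Prob{\rmI = n-i}$ respectively; and Corollary~\ref{cor:rmvol-lc}, which identifies any such sum with $(n-i)$-polynomial coefficients as an expectation of a function of $1 - \rmJ(\vct{z})^{-2}$, once that polynomial is written as a moment of $B_{n-i}$.

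First I would record the first two moments of $B_{n-i} \sim \textsc{beta}((n-i)/2,(i+1)/2)$, whose parameters sum to $(n+1)/2$: from $\Expect[B] = a/(a+b)$ and $\Expect[B^2] = a(a+1)/((a+b)(a+b+1))$ we get
\begin{equation*}
(n+1)\Expect[B_{n-i}] = n-i
\quad\text{and}\quad
(n+1)(n+3)\Expect[B_{n-i}^2] = (n-i)(n-i+2) = (n-i)^2 + 2(n-i).
\end{equation*}
Feeding $h(s) = (n+1)s$ into Corollary~\ref{cor:rmvol-lc} and comparing with~\eqref{eqn:thermal-expect} gives $\xi_{\rmI}'(\theta) = (n+1)\Expect[1 - \rmJ(\vct{z})^{-2}]$, which is~\eqref{eqn:rmvol-tm}. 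Feeding $h(s) = (n+1)(n+3)s^2$ into Corollary~\ref{cor:rmvol-lc}, splitting the resulting sum of $(n-i)^2 + 2(n-i)$ into its two pieces, and invoking~\eqref{eqn:thermal-var} together with the preceding identity gives
\begin{equation*}
(n+1)(n+3)\Expect\big[(1 - \rmJ(\vct{z})^{-2})^2\big] = \xi_{\rmI}''(\theta) + (\xi_{\rmI}'(\theta))^2 + 2\,\xi_{\rmI}'(\theta).
\end{equation*}

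To conclude, set $W := 1 - \rmJ(\vct{z})^{-2}$, so that the two displays above read $(n+1)\Expect[W] = \xi_{\rmI}'(\theta)$ and $(n+1)(n+3)\Expect[W^2] = \xi_{\rmI}''(\theta) + (\xi_{\rmI}'(\theta))^2 + 2\,\xi_{\rmI}'(\theta)$. Then
\begin{equation*}
(n+1)(n+3)\Var[W] = (n+1)(n+3)\Expect[W^2] - \tfrac{n+3}{n+1}(\xi_{\rmI}'(\theta))^2 = \xi_{\rmI}''(\theta) + \tfrac{2\,\xi_{\rmI}'(\theta)\,[(n+1) - \xi_{\rmI}'(\theta)]}{n+1},
\end{equation*}
where the last step collects the $(\xi_{\rmI}'(\theta))^2$ terms via $1 - \tfrac{n+3}{n+1} = -\tfrac{2}{n+1}$. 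Solving for $\xi_{\rmI}''(\theta)$ and using $\Var[W] = \Var[\rmJ(\vct{z})^{-2}]$ (shift-invariance of the variance) yields~\eqref{eqn:rmvol-tv}. The only delicate point is the bookkeeping in this final paragraph: $\Expect[W]$ is normalized by $n+1$ whereas $\Expect[W^2]$ is normalized by $(n+1)(n+3)$, so the cancellation producing the tidy factor $(n+1) - \xi_{\rmI}'(\theta)$ must be tracked precisely; beyond that the computation is routine.
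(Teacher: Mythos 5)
Your argument is correct and is essentially the paper's own proof: both rest on Corollary~\ref{cor:rmvol-lc} together with the first two moments of the $\textsc{beta}((n-i)/2,(i+1)/2)$ law, and the algebra collecting the $(\xi_{\rmI}'(\theta))^2$ terms into the factor $(n+1)-\xi_{\rmI}'(\theta)$ checks out. The only cosmetic difference is that the paper feeds the combined polynomial $h(s)=(n+1)\big((n+3)s^2-2s\big)$ into the corollary so the weighted sum produces $(n-i)^2$ directly, whereas you use $h(s)=(n+1)(n+3)s^2$ and split off the $2(n-i)$ term afterward.
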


\begin{proof}
The beta random variable $B_{n-i}$ satisfies
$$
(n+1) \, \Expect[ B_{n-i} ] = n-i
\quad\text{and}\quad
(n+1) \Expect[ (n+3) B_{n-i}^2 - 2 B_{n-i} ] = (n-i)^2.
$$
Invoke Corollary~\ref{cor:rmvol-lc} with $h(s) = (n+1) s$ to obtain
\begin{equation} \label{eqn:rmvol-tm-pf}
\xi_{\rmI}'(\theta) \stackrel{\eqref{eqn:thermal-expect}}{=}
	\frac{1}{m_{\rmI}(\theta)} \sum_{i=0}^n (n-i) \,\econst^{(n-i)\theta} \cdot \Prob{ \rmI = n-i }
	= (n+1) \cdot \Expect [ 1 - \rmJ(\vct{z})^{-2} ].
\end{equation}
Apply Corollary~\ref{cor:rmvol-lc} with $h(s) = (n+1)((n+3) s^2 - 2s )$ to obtain
\begin{align*}
\xi_{\rmI}''(\theta) &\stackrel{\eqref{eqn:thermal-var}}{=}
	\left[ \frac{1}{m_{\rmI}(\theta)} \sum_{i=0}^n (n-i)^2 \,\econst^{(n-i)\theta} \cdot \Prob{ \rmI = n-i } \right]
		- (\xi_{\rmI}'(\theta))^2 \\
	&\stackrel{\eqref{eqn:rmvol-tm-pf}}{=} (n+1) \Expect \big[ (n+3) \big(1 - \rmJ(\vct{z})^{-2} \big)^2 - 2 \big(1-\rmJ(\vct{z})^{-2}\big) \big]
		- (n+1)^2 \big(\Expect[ 1 - \rmJ(\vct{z})^{-2} ] \big)^2 \\
	&\stackrel{\phantom{\eqref{eqn:rmvol-tm-pf}}}{=} (n+1)(n+3) \Var[ 1 - \rmJ(\vct{z})^{-2} ]
		- 2 (n+1)\left( \Expect[ 1 - \rmJ(\vct{z})^{-2} ] - (\Expect[ 1 - \rmJ(\vct{z})^{-2} ])^2 \right) \\
	&\stackrel{\eqref{eqn:rmvol-tm-pf}}{=} (n+1)(n+3) \Var[ \rmJ(\vct{z})^{-2} ]
		- \frac{2 \xi_{\rmI}'(\theta) \cdot[(n+1) - \xi_{\rmI}'(\theta)]}{n+1}.
\end{align*}
This is the required result.
\end{proof}

\subsection{A Variance Bound}

To compute exponential moments of the rigid motion volume random variable $\rmI$,
we can use variance inequalities for concave measures.  We will establish the following estimate.

\begin{lemma}[Rigid Motion Volumes: Variance Bound] \label{lem:rmvol-varbd}
Instate the notation of Lemma~\ref{lem:rmvol-tmv}.  For a random variable $\vct{z} \sim \rmmu_{\theta}$,
\begin{equation} \label{eqn:rmvol-varbd}
\Var[ \rmJ(\vct{z})^{-2} ]
	\leq \frac{4}{n+4}  \Expect[ \rmJ(\vct{z})^{-2}] \cdot \Expect[ 1 - \rmJ(\vct{z})^{-2} ]. 
\end{equation}
\end{lemma}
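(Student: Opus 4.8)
The plan is to control $\Var[\rmJ(\vct z)^{-2}]$ by feeding the function $f=\rmJ^{-2}$ into a Brascamp--Lieb-type variance inequality for the concave measure $\rmmu_{\theta}$, and then to collapse the resulting bound using the special geometry of the squared distance. Throughout write $c:=\econst^{-2\theta}$ and $w:=\rmJ(\vct x)^{-2}=(1+c\,\dist^2(\vct x))^{-1}\in(0,1]$. First I would record the gradient of $w$: by Fact~\ref{fact:dist}\eqref{dist:2} one has $\grad\dist^2=2\,\dist\,\vct n_{\set K}$, so $\grad w=-c\,\rmJ^{-4}\grad\dist^2=-2c\,\dist\,\rmJ^{-4}\,\vct n_{\set K}$. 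Two features matter: $\grad w$ is everywhere a scalar multiple of the normal field $\vct n_{\set K}$, and, since $\vct n_{\set K}$ is a unit vector off $\set K$ and $c\,\dist^2=\rmJ^2-1$, one gets the pointwise identity $\norm{\grad w}^2=4c\,(\rmJ^{-6}-\rmJ^{-8})=4c\,w^3(1-w)$.

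Next I would invoke a variance inequality for $\rmmu_{\theta}$. Its density is proportional to $\rmJ^{-(n+1)}$ with $\rmJ$ convex; equivalently $1/(\text{density})\propto\rmJ^{n+1}$ is convex, so $\rmmu_{\theta}$ is a $(-1)$-concave (Cauchy-type) probability measure. For such measures a Brascamp--Lieb inequality holds in the form $\Var_{\rmmu_{\theta}}(f)\le\int \ip{A^{-1}\grad f}{\grad f}\idiff{\rmmu_{\theta}}$, where $A:=\Hess V+\grad V\otimes\grad V=\rmJ^{-(n+1)}\Hess(\rmJ^{n+1})$ and $\rmmu_{\theta}\propto\econst^{-V}$ (this is the improved Brascamp--Lieb inequality for $(-1)$-concave measures, in the spirit of Bobkov--Ledoux; note $A\psdge 0$ because $\rmJ^{n+1}$ is convex). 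Applying this with $f=w$: since $\grad w\parallel\vct n_{\set K}$, only the action of $A$ on $\vct n_{\set K}$ is relevant, and this is where the non-generic structure of $\dist^2$ enters. By Fact~\ref{fact:dist}\eqref{dist:3}, $\vct n_{\set K}$ is an eigenvector of $\Hess(\dist^2)$ with eigenvalue $2$; equivalently $\vct n_{\set K}$ is constant along its own integral rays, so $\vct n_{\set K}$ is an eigenvector of $\Hess(\rmJ^{n+1})$, hence of $A$, and a short computation along these rays shows the eigenvalue is $(n+1)\,c\,w\,[\,n-(n-1)w\,]$.

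Combining the two steps, $\ip{A^{-1}\grad w}{\grad w}=\norm{\grad w}^2\big/\big((n+1)c\,w[\,n-(n-1)w\,]\big)=4\,w^2(1-w)\big/\big((n+1)[\,n-(n-1)w\,]\big)$. Using $n-(n-1)w\ge 1$ and $w\le 1$ for $w\in(0,1]$, this is at most $\tfrac{4}{n+1}\,w(1-w)$, so
\begin{equation*}
\Var_{\rmmu_{\theta}}[w]\;\le\;\frac{4}{n+1}\,\Expect\big[w(1-w)\big]\;=\;\frac{4}{n+1}\Big(\Expect[w]\,\Expect[1-w]-\Var_{\rmmu_{\theta}}[w]\Big),
\end{equation*}
where the last equality is $\Expect[w(1-w)]=\Expect[w]-\Expect[w^2]=\Expect[w]\,\Expect[1-w]-\Var[w]$. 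Solving for $\Var_{\rmmu_{\theta}}[w]$ yields $\Var_{\rmmu_{\theta}}[w]\le\tfrac{4}{n+5}\,\Expect[w]\,\Expect[1-w]$, which in particular gives the asserted bound~\eqref{eqn:rmvol-varbd} (with the slightly weaker constant $\tfrac{4}{n+4}$). Recalling $w=\rmJ(\vct z)^{-2}$, this is exactly~\eqref{eqn:rmvol-varbd}.

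The main obstacle is the middle step: identifying the correct variance inequality for the concave measure $\rmmu_{\theta}$ and carrying out the eigenvalue computation for $A$ along the normal field (including the usual care where $A$ degenerates, namely on the interior of $\set K$, where $\grad w$ also vanishes). The delicate point is that $\rmmu_{\theta}$ sits at the borderline concavity exponent, so a generic convex potential would not suffice; the argument genuinely uses that $\rmJ$ is built from the squared distance to a convex body, through Fact~\ref{fact:dist}\eqref{dist:3}. An alternative, if longer, route reduces the whole statement along the field $\vct n_{\set K}$ to a one-dimensional Cauchy/beta-type computation by way of the generalized Steiner formula (Fact~\ref{fact:distance-integral}), at the cost of having to exploit the ultra-log-concavity of the intrinsic volume sequence.
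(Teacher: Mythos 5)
Your pointwise computations are sound and in fact mirror the paper's: the identity $\norm{\grad w}^2 = 4\econst^{-2\theta}w^3(1-w)$ for $w=\rmJ^{-2}$, the observation that $\vct{n}_{\set{K}}$ is an eigenvector of $\Hess\rmJ$ via Fact~\ref{fact:dist}\eqref{dist:3}, and the closing step $\Expect[w(1-w)]=\Expect[w]\Expect[1-w]-\Var[w]$ are exactly the ingredients used there. The gap is the variance inequality you invoke in the middle. The bound
\begin{equation*}
\Var_{\rmmu_{\theta}}(f)\;\le\;\int\ip{(\Hess V+\grad V\otimes\grad V)^{-1}\grad f}{\grad f}\idiff{\rmmu_{\theta}},
\qquad \rmmu_{\theta}\propto\econst^{-V},
\end{equation*}
is false for this borderline class of measures. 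Take $n=1$, $\set{K}=\{0\}$, $\theta=0$: then $\rmmu_{0}$ is the standard Cauchy distribution, $V=\log(1+x^2)$ up to a constant, and $A=\Hess V+(\grad V)^2=2/(1+x^2)$. For $f=\arctan$ the right-hand side equals $\tfrac{1}{2\pi}\int_{\R}(1+x^2)^{-2}\,\diff{x}=1/4$, while $f(z)$ is uniform on $(-\pi/2,\pi/2)$, so $\Var[f(z)]=\pi^2/12>1/4$. The correct dimensional Brascamp--Lieb inequality for a density proportional to $J^{-(n+1)}$ carries the rank-one term with weight $\tfrac{1}{n+1}\grad V\otimes\grad V$, not the full $\grad V\otimes\grad V$; equivalently, it is Fact~\ref{fact:nguyen}, $\Var[f]\le\tfrac1n\int\ip{(\Hess J)^{-1}\grad f}{\grad f}\,J\idiff{\mu}$. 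Your operator $A$ is too large, hence $A^{-1}$ too small, which is why your route outputs the constant $4/(n+5)$, strictly better than the paper's $4/(n+4)$ --- a warning sign.

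If you substitute Fact~\ref{fact:nguyen} and rerun your own computation, the integrand collapses to an exact identity, $\ip{(\Hess\rmJ)^{-1}\grad w}{\grad w}\,\rmJ=4w(1-w)$ (no slack from $n-(n-1)w\ge 1$ is needed), giving $\Var[w]\le\tfrac4n\Expect[w(1-w)]$ and then $\tfrac{4}{n+4}$ by the same algebra. That is precisely the paper's proof, modulo one technical point you flag but do not resolve: $\Hess\rmJ$ is singular on the interior of $\set{K}$ and in directions tangent to the level sets of the distance, so one must regularize (the paper adds $\eps\normsq{\vct{x}}/2$ to the potential, applies the inequality to the strongly convex perturbation, and lets $\eps\downarrow 0$ by dominated convergence).
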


To prove this result, we will use the following statement.

\begin{fact}[Nguyen] \label{fact:nguyen}
Let $J : \R^n \to \R_{++}$ be a twice differentiable and strongly convex potential.
Consider the concave probability measure $\mu$ on $\R^n$ whose density is proportional
to $J^{-(n+1)}$.  For any differentiable function $f : \R^n \to \R$,
$$
\Var_{\vct{z} \sim \mu}[ f(\vct{z}) ]	
	\leq \frac{1}{n} \int_{\R^n} \ip{ (\Hess J(\vct{x}))^{-1} \grad f(\vct{x}) }{ \grad f(\vct{x}) } J(\vct{x}) \idiff{\mu}(\vct{x}).
$$
\end{fact}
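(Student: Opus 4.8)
The stated inequality is a weighted Poincar\'e (Brascamp--Lieb type) inequality for the ``convex'' probability measures with density proportional to $J^{-(n+1)}$, $J$ convex; it is the borderline member of a one-parameter family of dimensional variance inequalities, due to Nguyen~\cite{Ngu13:Inegalites-Fonctionelles} and Wang~\cite{wang2014heat}, with short proofs in~\cite{FMW16:Optimal-Concentration} and~\cite{BGG18:Dimensional-Improvements}. I would argue by an $L^2$ (Bochner / integration-by-parts) method. By mollifying $J$ and passing to the limit, one may assume $J$ is $C^\infty$ and uniformly convex, $\Hess J \psdge \alpha \Id$ for some $\alpha > 0$; then $J$ grows quadratically, the density $J^{-(n+1)}$ is integrable, $\Hess J$ is everywhere invertible, and all integrals below converge. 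Set $V := (n+1)\log J$, so $\diff{\mu} \propto \econst^{-V} \idiff{\vct{x}}$, and let $L := \Delta - \ip{\grad V}{\grad(\cdot)}$ be the associated $\mu$-symmetric diffusion operator. For $f$ with $\int f \idiff{\mu} = 0$, solve the Poisson equation $-Lu = f$ (a smooth solution exists by the spectral gap for convex measures and elliptic regularity), so that $\Var_{\mu}[f] = \int \ip{\grad u}{\grad f} \idiff{\mu}$. Put $A := J^{-1} \Hess J$, a field of positive definite matrices, and note that the target right-hand integrand equals $\ip{A^{-1} \grad f}{\grad f}$. Cauchy--Schwarz in the weight $A$ gives
$$
\Var_{\mu}[f] = \int \ip{A^{-1/2}\grad f}{A^{1/2}\grad u} \idiff{\mu}
	\le \Big( \int \ip{A^{-1}\grad f}{\grad f} \idiff{\mu} \Big)^{1/2} \Big( \int \ip{A\,\grad u}{\grad u} \idiff{\mu} \Big)^{1/2},
$$
so it remains to prove the reverse bound $\int \ip{A\,\grad u}{\grad u} \idiff{\mu} \le \tfrac1n \int (Lu)^2 \idiff{\mu}$.

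For this I would invoke the integrated Bochner--Lichnerowicz identity in flat space, $\int (Lu)^2 \idiff{\mu} = \int \fnormsq{\Hess u} \idiff{\mu} + \int \ip{\Hess V\,\grad u}{\grad u} \idiff{\mu}$, the exact decomposition $\Hess V = (n+1)A - (n+1) J^{-2}\, \grad J \otimes \grad J$, and the dimensional refinement $\fnormsq{\Hess u} \ge \tfrac1n (\Delta u)^2$. Writing $\Delta u = Lu + (n+1) J^{-1} \ip{\grad J}{\grad u}$ and expanding reduces the reverse bound to a quadratic inequality among the three scalars $\int (Lu)^2 \idiff{\mu}$, $\int J^{-1}(Lu) \ip{\grad J}{\grad u} \idiff{\mu}$, and $\int J^{-2} \ip{\grad J}{\grad u}^2 \idiff{\mu}$; the middle (cross) term is rewritten by one more integration by parts against $L$, which brings back an $\ip{A\,\grad u}{\grad u}$ contribution and a Hessian contribution that is itself integrated by parts. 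The net effect is that the non-positive rank-one piece $-(n+1)J^{-2}\grad J \otimes \grad J$ of $\Hess V$ --- the obstruction to a naive Brascamp--Lieb estimate --- is precisely offset by the dimensional gain, and the constant $\tfrac1n$ drops out.

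An alternative, more structural route is to lift to $\R^{n+1}$: let $\tilde J(\vct{x},t) := t\, J(\vct{x}/t)$ be the perspective of $J$ on the half-space $\{t > 0\}$, a convex $1$-homogeneous function, and let $\nu \propto \econst^{-\tilde J}$ be the corresponding log-concave measure there. In the coordinates $(\vct{y},t) := (\vct{x}/t, t)$ one finds $\diff{\nu} \propto \econst^{-t J(\vct{y})} t^n \idiff{\vct{y}} \idiff{t}$, so integrating out $t$ identifies the $\vct{y}$-marginal of $\nu$ with $\mu$; hence for the $0$-homogeneous lift $F(\vct{x},t) := f(\vct{x}/t)$ one has $\Var_{\mu}[f] = \Var_{\nu}[F]$. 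Applying the classical Brascamp--Lieb inequality for the log-concave $\nu$ in its degenerate (Moore--Penrose) form --- legitimate here because $\grad F$ annihilates the radial direction, which is exactly the kernel of $\Hess \tilde J$ --- and pushing the computation back to $\R^n$ should recover the inequality, trading the ad hoc integrations by parts of the first route for careful handling of the degeneracy of $\Hess \tilde J$.

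The principal obstacle, common to both routes, is that $\Hess V = (n+1)\Hess(\log J)$ fails to be positive semidefinite for heavy-tailed densities, so the bare Brascamp--Lieb inequality is unavailable; one must manufacture the positive weight $A = J^{-1}\Hess J$ by trading against $\tfrac1n(\Delta u)^2$, and pinning the constant down to the sharp value $\tfrac1n$ (it is an equality when $J$ is $1$-homogeneous) is where the algebra must be done with care. A secondary point is the justification of existence, uniqueness, and regularity of the solution to $-Lu = f$ for these measures, which rests on the Poincar\'e inequality for convex measures in the spirit of Bobkov's work (or can be bootstrapped within the mollification scheme).
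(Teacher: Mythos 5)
The paper never proves this statement: it is imported verbatim as a Fact from Nguyen~\cite{Ngu14:Dimensional-Variance}, who established it by H\"ormander's $L^2$ method (the surrounding text only records this citation and the comparison with Bobkov--Ledoux), so there is no internal proof to compare yours against. Judged on its own, your second route --- the perspective lift --- is a genuinely different and essentially correct derivation, and the bookkeeping does come out exactly right: with $\tilde{J}(\vct{x},t) := tJ(\vct{x}/t)$ on $\{t>0\}$ and $\vct{y} := \vct{x}/t$, the pushforward of $\econst^{-\tilde{J}}\,\diff{\vct{x}}\,\diff{t}$ under $(\vct{x},t)\mapsto \vct{x}/t$ has density proportional to $\int_0^\infty t^n\econst^{-tJ(\vct{y})}\idiff{t} = n!\,J(\vct{y})^{-(n+1)}$, so $\Var_{\nu}[F]=\Var_{\mu}[f]$ for the $0$-homogeneous lift $F$; since strong convexity of $J$ makes the kernel of $\Hess\tilde{J}$ exactly the radial direction and $\grad F$ lies pointwise in its range, a direct computation gives $\ip{(\Hess\tilde{J})^{\psinv}\grad F}{\grad F} = t^{-1}\ip{(\Hess J(\vct{y}))^{-1}\grad f(\vct{y})}{\grad f(\vct{y})}$, and the Gamma integral $\int_0^\infty t^{n-1}\econst^{-tJ}\idiff{t}=(n-1)!\,J^{-n}$ reproduces precisely the factor $J/n$ in the stated bound. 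What remains is to justify Brascamp--Lieb for a log-concave measure with degenerate Hessian supported on a half-space; the regularization $\tilde{J}+\eps\normsq{\cdot}/2$ you mention, together with the monotone convergence of $\ip{(\Hess\tilde{J}+\eps\Id)^{-1}\grad F}{\grad F}$ to the pseudo-inverse form on the range, handles this, in the same spirit as the $\eps$-perturbations already used in the proofs of Lemmas~\ref{lem:rmvol-varbd} and~\ref{lem:intvol-varbd}. Written out, this would give the paper a short self-contained proof of the Fact, which it currently does not have.

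Your first route, by contrast, has a gap at its decisive step. After the weighted Cauchy--Schwarz it requires the dual estimate $\int\ip{A\grad u}{\grad u}\idiff{\mu}\le \tfrac{1}{n}\int (Lu)^2\idiff{\mu}$, and the claim that Bochner, the trace bound $\fnormsq{\Hess u}\ge(\Delta u)^2/n$, and one further integration by parts make the constant $1/n$ ``drop out'' is asserted, not shown. Concretely, with $a:=Lu$ and $b:=J^{-1}\ip{\grad J}{\grad u}$, Bochner gives $\int a^2 = \int\fnormsq{\Hess u} + (n+1)\int\ip{A\grad u}{\grad u} - (n+1)\int b^2$ (all against $\diff{\mu}$), so the dual estimate is equivalent to $\int\fnormsq{\Hess u} + \int\ip{A\grad u}{\grad u} \ge (n+1)\int b^2$; the integration by parts you invoke yields $\int ab = \int b^2 - \int\ip{A\grad u}{\grad u} - \int J^{-1}\ip{\Hess u\,\grad J}{\grad u}$, which reintroduces a mixed Hessian term that must then be absorbed back into $\fnormsq{\Hess u}$ by a further Cauchy--Schwarz, and it is exactly this absorption --- splitting so that the rank-one defect of $\Hess\log J$ is cancelled without losing the sharp constant --- that constitutes the content of Nguyen's theorem. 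Until that algebra is carried out and checked, route 1 is an outline of~\cite{Ngu14:Dimensional-Variance} rather than a proof; either complete it, or rely on route 2 or the citation, as the paper does.
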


Fact~\ref{fact:nguyen} extends the Brascamp--Lieb variance inequality
to a special class of concave measures.  Nguyen~\cite{Ngu14:Dimensional-Variance}
proved this result using H{\"o}rmander's $L_2$ method.  It improves on a
bound that Bobkov \& Ledoux~\cite{BL09:Weighted-Poincare-Type}
derived from the Borell--Brascamp--Lieb variance inequality~\cite{Bor75:Convex-Set,BL76:Extensions-Brunn-Minkowski}.

\begin{proof}[Proof of Lemma~\ref{lem:rmvol-varbd}]
Fix the parameter $\theta \in \R$. To apply Nguyen's variance inequality, we need to perturb the potential $\rmJ$
so that it is strongly convex.  For each $\eps > 0$, set
\begin{equation} \label{eqn:igJ-perturb}
J^{\eps}(\vct{x}) := \rmJ(\vct{x}) + \eps \normsq{\vct{x}} / 2,
\quad\text{recalling that}\quad
\rmJ(\vct{x}) = \big[ 1 + \econst^{-2\theta} \dist^2(\vct{x}) \big]^{1/2}. \end{equation}
Next, define the function $f$ and compute its gradient:
\begin{equation} \label{eqn:igf-grad}
f(\vct{x}) = \rmJ(\vct{x})^{-2}
\quad\text{and}\quad
\grad f(\vct{x}) = \frac{-2}{\rmJ(\vct{x})^3} \grad \rmJ(\vct{x}).
\end{equation}
Let us continue to the main argument.

Our duty is to evaluate the quadratic form induced by the inverse Hessian
of the perturbed potential $J^{\eps}$ at the vector $\grad f$.  
First, use the definition~\eqref{eqn:rmvol-potent}
of the potential $\rmJ$ and Fact~\ref{fact:dist} to calculate that
\begin{equation} \label{eqn:igJ-gradnorm}
\norm{ \smash{\grad \rmJ(\vct{x})} }^2 = \econst^{-2\theta} \big( 1 - \rmJ(\vct{x})^{-2} \big).
\end{equation}
Differentiate~\eqref{eqn:igJ-gradnorm} again:
\begin{equation} \label{eqn:igJ-hessian}
(\Hess \rmJ(\vct{x})) \, \grad \rmJ(\vct{x})
	= \frac{1}{2} \grad \norm{ \smash{\grad \rmJ(\vct{x})} }^2
	= \frac{1}{2} \grad \big[ \econst^{-2\theta} (1 - \rmJ(\vct{x})^{-2}) \big]
= \frac{-\econst^{-2\theta}}{2} \grad f(\vct{x}).
\end{equation}
We have used~\eqref{eqn:igf-grad} in the last step.  Consequently,
the perturbed potential~\eqref{eqn:igJ-perturb} satisfies
$$
(\Hess J^{\eps}(\vct{x})) \, \grad \rmJ(\vct{x})
	= \frac{-\econst^{-2\theta}}{2} \grad f(\vct{x}) + \eps \grad \rmJ(\vct{x})
	= \frac{-1}{2} \big[ \econst^{-2\theta} + \eps \rmJ(\vct{x})^3 \big] \grad f(\vct{x}).
$$
Multiply through by the inverse Hessian and rearrange:
$$
(\Hess J^{\eps}(\vct{x}))^{-1} \grad f(\vct{x})
	= \frac{-2}{\econst^{-2\theta} + \eps \rmJ(\vct{x})^3} \grad \rmJ(\vct{x}).
$$
Take the inner product with $\grad f$ and multiply by $J^{\eps}$:
\begin{align*}
&\ip{ (\Hess J^{\eps}(\vct{x}))^{-1} \grad f(\vct{x}) }{ \grad f(\vct{x}) } J^{\eps}(\vct{x}) \\
&\qquad= \frac{4}{\econst^{-2\theta} + \eps \rmJ(\vct{x})^3} \rmJ(\vct{x})^{-3} \norm{ \smash{ \grad\rmJ(\vct{x})}}^2 J^{\eps}(\vct{x}) \\
&\qquad= \frac{4 \econst^{-2\theta}}{\econst^{-2\theta} + \eps \rmJ(\vct{x})^3}
	\big[ \rmJ(\vct{x})^{-2} \big(1 - \rmJ(\vct{x})^{-2}\big) + \eps \norm{\vct{x}}^2 \big(1 - \rmJ(\vct{x})^{-2}\big) / 2 \big].
\end{align*}
We have used~\eqref{eqn:igf-grad} to reach the second line.
The third line requires~\eqref{eqn:igJ-gradnorm} and
the definition~\eqref{eqn:igJ-perturb} of the perturbed potential $J^{\eps}$.
Taking the limit as $\eps \downarrow 0$,
$$
\ip{ (\Hess \rmJ(\vct{x}))^{-1} \grad f(\vct{x}) }{ \grad f(\vct{x} } \rmJ(\vct{x})
	= 4 \rmJ(\vct{x})^{-2} \big(1 - \rmJ(\vct{x})^{-2}\big)
$$
This computation gives us the integrand in Nguyen's variance inequality.

Fact~\ref{fact:nguyen} applies to the strongly convex potential $J^{\eps}$ and the function $f$.
By dominated convergence, the inequality also remains valid in the limit as $\eps \downarrow 0$.
Thus, for $\vct{z} \sim \rmmu_{\theta}$,
$$
\Var[ \rmJ(\vct{z})^{-2} ]
	= \Var[ f(\vct{z}) ]
	\leq \frac{4}{n} \Expect\big[ \rmJ(\vct{z})^{-2} \big( 1 - \rmJ(\vct{z})^{-2} \big) \big].
$$
The rest is simple algebra.  Rewrite the expectation on the right-hand side as
$$
\Expect\big[ \rmJ(\vct{z})^{-2} \big( 1 - \rmJ(\vct{z})^{-2} \big) \big]
	= \Expect[ \rmJ(\vct{z})^{-2} ] \cdot \Expect[  1 - \rmJ(\vct{z})^{-2} ] - \Var[ \rmJ(\vct{z})^{-2} ].
$$
Some manipulations yield the bound
$$
\Var[ \rmJ(\vct{z})^{-2} ]
	\leq \frac{4}{n+4} \Expect[ \rmJ(\vct{z})^{-2} ] \cdot \Expect[  1 - \rmJ(\vct{z})^{-2} ].
$$
This is the advertised result.
\end{proof}

\subsection{Proof of Proposition~\ref{prop:rmvol-tvar}}

We may now complete the proof of the thermal variance bound, Proposition~\ref{prop:rmvol-tvar}.
First, Lemmas~\ref{lem:rmvol-tmv} and~\ref{lem:rmvol-varbd} imply that
\begin{align} \label{eqn:rmvol-tvar-pf}
(n+1)(n+3) \cdot \Var[\rmJ(\vct{z})^{-2}]
	&\stackrel{\eqref{eqn:rmvol-varbd}}{\leq} 4(n+1) \cdot \Expect[ \rmJ(\vct{z})^{-2}] \cdot \Expect[ 1 - \rmJ(\vct{z})^{-2} ] \\
	&\stackrel{\eqref{eqn:rmvol-tm}}{=} \frac{4 \xi_{\rmI}'(\theta) \cdot[(n+1) - \xi_{\rmI}'(\theta)]}{n+1}.
\end{align}
Applying Lemma~\ref{lem:rmvol-tmv} again,
\begin{align*}
\xi_{\rmI}''(\theta)
	&\stackrel{\eqref{eqn:rmvol-tv}}{=} (n+1)(n+3) \cdot \Var[\rmJ(\vct{z})^{-2}] - \frac{2\xi_{\rmI}'(\theta)((n+1) - \xi_{\rmI}'(\theta))}{n+1} \\
	&\stackrel{\eqref{eqn:rmvol-tvar-pf}}{\leq} \frac{2 \xi_{\rmI}'(\theta) \cdot [(n+1) - \xi_{\rmI}'(\theta)]}{n+1}. 
\end{align*}
We have finished the argument.

\section{Moving Flats: Phase Transitions}
\label{sec:moving-flats}

We are now prepared to demonstrate the existence of phase transitions
in classic integral geometry problems.  The first step in our program
is to rewrite integral geometry formulas in terms
of weighted intrinsic volumes (Appendix~\ref{sec:formulas}).
Then we can apply concentration of the
weighted intrinsic volumes to obtain new insights on integral geometry.

This section treats two questions involving moving flats.
There are strong formal parallels between these two results,
reflecting the duality between projections and slices.
In Section~\ref{sec:moving-bodies}, we turn to problems involving moving convex bodies.

\subsection{Random Projections}
\label{sec:randproj}

First, we take up the question about projecting a convex body
onto a random subspace; see Figure~\ref{fig:random-projection}
for a schematic.

\subsubsection{The Projection Formula}

The projection formula describes the average rotation volumes
of a projection of a convex body onto a random subspace of
a given dimension.

\begin{fact}[Projection Formula] \label{fact:projection-formula}
Consider a nonempty convex body $\set{K} \subset \R^n$.
For each subspace dimension $m =0,1,2,\dots, n$,
and each index $i = 0,1,2,\dots, m$,
\begin{equation} \label{eqn:proj-pf}
\int_{\Gr(m, n)} \rotv_i^m(\set{K}|\set{L}) \, {\nu}_m(\diff{\set{L}})
	= \rotv_{n-m + i}^n(\set{K})
	\quad\text{for $i = 0, 1, 2, \dots, m$.}
\end{equation}
The superscripts indicate the ambient dimension in which the rotation volumes are computed, viz.
$$
\rotv_i^m(\set{M}) := \frac{\omega_{m+1}}{\omega_{i+1}} \, \intvol_{m - i}(\set{M}),
\quad\text{while}\quad
\rotv_i^n(\set{M}) := \frac{\omega_{n+1}}{\omega_{i+1}} \, \intvol_{n - i}(\set{M})
$$
The Grassmannian $\Gr(m, n)$ is equipped with its invariant probability measure
$\nu_m$; see Appendix~\ref{sec:invariant-grass}.
\end{fact}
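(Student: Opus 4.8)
The plan is to recognize~\eqref{eqn:proj-pf} as Kubota's formula (Fact~\ref{fact:projection-intvol}) dressed up in the language of rotation volumes. First I would substitute the definitions of the rotation volumes into both sides. Using $\rotv_i^m(\set{K}|\set{L}) = (\omega_{m+1}/\omega_{i+1})\,\intvol_{m-i}(\set{K}|\set{L})$ together with $\rotv_{n-m+i}^n(\set{K}) = (\omega_{n+1}/\omega_{n-m+i+1})\,\intvol_{m-i}(\set{K})$, and writing $j := m-i$ (so that $j$ ranges over $\{0,1,2,\dots,m\}$ exactly as $i$ does), the asserted identity~\eqref{eqn:proj-pf} is equivalent to
\[
\int_{\Gr(m,n)} \intvol_j(\set{K}|\set{L}) \, \nu_m(\diff{\set{L}})
	= \frac{\omega_{n+1}\,\omega_{m-j+1}}{\omega_{n-j+1}\,\omega_{m+1}} \cdot \intvol_j(\set{K}).
\]
Since $\set{K}|\set{L}$ lies in the $m$-dimensional space $\set{L}$, the quantity $\intvol_j(\set{K}|\set{L})$ is well defined for $j \le m$, so the entire content of the fact is the classical Kubota formula plus an identification of the constant.

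Next I would invoke Kubota's formula in the standard form recorded in Fact~\ref{fact:projection-intvol} (see also~\cite[Sec.~4.5]{SW08:Stochastic-Integral} or~\cite[Sec.~5.3]{Sch14:Convex-Bodies}), being careful that $\nu_m$ is the \emph{probability} measure on $\Gr(m,n)$ (some references use an unnormalized version, which only rescales the constant). In that form the coefficient on the right-hand side is $\binom{m}{j}\kappa_m\kappa_{n-j}/(\binom{n}{j}\kappa_n\kappa_{m-j})$, so it remains to verify that this equals $\omega_{n+1}\omega_{m-j+1}/(\omega_{n-j+1}\omega_{m+1})$. This is routine: expand every $\kappa_k$ and $\omega_k$ via~\eqref{eqn:ball-sphere}, note that all powers of $\pi$ cancel on both sides, convert the factorials $k! = \Gamma(k+1)$ into half-integer gamma values through the Legendre duplication formula $\Gamma(k+1) = 2^k \pi^{-1/2}\,\Gamma((k+1)/2)\,\Gamma(1+k/2)$, and check that all powers of $2$ cancel as well; both sides collapse to $\Gamma((m+1)/2)\,\Gamma((n-j+1)/2)\,/\,\bigl[\Gamma((n+1)/2)\,\Gamma((m-j+1)/2)\bigr]$.

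I do not expect a genuine obstacle here; the only real work is bookkeeping---matching the index conventions (the rotation volume random variable reverses the indexing, which is why $n-m+i$, not $i$, appears on the right of~\eqref{eqn:proj-pf}) and fixing the normalization of $\nu_m$. The degenerate cases are harmless: for $m = n$ the projection is the identity and~\eqref{eqn:proj-pf} is a tautology, while for $j = 0$ it reduces to the invariance of the Euler characteristic, since $\intvol_0 \equiv 1$ on nonempty bodies and the surface-area ratio collapses to $1$. For completeness one could instead give a self-contained derivation by applying the generalized Steiner formula (Fact~\ref{fact:distance-integral}) inside $\set{L}$ and integrating over $\Gr(m,n)$ using the tower property of the invariant measures in Appendix~\ref{sec:invariant}, but this merely re-proves Kubota's formula and is not worth the detour.
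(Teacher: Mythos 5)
Your proposal is correct and is essentially the paper's own argument: the paper likewise deduces Fact~\ref{fact:projection-formula} from the intrinsic-volume Kubota formula (Fact~\ref{fact:projection-intvol}) together with Lemma~\ref{lem:structure}, whose content is exactly the gamma-function identity $c^{m,n-j}_{n,m-j} = \omega_{n+1}\omega_{m-j+1}/(\omega_{n-j+1}\omega_{m+1})$ that you verify by hand via the Legendre duplication formula. Your index bookkeeping and constant identification check out.
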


Fact~\ref{fact:projection-formula} follows from Fact~\ref{fact:projection-intvol},
the formulation in terms of intrinsic volumes, 
and Lemma~\ref{lem:structure}.  Roughly speaking,
it shows that random projection acts as a shift
operator on the sequence of rotation volumes.  The case $i = 0$ corresponds to
Kubota's formula.  The case $i = 0$ and $m = n - 1$ is equivalent to Cauchy's
surface area formula.

We can give a probabilistic interpretation of the random
projection formula in terms of the rotation volume random variable.
Define
\begin{equation} \label{eqn:proj-wills-pf}
\mathrm{RandProj}_m(\set{K})
	:= \int_{\Gr(m, n)} \frac{\rotwills^m(\set{K}|\set{L})}{\rotwills^n(\set{K})} \, {\nu}_m(\diff{\set{L}})
	= \sum_{i=0}^m \frac{\rotv_{n-i}^n(\set{K})}{\rotwills^n(\set{K})}
	= \Prob{ \rotI_{\set{K}} \leq m }.
\end{equation}
The superscript on the total rotation volumes $\rotwills^m$ and $\rotwills^n$
reflects the ambient dimension.
The second relation follows when we sum~\eqref{eqn:proj-pf} over indices $i = 0, 1, 2, \dots, m$
and then divide by the total rotation volume $\rotwills^n(\set{K})$.
In the last step, we have identified a probability involving $\rotI_{\set{K}}$,
the rotation volume random variable~\eqref{eqn:wvol-rv-def}.
The exact formula~\eqref{eqn:proj-wills-pf} also implies that $\mathrm{RandProj}_m(\set{K})$
is bounded between zero and one.

\subsubsection{The Approximate Projection Formula}

In this section, we complete the proof of Theorem~\ref{thm:randproj-intro},
the phase transition for random projections.  To do so, we reparameterize the dimension $m$
of the random subspace as $m = \rotdelta(\set{K}) + t$ for $t \in \R$.
The Bernstein inequality~\eqref{eqn:rotvol-bernstein} for
rotation volume random variable states that,
for $t \neq 0$,
$$
\Prob{ \frac{\rotI_{\set{K}} - \rotdelta(\set{K})}{t} \geq 1 } \leq \exp \left( \frac{-t^2/2}{\rotdelta(\set{K}) + \abs{t}/3} \right)
	=: p(t).
$$
For a proportion $\alpha \in (0, 1)$, invert the tail probability function to see that
$$
p(t) \leq \alpha
\quad\text{if and only if}\quad
\abs{t} \geq \big[ 2  \rotdelta(\set{K}) \log(1/\alpha) + \tfrac{1}{9} \log^2(1/\alpha) \big]^{1/2} + \tfrac{1}{3} \log(1/\alpha). 
$$
Using the subadditivity of the square root, we can relax this to a simpler bound.
Define the transition width
$$
t_{\star}(\alpha) := \big[ 2\rotdelta(\set{K}) \log(1/\alpha)\big]^{1/2} + \tfrac{2}{3} \log(1/\alpha).
$$
We have shown that
$$
\abs{t} \geq t_{\star}(\alpha)
\quad\text{implies}\quad
\Prob{ \frac{\rotI_{\set{K}} - \rotdelta(\set{K})}{t} \geq 1 }
	\leq \alpha.
$$
Use the relation $m = \rotdelta(\set{K}) + t$ and rearrange to obtain the
pair of implications
\begin{align*}
m \leq \rotdelta(\set{K}) - t_{\star}(\alpha)
\quad\text{implies}\quad
\Prob{ \rotI_{\set{K}} \leq m } &\leq \alpha; \quad\text{and} \\ m \geq \rotdelta(\set{K}) + t_{\star}(\alpha)
\quad\text{implies}\quad
\Prob{ \rotI_{\set{K}} \geq m } &\leq \alpha
\quad\text{implies}\quad
\Prob{ \rotI_{\set{K}} \leq m } \geq 1 - \alpha.
\end{align*}
Last, invoke the formula~\eqref{eqn:proj-wills-pf} to replace
the probability, $\Prob{ \rotI_{\set{K}} \leq m }$,
by the integral geometric quantity, $\mathrm{RandProj}_m(\set{K})$.
This completes the proof of Theorem~\ref{thm:randproj-intro}.

\subsection{Random Slices} \label{sec:crofton}

Next, we consider the dual problem of slicing a convex body
by a random affine space.  An illustration appears
in Figure~\ref{fig:crofton}.

\subsubsection{The Slicing Formula}

The slicing formula, which is attributed to Crofton,
describes the total content of the slices
of a convex body by affine spaces of a given dimension.

\begin{fact}[Slicing Formula] \label{fact:slicing}
Consider a nonempty convex body $\set{K} \subset \R^n$.
For each affine space dimension $m =0,1,2,\dots,n$
and each index $i = 0,1,2,\dots, m$,
\begin{equation} \label{eqn:slicing-pf}
\int_{\Af(m,n)} \rmv_i(\set{K} \cap \set{E}) \, \mu_m(\diff{\set{E}})
	= \rmv_{n-m+i}(\set{K}).
\end{equation}
The rigid motion volumes are calculated with respect
to the ambient dimension $n$.
The affine Grassmannian $\Af(m,n)$ is equipped with its
invariant measure $\mu_m$; see Appendix~\ref{sec:invariant}.
\end{fact}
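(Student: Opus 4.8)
Fact~\ref{fact:slicing} is the classical Crofton intersection formula, merely repackaged in terms of the rigid motion volumes of Definition~\ref{def:wintvol}. The plan is to deduce it from the intrinsic-volume version of Crofton's formula (Fact~\ref{fact:slicing-intvol} in Appendix~\ref{sec:formulas}) by exactly the kind of normalization bookkeeping that converts the intrinsic-volume projection formula into the rotation-volume projection formula, Fact~\ref{fact:projection-formula}; indeed, if the structural lemma used there (Lemma~\ref{lem:structure}) is stated in enough generality, the whole argument is ``apply Fact~\ref{fact:slicing-intvol} and Lemma~\ref{lem:structure}.''

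Concretely, I would first record the classical statement: for the invariant measure $\mu_m$ on $\Af(m,n)$ fixed in Section~\ref{sec:invariant-Af}, there is an explicit constant $c_{n,m,i}$, built from the ball volumes $\kappa_\bullet$ (equivalently, from values of the gamma function), with
\begin{equation*}
\int_{\Af(m,n)} \intvol_i(\set{K} \cap \set{E}) \, \mu_m(\diff{\set{E}}) = c_{n,m,i} \cdot \intvol_{n-m+i}(\set{K}).
\end{equation*}
Here $\intvol_i(\set{K} \cap \set{E})$ is an honest intrinsic volume, so it is immaterial whether it is computed inside the flat $\set{E}$ or inside $\R^n$. Since the weight $\omega_{n+1}/\omega_{i+1}$ from Definition~\ref{def:wintvol} is a scalar, it pulls out of the $\mu_m$-integral, so the left-hand side of the claimed identity equals $(\omega_{n+1}/\omega_{i+1})\, c_{n,m,i}\, \intvol_{n-m+i}(\set{K})$, while the right-hand side is $\rmv_{n-m+i}(\set{K}) = (\omega_{n+1}/\omega_{n-m+i+1})\, \intvol_{n-m+i}(\set{K})$ by definition (all rigid motion volumes computed in ambient dimension $n$, as the statement specifies). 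Thus the identity reduces to the purely numerical claim $c_{n,m,i} = \omega_{i+1}/\omega_{n-m+i+1}$: the awkward gamma-function constant of classical Crofton collapses to a clean ratio of two sphere surface areas under this normalization — which is precisely the reason for the choice of weights in Definition~\ref{def:wintvol}. I would verify this by expanding $\omega_k = 2\pi^{k/2}/\Gamma(k/2)$ from~\eqref{eqn:ball-sphere} and simplifying with the Legendre duplication formula, or simply cite Lemma~\ref{lem:structure}.

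\textbf{Main obstacle.} There is no analytic difficulty; the only real task is getting the constant right, and the one trap is normalization of $\mu_m$ — a different choice of invariant measure on $\Af(m,n)$ rescales everything — so I would be careful to quote the classical constant $c_{n,m,i}$ in exactly the $\mu_m$-normalization of Section~\ref{sec:invariant-Af} before doing the gamma-function reduction. Two minor bookkeeping remarks also warrant a sentence: since $i$ ranges only over $\{0,1,\dots,m\}$ and $m \le n$, the shifted index $n-m+i$ stays in $\{0,\dots,n\}$, so no out-of-range intrinsic volumes intervene and the endpoint cases (e.g.\ $m=n$, where the statement is a tautology) are automatically consistent; and one should keep in mind that the rigid motion volumes here are the ambient-$n$ quantities, not the ambient-$m$ version a naive transcription of Fact~\ref{fact:proj-formula-intro} would suggest.
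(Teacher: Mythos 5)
Your overall route is the paper's own: the paper derives Fact~\ref{fact:slicing} in one line from Fact~\ref{fact:slicing-intvol} together with Lemma~\ref{lem:structure}, which is exactly the reduction you describe. The problem is that the ``purely numerical claim'' you reduce everything to is false, and the error sits precisely at the normalization point you single out. Applying Lemma~\ref{lem:structure} to the Crofton coefficient $c^{m,\,n-m+i}_{i,\,n}$ of Fact~\ref{fact:slicing-intvol}, with $(i_1,i_2)=(m,\,n-m+i)$ and $(j_1,j_2)=(i,\,n)$ (whose sums agree), gives
\begin{equation*}
c^{m,\,n-m+i}_{i,\,n} \;=\; \frac{\omega_{i+1}}{\omega_{m+1}}\cdot\frac{\omega_{n+1}}{\omega_{n-m+i+1}},
\end{equation*}
not $\omega_{i+1}/\omega_{n-m+i+1}$. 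Pulling the ambient-$n$ weight $\omega_{n+1}/\omega_{i+1}$ out of the integral therefore leaves
\begin{equation*}
\int_{\Af(m,n)} \frac{\omega_{n+1}}{\omega_{i+1}}\,\intvol_i(\set{K}\cap\set{E})\,\mu_m(\diff{\set{E}})
	\;=\; \frac{\omega_{n+1}}{\omega_{m+1}}\cdot\rmv_{n-m+i}(\set{K}),
\end{equation*}
with a surplus factor $\omega_{n+1}/\omega_{m+1}$ that does not cancel. A concrete check: for $n=2$, $m=1$, $i=0$, $\set{K}=\ball{2}$, the left-hand side is $(\omega_3/\omega_1)\,\mu_1\{\set{E}:\set{E}\cap\ball{2}\neq\emptyset\} = 2\pi\cdot\kappa_1 = 4\pi$ by the normalization~\eqref{eqn:Af-normalization}, whereas $\rmv_1(\ball{2}) = (\omega_3/\omega_2)\,\intvol_1(\ball{2}) = 2\pi$.

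The surplus factor is exactly the ratio between the ambient-$n$ and ambient-$m$ normalizations of the $i$th rigid motion volume, so it is absorbed precisely when the integrand is taken to be $(\omega_{m+1}/\omega_{i+1})\,\intvol_i(\set{K}\cap\set{E})$, i.e.\ the rigid motion volume of the slice computed relative to the $m$-dimensional flat containing it --- in exact formal parallel with Fact~\ref{fact:proj-formula-intro}, where the integrand carries the superscript $m$. In other words, the ``naive transcription'' of the projection formula that your closing remark warns against is the reading under which Lemma~\ref{lem:structure} actually closes the computation; insisting on the ambient-$n$ integrand is what breaks it (and it is the ambient-$m$ reading that makes $\mathrm{RandSlice}_m$ land in $[0,1]$ and equal $\Prob{\rmI_{\set{K}}\leq m}$ downstream). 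So the gap is not in the strategy, which matches the paper's, but in the one calculation the whole fact consists of: you assert that the constant collapses without computing it, and the assertion is wrong under the normalization you chose.
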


Fact~\ref{fact:slicing} follows from Fact~\ref{fact:slicing-intvol} and Lemma~\ref{lem:structure}.  Our formulation shows that random slicing acts as a
shift operator on the sequence of rigid motion volumes;
note the formal similarity between~\eqref{eqn:slicing-pf}
and the analogous result~\eqref{eqn:proj-pf} for random projections.
Of particular interest is the case $i = 0$, which expresses
the measure of the set of affine spaces that hit $\set{K}$.

As above, the formula~\eqref{eqn:slicing-pf} has a probabilistic interpretation.
Define
\begin{equation} \label{eqn:crofton-wills-pf}
\mathrm{RandSlice}_m(\set{K}) := \int_{\Af(m,n)} \frac{\rmwills(\set{K} \cap \set{E})}{\rmwills(\set{K})} \, \mu_m(\diff{\set{E}})
	= \sum_{i=0}^m \frac{\rmv_{n-i}(\set{K})}{\rmwills(\set{K})}
	= \Prob{ \rmI_{\set{K}} \leq m }.
\end{equation}
To reach the second relation, sum~\eqref{eqn:slicing-pf} over $i = 0,1,2,\dots, m$,
and divide by the total rigid motion volume $\rmwills(\set{K})$.
We have recognized a probability involving $\rmI_{\set{K}}$,
the rigid motion random variable~\eqref{eqn:wvol-rv-def}.

In particular, the formula~\eqref{eqn:crofton-wills-pf} demonstrates
that $\mathrm{RandSlice}_m(\set{K})$ is bounded between zero and one.
A striking feature of the slicing problem is that a much stronger condition
holds:
$$
0 \leq \frac{\rmwills(\set{K} \cap \set{E})}{\rmwills(\set{K})} \leq 1
\quad\text{for all $\set{E} \in \Af(m,n)$.}
$$
This relation is a straightforward consequence of the fact that
intrinsic volumes are monotone increasing with respect to set inclusion.

\subsubsection{The Approximate Slicing Formula}

Let us continue with the proof of Theorem~\ref{thm:crofton-intro}.
The argument is almost identical with the proof of Theorem~\ref{thm:randproj-intro},
so we will leave out some details.
Rewrite the dimension $m$ of the affine
space as $m = \rmdelta(\set{K}) + t$ for $t \in \R$.  The Bernstein inequality~\eqref{eqn:rmvol-bernstein}
for the rigid motion volume random variable states that, for $t \neq 0$,
$$
\Prob{ \frac{\rmI_{\set{K}} - \rmdelta(\set{K})}{t} \geq 1 }
	\leq \exp\left( \frac{-t^2/4}{\rmsigma^2(\set{K}) + \abs{t}/3} \right)
	\quad\text{where}\quad
	\rmsigma^2(\set{K}) := \rmdelta(\set{K}) \wedge \big((n+1) - \rmdelta(\set{K})\big).
$$
For a proportion $\alpha \in (0,1)$, the transition width is
$$
t_{\star}(\alpha) := \big[ 4 \rmsigma^2(\set{K}) \log(1/\alpha) \big]^{1/2} + \tfrac{4}{3} \log(1/\alpha).
$$
Repeating the arguments from Section~\ref{sec:randproj},
\begin{equation*}
\begin{aligned}
m &\leq \rmdelta(\set{K}) - t_{\star}(\alpha)
&\quad\text{implies}\quad&&
\mathrm{RandSlice}_m(\set{K}) &\leq \alpha; \\
m &\geq \rmdelta(\set{K}) + t_{\star}(\alpha)
&\quad\text{implies}\quad&&
\mathrm{RandSlice}_m(\set{K}) &\geq 1 - \alpha.
\end{aligned}
\end{equation*}
Collect the results to obtain Theorem~\ref{thm:crofton-intro}.

\section{Moving Bodies: Phase Transitions}
\label{sec:moving-bodies}

In this section, we consider problems involving two moving bodies.
The approach is similar to the argument in Section~\ref{sec:moving-flats}.
The main technical difference is that the integral geometry formulas
involve weighted intrinsic volumes of both convex bodies.

\subsection{Rotation Means}
\label{sec:rotmean}

First, we consider what happens when we add a randomly rotated convex body
to a fixed convex body.  See Figure~\ref{fig:rotation-mean} for a picture.

\subsubsection{The Rotation Mean Formula}

The rotation mean formula gives an exact account of the rotation
volumes of the Minkowski sum of a fixed convex body and a randomly
rotated convex body.

\begin{fact}[Rotation Mean Value Formula] \label{fact:rotmean}
Consider two nonempty convex bodies $\set{K}, \set{M} \subset \R^n$.
Then
\begin{equation} \label{eqn:rotmean-pf}
\int_{\SO(n)} \rotv_i(\set{K} + \mtx{O} \set{M}) \, \nu(\diff{\mtx{O}})
	= \sum_{j \geq i} \rotv_j(\set{K}) \, \rotv_{n + i-j}(\set{M})
	\quad\text{for $i = 0,1,2,\dots, n$.}
\end{equation}
The rotation volumes are computed with respect to the ambient dimension $n$.
The special orthogonal group $\SO(n)$ is equipped with its invariant probability
measure $\nu$; see Appendix~\ref{sec:invariant-SO}.
\end{fact}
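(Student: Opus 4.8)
The plan is to deduce Fact~\ref{fact:rotmean} from the classical rotation mean value formula of Hadwiger, which is traditionally stated in terms of ordinary intrinsic volumes (see, e.g., \cite{SW08:Stochastic-Integral}). That result guarantees the existence of universal constants $c_{n,a,b}$, depending only on $n,a,b$, such that
\begin{equation*}
  \int_{\SO(n)} \intvol_k(\set{K} + \mtx{O}\set{M})\,\nu(\diff{\mtx{O}})
  = \sum_{a+b=k} c_{n,a,b}\,\intvol_a(\set{K})\,\intvol_b(\set{M})
  \qquad (k = 0,1,\dots,n)
\end{equation*}
for all nonempty convex bodies $\set{K},\set{M}\subset\R^n$. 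The bilinear form, and the fact that the ``non-ball'' slots split as $a+b=k$, is what one obtains by expanding $\intvol_k(\set{K}+\set{M})$ into mixed volumes and then applying the known $\SO(n)$-average of a mixed volume containing a single rotating body; one may also simply take the displayed identity as the statement of the classical theorem.

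First I would pin down the constants $c_{n,a,b}$. Since the identity holds for \emph{every} pair of convex bodies and the coefficients are universal, it suffices to test it on dilated Euclidean balls $\set{K}=\lambda\ball{n}$ and $\set{M}=\mu\ball{n}$. Here $\lambda\ball{n}+\mtx{O}(\mu\ball{n})=(\lambda+\mu)\ball{n}$ for every rotation $\mtx{O}$, so the left-hand side equals $\intvol_k((\lambda+\mu)\ball{n})$; expanding this by the binomial theorem together with the ball formula~\eqref{eqn:intvol-ball}, then matching the coefficient of $\lambda^a\mu^b$ against the corresponding term of the right-hand side, yields a closed form for $c_{n,a,b}$ in terms of binomial coefficients and the ball volumes $\kappa_m$.

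Next I would substitute Definition~\ref{def:wintvol}. Putting $k=n-i$ and using $\intvol_{n-i}=\tfrac{\omega_{i+1}}{\omega_{n+1}}\rotv_i$ on the left, and $\intvol_a(\set{K})=\tfrac{\omega_{n-a+1}}{\omega_{n+1}}\rotv_{n-a}(\set{K})$ together with the analogous identity for $\set{M}$ on the right, and finally re-indexing the sum by $j:=n-a$ (so that $b=j-i$, the two rotation-volume indices become $j$ and $n+i-j$, and the constraint $a,b\ge 0$ becomes $i\le j\le n$), the desired formula~\eqref{eqn:rotmean-pf} reduces to the purely arithmetic claim
\begin{equation*}
  \frac{\omega_{n+1}}{\omega_{i+1}}\,c_{n,a,b} = \frac{\omega_{n+1}^{2}}{\omega_{n-a+1}\,\omega_{n-b+1}}
  \qquad\text{whenever } a+b=n-i.
\end{equation*}
Expanding the surface areas and ball volumes via $\omega_m=\tfrac{2\pi^{m/2}}{\Gamma(m/2)}$ and $\kappa_m=\tfrac{\pi^{m/2}}{\Gamma(1+m/2)}$, cancelling the powers of $\pi$, and simplifying with the Legendre duplication formula reduces this to an identity among gamma functions --- precisely the style of computation already carried out in the proof of Proposition~\ref{prop:rotvol-distint}.

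I expect the only real obstacle to be organizational rather than conceptual: keeping the three index substitutions ($k\leftrightarrow i$, $a\leftrightarrow j$, $b\leftrightarrow n+i-j$) consistent, and verifying that the constant in the last display genuinely collapses to $1$ on every term --- not merely to $1$ times a dimensional factor that would spoil the clean form of~\eqref{eqn:rotmean-pf}. This cancellation is exactly the phenomenon that motivates the ``unusual'' normalization $\omega_{n+1}/\omega_{i+1}$ in Definition~\ref{def:wintvol}: it is engineered so that the combinatorial weights $\binom{n}{a}$, $\binom{k}{a}$ and the ball volumes appearing in $c_{n,a,b}$ disappear, leaving the coefficient $1$.
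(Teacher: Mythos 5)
Your proposal is correct and follows essentially the same route as the paper, which deduces the statement from the classical intrinsic-volume formulation (Fact~\ref{fact:rotmean-intvol}, quoted from Schneider--Weil) together with the structure-coefficient identity of Lemma~\ref{lem:structure}; your final gamma-function identity $\frac{\omega_{n+1}}{\omega_{i+1}}c_{n,a,b}=\frac{\omega_{n+1}^2}{\omega_{n-a+1}\omega_{n-b+1}}$ is precisely that lemma in the case $c^{n-b,n-a}_{i,n}$. The only variation is that you re-derive the structure coefficients by testing on dilated balls rather than quoting them, which is a harmless, self-contained substitute (and indeed reproduces the correct values).
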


Fact~\ref{fact:rotmean} is attributed to Hadwiger.
Our formulation follows from Fact~\ref{fact:rotmean-intvol} and Lemma~\ref{lem:structure}.
It shows that a rotation mean corresponds with a convolution of the rotation volumes.
The special case $\set{M} = \lambda \ball{n}$ is equivalent to the
Steiner formula~\eqref{eqn:steiner-intro}.

We can easily obtain a probabilistic interpretation of the rotation
mean value formula.  Define
\begin{equation} \label{eqn:rotmean-wills-pf}
\mathrm{RotMean}(\set{K}, \set{M}) :=
	\int_{\SO(n)} \frac{\rotwills(\set{K} + \mtx{O} \set{M})}{\rotwills(\set{K}) \, \rotwills(\set{M})} \, \nu(\diff{\mtx{O}})
	= \sum_{i + j \geq n} \frac{\rotv_i(\set{K}) \, \rotv_j(\set{M})}{\rotwills(\set{K}) \, \rotwills(\set{M})}
	= \Prob{ \rotI_{\set{K}} + \rotI_{\set{M}} \geq n }.
\end{equation}
To obtain the second relation, sum~\eqref{eqn:rotmean-pf} over $i = 0, 1,2, \dots, n$,
and divide by the product $\rotwills(\set{K}) \, \rotwills(\set{M})$ of the total rotation volumes.  
In this expression, $\rotI_{\set{K}}$ and $\rotI_{\set{M}}$ are \emph{independent}
rotation volume random variables~\eqref{eqn:wvol-rv-def} associated with
the convex bodies $\set{K}$ and $\set{M}$.
As a consequence of~\eqref{eqn:rotmean-wills-pf}, we immediately recognize that
the integral $\mathrm{RotMean}(\set{K}, \set{M})$ is bounded between zero and one.

\subsubsection{The Approximate Rotation Mean Formula}

We may now establish Theorem~\ref{thm:rotmean-intro}.
Define the total rotation volume
$$
\rotDelta(\set{K}, \set{M}) := \rotdelta(\set{K}) + \rotdelta(\set{M})
\in [0, 2n].
$$
Parameterize the phase transition by writing $\rotDelta(\set{K}, \set{M}) = n + t$
for $t \in \R$.  The Bernstein inequality~\eqref{eqn:rotvol-sum} for the sum of rotation mean
random variables states that
$$
\Prob{ \frac{\rotI_{\set{K}} + \rotI_{\set{M}} - \rotDelta(\set{K},\set{M})}{t} \geq 1 }
	\leq \exp\left( \frac{-t^2/2}{\rotDelta(\set{K},\set{M}) + \abs{t} / 3} \right)
	\quad\text{for $t \neq 0$.}
$$
For a proportion $\alpha \in (0,1)$, set the transition width
$$
t_{\star}(\alpha) := \big[ 2 \rotDelta(\set{K},\set{M}) \log(1/\alpha) \big]^{1/2} + \tfrac{2}{3} \log(1/\alpha).
$$
We find that
$$
\abs{t} \geq t_{\star}(\alpha)
\quad\text{implies}\quad
\Prob{ \frac{\rotI_{\set{K}} + \rotI_{\set{M}} - \rotDelta(\set{K},\set{M})}{t} \geq 1 }
	\leq \alpha.
$$
Rearrange this inequality to determine that
\begin{equation}
\begin{aligned}
\rotDelta(\set{K},\set{M}) &\geq n + t_{\star}(\alpha)
&\quad\text{implies}\quad&&
\mathrm{RotMean}(\set{K}, \set{M})
	&\leq \alpha; \\
\rotDelta(\set{K},\set{M}) &\leq n - t_{\star}(\alpha)
&\quad\text{implies}\quad&&
\mathrm{RotMean}(\set{K}, \set{M})
	&\geq 1 - \alpha.	
\end{aligned}
\end{equation}
We have established Theorem~\ref{thm:rotmean-intro}.

\subsection{The Kinematic Formula}
\label{sec:kinematic}

Last, we turn to the kinematic formula, which describes how two
moving convex bodies intersect;  
see the illustration in Figure~\ref{fig:kinematic}.
This setting is dual to the
problem of rotation means.  

\subsubsection{Exact Kinematics}

The kinematic formula expresses the rigid motion volumes
of an intersection of a fixed convex body and a randomly
transformed convex body.

\begin{fact}[Kinematic Formula] \label{fact:kinematic}
Consider two nonempty bodies $\set{K}, \set{M} \subset \R^n$.
For each $i = 0,1,2,\dots,n$,
\begin{equation} \label{eqn:kinematic-pf}
\int_{\RM(n)} \rmv_i(\set{K} \cap g \set{M}) \, \mu(\diff{g})
	= \sum_{j \geq i} \rmv_j(\set{K}) \, \rmv_{n+i-j}(\set{M})
	\quad\text{for $i = 0, 1, 2, \dots, n$.}
\end{equation}
The rigid motion volumes are calculated with respect
to the ambient dimension $n$.
The group $\RM(n)$ of proper rigid motions on $\R^n$ is equipped with
its invariant measure $\mu$; see Section~\ref{sec:invariant}.
\end{fact}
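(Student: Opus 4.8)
The plan is to deduce this reformulation from the classical principal kinematic formula, stated in terms of intrinsic volumes (Fact~\ref{fact:kinematic-intvol}), together with the constant-bookkeeping supplied by Lemma~\ref{lem:structure}. The argument runs exactly parallel to the proof of the rotation mean value formula (Fact~\ref{fact:rotmean}); the only changes are that rigid motion volumes $\rmv_i$ replace the rotation volumes $\rotv_i$, and that the invariant measure now carries a translational part. First I would record the classical formula in the form
\begin{equation*}
\int_{\RM(n)} \intvol_i(\set{K} \cap g \set{M}) \, \mu(\diff g)
	= \sum_{k=i}^{n} c^{\,n}_{i,k} \cdot \intvol_{k}(\set{K}) \, \intvol_{n+i-k}(\set{M}),
\end{equation*}
where the coefficients $c^{\,n}_{i,k}$ are the usual ratios of $\Gamma$-values (equivalently, of the ball volumes $\kappa_j$) determined by the normalization of $\mu$ fixed in Section~\ref{sec:invariant-RM}.

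Next I would substitute $\intvol_j(\cdot) = (\omega_{j+1}/\omega_{n+1}) \, \rmv_j(\cdot)$ everywhere. Cancelling the factor $\omega_{i+1}/\omega_{n+1}$ on the left and gathering the surface-area constants on the right turns the identity into
\begin{equation*}
\int_{\RM(n)} \rmv_i(\set{K} \cap g \set{M}) \, \mu(\diff g)
	= \sum_{k=i}^{n} \Bigl( c^{\,n}_{i,k} \cdot \frac{\omega_{k+1}\,\omega_{n+i-k+1}}{\omega_{i+1}\,\omega_{n+1}} \Bigr) \, \rmv_{k}(\set{K}) \, \rmv_{n+i-k}(\set{M}).
\end{equation*}
The heart of the matter is that the parenthesized coefficient equals $1$ for every $i \le k \le n$; this is the Legendre-duplication-type identity packaged in Lemma~\ref{lem:structure}, and it is the same cancellation that made the constants disappear in Proposition~\ref{prop:rotvol-distint}. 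Once the coefficients are gone, I would set $j = k$ and note that, under the convention that the weighted volumes vanish outside $\{0,1,\dots,n\}$, the restricted sum $\sum_{k=i}^{n}$ coincides with $\sum_{j \ge i}$: terms with $j > n$ drop because $\rmv_j(\set{K}) = 0$, and terms with $j > n + i$ drop because $\rmv_{n+i-j}(\set{M}) = 0$. This produces the stated convolution identity.

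It is worth recording the generating-function shadow of this calculation, since it makes the role of the normalization transparent. Summing the claimed identity against $t^{n-i}$ and re-indexing shows it is equivalent to $\int_{\RM(n)} \bar{\chi}_{\set{K} \cap g \set{M}}(t) \, \mu(\diff g) = \bar{\chi}_{\set{K}}(t) \, \bar{\chi}_{\set{M}}(t)$; specializing to $t = 1$ gives $\int_{\RM(n)} \rmwills(\set{K} \cap g \set{M}) \, \mu(\diff g) = \rmwills(\set{K}) \, \rmwills(\set{M})$, which is exactly what underlies the bound $\mathrm{Kinematic}(\set{K}, \set{M}) \in [0,1]$ in Theorem~\ref{thm:kinematic-intro}.

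The main obstacle is organizational rather than conceptual: one must confirm that the normalization of the motion-invariant measure $\mu$ used here matches the normalization under which the classical constants $c^{\,n}_{i,k}$ are recorded in the reference, and then verify the gamma-function identity $c^{\,n}_{i,k}\,\omega_{k+1}\,\omega_{n+i-k+1} = \omega_{i+1}\,\omega_{n+1}$. Since Lemma~\ref{lem:structure} already performs this verification in a form that applies uniformly to all four integral geometry formulas treated in the paper, what remains is the short substitution-and-re-indexing step sketched above.
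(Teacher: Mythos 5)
Your proposal is correct and is essentially the paper's own argument: the paper likewise derives Fact~\ref{fact:kinematic} by combining the classical statement in intrinsic volumes (Fact~\ref{fact:kinematic-intvol}) with the structure-coefficient identity of Lemma~\ref{lem:structure}, and your coefficient check $c_{i,n}^{j,\,n+i-j}\,\omega_{j+1}\,\omega_{n+i-j+1} = \omega_{i+1}\,\omega_{n+1}$ is exactly the cancellation that makes the weights disappear. (One small terminological point: Fact~\ref{fact:kinematic-intvol} is the general kinematic formula; the \emph{principal} kinematic formula is only its $i=0$ case.)
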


Fact~\ref{fact:kinematic} goes back to work of Blaschke, Santal{\'o},
Chern and others.  The statement here follows from Fact~\ref{fact:kinematic-intvol}
and Lemma~\ref{lem:structure}.
Our formulation states that random intersection acts as convolution
on sequences of rigid motion volumes; note the formal parallel
between~\eqref{eqn:kinematic-pf} and~\eqref{eqn:rotmean-pf}, the expression for rotation means.
The special case $i = 0$ is called the \term{principal kinematic formula},
and it describes the measure of the set of rigid motions that bring $\set{M}$
into contact with $\set{K}$.

To obtain a probabilistic formulation, define
\begin{equation} \label{eqn:kinematic-wills-pf}
\mathrm{Kinematic}(\set{K}, \set{M}) 
	:= \int_{\RM(n)} \frac{\rmwills(\set{K} \cap g \set{M})}{\rmwills(\set{K}) \, \rmwills(\set{M})} \, \mu(\diff{g})
	= \sum_{i + j \geq n} \frac{\rmv_i(\set{K}) \, \rmv_j(\set{K})}{\rmwills(\set{K}) \, \rmwills(\set{M})}
	= \Prob{ \rmI_{\set{K}} + \rmI_{\set{M}} \geq n }.
\end{equation}
The second relation follows when we sum~\eqref{eqn:kinematic-pf} over $i = 0,1,2,\dots,n$
and divide by the product $\rmwills(\set{K}) \, \rmwills(\set{M})$ of the total rigid motion volumes.
In this formula, $\rmI_{\set{K}}$ and $\rmI_{\set{M}}$ are \emph{independent} rigid motion volume random
variables associated with the sets $\set{K}$ and $\set{M}$.
The formula~\eqref{eqn:kinematic-wills-pf} also demonstrates
that $\mathrm{Kinematic}(\set{K}, \set{M})$ is bounded between zero and one.

\subsubsection{Approximate Kinematics}

Finally, we establish Theorem~\ref{thm:kinematic-intro}.  Introduce the total
rigid motion volume
$$
\rmDelta(\set{K},\set{M}) := \rmdelta(\set{K}) + \rmdelta(\set{M}) \in [0, 2n].
$$
Define the variance proxy
$$
\bar{v}(\set{K},\set{M}) := \big[ \rmdelta(\set{K}) \wedge ((n+1) - \rmdelta(\set{K})) \big] + \big[ \rmdelta(\set{M}) \wedge ((n+1) - \rmdelta(\set{M})) \big]
	\in [0, n+1].
$$	
We parameterize the phase transition as $\rmDelta(\set{K},\set{M}) = n + t$ for $t \in \R$.
The Bernstein inequality~\eqref{eqn:rmvol-sum} for the sum
of rigid motion volume random variables ensures that
$$
\Prob{ \frac{\rmI_{\set{K}} + \rmI_{\set{M}}-\rmDelta(\set{K},\set{M})}{t} \geq 1 }
	\leq \exp\left( \frac{-t^2/4}{\bar{v}(\set{K},\set{M}) + \abs{t}/3} \right)
	\quad\text{for $t \neq 0$.}
$$
For a proportion $\alpha \in (0,1)$, we define the transition width
$$
t_{\star}(\alpha) := \big[ 4\bar{v}(\set{K},\set{M}) \log(1/\alpha) \big]^{1/2} + \tfrac{4}{3} \log(1/\alpha).
$$
With this notation,
$$
\abs{t} \geq t_{\star}(\alpha)
\quad\text{implies}\quad
\Prob{ \frac{\rmI_{\set{K}} + \rmI_{\set{M}}-\rmDelta(\set{K},\set{M})}{t} \geq 1 } \leq \alpha.
$$
This inequality implies that
\begin{equation*}
\begin{aligned}
\rmDelta(\set{K},\set{M}) &\geq n + t_{\star}(\alpha)
&\quad\text{implies}\quad&&
\mathrm{Kinematic}(\set{K}, \set{M}) &\leq \alpha; \\
\rmDelta(\set{K},\set{M}) &\leq n - t_{\star}(\alpha)
&\quad\text{implies}\quad&&
\mathrm{Kinematic}(\set{K}, \set{M}) &\geq 1 - \alpha.
\end{aligned}
\end{equation*}
This result is stronger than Theorem~\ref{thm:kinematic-intro},
which follows from the loose bound
$\bar{v}(\set{K},\set{M}) \leq \rmDelta(\set{K},\set{M})$.

\subsection{Iteration}

Both the rotation mean formula, Fact~\ref{fact:rotmean},
and the kinematic formula, Fact~\ref{fact:kinematic},
can be iterated to handle combinations of more than
two convex bodies.  For example, see~\cite[Thm.~5.1.5]{SW08:Stochastic-Integral}.
It is straightforward to extend our methods to obtain
phase transitions for combinations of many moving
convex bodies.  We omit further discussion.

\section{Conclusions and Outlook}

To recapitulate: We have discovered that several major problems
in integral geometry exhibit sharp phase transitions.
To explain these phenomena, we cast
the integral geometry formulas in terms of weighted intrinsic volumes.
These reformulations reveal that each result
has a natural expression as a probability involving
weighted intrinsic volume random variables.
The phase transition arises as a consequence of the
fact that these random variables concentrate
around their expectations.

The main technical contribution of the paper
is to prove that weighted intrinsic volumes concentrate.
This argument first rewrites the exponential moments of the
sequence of weighted intrinsic volumes in terms of
a distance integral.  The distance integral
can be interpreted as the variance of a function
with respect to a continuous concave measure.
We bound the distance integral using a variance
inequality for concave measures.
These variance inequalities emerged
from research on functional extensions of
the Brunn--Minkowski inequality, so geometry
is central to our whole program.

Our approach can be generalized in several different directions.
On the probabilistic side, it is likely that weighted
intrinsic volumes satisfy a central limit theorem,
analogous to the one derived for conic intrinsic
volumes in~\cite{GNP17:Gaussian-Phase}.  On the
geometric side, the intrinsic volume sequence
is a special case of the sequence of mixed volumes
of two convex bodies.
We anticipate that our techniques can be used
to prove that the sequence of mixed volumes
also concentrates, and we plan to investigate
consequences of this phenomenon in
geometry, combinatorics, and algebra.

\appendix

\section{Invariant Measures}
\label{sec:invariant}

Integral geometry problems involve integration over spaces of geometric
objects, equipped with invariant measures.  To formulate these questions correctly,
we must be quite explicit about the construction of the measures.
Many mysteries in geometric probability, such as Bertrand's paradox~\cite[pp.~5--6]{Ber89:Calcul-Probabilites},
can be traced to a confusion about the random model.  The material in
this section is summarized from~\cite[Chaps.~5, 13]{SW08:Stochastic-Integral}.

\subsection{Notational Collision}

We have chosen to stick with the standard notation from~\cite{SW08:Stochastic-Integral}
for invariant measures.  Although this notation collides with the symbols we previously used
for concave measures, we do not use these constructions simultaneously, so there
is no risk of confusion.

\subsection{The Special Orthogonal Group}
\label{sec:invariant-SO}

Recall that the special orthogonal group $\SO(n)$ consists
of all rotations acting on $\R^n$.  We identify the group with its
representation as the family of all $n \times n$ orthogonal matrices
with determinant one, acting on itself by matrix multiplication.
Since the special orthogonal group is compact (in the relative topology),
it admits a unique invariant probability measure, denoted by $\nu$. That is,
$$
\nu( \mtx{O} \set{S} ) = \nu( \set{S} )
\quad\text{for all $\mtx{O} \in \SO(n)$ and Borel $\set{S} \subseteq \SO(n)$.}
$$
Furthermore, the measure is scaled so that $\nu(\SO(n)) = 1$.

\subsection{The Grassmannian}
\label{sec:invariant-grass}

The (real) Grassmannian $\Gr(m, n)$ is the collection of all $m$-dimensional subspaces
in the Euclidean space $\R^n$.  The special orthogonal group $\SO(n)$ acts on $\Gr(m,n)$ by rotation.

Fix a reference subspace $\set{L}_{\star} \in \Gr(m,n)$; the particular choice has no downstream effect.  
We can construct a unique rotation-invariant probability measure $\nu_m$ on $\Gr(m, n)$
as the push-forward of the measure $\nu$ on $\SO(n)$ via the map $\mtx{O} \mapsto \mtx{O} \set{L}_{\star}$.
The measure satisfies
$$
\nu_m( \mtx{O} \set{S} ) = \nu_m( \set{S} )
\quad\text{for all $\mtx{O} \in \SO(n)$ and Borel $\set{S} \subseteq \Gr(m,n)$.}
$$
The total measure of the Grassmannian satisfies $\nu_m(\Gr(m,n)) = 1$.
In fact, the invariance property extends to the full orthogonal group
because negation acts as the identity map on the Grassmannian.

\subsection{The Group of Proper Rigid Motions}
\label{sec:invariant-RM}

The special Euclidean group $\RM(n)$ consists of proper
rigid motions on $\R^n$.
More precisely, a proper rigid motion $g$ acts on $\R^n$ first by applying a
rotation $\mtx{O}$ and then applying a translation $\tau_{\vct{x}}$ by a vector $\vct{x}$.
The group product is composition.

Introduce the notation $\mathrm{Leb}_m$ for the Lebesgue measure on
an $m$-dimensional subspace of $\R^n$.  
We construct an invariant measure $\mu$ on $\RM(n)$
as the push-forward of the measure $\nu \times \mathrm{Leb}_n$
on the product space $\SO_n \times \R^n$ under the map $(\mtx{O}, \vct{x}) \mapsto
\tau_{\vct{x}} \circ \mtx{O}$.  In other words,
$$
\mu( g \set{S} ) = \mu( \set{S} )
\quad\text{for all $g \in \RM(n)$ and Borel $\set{S} \subseteq \RM(n)$.}
$$
Unraveling the definitions, we learn that the measure $\mu$
satisfies the normalization
\begin{equation} \label{eqn:RM-normalization}
\mu \{ g \in \RM(n) : g \vct{0}_n \in \ball{n} \} = \kappa_n.
\end{equation}
In other words, the measure of the set of proper rigid motions that
keeps the origin $\vct{0}_n$ within the Euclidean unit ball equals the
Lebesgue measure of the ball.  In~\eqref{eqn:RM-normalization},
we can replace the origin by any other point without changing the construction.
Subject to the normalization~\eqref{eqn:RM-normalization},
the measure $\mu$ is the unique rigid-motion-invariant measure on $\RM(n)$.

\subsection{The Affine Grassmannian}
\label{sec:invariant-Af}

The affine Grassmannian $\Af(m, n)$ consists of all affine subspaces
of $\R^n$ with dimension $m$.  
The group $\RM(n)$ of rigid motions acts on $\Af(m, n)$ in the obvious way.

Fix a reference affine space $\set{E}_{\star} \in \Af(m,n)$.
We can construct a unique measure $\mu_m$ on $\Af(m, n)$ that is invariant
under proper rigid motions as the push-forward of the measure $\mu$
on $\RM(n)$ via the map $g \mapsto g \set{E}_\star$.
The invariance property of the measure $\mu_m$ means that
$$
\mu_m( g \set{S} ) = \mu_m( \set{S} )
\quad\text{for all $g \in \RM(n)$ and Borel $\set{S} \subseteq \Af(m,n)$.}
$$
In fact, the measure is invariant over the full group of rigid motions,
$\mathrm{E}(n)$, because negation is an involution on $\Af(m,n)$.
The normalization is
\begin{equation} \label{eqn:Af-normalization}
\mu_m \{\set{E} \in \Af(m,n) : \ball{n} \cap \set{E} \neq \emptyset\} = \kappa_{n-m}.
\end{equation}
In other words, the measure of the set of $m$-dimensional affine spaces that hit
the Euclidean unit ball equals the volume of the $(n-m)$-dimensional ball.

\section{Elements of Integral Geometry}
\label{sec:elements}

This section summarizes the key properties
of the intrinsic volumes.  Then we outline Hadwiger's approach
to integral geometry, which highlights why intrinsic volumes
are so significant.  As a concrete example of this methodology,
we derive a particular case of Crofton's formula.

\subsection{Properties of (Weighted) Intrinsic Volumes}
\label{sec:intvol-properties}

We begin with an overview of the properties of the
intrinsic volume functionals.
Let $\set{K}, \set{M} \subset \R^n$ be convex bodies.
For each index $i = 0,1,2, \dots, n$, the intrinsic volume $\intvol_i$ is:

\begin{enumerate}
\item	\textbf{Nonnegative:} $\intvol_i(\set{K}) \geq 0$.

\item	\textbf{Monotone:} $\set{M} \subseteq \set{K}$ implies $\intvol_i(\set{M}) \leq \intvol_i(\set{K})$.

\item	\textbf{Homogeneous:}\label{eqn:homogeneous}
$\intvol_i(\lambda \set{K}) = \lambda^i \, \intvol_i(\set{K})$ for each $\lambda \geq 0$.

\item	\textbf{Invariant:}  
$\intvol_i(g \set{K}) = \intvol_i(\set{K})$ for each \emph{rigid motion} $g$.
That is, $g$ acts by rotation, reflection, and translation.

\item	\textbf{Intrinsic:} $\intvol_i(\set{K}) = \intvol_i(\set{K} \times \{ \vct{0}_r \})$
for each natural number $r$.

\item	\textbf{A Valuation:}  $\intvol_i(\emptyset) = 0$.  If $\set{M} \cup \set{K}$ is also a convex body, then
$$
\intvol_i( \set{M} \cap \set{K} ) + \intvol_i( \set{M} \cup \set{K} )
	= \intvol_i(\set{M}) + \intvol_i(\set{K}).
$$
This is a restricted version of the additivity property satisfied by a measure.

\item	\textbf{Continuous:}  If $\set{K}_m \to \set{K}$ in the Hausdorff metric, then
$\intvol_i(\set{K}_m) \to \intvol_i(\set{K})$.
\end{enumerate}

It is clear from the definitions~\eqref{eqn:rotvol-def-intro} and~\eqref{eqn:rmvol-def-intro}
that the rotation volumes, $\rotv_i$,
and the rigid motion volumes, $\rmv_i$,
are not intrinsic, but they enjoy the other six properties on the list.
Note that, because of the indexing, the rotation volume $\rotv_i$
on $\R^n$ is homogeneous of degree $n - i$.
Similarly, the total rotation volume, $\rotwills$, 
and the total rigid motion volume, $\rmwills$, are neither homogeneous nor intrinsic,
but they share the other five properties of the intrinsic volumes.

\subsection{Hadwiger's Approach to Integral Geometry}

There is a remarkable and illuminating path to the main
formulas of integral geometry that proceeds
via a deep theorem of Hadwiger~\cite{Had51:Funktionalsatzes,Had52:Additive-Funktionale,Had57:Vorlesungen}.

\begin{fact}[Hadwiger's Functional Theorem] \label{fact:hadwiger}
Suppose that $F$ is a rigid-motion-invariant, continuous
valuation on the space of convex bodies in $\R^n$.
Then $F$ is a linear combination of the intrinsic
volumes $\intvol_0, \intvol_1, \dots, \intvol_n$.
\end{fact}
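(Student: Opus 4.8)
The plan is to prove Fact~\ref{fact:hadwiger} by induction on the ambient dimension $n$, following the classical route (as in the book of Klain \& Rota~\cite{KR97:Introduction-Geometric} and Hadwiger~\cite{Had51:Funktionalsatzes,Had52:Additive-Funktionale,Had57:Vorlesungen}) that reduces everything to the characterization of \term{simple} valuations. Call a valuation $F$ on the convex bodies of $\R^n$ \emph{simple} if $F(\set{K}) = 0$ whenever $\dim \set{K} < n$. The base cases are direct: for $n = 0$ a nonempty convex body is a point, so $F = F(\{\vct{0}\}) \cdot \intvol_0$; for $n = 1$, applying the valuation identity to $[0,s] \cup [s,s+t]$ and using translation invariance shows that $g(s) := F([0,s]) - F(\{\vct{0}\})$ is additive, hence linear by continuity, so $F = F(\{\vct{0}\}) \, \intvol_0 + b \, \intvol_1$ on segments and, by continuity in the Hausdorff metric, on all convex bodies in $\R^1$.

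For the inductive step, assume the result in dimensions less than $n$ (with $n \geq 2$), and let $F$ be a continuous, rigid-motion-invariant valuation on the convex bodies of $\R^n$. Fix a hyperplane $\set{H} \ni \vct{0}$. The restriction of $F$ to convex bodies contained in $\set{H}$ is a continuous valuation invariant under the rigid motions of $\set{H} \cong \R^{n-1}$, so by the inductive hypothesis it equals $\sum_{i=0}^{n-1} c_i \intvol_i$ on $\set{H}$ for constants $c_i$; because $F$ is invariant under all rotations of $\R^n$, these constants do not depend on the choice of $\set{H}$. Since every convex body of dimension $< n$ lies in some hyperplane, the valuation $\psi := F - \sum_{i=0}^{n-1} c_i \intvol_i$ is continuous, rigid-motion-invariant, and simple. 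It therefore suffices to prove the key claim: \emph{every continuous, rigid-motion-invariant, simple valuation $\psi$ on the convex bodies of $\R^n$ is a scalar multiple $c_n \intvol_n$ of Lebesgue measure.} Granting this, $F = \sum_{i=0}^{n} c_i \intvol_i$, which closes the induction.

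To establish the key claim, decompose $\psi$ into even and odd parts, $\psi^{\pm}(\set{K}) := \tfrac{1}{2}\bigl( \psi(\set{K}) \pm \psi(-\set{K}) \bigr)$, each of which is again a continuous, rigid-motion-invariant, simple valuation. One then invokes the structure theorem for continuous, translation-invariant simple valuations: such a functional has the form $\set{K} \mapsto c \, \intvol_n(\set{K}) + \int_{\sphere{n-1}} f(\vct{u}) \, S_{n-1}(\set{K}, \diff{\vct{u}})$, where $S_{n-1}(\set{K},\cdot)$ is the top-order surface area measure and $f$ is an odd function on the sphere. Rotation invariance of $\psi$ forces $f$ to be invariant under $\SO(n)$, hence constant; being odd, $f \equiv 0$ (here $n \geq 2$ is used, so that $\SO(n)$ acts transitively on $\sphere{n-1}$). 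Thus both $\psi^{+}$ and $\psi^{-}$ reduce to multiples of $\intvol_n$, and since $\intvol_n$ is even, $\psi = c_n \intvol_n$. Alternatively, one may bypass the structure theorem and use Klain's elementary dissection argument, which pins a simple rotation-invariant valuation to a multiple of volume by cutting cylinders and cubes and exploiting additivity.

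The main obstacle is the key claim on simple valuations; the rest is bookkeeping. Within the key claim, the delicate point is ruling out a nontrivial odd part — a priori $\psi$ need not be even — which is exactly what the surface-area-measure representation (or Klain's dissection) is needed for. Two further routine checks deserve care: verifying that the constants $c_i$ extracted on a hyperplane are genuinely independent of that hyperplane, which uses the full rotation invariance of $F$ rather than just invariance within the hyperplane; and justifying the passage from polytopes (or segments) to all convex bodies, which follows from density in the Hausdorff metric together with the assumed continuity of $F$.
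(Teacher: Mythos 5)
The paper does not actually prove Fact~\ref{fact:hadwiger}; it cites \cite[Thm.~14.4.6]{SW08:Stochastic-Integral} and merely identifies the critical step, namely that $\intvol_n$ is the unique rigid-motion-invariant, continuous valuation that is \emph{simple} (vanishes on lower-dimensional bodies) and normalized on the cube. Your outline is the standard Klain--Rota/Schneider--Weil argument and it correctly isolates exactly that same critical step, so in substance you are following the route the paper points to: the induction on dimension, the subtraction of $\sum_{i<n} c_i \intvol_i$ using the intrinsic-ness of the $\intvol_i$, and the rotation-invariance check that the $c_i$ do not depend on the hyperplane are all sound bookkeeping. Two remarks. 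First, be aware that your treatment of the key claim still defers its entire content to an external theorem (Schneider's representation of simple translation-invariant valuations via the surface area measure, or ``Klain's dissection'') rather than proving it; that is acceptable here only because the paper itself treats the statement as a citable Fact, but it means the genuinely hard mathematics has not been carried out. Second, your worry about the odd part is unnecessary under the paper's conventions: rigid motions are defined to include reflections, and $-\Id \in \mathrm{O}(n)$ is a composition of reflections, so $\psi(-\set{K}) = \psi(\set{K})$ automatically, $\psi^{-} \equiv 0$, and only Klain's even-case volume theorem is needed --- this lets you bypass the surface-area-measure representation (where, incidentally, your inference that $f$ must be constant is slightly too quick, since $f$ is only determined modulo linear functions).
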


\noindent
See~\cite[Thm.~14.4.6]{SW08:Stochastic-Integral}
for a proof of this result.  The critical step is
to verify that the $n$th intrinsic volume, $\intvol_n$,
is the only rigid-motion-invariant, continuous valuation on $\R^n$
that assigns the value one to the cube $\set{Q}^n$
and zero to every convex body with dimension
strictly less than $n$.  This is analogous to
the fact that the Lebesgue measure, $\mathrm{Leb}_n$,
is the unique translation-invariant measure on
Borel sets in $\R^n$.  The technical impediment
to proving Fact~\ref{fact:hadwiger} is that convex bodies
form a smaller class than Borel sets,
so they provide fewer constraints on the valuation.

A canonical method for obtaining an invariant, continuous valuation $F$
is to integrate a continuous valuation $\phi$ over a space of geometric
objects, such as the group $\mathrm{E}(n)$ of all rigid motions.  Then
Fact~\ref{fact:hadwiger} demonstrates that
$$
F(\set{K}) := \int_{\mathrm{E}(n)} \phi(g \set{K}) \, \mu(\diff{g})
	= \sum_{i=0}^n a_i \, \intvol_i(\set{K})
	\quad\text{for $a_i \in \R$.}
$$
To determine the coefficients $a_i$, we typically evaluate the
functional $F$ on simple convex bodies, such as Euclidean balls.
This methodology can be used to obtain
many results in integral geometry.

Our approach to integral geometry replaces the intrinsic
volumes with either the rotation volumes or the rigid motion
volumes, depending on the transformation group in the problem.
In the context of Hadwiger's functional theorem, this simply amounts to a change
of basis for the linear space of invariant, continuous valuations.

\subsection{Example: Crofton's Formula}

To illustrate how Hadwiger's functional theorem is deployed,
let us derive Crofton's slicing formula.
For a convex body $\set{K} \subset \R^n$ and an affine space
$\set{E} \in \Af(m, n)$, introduce the functional
$\phi(\set{K}; \set{E}) := \intvol_0(\set{K} \cap \set{E})$.
We easily verify that $\phi(\cdot; \set{E})$ is a continuous
valuation for each choice of $\set{E}$.  Set $$
F(\set{K}) := \int_{\Af(m,n)} \phi(\set{K}; \set{E}) \, \mu_m(\diff{\set{E}})
	= \int_{\Af(m,n)} \intvol_0(\set{K} \cap \set{E}) \, \mu_m(\diff{\set{E}}).
$$
The functional $F$ inherits the continuous valuation property from $\phi$.
The functional $F$ is also invariant under rigid motions.
Indeed, for each $g \in \mathrm{E}(n)$,
\begin{align*}
F(g \set{K}) &= \int_{\Af(m,n)} \intvol_0( (g \set{K}) \cap \set{E}) \, \mu_m(\diff{\set{E}}) \\
	&= \int_{\Af(m,n)} \intvol_0( \set{K} \cap (g^{-1} \set{E}) ) \, \mu_m(\diff{\set{E}})
	= \int_{\Af(m,n)} \intvol_0( \set{K} \cap \set{E}) \, \mu_m(\diff{\set{E}})
	= F(\set{K}).
\end{align*}
This point follows from the rigid motion invariance of the Euler characteristic $\intvol_0$
and the measure $\mu_m$.  An application of Fact~\ref{fact:hadwiger} yields
$$
F(\set{K}) = \int_{\Af(m,n)} \intvol_0( \set{K} \cap \set{E}) \, \mu_m(\diff{\set{E}})
	= \sum_{i=0}^n a_i \, \intvol_i(\set{K}).
$$
To calculate the coefficients $a_i$, select $\set{K} = \lambda \ball{n}$
for a scalar parameter $\lambda \geq 0$.
$$
F(\lambda \ball{n}) = \int_{\Af(m,n)} \intvol_0((\lambda \ball{n}) \cap \set{E}) \, \mu_m(\diff{\set{E}})
	= \kappa_{n-m} \cdot \lambda^{n-m}.
$$
We have used the formula~\eqref{eqn:Af-normalization} to evaluate the integral.  On the other hand,
using Example~\ref{ex:ball},
$$
F(\lambda \ball{n}) = \sum_{i=0}^n a_i \intvol_i(\lambda \ball{n})
	= \sum_{i=0}^n a_i \cdot \binom{n}{i} \frac{\kappa_{n}}{\kappa_{n-i}} \cdot \lambda^i
$$
Matching terms in the polynomials in the last two displays, we can identify the
value of $\alpha_{n-m}$.  The remaining coefficients vanish:
$a_i = 0$ for $i \neq n - m$.  In summary,
$$
F(\set{K}) = \int_{\Af(m,n)} \intvol_0(\set{K} \cap \set{E}) \, \mu_m(\diff{\set{E}})
	= \binom{n}{m} \frac{\kappa_m \kappa_{n-m}}{\kappa_n} \cdot \intvol_{n-m}(\set{K}).
$$
This is Crofton's famous result.

\section{Reweighting Integral Geometry Formulas}
\label{sec:formulas}

Our work involves nonstandard presentations of classic formulas
from integral geometry.  This appendix describes the simple
tools we need to translate the familiar statements into our language.

\subsection{Structure Coefficients}

Schneider \& Weil~\cite{SW08:Stochastic-Integral}
phrase integral geometry results in terms of
the standard intrinsic volumes.
These formulations involve a family of structure coefficients.

\begin{definition}[Structure Coefficients] \label{def:structure}
For nonnegative integers $i$ and $j$, introduce the \term{structure coefficient}
$$
c_j^i := \frac{i! \kappa_i}{j! \kappa_j} $$
This definition is extended to families $(i_1, \dots, i_r)$ and $(j_1, \dots, j_r)$
of nonnegative integers:
$$
c_{j_1,\dots,j_r}^{i_1,\dots,i_r} := \prod_{s=1}^r c_{j_s}^{i_s}
	= \prod_{s=1}^r \frac{i_s! \kappa_{i_s}}{j_s! \kappa_{j_s}}.
$$
\end{definition}

The following simple result allows us to manipulate the structure coefficients.
It is the source of the weights that appear in the rigid motion volumes
and the rotation volumes.

\begin{lemma}[Structure Coefficients] \label{lem:structure}
Consider four nonnegative integers $i_1,i_2,j_1,j_2$
that satisfy the relation $i_1 + i_2 = j_1 + j_2$.
Then the structure coefficient can be written as
$$
c_{j_1, j_2}^{i_1, i_2} = \frac{\omega_{j_1+1}}{\omega_{i_1+1}} \cdot \frac{\omega_{j_2+1}}{\omega_{i_2+1}}.
$$
\end{lemma}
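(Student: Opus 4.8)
The plan is to reduce the two-index identity to a one-index formula for the structure coefficients $c_j^i = i!\kappa_i/(j!\kappa_j)$ and then use the balance condition $i_1+i_2 = j_1+j_2$ to cancel a leftover power of $2\pi$.

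First I would establish the single-index formula
$$
c_j^i = (2\pi)^{i-j}\,\frac{\omega_{j+1}}{\omega_{i+1}}
\qquad\text{for all nonnegative integers $i$ and $j$.}
$$
This is equivalent to the statement that $k!\,\kappa_k\,\omega_{k+1}$ is independent of $k$; concretely, $k!\,\kappa_k\,\omega_{k+1} = 2^{k+1}\pi^k$. To prove this, substitute the closed forms $\kappa_k = \pi^{k/2}/\Gamma(1+k/2)$ and $\omega_{k+1} = 2\pi^{(k+1)/2}/\Gamma((k+1)/2)$ from~\eqref{eqn:ball-sphere}, which turns the claim into
$$
\Gamma\!\big(\tfrac{k+1}{2}\big)\,\Gamma\!\big(1+\tfrac{k}{2}\big) = 2^{-k}\sqrt{\pi}\,k!.
$$
This is exactly the Legendre duplication formula $\Gamma(z)\Gamma(z+\tfrac12) = 2^{1-2z}\sqrt{\pi}\,\Gamma(2z)$ evaluated at $z = (k+1)/2$, for which $z+\tfrac12 = 1+k/2$ and $2z = k+1$; the degenerate case $k=0$ is checked directly against $0!\,\kappa_0\,\omega_1 = 2$. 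Rearranging gives $k!\,\kappa_k = 2^{k+1}\pi^k/\omega_{k+1}$, and dividing the instances at $i$ and at $j$ yields the displayed formula.

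With the single-index identity in hand, the lemma follows by multiplying the two factors of the structure coefficient:
$$
c_{j_1,j_2}^{i_1,i_2} = c_{j_1}^{i_1}\,c_{j_2}^{i_2}
= (2\pi)^{i_1-j_1}\,(2\pi)^{i_2-j_2}\cdot\frac{\omega_{j_1+1}}{\omega_{i_1+1}}\cdot\frac{\omega_{j_2+1}}{\omega_{i_2+1}}
= (2\pi)^{(i_1+i_2)-(j_1+j_2)}\cdot\frac{\omega_{j_1+1}}{\omega_{i_1+1}}\cdot\frac{\omega_{j_2+1}}{\omega_{i_2+1}}.
$$
The hypothesis $i_1+i_2 = j_1+j_2$ forces the exponent of $2\pi$ to be zero, and the asserted formula drops out.

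There is no genuine obstacle: the only non-mechanical ingredient is noticing that the combination $k!\,\kappa_k\,\omega_{k+1}$ collapses to $2^{k+1}\pi^k$ via the duplication formula, and the main care required is to instantiate that formula at the right argument and to track that the surviving power of $2\pi$ is governed precisely by $(i_1+i_2)-(j_1+j_2)$, which the balance condition annihilates. One could instead verify $c_j^i = (2\pi)^{i-j}\omega_{j+1}/\omega_{i+1}$ by induction from the recursions $\omega_{k+1} = (k+1)\kappa_{k+1}$ and $\kappa_k/\kappa_{k-2} = 2\pi/k$, but the duplication-formula route is the shortest.
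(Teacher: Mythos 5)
Your proof is correct and follows exactly the route the paper indicates (its proof is a one-liner citing Definition~\ref{def:structure}, the formulas~\eqref{eqn:ball-sphere}, and the Legendre duplication formula); you have simply written out the details, and the key identity $k!\,\kappa_k\,\omega_{k+1} = 2^{k+1}\pi^k$ together with the cancellation of $(2\pi)^{(i_1+i_2)-(j_1+j_2)}$ checks out.
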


\begin{proof}
This statement follows instantly from Definition~\ref{def:structure}, the formulas~\eqref{eqn:ball-sphere} for the volume and surface area of the Euclidean ball, and the Legendre duplication formula.
\end{proof}

\subsection{Integral Geometry with Intrinsic Volumes}

For completeness, we give the usual statements of the integral geometry
formulas that we have studied in this paper.  These formulations provide
additional intuition and may be easier to interpret.  
Using Lemma~\ref{lem:structure}, these results
lead to the corresponding statements
in terms of the rotation volumes or the rigid motion volumes.

\subsubsection{Cartesian Products}

First, we note that the intrinsic volumes of a direct product
are given by the convolution of the intrinsic volumes.

\begin{fact}[Cartesian Products] \label{fact:product-intvol}
Consider nonempty convex bodies $\set{K}, \set{M} \subset \R^n$.
For each index $i = 0,1,2,\dots,2n$,
the intrinsic volumes of the product satisfy
$$
\intvol_i(\set{K} \times \set{M})
	= \sum_{j \leq i} \intvol_j(\set{K}) \, \intvol_{i-j}(\set{M}).
$$
\end{fact}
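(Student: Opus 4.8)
The plan is to route everything through the total intrinsic volume (Wills functional), whose Gaussian integral representation behaves multiplicatively under Cartesian products, and then recover the convolution identity by comparing polynomials in a dilation parameter.

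\textbf{Step 1: a Gaussian representation of $\wills$.} First I would record that, for any nonempty convex body $\set{L} \subset \R^d$,
\[
\wills(\set{L}) = \int_{\R^d} \econst^{-\pi \dist_{\set{L}}^2(\vct{x})} \idiff{\vct{x}}.
\]
This drops out of the generalized Steiner formula (Fact~\ref{fact:distance-integral}) applied with $f(r) = \econst^{-\pi r^2}$: the $f(0)$ term contributes $\intvol_d(\set{L})$, while for $0 \le i \le d-1$ the coefficient $\big(\int_0^\infty \econst^{-\pi r^2} r^{d-i-1}\idiff{r}\big)\,\omega_{d-i}$ equals $1$, since $\int_0^\infty \econst^{-\pi r^2} r^{k-1}\idiff{r} = \tfrac12 \pi^{-k/2}\Gamma(k/2)$ cancels exactly against $\omega_k = 2\pi^{k/2}/\Gamma(k/2)$ from~\eqref{eqn:ball-sphere}. (Alternatively one may simply cite Corollary~\ref{cor:intvol-metric} at $\theta = 0$.)

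\textbf{Step 2: products separate the distance.} For $\vct{z} = (\vct{x},\vct{y}) \in \R^n \times \R^n$, the nearest point of $\set{K}\times\set{M}$ is $(\proj_{\set{K}}(\vct{x}),\proj_{\set{M}}(\vct{y}))$, so $\dist_{\set{K}\times\set{M}}^2(\vct{z}) = \dist_{\set{K}}^2(\vct{x}) + \dist_{\set{M}}^2(\vct{y})$. Feeding this into the representation from Step~1 and using Fubini gives the multiplicativity $\wills(\set{K}\times\set{M}) = \wills(\set{K})\cdot\wills(\set{M})$.

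\textbf{Step 3: dilate and match coefficients.} I would then apply Step~2 to the scaled bodies $s\set{K}$ and $s\set{M}$ for a parameter $s > 0$, using $(s\set{K})\times(s\set{M}) = s(\set{K}\times\set{M})$ together with the degree-$i$ homogeneity of $\intvol_i$, to obtain
\[
\sum_{i=0}^{2n} s^i\, \intvol_i(\set{K}\times\set{M}) = \Big(\sum_{j} s^j\, \intvol_j(\set{K})\Big)\Big(\sum_{k} s^k\, \intvol_k(\set{M})\Big) \quad\text{for all } s>0 .
\]
Two polynomials in $s$ agreeing on an interval are identical, so equating coefficients of $s^i$ — and invoking the extension-by-zero convention to collapse the range of summation — yields $\intvol_i(\set{K}\times\set{M}) = \sum_{j \le i} \intvol_j(\set{K})\,\intvol_{i-j}(\set{M})$. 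The only steps requiring any care are the normalization-constant cancellation in Step~1 (a one-line Gamma-function computation) and the bookkeeping in Step~3, where one should note that the embedding $\R^n\times\R^n \hookrightarrow \R^{2n}$ is compatible with the \emph{intrinsic} normalization of the $\intvol_i$; there is no substantive obstacle. An alternative to Step~1–2 is to derive the multiplicativity of $\wills$ directly from Fact~\ref{fact:distance-integral} by a suitable choice of test function, but the distance-separation argument above is the most transparent.
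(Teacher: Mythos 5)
Your proof is correct and is exactly the argument the paper points to: Hadwiger's proof via the integral representation $\wills(\set{L}) = \int \econst^{-\pi \dist_{\set{L}}^2}$ (Corollary~\ref{cor:intvol-metric}), the multiplicativity of $\wills$ under Cartesian products, and coefficient matching after dilation. The normalization cancellation in Step~1 and the separation of the squared distance in Step~2 both check out, so there is nothing to add.
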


There is a simple proof of Fact~\ref{fact:product-intvol}, due to Hadwiger~\cite{Had75:Willssche},
based on an integral representation (Corollary~\ref{cor:intvol-metric}) of the total intrinsic volume.
See~\cite[Lem.~14.2.1]{SW08:Stochastic-Integral} or~\cite[Cor.~5.4]{LMNPT20:Concentration-Euclidean}.

\subsubsection{Projection Formula}

The projection formula appears as~\cite[Thm.~6.2.2]{SW08:Stochastic-Integral};
the simplest forms date back to work of Cauchy and Kubota.

\begin{fact}[Projection Formula: Intrinsic Volumes] \label{fact:projection-intvol}
Consider a nonempty convex body $\set{K} \subset \R^n$.  For each subspace
dimension $m = 0, 1,2, \dots, n$ and each index $i = 0,1,2,\dots, m$,
$$
\int_{\Gr(m,n)} \intvol_i(\set{K}|\set{L}) \, {\nu}_m(\diff{\set{L}})
	= c^{m,n-i}_{n,m-i} \cdot \intvol_i(\set{K}).
$$
The Grassmannian $\Gr(m, n)$ is equipped with its invariant probability measure
$\nu_m$; see Appendix~\ref{sec:invariant-grass}.
\end{fact}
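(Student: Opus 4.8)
The plan is to run the Hadwiger-functional-theorem argument that the excerpt already executes for Crofton's formula in Appendix~\ref{sec:elements}. Fix the subspace dimension $m$ and the index $i$, and define the functional
$$
F(\set{K}) := \int_{\Gr(m,n)} \intvol_i(\set{K}|\set{L}) \, \nu_m(\diff{\set{L}})
$$
on convex bodies $\set{K} \subset \R^n$. First I would check that $F$ meets the hypotheses of Fact~\ref{fact:hadwiger}. Continuity: $\set{K} \mapsto \set{K}|\set{L}$ is continuous in the Hausdorff metric and $\intvol_i$ is continuous, so dominated convergence over the compact $\Gr(m,n)$ gives continuity of $F$. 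Rigid-motion invariance: translations are killed by $\intvol_i$ and commute with projection, while for $\mtx{O} \in \SO(n)$ one has $(\mtx{O}\set{K})|\set{L} = \mtx{O}(\set{K}|\mtx{O}^{-1}\set{L})$, so the substitution $\set{L} \mapsto \mtx{O}^{-1}\set{L}$ together with rotation invariance of $\nu_m$ finishes the job; reflections are handled the same way. Valuation property: for convex bodies $\set{K},\set{M}$ with $\set{K}\cup\set{M}$ convex one has $(\set{K}\cup\set{M})|\set{L} = (\set{K}|\set{L})\cup(\set{M}|\set{L})$ and $(\set{K}\cap\set{M})|\set{L} = (\set{K}|\set{L})\cap(\set{M}|\set{L})$, so applying the valuation identity for $\intvol_i$ to the convex bodies $\set{K}|\set{L}$ and $\set{M}|\set{L}$ (whose union is convex) and integrating over $\set{L}$ shows $F$ is a valuation; also $F(\emptyset)=0$.

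By Fact~\ref{fact:hadwiger}, $F = \sum_{j=0}^n a_j \intvol_j$ for scalars $a_j$. Since $(\lambda\set{K})|\set{L} = \lambda(\set{K}|\set{L})$ and $\intvol_i$ is homogeneous of degree $i$, the functional $F$ is positively homogeneous of degree $i$, which forces $a_j = 0$ for $j \neq i$, so $F = a_i \intvol_i$. To pin down $a_i$, test on the Euclidean ball: every orthogonal projection gives $\ball{n}|\set{L} = \ball{m}$, so by Example~\ref{ex:ball} and $\nu_m(\Gr(m,n))=1$ we get $F(\ball{n}) = \intvol_i(\ball{m}) = \binom{m}{i}\kappa_m/\kappa_{m-i}$, while $a_i\intvol_i(\ball{n}) = a_i\binom{n}{i}\kappa_n/\kappa_{n-i}$. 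Solving,
$$
a_i = \frac{m!\,(n-i)!}{n!\,(m-i)!}\cdot\frac{\kappa_m\,\kappa_{n-i}}{\kappa_n\,\kappa_{m-i}},
$$
which is exactly $c^{m,n-i}_{n,m-i}$ by Definition~\ref{def:structure}. (Equivalently, one can skip the homogeneity remark and read off all $a_j$ by matching the two polynomials in $\lambda$ obtained from $F(\lambda\ball{n})$, as in the Crofton computation.)

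The homogeneity bookkeeping and the closing gamma-function identification are routine. The one genuinely geometric point — and hence the main obstacle — is the valuation property of $F$, specifically the identity $(\set{K}\cap\set{M})|\set{L} = (\set{K}|\set{L})\cap(\set{M}|\set{L})$ when $\set{K}\cup\set{M}$ is convex. I would argue it directly: if $\vct{x}$ lies in both $\set{K}|\set{L}$ and $\set{M}|\set{L}$, pick $\vct{a}\in\set{K}$ and $\vct{b}\in\set{M}$ in the fibre $\vct{x}+\set{L}^\perp$; the segment $[\vct{a},\vct{b}]$ lies in $\vct{x}+\set{L}^\perp$ and, by convexity of $\set{K}\cup\set{M}$, inside $\set{K}\cup\set{M}$, so connectedness of $[\vct{a},\vct{b}]$ together with closedness of $\set{K}$ and $\set{M}$ produces a point of $[\vct{a},\vct{b}]\cap\set{K}\cap\set{M}$ projecting to $\vct{x}$. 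I would record this as a one-line lemma (or cite the corresponding statement in~\cite{SW08:Stochastic-Integral}), after which the rest of the proof is a transcription of the Crofton derivation with $\intvol_0$ replaced by $\intvol_i$ and $\Af(m,n)$ replaced by $\Gr(m,n)$.
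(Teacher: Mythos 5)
Your proposal is correct. The paper does not actually prove this Fact: it cites~\cite[Thm.~6.2.2]{SW08:Stochastic-Integral} and only illustrates the Hadwiger template on Crofton's formula in Appendix~\ref{sec:elements}. What you have done is execute that same template for the projection formula, and every step checks out: the continuity and invariance of $F$ are routine; the valuation property correctly reduces to the identity $(\set{K}\cap\set{M})|\set{L} = (\set{K}|\set{L})\cap(\set{M}|\set{L})$ for $\set{K}\cup\set{M}$ convex, and your connectedness argument on the segment $[\vct{a},\vct{b}]$ inside the fibre is exactly the right (and standard) proof of it; the degree-$i$ homogeneity of $F$ kills the coefficients $a_j$ for $j\neq i$ once you test against a body with all intrinsic volumes positive (the ball, or equivalently your polynomial-matching remark); and the evaluation $a_i = \tfrac{m!\,(n-i)!}{n!\,(m-i)!}\cdot\tfrac{\kappa_m\kappa_{n-i}}{\kappa_n\kappa_{m-i}} = c^m_n\, c^{n-i}_{m-i} = c^{m,n-i}_{n,m-i}$ agrees with Definition~\ref{def:structure}. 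Compared with simply citing Schneider--Weil, your route is self-contained modulo Hadwiger's theorem (Fact~\ref{fact:hadwiger}), at the cost of the one genuinely geometric lemma you correctly isolated; the paper's route buys brevity. No gaps.
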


The case $m = i$ is called \term{Kubota's formula};
it shows that the $i$th intrinsic volume of $\set{K}$
is proportional to the average $i$-dimensional
volume of projections of $\set{K}$ onto $i$-dimensional subspaces.
The case $i = n - 1$ is called \term{Cauchy's formula};
it expresses the surface area of $\set{K}$ in terms of
the average $(n-1)$-dimensional volume of the projections
of $\set{K}$ onto hyperplanes through the origin.

\subsubsection{Slicing Formula}

The slicing formula is due to Crofton,
and it appears as~\cite[Thm.~5.1.1]{SW08:Stochastic-Integral}.

\begin{fact}[Slicing Formula: Intrinsic Volumes] \label{fact:slicing-intvol}
Consider a nonempty convex body $\set{K} \subset \R^n$.
For each affine space dimension $m = 0, 1, 2, \dots, n$ and each index $i = 0,1,2,\dots, m$,
$$
\int_{\Af(m,n)} \intvol_i(\set{K} \cap \set{E}) \, \mu_m(\diff{\set{E}})
	= c^{m,n-m+i}_{i,n} \cdot \intvol_{n-m+i}(\set{K}).
$$
The affine Grassmannian $\Af(m,n)$ is equipped with its
invariant measure $\mu_m$; see Appendix~\ref{sec:invariant-Af}.
\end{fact}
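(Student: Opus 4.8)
The plan is to generalize the Hadwiger-functional-theorem computation that Appendix~\ref{sec:elements} already carries out for the Euler-characteristic slicing formula (the case $i=0$). Fix $m$ and an index $i\in\{0,1,\dots,m\}$. For each affine space $\set{E}\in\Af(m,n)$ the section $\set{K}\cap\set{E}$ is a convex body contained in an $m$-dimensional affine space, so $\phi(\set{K};\set{E}):=\intvol_i(\set{K}\cap\set{E})$ is well defined, and $\phi(\emptyset;\set{E})=0$. First I would check that $\phi(\,\cdot\,;\set{E})$ is a continuous valuation on convex bodies in $\R^n$ for each fixed $\set{E}$: the inclusion--exclusion identity reduces to the valuation property of $\intvol_i$ inside $\set{E}$ via the set identities $(\set{A}\cup\set{B})\cap\set{E}=(\set{A}\cap\set{E})\cup(\set{B}\cap\set{E})$ and $(\set{A}\cap\set{B})\cap\set{E}=(\set{A}\cap\set{E})\cap(\set{B}\cap\set{E})$. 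I then set
\[
F(\set{K}):=\int_{\Af(m,n)}\intvol_i(\set{K}\cap\set{E})\,\mu_m(\diff{\set{E}}).
\]
The integrand is bounded (by the $i$th intrinsic volume of a slice of a fixed ball containing $\set{K}$) and supported on the set of affine spaces meeting $\set{K}$, which has finite $\mu_m$-measure, so $F$ is finite. As in the $i=0$ case, $F$ is a continuous valuation and is invariant under all rigid motions, the invariance coming from the invariance of $\intvol_i$ and of $\mu_m$ together with the substitution $\set{E}\mapsto g\set{E}$. Hadwiger's functional theorem (Fact~\ref{fact:hadwiger}) then gives $F(\set{K})=\sum_{k=0}^{n}a_k\,\intvol_k(\set{K})$ for some constants $a_k$.

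To determine the $a_k$ I would evaluate $F$ on a scaled ball $\lambda\ball{n}$. Here the key input is the standard disintegration of $\mu_m$ that the normalization~\eqref{eqn:Af-normalization} forces: for every bounded measurable $g$ on $\Af(m,n)$,
\[
\int_{\Af(m,n)}g\,\diff{\mu_m}=\int_{\Gr(m,n)}\int_{\set{L}^\perp}g(\vct{y}+\set{L})\,\mathrm{Leb}_{n-m}(\diff{\vct{y}})\,\nu_m(\diff{\set{L}}).
\]
With this in hand, the section $(\lambda\ball{n})\cap(\vct{y}+\set{L})$ for $\vct{y}\in\set{L}^\perp$ is an $m$-dimensional ball of radius $(\lambda^2-\norm{\vct{y}}^2)^{1/2}$ when $\norm{\vct{y}}\le\lambda$ and is empty otherwise. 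Inserting the intrinsic-volume formula for a ball (Example~\ref{ex:ball}) and passing to polar coordinates in $\set{L}^\perp\cong\R^{n-m}$ collapses the computation to $\int_0^1(1-t^2)^{i/2}t^{n-m-1}\,\diff{t}=\tfrac12\Beta((n-m)/2,\,i/2+1)$; simplifying with the gamma-function identities behind~\eqref{eqn:ball-sphere} yields
\[
F(\lambda\ball{n})=\binom{m}{i}\frac{\kappa_m\,\kappa_{n-m+i}}{\kappa_{m-i}\,\kappa_i}\,\lambda^{n-m+i}.
\]
Because the right-hand side is a single monomial in $\lambda$, matching it term by term against $\sum_k a_k\binom{n}{k}\frac{\kappa_n}{\kappa_{n-k}}\lambda^k$ forces $a_k=0$ for $k\ne n-m+i$ and pins down $a_{n-m+i}$. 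A short binomial identity, $\binom{m}{i}\big/\binom{n}{n-m+i}=m!\,(n-m+i)!/(i!\,n!)$, then shows $a_{n-m+i}=\tfrac{m!\kappa_m}{i!\kappa_i}\cdot\tfrac{(n-m+i)!\kappa_{n-m+i}}{n!\kappa_n}=c^{m,n-m+i}_{i,n}$, which is exactly the asserted formula.

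The one place I expect genuine friction is the continuity of $\set{K}\mapsto F(\set{K})$ in the Hausdorff metric. Pointwise in $\set{E}$ the map $\set{K}\mapsto\intvol_i(\set{K}\cap\set{E})$ need not be continuous -- a limiting section may drop dimension -- so one must argue that the affine spaces $\set{E}$ where this happens form a $\mu_m$-null set and then invoke dominated convergence (the integrand stays uniformly bounded and supported in a fixed finite-measure set along a convergent sequence of bodies). This is precisely the technical step that Appendix~\ref{sec:elements} suppresses for $i=0$; I would either suppress it the same way or simply cite~\cite[Thm.~5.1.1]{SW08:Stochastic-Integral}. Everything else -- the ball computation, the gamma-function bookkeeping, and the disintegration of $\mu_m$ -- is routine with the tools already assembled in the paper.
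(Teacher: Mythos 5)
Your proposal is correct, and the computations check out: the disintegration of $\mu_m$ over $\Gr(m,n)\times\set{L}^\perp$ is the standard one consistent with the normalization~\eqref{eqn:Af-normalization}, the evaluation $F(\lambda\ball{n})=\binom{m}{i}\frac{\kappa_m\kappa_{n-m+i}}{\kappa_{m-i}\kappa_i}\lambda^{n-m+i}$ follows from the beta integral exactly as you describe, and matching against $a_{n-m+i}\binom{n}{n-m+i}\frac{\kappa_n}{\kappa_{m-i}}$ does yield $a_{n-m+i}=\frac{m!\kappa_m}{i!\kappa_i}\cdot\frac{(n-m+i)!\kappa_{n-m+i}}{n!\kappa_n}=c^{m,n-m+i}_{i,n}$ per Definition~\ref{def:structure}. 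For comparison: the paper does not prove Fact~\ref{fact:slicing-intvol} at all — it is quoted from~\cite[Thm.~5.1.1]{SW08:Stochastic-Integral}, and Appendix~\ref{sec:elements} carries out the Hadwiger argument only for the special case $i=0$ as an illustration. So what you have done is upgrade that illustration into a genuine proof of the general statement, which is a legitimate and self-contained route; the only cost is the technical point you already flag, namely that the pointwise map $\set{K}\mapsto\intvol_i(\set{K}\cap\set{E})$ is discontinuous on a $\mu_m$-null set of flats, so continuity of $F$ requires the dominated-convergence argument (or a citation) rather than being immediate. Handled either way, the argument is complete.
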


Of particular interest is the case $i = 0$.  Indeed, the Euler characteristic
$\intvol_0(\set{K} \cap \set{E})$ registers whether the affine space $\set{E}$
intersects the set $\set{K}$.  Thus, the slicing formula shows that the
intrinsic volume $\intvol_{n-m}(\set{K})$ is proportional to the
measure of the set of $m$-dimensional affine spaces that hit $\set{K}$.

\subsubsection{Rotation Mean Formula}

The rotation mean formula is attributed to Hadwiger;
see~\cite[Thm.~6.1.1]{SW08:Stochastic-Integral}.

\begin{fact}[Rotation Mean Value Formula: Intrinsic Volumes] \label{fact:rotmean-intvol}
Consider two nonempty convex bodies $\set{K}, \set{M} \subset \R^n$.
Then
$$
\int_{\SO(n)} \intvol_i(\set{K} + \mtx{O} \set{M}) \, \nu(\diff{\mtx{O}})
	= \sum_{j=0}^i c_{n-i,n}^{n+j-i,n-j} \cdot \intvol_j(\set{K}) \, \intvol_{i-j}(\set{M}).
$$
The special orthogonal group $\SO(n)$ is equipped with its invariant probability
measure $\nu$; see Appendix~\ref{sec:invariant-SO}.
\end{fact}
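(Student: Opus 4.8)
The plan is to mimic the derivation of Crofton's formula in Appendix~\ref{sec:elements}: apply Hadwiger's functional theorem (Fact~\ref{fact:hadwiger}), here \emph{twice}, once in each convex body, to see that the integral on the left is a universal bilinear combination of the intrinsic volumes of $\set{K}$ and $\set{M}$; then use homogeneity to kill the off-diagonal terms; then pin down the surviving constants by testing on Euclidean balls, where the Minkowski sum is again a ball.

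\textbf{Reduction to a bilinear form.} Fix a nonempty convex body $\set{K} \subset \R^n$ and set
$$
F_{\set{K}}(\set{M}) := \int_{\SO(n)} \intvol_i(\set{K} + \mtx{O}\set{M}) \, \nu(\diff{\mtx{O}}).
$$
For each fixed $\mtx{O}$, the map $\set{M} \mapsto \intvol_i(\set{K}+\mtx{O}\set{M})$ is a continuous valuation on convex bodies: the valuation identity is inherited through Minkowski addition, and continuity follows from continuity of $\intvol_i$ together with continuity of Minkowski addition in the Hausdorff metric. Since $\set{K}+\mtx{O}\set{M}$ has diameter independent of $\mtx{O}$, monotonicity of $\intvol_i$ bounds the integrand by $\intvol_i$ of a fixed ball, so $F_{\set{K}}$ is finite. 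Integrating against the invariant probability measure $\nu$ preserves continuity and the valuation property, and the result is invariant under rotations of $\set{M}$ (invariance of $\nu$) and under translations of $\set{M}$ (translation invariance of $\intvol_i$). Hence $F_{\set{K}}$ is a rigid-motion-invariant continuous valuation in $\set{M}$, so Fact~\ref{fact:hadwiger} gives $F_{\set{K}}(\set{M}) = \sum_{k=0}^n b_k(\set{K}) \, \intvol_k(\set{M})$. Repeating the argument in the variable $\set{K}$ shows each $\set{K} \mapsto b_k(\set{K})$ is again a rigid-motion-invariant continuous valuation, so $F_{\set{K}}(\set{M}) = \sum_{j,k=0}^n a_{jk} \, \intvol_j(\set{K}) \, \intvol_k(\set{M})$ with universal constants $a_{jk}$ depending only on $n, i$.

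\textbf{Homogeneity and evaluation on balls.} Replacing $\set{K}, \set{M}$ by $\rho\set{K}, \sigma\set{M}$ and using that $\intvol_i$ restricted to Minkowski combinations is a homogeneous polynomial of degree $i$ in the dilation parameters (the coefficients being mixed volumes), the function $(\rho,\sigma) \mapsto F_{\rho\set{K}}(\sigma\set{M})$ is homogeneous of degree $i$; comparing with $\sum a_{jk}\rho^j\sigma^k\intvol_j(\set{K})\intvol_k(\set{M})$ and choosing bodies with nonvanishing intrinsic volumes forces $a_{jk}=0$ unless $j+k=i$. Writing $a_j := a_{j,i-j}$, we get $F_{\set{K}}(\set{M}) = \sum_{j=0}^i a_j \, \intvol_j(\set{K}) \, \intvol_{i-j}(\set{M})$. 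To identify $a_j$, take $\set{K} = \rho\ball{n}$ and $\set{M} = \sigma\ball{n}$: then $\rho\ball{n} + \sigma\mtx{O}\ball{n} = (\rho+\sigma)\ball{n}$ for every $\mtx{O}$, so the left side is $\intvol_i((\rho+\sigma)\ball{n}) = \binom{n}{i}\tfrac{\kappa_n}{\kappa_{n-i}}(\rho+\sigma)^i$ by Example~\ref{ex:ball}. Expanding via the binomial theorem, inserting the intrinsic volumes of scaled balls on the right side, and matching the coefficient of $\rho^j\sigma^{i-j}$ yields
$$
a_j = \frac{(n+j-i)!\,\kappa_{n+j-i}}{(n-i)!\,\kappa_{n-i}} \cdot \frac{(n-j)!\,\kappa_{n-j}}{n!\,\kappa_n} = c_{n-i,n}^{n+j-i,n-j},
$$
the last equality being the defining product of Definition~\ref{def:structure}. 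This gives the claimed formula.

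\textbf{Main obstacle.} The conceptual content — the valuation/invariance verification and the double Hadwiger reduction — is routine once the Crofton template is in hand; the one place demanding care is the ball computation, where one must correctly cancel the several $\kappa$-factors hidden in $\kappa_{n-i}$, $\kappa_{n-j}$, $\kappa_{n-i+j}$ and confirm that the ratio coming out of the binomial comparison is \emph{exactly} $c_{n-i,n}^{n+j-i,n-j}$ rather than a near-miss. I would double-check the final formula against two degeneracies: the symmetry $a_j = a_{i-j}$, forced by $F_{\set{K}}(\set{M}) = F_{\set{M}}(\set{K})$ (substitute $\mtx{O} \mapsto \mtx{O}^{-1}$ and use rotation invariance of $\intvol_i$); and the case $\set{M} = \lambda\ball{n}$, where the identity must collapse to the (generalized) Steiner formula~\eqref{eqn:steiner-intro}. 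Finally, applying Lemma~\ref{lem:structure} to $a_j$ (using $n+j-i$ and $n-j$, whose sum is $(n-i)+n$) rewrites the coefficient as the ratio $\omega_{j}$-type products appearing in the rotation volumes, which is the bridge to Fact~\ref{fact:rotmean}.
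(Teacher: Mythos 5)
The paper does not actually prove this statement---it is imported as a known Fact, with the citation [SW08, Thm.~6.1.1]---so there is no internal proof to compare against; your argument is the standard one (a double application of Hadwiger's functional theorem, homogeneity to isolate the degree-$i$ terms, and evaluation on balls), which is also the template the paper itself rehearses for Crofton's formula in Appendix~\ref{sec:elements}. Your coefficient computation is correct: matching the coefficient of $\rho^j\sigma^{i-j}$ gives
$a_j = \tbinom{n}{i}\tbinom{i}{j}\big/\big(\tbinom{n}{j}\tbinom{n}{i-j}\big)\cdot \kappa_{n-j}\kappa_{n-i+j}/(\kappa_{n-i}\kappa_n)$,
and the binomial ratio simplifies to $(n+j-i)!\,(n-j)!\big/\big((n-i)!\,n!\big)$, which is exactly $c_{n-i,n}^{n+j-i,n-j}$; your symmetry check $a_j = a_{i-j}$ also holds. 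Two elided points deserve a word. First, the valuation property of $\set{M}\mapsto \intvol_i(\set{K}+\mtx{O}\set{M})$ is not a formal consequence of additivity: it rests on the identity $\set{K}+(\set{M}\cap\set{M}')=(\set{K}+\set{M})\cap(\set{K}+\set{M}')$, valid when $\set{M}\cup\set{M}'$ is convex, which is a genuine (if classical) lemma. Second, to see that each coefficient $b_k(\set{K})$ is itself a continuous invariant valuation, one should note that $b_k(\set{K})$ can be recovered as a fixed finite linear combination of the values $F_{\set{K}}(\set{M}_\alpha)$ for suitable test bodies $\set{M}_\alpha$ (e.g., balls of $n+1$ distinct radii), each of which is a continuous invariant valuation in $\set{K}$. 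Neither point is a gap in substance, and the proof goes through.
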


The special case $\set{M} = \lambda \ball{n}$ is equivalent to the
Steiner formula~\eqref{eqn:steiner-intro}.

\subsubsection{Kinematic Formula}

The kinematic formula goes back to work of Blaschke, Santal{\'o},
Chern and others.  We have drawn the statement from~\cite[Thm.~5.1.3]{SW08:Stochastic-Integral}.

\begin{fact}[Kinematic Formula] \label{fact:kinematic-intvol}
Consider two nonempty bodies $\set{K}, \set{M} \subset \R^n$.
For each $i = 0,1,2,\dots,n$,
$$
\int_{\RM(n)} \intvol_i(\set{K} \cap g \set{M}) \, \mu(\diff{g})
	= \sum_{j=i}^n c_{i,n}^{j,n+i-j} \cdot \intvol_j(\set{K}) \, \intvol_{n+i-j}(\set{M}).
$$
The group $\RM(n)$ of proper rigid motions on $\R^n$ is equipped with
its invariant measure $\mu$; see Appendix~\ref{sec:invariant-RM}.
\end{fact}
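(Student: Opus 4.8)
The plan is to obtain this classical identity from Hadwiger's functional theorem (Fact~\ref{fact:hadwiger}), imitating the Crofton derivation in Appendix~\ref{sec:elements}, and then to bootstrap from the principal case $i = 0$ to general $i$ using the slicing formula. First I would fix a nonempty convex body $\set{M}$ and set $F(\set{K}) := \int_{\RM(n)} \intvol_0(\set{K} \cap g\set{M}) \, \mu(\diff{g})$. Since $(\set{K}_1 \cap g\set{M}) \cup (\set{K}_2 \cap g\set{M}) = (\set{K}_1 \cup \set{K}_2) \cap g\set{M}$ is a convex body whenever $\set{K}_1 \cup \set{K}_2$ is, and similarly for intersections, the functional $F$ inherits the continuous-valuation property from $\intvol_0$; rigid-motion invariance of $F$ comes from left-invariance of $\mu$, and finiteness from $\intvol_0 \le 1$ together with the fact that $\{g : \set{K} \cap g\set{M} \neq \emptyset\}$ has finite $\mu$-measure. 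Hadwiger's theorem then gives $F(\set{K}) = \sum_j a_j(\set{M}) \, \intvol_j(\set{K})$, and, because $\RM(n)$ is unimodular, the identical argument applied in the second slot shows that each $a_j$ is a continuous invariant valuation in $\set{M}$, so $F(\set{K},\set{M}) = \sum_{j,k} a_{jk} \, \intvol_j(\set{K}) \, \intvol_k(\set{M})$.

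To cut down this double sum, I would use a homogeneity count. Writing a proper motion as $g = \tau_{\vct{t}} \circ \mtx{O}$ and substituting $\vct{t} = \lambda \vct{s}$, one has $(\lambda\set{K}) \cap g(\lambda\set{M}) = \lambda\big(\set{K} \cap (\tau_{\vct{s}} \circ \mtx{O})\set{M}\big)$ while the change of variables in the translation contributes a Jacobian $\lambda^{n}$; hence $F(\lambda\set{K},\lambda\set{M}) = \lambda^{n} F(\set{K},\set{M})$, which forces $j + k = n$. Thus $F(\set{K},\set{M}) = \sum_{j=0}^{n} b_j \, \intvol_j(\set{K}) \, \intvol_{n-j}(\set{M})$, and I would pin down the $b_j$ by evaluating on two balls. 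Since $g(\rho\ball{n})$ is the ball of radius $\rho$ centered at $g\vct{0}$, the left side equals $\mu\{g : \abs{g\vct{0}} \le \lambda + \rho\} = \kappa_n(\lambda + \rho)^n$ by the normalization~\eqref{eqn:RM-normalization}; expanding the right side using~\eqref{eqn:intvol-ball} and matching the coefficient of $\lambda^j \rho^{n-j}$ gives $b_j = c_{0,n}^{j,n-j}$, which is the principal kinematic formula.

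For general $i$, I would invoke the slicing formula (Fact~\ref{fact:slicing-intvol}) with $m = n - i$ and section index $0$, namely $\int_{\Af(n-i,n)} \intvol_0(\set{L} \cap \set{E}) \, \mu_{n-i}(\diff{\set{E}}) = c_{0,n}^{n-i,i} \, \intvol_i(\set{L})$ for every convex body $\set{L}$. Applying this to $\set{L} = \set{K} \cap g\set{M}$, integrating over $g \in \RM(n)$, and exchanging the integrals by Tonelli's theorem rewrites the target as
\begin{equation*}
\int_{\RM(n)} \intvol_i(\set{K} \cap g\set{M}) \, \mu(\diff{g})
 = \frac{1}{c_{0,n}^{n-i,i}} \int_{\Af(n-i,n)} \left[ \int_{\RM(n)} \intvol_0\big((\set{K} \cap \set{E}) \cap g\set{M}\big) \, \mu(\diff{g}) \right] \mu_{n-i}(\diff{\set{E}}).
\end{equation*}
Substituting the principal kinematic formula for the inner integral, then applying the slicing formula once more --- now with section index $k$, using $\int_{\Af(n-i,n)} \intvol_k(\set{K} \cap \set{E}) \, \mu_{n-i}(\diff{\set{E}}) = c_{k,n}^{n-i,i+k} \, \intvol_{i+k}(\set{K})$ and noting that the terms with $k > n-i$ drop out because $\dim(\set{K} \cap \set{E}) \le n-i$ for almost every $\set{E}$ --- and reindexing by $j = i + k$, I arrive at
\begin{equation*}
\int_{\RM(n)} \intvol_i(\set{K} \cap g\set{M}) \, \mu(\diff{g})
 = \sum_{j=i}^{n} \frac{c_{0,n}^{j-i,n+i-j} \, c_{j-i,n}^{n-i,j}}{c_{0,n}^{n-i,i}} \, \intvol_j(\set{K}) \, \intvol_{n+i-j}(\set{M}).
\end{equation*}
A short computation from Definition~\ref{def:structure} (clearing the factorials and the $\kappa$'s, with $\kappa_0 = 1$) identifies the displayed coefficient with $c_{i,n}^{j,n+i-j}$, which is the asserted formula.

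The conceptual skeleton --- Hadwiger's theorem, a degree count, and Crofton's formula to express $\intvol_i$ via $\intvol_0$ of affine sections --- is routine, so the work will be in the bookkeeping: confirming that $F$ really is a continuous invariant valuation in \emph{each} argument on the lattice of convex bodies, justifying the Tonelli exchange and the homogeneity limit, and, above all, checking the final structure-coefficient identity, where the index conventions must be tracked exactly. I expect that last verification, together with keeping the two applications of the slicing formula straight, to be the main nuisance.
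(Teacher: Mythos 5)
The paper offers no proof of this Fact at all: it is quoted verbatim from Schneider \& Weil~\cite[Thm.~5.1.3]{SW08:Stochastic-Integral}, with Appendix~\ref{sec:elements} only sketching the Hadwiger-theorem template on the easier example of Crofton's formula. Your argument supplies a genuine proof, and it is the standard one (essentially the Klain--Rota/Schneider--Weil derivation): Hadwiger's theorem in each slot, the dilation count $F(\lambda\set{K},\lambda\set{M}) = \lambda^n F(\set{K},\set{M})$ to kill all terms with $j + k \neq n$, evaluation on balls against the normalization~\eqref{eqn:RM-normalization} to get the principal case, and two passes of Crofton's formula to lift to general $i$. I checked the two places where this could silently go wrong. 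For the principal case, matching $\kappa_n(\lambda+\rho)^n$ against $\sum_j b_j \binom{n}{j}^2 \kappa_n^2 (\kappa_j\kappa_{n-j})^{-1}\lambda^j\rho^{n-j}$ gives $b_j = j!(n-j)!\kappa_j\kappa_{n-j}/(n!\kappa_n) = c_{0,n}^{j,n-j}$, as you assert. For the final coefficient, writing $[a] := a!\,\kappa_a$ so that $c^{a_1,a_2}_{b_1,b_2} = [a_1][a_2]/([b_1][b_2])$, your ratio collapses to $[j][n+i-j]/([i][n]) = c_{i,n}^{j,n+i-j}$, so the bookkeeping is right. The only point I would insist you not wave away as "bookkeeping" is the continuity of $\set{K} \mapsto \int \intvol_0(\set{K}\cap g\set{M})\,\mu(\diff g)$: the integrand $\intvol_0(\set{K}_m \cap g\set{M})$ need not converge for every $g$ (grazing contacts), so you must argue that the exceptional set of motions has $\mu$-measure zero before dominated convergence applies; this is where the honest work in the classical proof lives, and Hadwiger's theorem as stated in Fact~\ref{fact:hadwiger} genuinely requires continuity, not just measurability.
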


The special case $i = 0$ is called the \term{principal kinematic formula},
and it describes the measure of the set of rigid motions that bring $\set{M}$
into contact with $\set{K}$.

\subsection{History: Renormalization}

The idea of renormalizing the intrinsic volumes to simplify integral geometry
formulas is due to Nijenhuis~\cite{nijenhuis1974chern};
see also~\cite[pp.~176--177]{SW08:Stochastic-Integral}.  In addition to rescaling
the intrinsic volumes, Nijenhuis also rescales the invariant measures.
In contrast, we work with the standard scaling of the invariant measures
and only modify the normalization of the intrinsic volumes.  Although
our reformulations are quite trivial, we have not been able to locate
a reference for this approach in the literature.  Therefore, we have given
an appropriate amount of detail.

\section{Intrinsic Volumes: Refined Concentration}
\label{app:intvol}

Last, we state and prove a refined concentration inequality for
the intrinsic volume random variable of a convex body,
introduced in Section~\ref{sec:intvol-def}.

\subsection{Variance and Cgf}

Our main result for intrinsic volumes gives detailed bounds for the
variance of the intrinsic volume random variable and its cgf.
By standard manipulations, these statements lead to concentration
inequalities.

\begin{theorem}[Intrinsic Volumes: Variance and Cgf] \label{thm:intvol-conc}
Let $\set{K} \subset \R^n$ be a nonempty convex body with
intrinsic volume random variable $I_{\set{K}}$, as in
Definition~\ref{def:wintvol-rv}.  Define the central
intrinsic volume and its complement:
$$
\delta(\set{K}) := \Expect[ I_{\set{K}} ]
\quad\text{and}\quad
\delta_{\circ}(\set{K}) := n - \delta(\set{K}).
$$
The variance of $I_{\set{K}}$ satisfies
$$
\Var[ I_{\set{K}} ] \leq \frac{2  \delta(\set{K}) \delta_{\circ}(\set{K})}{n + \delta(\set{K})}
	\leq \frac{2 \delta(\set{K}) \delta_{\circ}(\set{K})}{n} =: \sigma^2(\set{K}).
$$
The cgf  of $I_{\set{K}}$ satisfies
\begin{align*}
\xi_{I_{\set{K}}}(\theta) &\leq \sigma^2(\set{K}) \cdot \frac{\econst^{\beta_{\circ} \theta} - \beta_{\circ} \theta - 1}{\beta_{\circ}^2}
	\quad\text{for $\theta \geq 0$}
	&\text{where}&&
	\beta_{\circ} &:= \frac{2\delta_{\circ}(\set{K})}{n} \leq 2; \\
\xi_{I_{\set{K}}}(\theta) &\leq \sigma^2(\set{K}) \cdot \frac{\econst^{\beta \theta} - \beta \theta - 1}{\beta^2}
	\quad\text{for $\theta \leq 0$}
	&\text{where}&&
	\beta &:= \frac{2\delta(\set{K})}{n} \leq 2. 
\end{align*}\end{theorem}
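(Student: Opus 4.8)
The plan is to follow the template of Sections~\ref{sec:genfun}--\ref{sec:rmvol-conc}, now specialized to the Wills functional. Fix the nonempty convex body $\set{K}\subset\R^n$ and suppress it from the notation; write $I := I_{\set{K}}$, $\delta := \delta(\set{K})$, $\delta_{\circ} := n - \delta$, and $\sigma^2 := 2\delta\delta_{\circ}/n$. Exactly as in the passage from Proposition~\ref{prop:rmvol-tvar} to Theorem~\ref{thm:rmvol-conc}, the whole statement follows once one proves the thermal variance bound
$$
\xi_I''(\theta) \;\leq\; \frac{2\,\xi_I'(\theta)\,\bigl(n - \xi_I'(\theta)\bigr)}{n + \xi_I'(\theta)}
\;\leq\; \frac{2}{n}\,\xi_I'(\theta)\,\bigl(n - \xi_I'(\theta)\bigr)
\qquad\text{for all }\theta \in \R .
$$
Indeed, the variance estimate is the case $\theta = 0$ of the first inequality, via $\xi_I''(0) = \Var[I]$ and $\xi_I'(0) = \delta$ from~\eqref{eqn:cgf-zero}. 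For the cgf bounds, center to $Y := I - \delta$; convexity of the cgf makes $\xi_I'$ increasing, so $\bigl(n - \xi_I'(s)\bigr)\xi_I'(s) \leq \delta_{\circ}\,\xi_I'(s)$ for $s \geq 0$ and $\bigl(n - \xi_I'(s)\bigr)\xi_I'(s) \leq \bigl(n - \xi_I'(s)\bigr)\delta$ for $s \leq 0$. Integrating the relaxed bound and using $\xi_I(\theta) = \xi_Y(\theta) + \delta\theta$ produces the differential inequalities $\xi_Y'(\theta) \leq \beta_{\circ}\,\xi_Y(\theta) + \sigma^2\theta$ on $\theta > 0$ and $\xi_Y'(\theta) \geq -\beta\,\xi_Y(\theta) + \sigma^2\theta$ on $\theta < 0$, with $\beta_{\circ} = 2\delta_{\circ}/n$, $\beta = 2\delta/n$, and $\beta_{\circ}\delta = \beta\delta_{\circ} = \sigma^2$; the two cases of Lemma~\ref{lem:psi'topsi} (with $a = \beta_{\circ},\ v = \sigma^2$ and with $a = -\beta,\ v = \sigma^2$) then deliver the two cgf bounds.

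To establish the thermal variance bound I mirror Section~\ref{sec:rmvol-conc}. Homogeneity gives $m_I(\theta) = \wills^{-1}\sum_i \econst^{(n-i)\theta}\intvol_i = \wills^{-1}\econst^{n\theta}\wills(\econst^{-\theta}\set{K})$, and combining the integral representation of the Wills functional (Corollary~\ref{cor:intvol-metric}) with the scaling $\dist_{\econst^{-\theta}\set{K}}(\vct{x}) = \econst^{-\theta}\dist_{\set{K}}(\econst^{\theta}\vct{x})$ and a change of variables yields $m_I(\theta) = \wills^{-1}\int_{\R^n}\econst^{-J_\theta(\vct{x})}\idiff{\vct{x}}$, where the convex potential (Fact~\ref{fact:dist}\eqref{dist:1}) is $J_\theta(\vct{x}) := \pi\,\econst^{-2\theta}\dist_{\set{K}}^2(\vct{x})$. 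Let $\mu_\theta$ be the log-concave probability measure with density proportional to $\econst^{-J_\theta}$. Instantiating the generalized Steiner formula (Fact~\ref{fact:distance-integral}) with $f(r) = h(\pi\econst^{-2\theta}r^2)\,\econst^{-\pi\econst^{-2\theta}r^2}$ and recognizing the one-dimensional integrals as Gamma expectations gives the analog of Corollary~\ref{cor:rmvol-lc}: for $\vct{z}\sim\mu_\theta$ and admissible $h$,
$$
\Expect\bigl[h(J_\theta(\vct{z}))\bigr] = \frac{1}{m_I(\theta)}\sum_{i=0}^n \Expect\bigl[h(G_{n-i})\bigr]\,\econst^{(n-i)\theta}\cdot\Prob{I = n-i},
$$
where $G_k \sim \textsc{gamma}(k/2, 1)$ with $G_0 := 0$. (Taking $h \equiv 1$ and $h(s) = 2s$ recovers the metric formulas for $\wills(\set{K})$ and $\delta(\set{K})$ recorded in Corollary~\ref{cor:intvol-metric}.) Using the Gamma moments $\Expect[2G_k] = k$ and $\Expect[4G_k^2 - 4G_k] = k^2$, the choices $h(s) = 2s$ and $h(s) = 4s^2 - 4s$ give the thermal variance identity $\xi_I'(\theta) = 2\,\Expect_{\mu_\theta}[J_\theta(\vct{z})]$ and $\xi_I''(\theta) = 4\,\Var_{\mu_\theta}[J_\theta(\vct{z})] - 2\,\xi_I'(\theta)$. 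Hence the thermal variance bound is equivalent to the variance inequality $\Var_{\mu_\theta}[J_\theta(\vct{z})] \leq 2n\,\Expect_{\mu_\theta}[J_\theta(\vct{z})]\big/\bigl(n + 2\,\Expect_{\mu_\theta}[J_\theta(\vct{z})]\bigr)$.

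The crux is this variance inequality, which I would prove by H{\"o}rmander's $L_2$ method --- the tool behind Nguyen's Fact~\ref{fact:nguyen} and the Nguyen--Wang bound Fact~\ref{fact:nguyen-wang} --- exploiting that $J := J_\theta$ is the potential of $\mu_\theta$ itself. Let $Lg := \laplace g - \ip{\grad J}{\grad g}$ be the associated diffusion generator and let $g$ solve $-Lg = J - \Expect_{\mu_\theta}[J]$. Integration by parts gives $W := \Var_{\mu_\theta}[J] = \Expect_{\mu_\theta}\bigl[\ip{\grad J}{\grad g}\bigr] = \Expect_{\mu_\theta}[\laplace g]$, while the Bochner identity yields $W = \Expect_{\mu_\theta}[(Lg)^2] = \Expect_{\mu_\theta}\bigl[\fnorm{\Hess g}^2\bigr] + \Expect_{\mu_\theta}\bigl[\ip{(\Hess J)\grad g}{\grad g}\bigr]$. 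By Cauchy--Schwarz in the metric $\Hess J$, $W \leq \sqrt{R}\cdot\sqrt{\Expect_{\mu_\theta}\bigl[\ip{(\Hess J)\grad g}{\grad g}\bigr]}$, where $R := \Expect_{\mu_\theta}\bigl[\ip{(\Hess J)^{-1}\grad J}{\grad J}\bigr]$; and the eigenvector relation $(\Hess J)\,\vct{n}_{\set{K}} = 2\pi\econst^{-2\theta}\,\vct{n}_{\set{K}}$, which follows from Fact~\ref{fact:dist}\eqref{dist:3} together with $\grad J = 2\pi\econst^{-2\theta}\,\dist_{\set{K}}\,\vct{n}_{\set{K}}$, evaluates $R = 2\,\Expect_{\mu_\theta}[J]$. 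Meanwhile the trace inequality $\fnorm{\Hess g}^2 \geq n^{-1}(\laplace g)^2$ and Jensen give $\Expect_{\mu_\theta}\bigl[\fnorm{\Hess g}^2\bigr] \geq n^{-1}\bigl(\Expect_{\mu_\theta}[\laplace g]\bigr)^2 = W^2/n$. Combining, $W \geq W^2/R + W^2/n$, i.e. $W \leq Rn/(R + n)$, which is exactly the claimed inequality. As in the proof of Lemma~\ref{lem:rmvol-varbd}, the genuine technical nuisance --- and what I expect to be the main obstacle --- is that $J_\theta$ is only $C^{1,1}$ and not strongly convex, so the $L_2$ argument must be carried out on a regularized, strictly convex perturbation $J_\theta + \eps\norm{\vct{x}}^2/2$ (together with a mollification of $\dist_{\set{K}}$) and then passed to the limit $\eps\downarrow 0$ by dominated convergence; one also needs enough integrability --- controlled by the quadratic growth of $J_\theta$ --- for the Poisson equation $-Lg = J - \Expect_{\mu_\theta}[J]$ to be solvable. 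Everything else is bookkeeping parallel to Sections~\ref{sec:rotvol-conc} and~\ref{sec:rmvol-conc}.
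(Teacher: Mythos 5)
Your proposal is correct and reproduces the paper's architecture almost step for step: the reduction of the whole theorem to the thermal variance bound $\xi_I''\leq 2\xi_I'(n-\xi_I')/(n+\xi_I')$ is exactly Proposition~\ref{prop:intvol-tvar} and its use, the potential $\intvolJ=\pi\econst^{-2\theta}\dist^2$ and the distance-integral/probabilistic reformulation are Proposition~\ref{prop:intvol-distint} and Corollary~\ref{cor:intvol-lc} (your $\textsc{gamma}(k/2,1)$ variables are the paper's $\textsc{chisq}(k)$ variables up to the factor of two you carry consistently), and the thermal variance identity $\xi_I''=4\Var[\intvolJ]-2\xi_I'$ is Lemma~\ref{lem:intvol-tmv}. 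The one genuine divergence is in the crux, the variance inequality $\Var[\intvolJ(\vct{z})]\leq n\,\Expect[2\intvolJ(\vct{z})]/(n+\Expect[2\intvolJ(\vct{z})])$ of Lemma~\ref{lem:intvol-varbd}. The paper obtains it by citing the dimensional Brascamp--Lieb inequality of Bolley--Gentil--Guillin (Fact~\ref{fact:dim-brascamp-lieb}) with $f=\intvolJ$, for which the dimensional correction term $-\Var[\intvolJ]^2/(n-\Var[\intvolJ])$ yields precisely your $W\leq Rn/(R+n)$ after rearrangement. You instead re-derive exactly this special case from scratch by H{\"o}rmander's $L_2$ method (Poisson equation, Bochner, Cauchy--Schwarz in the $\Hess J$ metric, trace plus Jensen); the computation $R=\Expect[\ip{(\Hess\intvolJ)^{-1}\grad\intvolJ}{\grad\intvolJ}]=2\Expect[\intvolJ]$ is the same one the paper performs. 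What the paper's route buys is that all the analytic care---solvability and $H^2$ regularity for the Poisson equation, the fact that $\dist_{\set{K}}^2$ is only $C^{1,1}$---is outsourced to the cited reference, with only the $\eps\norm{\vct{x}}^2/2$ perturbation handled explicitly; your route is self-contained but must carry that burden itself, and your sketch correctly identifies it as the main obstacle. One cosmetic caveat: as in the paper's own statements, the displayed cgf bounds really concern the centered variable $Y=I_{\set{K}}-\delta(\set{K})$ (otherwise $\xi_{I_{\set{K}}}(\theta)\sim\delta(\set{K})\theta$ near zero would violate the quadratic bound), and your derivation, like the paper's, establishes the centered version.
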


\noindent
The proof of Theorem~\ref{thm:intvol-conc} occupies the rest of this Appendix.

In combination with Fact~\ref{fact:poisson}, this result yields Poisson-type tail
bounds for the intrinsic volume random variable.
We can also derive weaker Bernstein inequalities, as in~\eqref{eqn:bernstein}.
Using the cgf bounds and~\eqref{eqn:cgf-indep}, we can develop probability
inequalities for sums of independent intrinsic volume random variables;
these results provide concentration for the intrinsic volumes of a Cartesian
product of convex bodies via Fact~\ref{fact:product-intvol}.  We omit detailed statements.

\subsection{Comparison with Prior Work}
\label{sec:intvol-ulc}

Theorem~\ref{thm:intvol-conc} improves substantially
over the the concentration of intrinsic volumes reported in
our previous work~\cite{LMNPT20:Concentration-Euclidean}.
For example, the earlier paper only achieves the variance
bound $\Var[ I_{\set{K}} ] \leq 4n$, while our new approach
comprehends that the variance does not exceed $2n$,
and may be far smaller.

The Alexandrov--Fenchel inequalities~\cite[Sec.~7.3]{Sch14:Convex-Bodies}
imply that the intrinsic volumes of a convex body form an
ultra-log-concave (ULC) sequence~\cite{Che76:Processus-Gaussiens,McM91:Inequalities-Intrinsic}.
Owing to~\cite[Thm.~1.3 and Lem.~5.1]{Joh17:Discrete-Log-Sobolev},
this fact ensures that the intrinsic volume sequence of $\set{K}$
has Poisson-type concentration with variance proportional to $\intvol_1(\set{K})$.
In contrast, Theorem~\ref{thm:intvol-conc} shows that
the intrinsic volumes exhibit Poisson concentration with variance
$\sigma^2(\set{K}) \leq 2 \delta_{\circ}(\set{K})$. Up to the factor $2$, our new result is always sharper than
the bound that follows from the ULC property because
$\intvol_1(\set{K}) \geq \delta_{\circ}(\set{K})$.
This surprising observation can be extracted
from~\cite[Lem.~5.1 and Lem.~5.3]{Joh17:Discrete-Log-Sobolev}.

After this paper was written, Aravinda, Marsiglietti, and Melbourne~\cite[Cor.~1.1]{AMM21:Concentration-Inequalities}
proved that all ULC sequences exhibit Poisson-type concentration.  Their argument follows by a direct comparison between
the ULC sequence and the Poisson distribution with the same mean.  The concentration inequality they obtain,
however, remains slightly weaker than the one that follows from Theorem~\ref{thm:intvol-conc}.

\subsection{Reduction to Thermal Variance Bound}

Theorem~\ref{thm:intvol-conc} follows from a bound on the thermal variance
of the intrinsic volume random variable, which we establish in the remaining
part of this Appendix.

\begin{proposition}[Intrinsic Volumes: Thermal Variance] \label{prop:intvol-tvar}
Let $\set{K} \subset \R^n$ be a nonempty convex body.  The thermal variance
of the associated intrinsic volume random variable $I_{\set{K}}$ satisfies
$$
\xi_{I_{\set{K}}}''(\theta) \leq \frac{2 \xi_{I_{\set{K}}}'(\theta) \cdot \big[ n - \xi_{I_{\set{K}}}'(\theta) \big]}{n + \xi_{I_{\set{K}}}'(\theta)}
	\leq \frac{2}{n} \cdot \xi_{I_{\set{K}}}'(\theta) \cdot \big[n - \xi_{I_{\set{K}}}'(\theta)\big]
\quad\text{for all $\theta \in \R$.}
$$
\end{proposition}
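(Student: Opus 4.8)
The plan is to follow the same template used for the rotation volumes (Section~\ref{sec:rotvol-conc}) and for the rigid motion volumes (Section~\ref{sec:rmvol-conc}): pass through the generalized Steiner formula (Fact~\ref{fact:distance-integral}) to realize the mgf of $I_{\set{K}}$ as a distance integral against a family of log-concave (or concave) measures, read off a thermal-variance identity in terms of a variance with respect to that measure, and then feed the relevant variance inequality (Fact~\ref{fact:nguyen-wang} or Fact~\ref{fact:nguyen}) into that identity. The natural substitution here is suggested by the Wills-functional integral representation~\eqref{eqn:ball-wills}: choosing $f(r)=h(\mathrm{stuff})\cdot\econst^{-\pi r^2}$ in Fact~\ref{fact:distance-integral} and making the change of variables $s = \sqrt{\pi}\,r$ (or, with a temperature parameter, $s=\sqrt{\pi}\,\econst^{-\theta}r$ so that the $\theta$-weights $\econst^{(n-i)\theta}$ appear) produces, on the geometric side, integrals of Gaussian type and, on the combinatorial side, a weighted sum of $\intvol_i(\set{K})$ with $\chi$-type moment weights. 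So first I would prove an ``Intrinsic Volumes: Distance Integral'' proposition: define a potential such as $\intvolJ(\vct{x}) := \pi\econst^{-2\theta}\dist^2(\vct{x})$ (or its square root, whichever makes the algebra with the beta/gamma moments cleanest), and show
$$
m_{I_{\set{K}}}(\theta) = \frac{1}{\wills_\theta(\set{K})}\sum_{i=0}^n \econst^{i\theta}\,\intvol_i(\set{K})
= \frac{1}{\text{const}}\int_{\R^n} (\text{Gaussian-type density in }\dist_{\set{K}})\,\idiff{\vct{x}},
$$
exactly as in Proposition~\ref{prop:rotvol-distint} and Proposition~\ref{prop:rmvol-potent}, identifying the auxiliary random variables (here these should again be chi-squared or gamma variables, since the relevant one-dimensional integral is $\int_0^\infty h(s)\, s^{n-i-1}\econst^{-s}\,\idiff{s}$ up to rescaling).

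Second, I would derive the thermal-variance identity. Writing $\vct{z}$ for a draw from the tilted measure $\mu_\theta$ (the one with density proportional to the Gaussian-type function of $\dist_{\set{K}}$), and abbreviating $W := \intvolJ(\vct{z})$, the moments of $I_{\set{K}}$ under the Gibbs measure $\varrho^\theta$ translate into moments of $W$ via the auxiliary gamma variables, just as in Lemma~\ref{lem:rotvol-tmv} and Lemma~\ref{lem:rmvol-tmv}. The upshot should be an identity of the schematic form
$$
\xi_{I_{\set{K}}}'(\theta) = \alpha\,\Expect[W] + (\text{affine correction}),
\qquad
\xi_{I_{\set{K}}}''(\theta) = \beta\,\Var[W] + (\text{affine-in-}\xi'_{I_{\set{K}}}\text{ correction}),
$$
with explicit constants $\alpha,\beta$ depending on $n$; the ``correction'' terms arise exactly from the polynomial identities relating $\Expect[G_i]$, $\Expect[G_i^2]$ to $i$ and $i^2$ that were used in the earlier proofs. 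To get the factor $n+\xi_{I_{\set{K}}}'$ in the denominator (rather than just $n$) I expect to need the sharp form of the variance inequality, Fact~\ref{fact:nguyen} applied with the perturbed potential $J^\eps = \intvolJ + \eps\|\vct{x}\|^2/2$ and the explicit function $f$ whose gradient is computed from $\grad\intvolJ$; the computation $\ip{(\Hess J^\eps)^{-1}\grad f}{\grad f}\,J^\eps$ then collapses, as in the proof of Lemma~\ref{lem:rmvol-varbd}, to something like $\text{const}\cdot(\text{monomial in }W)$, after which $\eps\downarrow 0$ and dominated convergence finish that step. Combining the resulting variance bound with the thermal-variance identity and doing ``simple algebra'' (the same kind of rearrangement, moving a $\Var[W]$ term from one side to the other, that produced the $n+4$ in Lemma~\ref{lem:rmvol-varbd}) should yield the first inequality $\xi_{I_{\set{K}}}'' \le 2\xi_{I_{\set{K}}}'(n-\xi_{I_{\set{K}}}')/(n+\xi_{I_{\set{K}}}')$; the second inequality is immediate since $\xi_{I_{\set{K}}}'(\theta)\ge 0$ (because $\inf I_{\set{K}}\ge 0$, via~\eqref{eqn:thermal-mean-extreme}), so $n+\xi_{I_{\set{K}}}'(\theta)\ge n$.

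The main obstacle I anticipate is getting the bookkeeping of the auxiliary distribution exactly right so that the extra $\xi_{I_{\set{K}}}'$ in the denominator actually materializes rather than being lost to a cruder bound. In the rigid-motion case the analogous sharpening came precisely from not throwing away a $-\Var[\,\cdot\,]$ term, and here there is an extra subtlety: the intrinsic-volume random variable is indexed with $\Prob{I_{\set{K}}=n-i}\propto\rotv_i$ versus $\Prob{I_{\set{K}}=i}\propto\intvol_i$ conventions floating around (see Definition~\ref{def:wintvol-rv}), so I would be careful to fix the indexing at the outset and track whether the relevant moment weight is $\econst^{i\theta}$ or $\econst^{(n-i)\theta}$. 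A secondary technical point is the choice between using $\intvolJ = \pi\econst^{-2\theta}\dist^2$ directly (giving a log-concave measure, so Fact~\ref{fact:nguyen-wang} applies and one would get only the weaker $\le 2\xi'(n-\xi')/n$ form) versus arranging a concave measure of the type $J^{-(n+1)}$ needed for Fact~\ref{fact:nguyen}; since the statement to be proved contains the sharper $n+\xi_{I_{\set{K}}}'$ denominator, the concave-measure route is presumably required, which is why I expect the integral representation to be built around a density that is a negative power of an affine-in-$\dist^2$ expression, after an appropriate change of variables in Fact~\ref{fact:distance-integral}, rather than a literal Gaussian.
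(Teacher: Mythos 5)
Your setup matches the paper's: the potential $\intvolJ(\vct{x}) = \pi\econst^{-2\theta}\dist^2(\vct{x})$, the chi-squared auxiliary variables, the thermal-variance identity $\xi_{I}''(\theta) = 4\Var[\intvolJ(\vct{z})] - 2\xi_I'(\theta)$ with $\xi_I'(\theta) = \Expect[2\intvolJ(\vct{z})]$, the $\eps\normsq{\vct{x}}/2$ perturbation, and the final algebra are all as in Appendix~\ref{app:intvol}. But at the decisive step you pick the wrong tool, and the branch you propose would fail. You reason that since neither plain Brascamp--Lieb nor Fact~\ref{fact:nguyen-wang} can produce the denominator $n + \xi_I'(\theta)$, one must route the argument through a concave measure of the form $J^{-(n+1)}$ so as to invoke Fact~\ref{fact:nguyen}. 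That route is blocked: the Cauchy-type density $\big[1+\econst^{-2\theta}\dist^2\big]^{-(n+1)/2}$ fed into Fact~\ref{fact:distance-integral} generates the beta-function weights $\omega_{n-i}\omega_{i+1}/\omega_{n+1}$, i.e.\ precisely the \emph{rigid motion} volumes $\rmv_i$ (this is the content of Proposition~\ref{prop:rmvol-potent}), not the unweighted intrinsic volumes. There is no change of variables that makes a negative-power-of-$\dist^2$ density reproduce the bare sequence $\intvol_i$ with weights $\econst^{(n-i)\theta}$; the Gaussian density $\econst^{-\pi\dist^2}$ is forced by Hadwiger's identity $\wills = \int \econst^{-\pi\dist^2}$.

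The missing ingredient is the \emph{dimensional} Brascamp--Lieb inequality (Fact~\ref{fact:dim-brascamp-lieb}, due to Bolley--Gentil--Guillin), which applies to the log-concave measure $\propto\econst^{-\intvolJ}$ and carries a second, subtractive term
\begin{equation}
\Var[f(\vct{z})] \le \int \ip{(\Hess J)^{-1}\grad f}{\grad f}\idiff{\mu} - \frac{\big[\Cov(J(\vct{z}), f(\vct{z}))\big]^2}{n - \Var[J(\vct{z})]}.
\end{equation}
Applying it with $f = J = \intvolJ$ (after the $\eps$-perturbation), the first term collapses to $\Expect[2\intvolJ(\vct{z})]$ exactly as you compute, and the correction term becomes $-\Var[\intvolJ(\vct{z})]^2/(n-\Var[\intvolJ(\vct{z})])$; solving the resulting inequality for $\Var[\intvolJ(\vct{z})]$ yields $\Var[\intvolJ(\vct{z})] \le n\,\Expect[2\intvolJ(\vct{z})]/(n+\Expect[2\intvolJ(\vct{z})])$, which is where the denominator $n+\xi_I'(\theta)$ comes from. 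So the dimensional sharpening is obtained entirely within the log-concave framework by keeping the self-covariance term, not by passing to a concave measure; without Fact~\ref{fact:dim-brascamp-lieb} your argument lands only at $\xi_I''(\theta)\le 4\xi_I'(\theta)-2\xi_I'(\theta)=2\xi_I'(\theta)$, which is the rotation-volume-style bound and is strictly weaker than the claim.
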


\begin{proof}[Proof of Theorem~\ref{thm:intvol-conc} from Proposition~\ref{prop:intvol-tvar}]
The argument is essentially the same as the proof of Theorem~\ref{thm:rmvol-conc}
from Proposition~\ref{prop:rmvol-tvar}.  We use the stronger inequality from
Proposition~\ref{prop:intvol-tvar} to prove the variance bound, and we back off to the
weaker inequality to establish cgf bounds.
\end{proof}

\subsection{Setup}

Let us commence with the proof of Proposition~\ref{prop:intvol-tvar},
the thermal variance bound for the intrinsic volumes.
This argument follows the same pattern as the others.

Fix a nonempty convex body $\set{K} \subset \R^n$,
which we suppress.
For $i = 0, 1, 2, \dots, n$,
let $\intvol_i$ be the $i$th intrinsic volume of $\set{K}$,
and let $\wills$ be the total intrinsic volume.
The intrinsic volume random variable $I$ follows the distribution
\begin{equation} \label{eqn:intvol-rv-pf}
\Prob{ I = n - i } = I_i / \wills
\quad\text{for $i = 0, 1, 2, \dots, n$.}
\end{equation}
Write $\delta = \Expect I $ for the expectation of the intrinsic volume random variable.

\subsection{The Distance Integral}

The first step is to express the exponential moments of the sequence of intrinsic
volumes as a distance integral.

\begin{proposition}[Intrinsic Volumes: Distance Integral] \label{prop:intvol-distint}
For each $\theta \in \R$, define the convex potential
\begin{equation} \label{eqn:intvol-potent}
\intvolJ(\vct{x}) := \pi \econst^{-2\theta} \dist^2(\vct{x})
\quad\text{for $\vct{x} \in \R^n$.}
\end{equation}
For any function $h : \R_+ \to \R$ where the expectations on the right-hand side are finite,
\begin{equation} \label{eqn:intvol-distint}
\int_{\R^n} h(2 \intvolJ(\vct{x})) \, \econst^{-\intvolJ(\vct{x})} \idiff{\vct{x}}
	=\sum_{i=0}^n \Expect[ h(X_{i})] \, \econst^{(n-i)\theta} \cdot \intvol_i. 
\end{equation}
The random variable $X_i \sim \textsc{chisq}(i)$ follows the chi-squared distribution
with $i$ degrees of freedom.  By convention, $X_0 = 0$.
\end{proposition}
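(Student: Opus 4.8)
The plan is to follow the template of Propositions~\ref{prop:rotvol-distint} and~\ref{prop:rmvol-potent}: feed the generalized Steiner formula (Fact~\ref{fact:distance-integral}) a radial function $f$ adapted to the potential $\intvolJ$, and then recognize each resulting one-dimensional integral as a moment of a chi-squared law. First I would record that $\intvolJ(\vct{x}) = \pi\econst^{-2\theta}\dist^2(\vct{x})$ is convex, since $\dist^2$ is convex by Fact~\ref{fact:dist}\eqref{dist:1} and we multiply by the positive constant $\pi\econst^{-2\theta}$. Then I would apply Fact~\ref{fact:distance-integral} with
$$
f(r) := h\bigl(2\pi\econst^{-2\theta}r^2\bigr)\,\econst^{-\pi\econst^{-2\theta}r^2},
\qquad r \geq 0,
$$
which is designed precisely so that $f(\dist(\vct{x})) = h\bigl(2\intvolJ(\vct{x})\bigr)\,\econst^{-\intvolJ(\vct{x})}$, the integrand on the left-hand side of~\eqref{eqn:intvol-distint}.

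Next I would evaluate the two kinds of terms produced by the Steiner formula. The $i=n$ term is $f(0)\cdot\intvol_n = h(0)\cdot\intvol_n$, which is exactly the corresponding summand on the right of~\eqref{eqn:intvol-distint} once we invoke the convention $X_0 = 0$, so that $\Expect[h(X_0)] = h(0)$. For $i < n$ the formula contributes $\bigl(\int_0^\infty f(r)\,r^{n-i-1}\idiff{r}\bigr)\,\omega_{n-i}\cdot\intvol_i$, and the decisive manipulation is the substitution $t = \pi\econst^{-2\theta}r^2$: under it $r^{n-i-1}\idiff{r}$ becomes a $\theta$-dependent constant times $t^{(n-i)/2-1}\idiff{t}$ while the integrand becomes $h(2t)\,\econst^{-t}$; since $t$ then carries a $\textsc{gamma}\bigl((n-i)/2,1\bigr)$ weight and twice such a variable is $\textsc{chisq}(n-i)$, the radial integral equals $\frac{\econst^{(n-i)\theta}\,\Gamma((n-i)/2)}{2\pi^{(n-i)/2}}\cdot\Expect[h(X_{n-i})]$. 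Multiplying by $\omega_{n-i} = \frac{2\pi^{(n-i)/2}}{\Gamma((n-i)/2)}$ from~\eqref{eqn:ball-sphere}, the $\Gamma$- and $\pi$-factors cancel \emph{exactly} — and, unlike the rotation and rigid motion cases, no appeal to the Legendre duplication formula is needed here, since there is no extra $\omega_{i+1}$ to absorb. What survives is the summand $\Expect[h(X_{n-i})]\,\econst^{(n-i)\theta}\cdot\intvol_i$, and summing over $i = 0,1,\dots,n$ yields~\eqref{eqn:intvol-distint}.

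The only real obstacle is clerical: keeping the powers of $2$, $\pi$, and $\econst^{\theta}$ straight through the change of variables and confirming that they collapse against~\eqref{eqn:ball-sphere}; there is no analytic content beyond this. Once~\eqref{eqn:intvol-distint} is in hand, the metric representations of Corollary~\ref{cor:intvol-metric} follow by specialization: taking $h \equiv 1$ and $\theta = 0$ gives $\wills(\set{K}) = \int_{\R^n}\econst^{-\pi\dist^2(\vct{x})}\idiff{\vct{x}}$, and then taking $h(s) = s$ and using $\Expect[X_k] = k$ produces the companion identity for the central intrinsic volume.
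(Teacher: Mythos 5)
Your proposal is correct and follows essentially the same route as the paper's own proof: the same instantiation of Fact~\ref{fact:distance-integral} with $f(r) = h(2\pi\econst^{-2\theta}r^2)\,\econst^{-\pi\econst^{-2\theta}r^2}$, the same change of variables (up to a harmless factor of two in the substitution variable), and the same cancellation of the resulting constants against $\omega_{n-i}$. The only point worth flagging is that what you (and the paper's proof) actually derive is the summand $\Expect[h(X_{n-i})]\,\econst^{(n-i)\theta}\cdot\intvol_i$, so the subscript $X_i$ in the displayed statement~\eqref{eqn:intvol-distint} is a typo in the paper rather than a discrepancy in your argument — a check against $\set{K}=\{\vct{0}\}$, where the left-hand side equals $\Expect[h(X_n)]$ while only the $\intvol_0$ term survives on the right, confirms that $X_{n-i}$ is the correct index.
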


\begin{proof}
The potential $\intvolJ$ defined in~\eqref{eqn:intvol-potent} is convex
because of Fact~\ref{fact:dist}\eqref{dist:1}, which states that the squared
distance is convex.
Next, we instantiate Fact~\ref{fact:distance-integral} with the function
$f(r) = h(2\pi \econst^{-2\theta} r^2) \, \econst^{-\pi \econst^{-2\theta} r^2}$
to arrive at
$$
\int_{\R^n} h(2 \intvolJ(\vct{x})) \, \econst^{-\intvolJ(\vct{x})} \idiff{\vct{x}}
	= h(0) \cdot \intvol_n + \sum_{i=0}^{n-1} \left( \int_0^\infty h(2\pi \econst^{-2\theta} r^2) \cdot \econst^{-\pi \econst^{-2\theta} r^2} r^{n-i-1} \idiff{r} \right)
	\omega_{n-i} \cdot \intvol_i.
$$
In the integral, making the change of variables $2\pi\econst^{-2\theta} r^2 \mapsto s$,
we recognize an expectation with respect to the chi-squared distribution:
\begin{multline*}
\omega_{n-i} \left( \int_0^\infty h(2\pi \econst^{-2\theta} r^2) \cdot \econst^{-\pi \econst^{-2\theta} r^2} r^{n-i-1} \idiff{r} \right) \\
	= \frac{2 \pi^{(n-i)/2}}{\Gamma((n-i)/2)} \cdot \frac{1}{2 (2\pi \econst^{-2\theta})^{(n-i)/2}} \int_0^\infty h(s) \cdot \econst^{-s/2} s^{n-i-1} \idiff{s} 
	= \Expect[ h(X_{n-i}) ] \, \econst^{(n-i) \theta}.
\end{multline*}
We have also used the formula~\eqref{eqn:ball-sphere} for the surface area $\omega_{n-i}$ of the sphere.
Combine the displays to complete the proof.
\end{proof}

This result yields alternative formulations for the total intrinsic volume
and the central intrinsic volume.  The result for $\wills$ is due
to Hadwiger~\cite{Had75:Willssche}.

\begin{corollary}[Intrinsic Volumes: Metric Formulations] \label{cor:intvol-metric}
The total intrinsic volume $\wills$ and the central intrinsic volume $\delta$
admit the expressions
\begin{align*}
\wills &= \int_{\R^n} \econst^{-\pi \dist^2(\vct{x})} \idiff{\vct{x}} &\text{and}&&
n - \delta &= \frac{1}{\wills} \int_{\R^n} 2\pi \dist^2(\vct{x}) \, \econst^{-\pi \dist^2(\vct{x})} \idiff{\vct{x}}.
\end{align*}
\end{corollary}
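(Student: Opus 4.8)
The plan is to read off both identities directly from the generalized distance integral of Proposition~\ref{prop:intvol-distint} by specializing the auxiliary function $h$ and setting $\theta = 0$, in exact parallel with the proofs of Propositions~\ref{prop:rotvol-metric} and~\ref{prop:rmvol-metric}. With $\theta = 0$, the potential in~\eqref{eqn:intvol-potent} becomes $\intvolJ(\vct{x}) = \pi \dist^2(\vct{x})$, so $2\intvolJ(\vct{x}) = 2\pi \dist^2(\vct{x})$ and $\econst^{-\intvolJ(\vct{x})} = \econst^{-\pi\dist^2(\vct{x})}$.

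First I would take $h \equiv 1$ in~\eqref{eqn:intvol-distint}. The left-hand side collapses to $\int_{\R^n} \econst^{-\pi\dist^2(\vct{x})}\idiff{\vct{x}}$, while the right-hand side becomes $\sum_{i=0}^n \intvol_i = \wills$, since $\Expect[h(X_i)] = 1$ and $\econst^{(n-i)\theta}=1$. This gives the first formula. Next I would take $h(s) = s$, so that $h(2\intvolJ(\vct{x})) = 2\pi\dist^2(\vct{x})$ and the left-hand side is $\int_{\R^n} 2\pi\dist^2(\vct{x})\,\econst^{-\pi\dist^2(\vct{x})}\idiff{\vct{x}}$; on the right-hand side, $\Expect[X_i] = i$ because $X_i \sim \textsc{chisq}(i)$, so we obtain $\sum_{i=0}^n i\,\intvol_i$.

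To finish, I would unwind the definition of the central intrinsic volume: since $\Prob{I_{\set{K}} = n-i} = \intvol_i/\wills$, we have $\delta = \sum_{i=0}^n (n-i)\,\intvol_i/\wills$, and using $\sum_i \intvol_i = \wills$ this rearranges to $n - \delta = \frac{1}{\wills}\sum_{i=0}^n i\,\intvol_i$. Combining with the previous display and dividing through by $\wills$ yields the second formula. There is no genuine obstacle here; the only point needing a moment's care is that the argument of $h$ in~\eqref{eqn:intvol-distint} is $2\intvolJ$ rather than $\intvolJ$, which is precisely what produces the factor $2\pi$ (not $\pi$) in the numerator of the second identity, together with the trivial rearrangement expressing $n-\delta$ as a normalized linear functional of the intrinsic volume sequence.
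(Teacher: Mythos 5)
Your strategy is exactly the paper's: specialize Proposition~\ref{prop:intvol-distint} at $\theta=0$ with $h\equiv 1$ and then with a linear choice of $h$ (the paper uses $h(s)=n-s$). The first identity is fine. The second step, however, rests on an incorrect reading of the right-hand side of~\eqref{eqn:intvol-distint}: the correct pairing is $\Expect[h(X_{n-i})]$ with $\intvol_i$, not $\Expect[h(X_i)]$ with $\intvol_i$. This is what the change of variables in the proof of Proposition~\ref{prop:intvol-distint} actually produces, it is forced by the boundary term $h(0)\cdot\intvol_n$ of the generalized Steiner formula (which must correspond to the degenerate variable $X_0$), and it is the pairing used in Corollary~\ref{cor:intvol-lc} and Lemma~\ref{lem:intvol-tmv}. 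Consequently, taking $h(s)=s$ at $\theta=0$ gives
\[
\int_{\R^n} 2\pi \dist^2(\vct{x})\, \econst^{-\pi\dist^2(\vct{x})}\idiff{\vct{x}} \;=\; \sum_{i=0}^n (n-i)\,\intvol_i ,
\]
not $\sum_{i} i\,\intvol_i$. You can confirm this directly from Fact~\ref{fact:distance-integral} with $f(r)=2\pi r^2\econst^{-\pi r^2}$, or on the example $\set{K}=\{\vct{0}\}$, where the integral equals $n$ while $\sum_i i\,\intvol_i=0$.

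This is not a cosmetic point, because it flips the conclusion. With the correct pairing your argument yields $\frac{1}{\wills}\int 2\pi\dist^2\,\econst^{-\pi\dist^2} = \frac{1}{\wills}\sum_i (n-i)\intvol_i$, which under the convention $\Prob{I_{\set{K}}=n-i}=\intvol_i/\wills$ of Definition~\ref{def:wintvol-rv} is $\delta=\Expect I_{\set{K}}$ itself, i.e.\ the \emph{complement} of what the displayed statement asserts; this is consistent with Lemma~\ref{lem:intvol-tmv}, which gives $\xi_I'(0)=\delta$ equal to that normalized integral. In your write-up the two index reversals you dropped (the $X_{n-i}$ pairing, and the reversal built into $I_{\set{K}}$) cancel and happen to reproduce the printed display, but the derivation as written does not establish it. To make the argument airtight, fix the pairing and then observe that the printed identity is the one valid for the un-reversed normalization $\delta=\frac{1}{\wills}\sum_i i\,\intvol_i$ of~\cite{LMNPT20:Concentration-Euclidean}; with the present paper's definition of $\delta$ the left-hand side of the second formula should be $\delta$ rather than $n-\delta$ (equivalently, $n-\delta=\frac{1}{\wills}\int (n-2\pi\dist^2)\,\econst^{-\pi\dist^2}$, which is what the choice $h(s)=n-s$ produces).
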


\begin{proof}
Apply Proposition~\ref{prop:intvol-distint} with $h(s) = 1$ and then with $h(s) = n - s$.
\end{proof}

\subsection{A Family of Log-Concave Measures}

Proposition~\ref{prop:intvol-distint} leads to an expression for the mgf $m_{I}$
of the intrinsic volume random variable.  Choose $h(s) = 1$ to find that
\begin{equation} \label{eqn:intvol-mgf}
m_I(\theta)
	\stackrel{\eqref{eqn:mgf}}{=} \Expect \econst^{\theta I}
	\stackrel{\eqref{eqn:intvol-rv-pf}}{=} \frac{1}{\wills} \sum_{i=0}^n \econst^{(n-i)\theta} \cdot \intvol_i
	= \int_{\R^n} \econst^{-\intvolJ(\vct{x})} \idiff{\vct{x}}.
\end{equation}
For each $\theta \in \R$, we introduce a log-concave probability measure $\mu_{\theta}$
with density
$$
\frac{\diff \mu_{\theta}(\vct{x})}{\diff{\vct{x}}} = \frac{1}{\wills} \cdot \frac{1}{m_{I}(\theta)} \cdot \econst^{-\intvolJ(\vct{x})} \quad\text{for $\vct{x} \in \R^n$.}
$$
Indeed, the measure $\mu_{\theta}$ has total mass one because of~\eqref{eqn:intvol-mgf}.
With this notation, we can write Proposition~\ref{prop:intvol-distint}
in a more probabilistic fashion.

\begin{corollary}[Intrinsic Volumes: Probabilistic Formulation] \label{cor:intvol-lc}
Instate the notation from Proposition~\ref{prop:intvol-distint}.
Draw a random vector $\vct{z} \sim \mu_{\theta}$.  Then
$$
\Expect[ h(2\intvolJ(\vct{x})) ]
	= \frac{1}{m_I(\theta)} \sum_{i=0}^n \Expect[ h(X_{n-i}) ] \, \econst^{(n-i)\theta} \cdot \Prob{ I = n - i }.
$$
\end{corollary}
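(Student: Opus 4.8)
The plan is to recognize Corollary~\ref{cor:intvol-lc} as nothing more than Proposition~\ref{prop:intvol-distint} rewritten in the language of the measure $\mu_\theta$, exactly as Corollary~\ref{cor:rotvol-lc} restates Proposition~\ref{prop:rotvol-distint} and Corollary~\ref{cor:rmvol-lc} restates Proposition~\ref{prop:rmvol-potent}. No new estimate is required; all the real work is already packaged in the distance integral, and the measure $\mu_\theta$ is precisely the probability density obtained by normalizing $\econst^{-\intvolJ}$, its total mass being $\wills\, m_I(\theta)$ by~\eqref{eqn:intvol-mgf}.

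First I would expand the left-hand side against the density of $\mu_\theta$:
$$
\Expect_{\vct{z}\sim\mu_\theta}\bigl[ h(2\intvolJ(\vct{z})) \bigr]
	= \frac{1}{\wills\, m_I(\theta)} \int_{\R^n} h(2\intvolJ(\vct{x})) \,\econst^{-\intvolJ(\vct{x})} \idiff{\vct{x}}.
$$
Then I would substitute the value of this integral supplied by Proposition~\ref{prop:intvol-distint}, namely $\sum_{i=0}^n \Expect[h(X_{n-i})]\,\econst^{(n-i)\theta}\,\intvol_i$, and finally invoke the definition~\eqref{eqn:intvol-rv-pf} of the intrinsic volume random variable, $\intvol_i/\wills = \Prob{I = n-i}$, to absorb the factor $1/\wills$ and land on the advertised identity.

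There is essentially no obstacle: the only point demanding care is the index bookkeeping inherited from Proposition~\ref{prop:intvol-distint} — that the chi-squared variable appearing in the $i$th term is $X_{n-i}$ rather than $X_i$ (with the convention $X_0 = 0$ accounting for the $i = n$ contribution $h(0)\,\intvol_n$), and that the weight $\econst^{(n-i)\theta}$ already encodes the reparameterization built into $\intvolJ(\vct{x}) = \pi\econst^{-2\theta}\dist^2(\vct{x})$. Once these are lined up against the earlier proposition, the corollary follows at once, and indeed it can be dispatched in a single sentence as a reinterpretation of Proposition~\ref{prop:intvol-distint}.
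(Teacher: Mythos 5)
Your proposal is correct and is exactly the paper's argument: the corollary is a direct reinterpretation of Proposition~\ref{prop:intvol-distint}, obtained by expanding the expectation against the density of $\mu_\theta$ (whose normalizing constant $\wills\, m_I(\theta)$ is supplied by the $h\equiv 1$ case) and then absorbing $\intvol_i/\wills$ into $\Prob{I=n-i}$ via~\eqref{eqn:intvol-rv-pf}. The paper dispatches this in one sentence; your version merely makes the same bookkeeping explicit.
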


\begin{proof}
This formulation is just a reinterpretation of Proposition~\ref{prop:intvol-distint}.
\end{proof}

\subsection{A Thermal Variance Identity}

With Corollary~\ref{cor:intvol-lc} at hand, we quickly obtain
identities for the thermal mean $\xi_I'$ and the thermal variance $\xi_I''$
of the intrinsic volume random variable $I$.  These results
are framed in terms of the mean and variance of the log-concave
measure $\mu_{\theta}$.

\begin{lemma}[Intrinsic Volumes: Thermal Variance Identity] \label{lem:intvol-tmv}
Draw a random vector $\vct{z} \sim \mu_{\theta}$.  Then
\begin{equation} \label{eqn:intvol-tmv}
\xi_I'(\theta) = \Expect[ 2 \intvolJ(\vct{z}) ]
\quad\text{and}\quad
\xi_{I}''(\theta) = 4 \Var[ \intvolJ(\vct{z}) ] - 2 \xi_{I}'(\theta).
\end{equation}
\end{lemma}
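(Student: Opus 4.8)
The plan is to follow the template of Lemma~\ref{lem:rotvol-tmv}: insert carefully chosen test functions $h$ into the probabilistic reformulation of the distance integral, Corollary~\ref{cor:intvol-lc}, and match the output against the standard expressions \eqref{eqn:thermal-expect} and \eqref{eqn:thermal-var} for the thermal mean and thermal variance of $I$. The only external arithmetic is the first two moments of the chi-squared law: for $X_i \sim \chisq(i)$ one has $\Expect X_i = i$ and $\Expect X_i^2 = i^2 + 2i$, so that $\Expect[X_{n-i}] = n-i$ and $\Expect[X_{n-i}^2 - 2 X_{n-i}] = (n-i)^2$. Reindexing via the distribution \eqref{eqn:intvol-rv-pf}, these are exactly the algebraic identities needed to turn the moment sums defining $\xi_I'$ and $\xi_I''$ into expectations of functions of $\intvolJ(\vct{z})$.

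First I would handle the thermal mean. Combining \eqref{eqn:thermal-expect} with \eqref{eqn:intvol-rv-pf} gives $\xi_I'(\theta) = m_I(\theta)^{-1}\sum_{i=0}^n (n-i)\,\econst^{(n-i)\theta}\Prob{I = n-i}$. Applying Corollary~\ref{cor:intvol-lc} with $h(s) = s$ and using $\Expect X_{n-i} = n-i$, the right-hand side is precisely $\Expect[2\intvolJ(\vct{z})]$ for $\vct{z}\sim\mu_{\theta}$, which is the first identity. For the thermal variance, \eqref{eqn:thermal-var} and \eqref{eqn:intvol-rv-pf} give $\xi_I''(\theta) = m_I(\theta)^{-1}\sum_{i=0}^n (n-i)^2\,\econst^{(n-i)\theta}\Prob{I = n-i} - (\xi_I'(\theta))^2$. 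Applying Corollary~\ref{cor:intvol-lc} with $h(s) = s^2 - 2s$ and using $\Expect[X_{n-i}^2 - 2X_{n-i}] = (n-i)^2$, the sum becomes $\Expect[(2\intvolJ(\vct{z}))^2 - 2(2\intvolJ(\vct{z}))] = 4\Expect[\intvolJ(\vct{z})^2] - 4\Expect[\intvolJ(\vct{z})]$. Subtracting $(\xi_I'(\theta))^2 = 4(\Expect[\intvolJ(\vct{z})])^2$, and then using $4\Expect[\intvolJ(\vct{z})] = 2\xi_I'(\theta)$ from the first identity, yields $\xi_I''(\theta) = 4\Var[\intvolJ(\vct{z})] - 2\xi_I'(\theta)$, as claimed.

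There is no genuine obstacle here; the computation is routine, and the chi-squared moments are the only quantitative input. The single point that needs attention is the factor of $2$ already built into the pairing between $2\intvolJ$ and the chi-squared variable $X_i$ in Proposition~\ref{prop:intvol-distint} (ultimately because $\chisq(i)$ has mean $i$, not $i/2$). This is why the second-moment test function must be $s^2 - 2s$ rather than $s^2$, and it is the reason the correction term in the variance identity is $-2\xi_I'(\theta)$ rather than $-\xi_I'(\theta)$, in contrast with the Gamma-based computation of Lemma~\ref{lem:rotvol-tmv}.
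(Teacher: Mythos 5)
Your proposal is correct and follows the paper's own proof essentially verbatim: the same chi-squared moment identities, the same test functions $h(s)=s$ and $h(s)=s^2-2s$ in Corollary~\ref{cor:intvol-lc}, and the same final algebra using the $2$-homogeneity of the variance. Nothing to add.
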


\begin{proof}
The chi-squared random variable $X_{n-i}$ satisfies the identities
$$
\Expect[ X_{n-i} ] = n - i
\quad\text{and}\quad
\Expect[ X_{n-i}^2 - 2X_{n-i} ] = (n-i)^2.
$$
Therefore, Corollary~\ref{cor:intvol-lc} with $h(s) = s$ implies that
\begin{equation} \label{eqn:intvol-tm-pf}
\xi_I'(\theta) \overset{\eqref{eqn:thermal-mean}}{=} \frac{1}{m_I(\theta)} \sum_{i=0}^n (n-i) \, \econst^{(n-i)\theta} \cdot \Prob{ I = n - i }
	= \Expect[ 2\intvolJ(\vct{z}) ].
\end{equation}
Next, apply Corollary~\ref{cor:intvol-lc} with $h(s) = s^2 - 2s$ to obtain
\begin{align*}
\xi_{I}''(\theta) &\stackrel{\eqref{eqn:thermal-var}}{=}
	\left[ \frac{1}{m_I(\theta)} \sum_{i=0}^n (n-i)^2 \, \econst^{(n-i)\theta} \cdot \Prob{ I = n - i } \right] - (\xi_I'(\theta))^2 \\
	&\stackrel{\eqref{eqn:intvol-tm-pf}}{=} \Expect[ (2 \intvolJ(\vct{z}))^2 - 2 (2\intvolJ(\vct{z})) ] - (\Expect[ 2 \intvolJ(\vct{z})])^2
	\stackrel{\eqref{eqn:intvol-tm-pf}}{=} \Var[ 2 \intvolJ(\vct{z}) ] - 2 \xi_I'(\theta).
\end{align*}Since the variance is $2$-homogeneous, this result is equivalent to the statement.
\end{proof}

\subsection{A Variance Bound}

We have now converted the problem of bounding the thermal variance
of the intrinsic volume random variable into a problem about
the variance of a function with respect to a log-concave measure.
Here is the required estimate.

\begin{lemma}[Intrinsic Volumes: Variance Bound] \label{lem:intvol-varbd}
Instate the notation of Lemma~\ref{lem:intvol-tmv}.  For a random variable $\vct{z} \sim \mu_{\theta}$,
\begin{equation} \label{eqn:intvol-varbd}
\Var[ \intvolJ(\vct{z}) ] \leq \frac{n \Expect[ 2\intvolJ(\vct{z}) ]}{n + \Expect[ 2 \intvolJ(\vct{z}) ]}.
\end{equation}
\end{lemma}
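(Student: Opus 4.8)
The plan is to follow the template of the proof of Lemma~\ref{lem:rmvol-varbd}: reinterpret the target as the variance of a function of a log-concave random vector, and then apply a functional inequality. Here $\vct{z} \sim \mu_{\theta}$ has density proportional to $\econst^{-\intvolJ}$, and the function whose variance we must bound is the potential $\intvolJ$ \emph{itself}. This self-referential feature is exactly what will promote the plain Brascamp--Lieb estimate $\Var_{\mu_{\theta}}[\intvolJ(\vct{z})] \leq \Expect[2\intvolJ(\vct{z})]$ --- which, like the bound $\Var_{\mu_{\theta}}[\intvolJ] \leq n$ from Fact~\ref{fact:nguyen-wang}, is by itself too weak --- to the sharper harmonic-type estimate~\eqref{eqn:intvol-varbd}.

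As in Lemma~\ref{lem:rmvol-varbd}, fix $\theta \in \R$ and regularize: for $\eps > 0$ set $J^{\eps}(\vct{x}) := \intvolJ(\vct{x}) + \eps\normsq{\vct{x}}/2$, an $\eps$-strongly convex potential, and write $\mu_{\theta}^{\eps}$ for the probability measure with density proportional to $\econst^{-J^{\eps}}$. Using Fact~\ref{fact:dist}\eqref{dist:2}--\eqref{dist:3}, one finds
\begin{equation*}
\grad\intvolJ(\vct{x}) = 2\pi\econst^{-2\theta}\,\dist(\vct{x})\,\vct{n}(\vct{x})
\qquad\text{and}\qquad
(\Hess\intvolJ(\vct{x}))\,\vct{n}(\vct{x}) = 2\pi\econst^{-2\theta}\,\vct{n}(\vct{x}),
\end{equation*}
so $\vct{n}(\vct{x})$ is an eigenvector of $\Hess J^{\eps}(\vct{x})$, whence
\begin{equation*}
\ip{(\Hess J^{\eps}(\vct{x}))^{-1}\grad\intvolJ(\vct{x})}{\grad\intvolJ(\vct{x})}
= \frac{(2\pi\econst^{-2\theta})^2\,\dist^2(\vct{x})}{2\pi\econst^{-2\theta} + \eps} \leq 2\intvolJ(\vct{x}),
\end{equation*}
with equality as $\eps \downarrow 0$.

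Next I would run the Bochner ($\Gamma_2$) argument that underlies Fact~\ref{fact:nguyen-wang}, but keep the term that the proof of that fact throws away --- this amounts to a dimensional Brascamp--Lieb inequality; cf.~\cite{BGG18:Dimensional-Improvements}. Let $g$ solve the Poisson equation $\laplace g - \ip{\grad J^{\eps}}{\grad g} = \Expect_{\mu_{\theta}^{\eps}}[\intvolJ] - \intvolJ$, the Gaussian-type decay of $\mu_{\theta}^{\eps}$ making all integrations by parts legitimate. Integrating the Bochner identity against $\mu_{\theta}^{\eps}$ gives
\begin{equation*}
\Var_{\mu_{\theta}^{\eps}}[\intvolJ] = \Expect\big[\fnormsq{\Hess g}\big] + \Expect\big[\ip{(\Hess J^{\eps})\grad g}{\grad g}\big].
\end{equation*}
Estimate the first term from below by $n^{-1}(\Expect[\laplace g])^2 = n^{-1}(\Expect[\ip{\grad J^{\eps}}{\grad g}])^2$, and the second, via Cauchy--Schwarz, from below by $(\Expect[\ip{\grad\intvolJ}{\grad g}])^2 / \Expect[\ip{(\Hess J^{\eps})^{-1}\grad\intvolJ}{\grad\intvolJ}]$. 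Because $\grad J^{\eps} = \grad\intvolJ + \eps\vct{x}$ while $\Expect_{\mu_{\theta}^{\eps}}[\ip{\grad\intvolJ}{\grad g}] = \Var_{\mu_{\theta}^{\eps}}[\intvolJ]$, a limiting argument as $\eps\downarrow 0$ (dominated convergence, as in Lemma~\ref{lem:rmvol-varbd}, together with a Poincar\'e bound for $\mu_{\theta}^{\eps}$ uniform in $\eps$ to discard the stray $\eps\Expect[\ip{\vct{x}}{\grad g}]$ term) yields, with $V := \Var_{\mu_{\theta}}[\intvolJ]$ and $A := \Expect_{\mu_{\theta}}[2\intvolJ]$,
\begin{equation*}
V \geq \frac{V^2}{n} + \frac{V^2}{A}.
\end{equation*}
If $V = 0$ there is nothing to prove; otherwise divide by $V$ to obtain $1 \geq V(n^{-1} + A^{-1})$, that is, $V \leq nA/(n+A)$, which is~\eqref{eqn:intvol-varbd}.

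The principal difficulty is regularity: $\dist^2$ is only $C^{1,1}$, so $\Hess\intvolJ$ exists merely almost everywhere, and the Bochner identity and the existence/regularity theory for the Poisson equation have to be accessed through the strongly convex surrogate $J^{\eps}$ (with an additional mollification if needed), carrying the limit $\eps\downarrow 0$ throughout. If one would rather not reprove this, it suffices to cite a dimensional Brascamp--Lieb inequality valid for $C^{1,1}$ convex potentials and apply it with test function $\intvolJ$; the computations above show that its dimensional correction then collapses to precisely the factor $n/(n+A)$ demanded by the statement.
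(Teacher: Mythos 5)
Your proposal is correct, and its geometric core --- the regularization $J^{\eps} = \intvolJ + \eps\normsq{\cdot}/2$, the computation that $\vct{n}(\vct{x})$ is an eigenvector of $\Hess J^{\eps}$ so that $\ip{(\Hess J^{\eps})^{-1}\grad\intvolJ}{\grad\intvolJ} \to 2\intvolJ$ as $\eps \downarrow 0$, and the final algebra turning $V \geq V^2/n + V^2/A$ into $V \leq nA/(n+A)$ --- is exactly what the paper does. The one real difference is how the functional inequality enters: the paper simply cites the dimensional Brascamp--Lieb inequality of Bolley--Gentil--Guillin (Fact~\ref{fact:dim-brascamp-lieb}) as a black box, applies it with test function $f = \intvolJ$, and observes that the dimensional correction term becomes $-\Var[\intvolJ]^2/(n - \Var[\intvolJ])$, which after rearrangement gives the same harmonic-mean bound. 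You instead re-derive the needed special case from scratch by the H{\"o}rmander/Bochner dual method, splitting $\Var[\intvolJ] = \Expect[\fnormsq{\Hess g}] + \Expect[\ip{(\Hess J^{\eps})\grad g}{\grad g}]$ and lower-bounding the two terms by $V^2/n$ and $V^2/A$ respectively. That buys self-containedness at the cost of the regularity issues you correctly flag (solvability of the Poisson equation for a $C^{1,1}$ potential, the stray $\eps\Expect[\ip{\vct{x}}{\grad g}]$ term); your closing remark that one could instead just invoke a dimensional Brascamp--Lieb inequality with test function $\intvolJ$ is precisely the route the paper takes, so the two arguments are interchangeable.
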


To establish Lemma~\ref{lem:intvol-varbd}, we must invoke a dimensional
variance inequality for log-concave measures.

\begin{fact}[Dimensional Brascamp--Lieb] \label{fact:dim-brascamp-lieb}
Let $J : \R^n \to \R_{++}$ be a twice differentiable and strongly convex potential.
Consider the log-concave probability measure $\mu$ on $\R^n$ whose density is proportional
to $\econst^{-J}$.  Draw a random variable $\vct{z} \sim \mu$.
For any differentiable function $f : \R^n \to \R$, $$
\Var[ f(\vct{z}) ] \leq \int \ip{(\Hess J)^{-1} \grad f}{\grad f} \diff{\mu}
	- \frac{\left[ \int Jf \idiff{\mu} - \int J \idiff{\mu} \int f \idiff{\mu} \right]^2}{n - \Var[ J(\vct{z}) ] }.
$$
We have suppressed the integration variable for legibility.
\end{fact}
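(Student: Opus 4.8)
The plan is to prove Fact~\ref{fact:dim-brascamp-lieb} by the Hörmander/Helffer--Sjöstrand $L^2$ technique, which underlies both Fact~\ref{fact:nguyen-wang} and Fact~\ref{fact:nguyen}. Normalize so that $\diff{\mu} = \econst^{-J}\idiff{\vct{x}}$, and let $L\phi := \laplace \phi - \ip{\grad J}{\grad\phi}$ be the associated weighted Laplacian; it is self-adjoint in $L^2(\mu)$ and satisfies $\int \phi\,(-L\psi)\idiff{\mu} = \int \ip{\grad\phi}{\grad\psi}\idiff{\mu}$. Strong convexity of $J$, say $\Hess J \psdge \rho\Id$ with $\rho > 0$, yields a Poincaré inequality for $\mu$ (via the \emph{classical} Brascamp--Lieb inequality), hence a spectral gap; so for the centered function $g := f - \int f \idiff{\mu}$ there is a smooth, adequately decaying solution $u$ of $-Lu = g$, and the decay makes all integrations by parts below legitimate. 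I would first record the two expressions $\Var_\mu[f] = \int g\,(-Lu)\idiff{\mu} = \int \ip{\grad f}{\grad u}\idiff{\mu}$ and, since $Lu=-g$, $\Var_\mu[f] = \int (Lu)^2\idiff{\mu}$.

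Next comes the integrated Bochner ($\Gamma_2$) identity
\[
\int (Lu)^2\idiff{\mu} = \int \norm{\Hess u}_{\mathrm{HS}}^2\idiff{\mu} + \int \ip{(\Hess J)\grad u}{\grad u}\idiff{\mu},
\]
whose validity rests only on the $\Gamma_2$ calculus and the decay furnished by strong convexity. The dimensional gain is then extracted from the pointwise trace inequality $\norm{\Hess u}_{\mathrm{HS}}^2 \geq (\laplace u)^2 / n$, together with the identity
\[
\int \laplace u \idiff{\mu} = \int \ip{\grad J}{\grad u}\idiff{\mu} = \int J g \idiff{\mu} = \Cov_\mu(J, f),
\]
which follows from one integration by parts against $\econst^{-J}$ and the equation $-Lu = g$.

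The crux is to upgrade the constraint on $\int \laplace u\idiff{\mu}$ to the \emph{sharp} bound $\int (\laplace u)^2\idiff{\mu} \geq n\,\Cov_\mu(J,f)^2 / (n - \Var_\mu[J])$: Jensen's inequality only gives $\int(\laplace u)^2\idiff{\mu} \geq \Cov_\mu(J,f)^2$, and recovering the extra factor $n/(n-\Var_\mu[J])$ is precisely the dimensional content of the theorem. I expect to obtain it by decomposing $\laplace u$ along the constants and along $J - \int J\idiff{\mu}$ in $L^2(\mu)$ — the squared norm of the latter being $\Var_\mu[J] \le n$ by Nguyen--Wang (Fact~\ref{fact:nguyen-wang}) — and evaluating the residual covariance by further integration by parts; equivalently one can quote Nguyen's dimensional Prékopa/variance machinery directly. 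Granting this, the Bochner identity gives $\int \ip{(\Hess J)\grad u}{\grad u}\idiff{\mu} \leq \Var_\mu[f] - \Cov_\mu(J,f)^2/(n - \Var_\mu[J])$, and Cauchy--Schwarz in the $\Hess J$ metric,
\[
\Var_\mu[f] = \int \ip{\grad f}{\grad u}\idiff{\mu} \leq \Bigl(\int \ip{(\Hess J)^{-1}\grad f}{\grad f}\idiff{\mu}\Bigr)^{1/2}\Bigl(\int \ip{(\Hess J)\grad u}{\grad u}\idiff{\mu}\Bigr)^{1/2},
\]
followed by squaring and solving the resulting quadratic inequality for $\Var_\mu[f]$, delivers the claimed estimate; here $n - \Var_\mu[J] > 0$ since a strongly convex potential cannot meet the equality case of Nguyen--Wang, which forces $1$-homogeneity.

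The single hard step is thus the sharp second-moment bound for $\laplace u$ — everything else is the standard $L^2$ route to Brascamp--Lieb. If a self-contained derivation of that step turns out to be unwieldy, the honest alternative, matching the paper's handling of Facts~\ref{fact:nguyen-wang} and~\ref{fact:nguyen}, is simply to cite Nguyen's dimensional variance inequalities of Brascamp--Lieb type.
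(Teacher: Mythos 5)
The paper does not prove Fact~\ref{fact:dim-brascamp-lieb} at all: immediately after the statement it cites \cite[Prop.~4.1]{BGG18:Dimensional-Improvements}, where the inequality is established by an optimal transport argument (a linearization of the dimensional Borell--Brascamp--Lieb inequality). So your ``honest alternative'' of quoting the literature is exactly what the paper does, and that part of your proposal is fine. The preliminary steps of your $L^2$ route are also correct as far as they go: the identities $\Var_{\mu}[f]=\int\ip{\grad f}{\grad u}\idiff{\mu}=\int(Lu)^2\idiff{\mu}$, the integrated Bochner formula, and $\int\laplace u\idiff{\mu}=\Cov_{\mu}(J,f)$ all check out.

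However, the step you flag as ``the single hard step'' is not merely hard --- it is false, so the route as designed cannot be completed. Take $n=1$, $J(x)=x^2/2+\tfrac12\log(2\pi)$ (so $\mu$ is the standard Gaussian and $J>0$), and $f=J$. Then $g=(x^2-1)/2$, the solution of $-Lu=g$ is $u=x^2/4$, and $\laplace u\equiv 1/2$, so $\int(\laplace u)^2\idiff{\mu}=1/4$; meanwhile $\Cov_{\mu}(J,f)=\Var_{\mu}[J]=1/2$, so the bound you require, $\int(\laplace u)^2\idiff{\mu}\geq n\,\Cov_{\mu}(J,f)^2/(n-\Var_{\mu}[J])$, would assert $1/4\geq 1/2$. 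Equivalently, the inequality you propose to extract from Bochner, $\int\ip{(\Hess J)\grad u}{\grad u}\idiff{\mu}\leq\Var_{\mu}[f]-\Cov_{\mu}(J,f)^2/(n-\Var_{\mu}[J])$, reads $1/4\leq 0$ in this example, even though the Fact itself holds there with equality. The moral is that the dimensional correction cannot be charged entirely to a lower bound on $\int\norm{\Hess u}_{\mathrm{HS}}^2\idiff{\mu}$: your final Cauchy--Schwarz step is itself lossy in general, and part of the term $\Cov_{\mu}(J,f)^2/(n-\Var_{\mu}[J])$ must be recovered from its defect (or, as in the cited proof, from the transport argument). What your chain does legitimately yield, via Jensen ($\int(\laplace u)^2\idiff{\mu}\geq\Cov_{\mu}(J,f)^2$) and the quadratic in $\Var_{\mu}[f]$, is the weaker inequality with denominator $n$ in place of $n-\Var_{\mu}[J]$. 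Your suggested repair --- decomposing $\laplace u$ along $1$ and $J-\int J\idiff{\mu}$ --- also stalls, because $\Cov_{\mu}(\laplace u,J)$ does not reduce by integration by parts to the data $\Var_{\mu}[J]$ and $\Cov_{\mu}(J,f)$; it produces a genuinely new quantity. Given all this, the defensible course is the one the paper takes: cite \cite[Prop.~4.1]{BGG18:Dimensional-Improvements}.
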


Fact~\ref{fact:dim-brascamp-lieb} was established in~\cite[Prop.~4.1]{BGG18:Dimensional-Improvements}
by means of an optimal transport argument.  The second term sharpens the classic
Brascamp--Lieb variance inequality~\cite[Thm.~4.1]{BL76:Extensions-Brunn-Minkowski}
for a log-concave measure by taking into account the ambient dimension.
We can obtain a weaker version of Lemma~\ref{lem:intvol-varbd} by a direct application of Brascamp--Lieb.

\begin{proof}[Proof of Lemma~\ref{lem:intvol-varbd}]
Fix the parameter $\theta \in \R$.
We cannot apply the dimensional Brascamp--Lieb inequality directly because the potential $\intvolJ$ is not strongly convex.  Instead, for each $\eps > 0$, we construct
the strongly convex potential
$$
J^{\eps}(\vct{x}) := \intvolJ(\vct{x}) + \half\eps \norm{\vct{x}}^2,
\quad\text{recalling that}\quad
\intvolJ(\vct{x}) = \pi \econst^{-2\theta} \dist^2(\vct{x}).
$$
We will consider the function $f = \intvolJ$, without any perturbation.

Our goal is to evaluate the quadratic form induced by the inverse Hessian of $J^\eps$
at the vector $\grad f$.  To that end, note that the original potential $\intvolJ$ satisfies
\begin{equation} \label{eqn:intvolJ-grad}
\grad \intvolJ( \vct{x} ) = \pi \econst^{-2 \theta} \grad \dist^2(\vct{x}) = 2\pi \econst^{-2\theta} \dist(\vct{x}) \, \vct{n}(\vct{x}).
\end{equation}
We have used Fact~\ref{fact:dist}\eqref{dist:2}.
Owing to Fact~\ref{fact:dist}\eqref{dist:3},
the Hessian of the perturbed potential $J^{\eps}$ satisfies
$$
( \Hess J^{\eps}(\vct{x}) ) \, \grad J(\vct{x})
	= (2 \pi \econst^{-2\theta} + \eps) \grad \intvolJ(\vct{x}).
$$
Multiply both sides by the inverse Hessian, and take the inner product with $\grad \intvolJ$ to arrive at
$$
\ip{( \Hess J^{\eps}(\vct{x}) )^{-1} \, \grad \intvolJ(\vct{x})}{ \grad \intvolJ(\vct{x}) }
	= \frac{(2\pi \econst^{-2\theta})^2}{2\pi\econst^{-2\theta} + \eps} \dist^2(\vct{x})
	= \frac{2\pi \econst^{-2\theta}}{2\pi\econst^{-2\theta} + \eps} \cdot 2 \intvolJ(\vct{x}).
$$
We have used~\eqref{eqn:intvolJ-grad} and the property that the normal vector $\vct{n}(\vct{x})$ 
has unit length when $\dist(\vct{x}) > 0$.  Taking the limit as $\eps \downarrow 0$,
$$
\ip{( \Hess \intvolJ(\vct{x}) )^{-1}  \, \grad \intvolJ(\vct{x})}{ \grad \intvolJ(\vct{x}) }
	= 2\intvolJ(\vct{x}).
$$
This computation delivers the integrand in the dimensional Brascamp--Lieb inequality.

Fact~\ref{fact:dim-brascamp-lieb} applies to the strongly convex potential $J^{\eps}$
and the function $f = \intvolJ$.  By dominated convergence, the inequality remains
valid with the limiting potential $\intvolJ$.  Thus, for $\vct{z} \sim \mu_{\theta}$,
$$
\Var[ \intvolJ(\vct{z}) ]
	\leq \int 2\intvolJ \idiff \mu - \frac{\Var[ \intvolJ(\vct{z}) ]^2}{n - \Var[ \intvolJ(\vct{z}) ]}.
$$
The remaining integral coincides with $\Expect[ 2\intvolJ(\vct{z})]$.
Solve for the variance to complete the proof.
\end{proof}

\subsection{Proof of Proposition~\ref{prop:intvol-tvar}}

We are now prepared to bound the thermal variance $\xi_{I}''(\theta)$
of the intrinsic volume random variable.  Together, Lemmas~\ref{lem:intvol-tmv}
and~\ref{lem:intvol-varbd} imply that
$$
\xi_{I}''(\theta)
	\stackrel{\eqref{eqn:intvol-tmv}}{=} 4 \Var[ \intvolJ(\vct{z}) ] - 2 \xi_I'(\theta)
	\stackrel{\eqref{eqn:intvol-varbd}}{\leq} \frac{4n \Expect[2\intvolJ]}{n + \Expect[2\intvolJ]} - 2 \xi_I'(\theta)
	\stackrel{\eqref{eqn:intvol-tmv}}{=} \frac{2 \xi_I'(\theta) (n - \xi_I'(\theta))}{n + \xi_I'(\theta)}.
$$
This completes the proof of Proposition~\ref{prop:intvol-tvar}
and, with it, the proof of Theorem~\ref{thm:intvol-conc}.

\section*{Acknowledgments}

This paper realizes a vision that was put forth by our colleague Michael McCoy in 2013.
The project turned out to be more challenging than anticipated.

The authors would like to thank Jiajie Chen and De Huang
for their insights on the concentration of information inequality.
Franck Barthe, Arnaud Marsiglietti, Michael McCoy, James Melbourne,
Ivan Nourdin, Giovanni Peccati, Rolf Schneider,
and Ramon Van Handel provided valuable feedback on the first draft
of this paper.

MAL would like to thank the Isaac Newton Institute for Mathematical Sciences for support and hospitality during the programme ``Approximation, Sampling and Compression in Data Science'', when work on this paper was undertaken. This work was supported by EPSRC grant number EP/R014604/1.

JAT gratefully acknowledges funding from ONR awards N00014-11-1002, N00014-17-12146, and N00014-18-12363.
He would like to thank his family for support during these trying times.

\bibliographystyle{myalpha}
\newcommand{\etalchar}[1]{$^{#1}$}

\end{document}